\newcommand{\Ad}{\textup{Ad}}
\newcommand{\ad}{\textup{ad}}
\renewcommand{\Im}{\textup{Im}}
\newcommand{\id}{\textup{id}}
\newcommand{\tr}{\textup{tr}}
\newcommand{\Aut}{\textup{Aut}}
\renewcommand{\sl}{\mathfrak{sl}}
\newcommand{\Sp}{\textup{Sp}}
\newcommand{\so}{\mathfrak{so}}
\newcommand{\End}{\textup{End}}
\newcommand{\RR}{\mathbb{R}}
\newcommand{\CC}{\mathbb{C}}
\newcommand{\ZZ}{\mathbb{Z}}
\newcommand{\NN}{\mathbb{N}}
\newcommand{\QQ}{\mathbb{Q}}
\newcommand{\DD}{\mathbb{D}}
\newcommand{\Ind}{\textup{Ind}}
\newcommand{\calF}{\mathcal{F}}
\newcommand{\calI}{\mathcal{I}}
\newcommand{\calO}{\mathcal{O}}
\newcommand{\calS}{\mathcal{S}}
\newcommand{\calB}{\mathcal{B}}
\newcommand{\calC}{\mathcal{C}}
\newcommand{\calP}{\mathcal{P}}
\newcommand{\calU}{\mathcal{U}}
\newcommand{\calL}{\mathcal{L}}
\newcommand{\calD}{\mathcal{D}}
\newcommand{\calT}{\mathcal{T}}
\newcommand{\calV}{\mathcal{V}}
\newcommand{\calZ}{\mathcal{Z}}
\newcommand{\calJ}{\mathcal{J}}
\newcommand{\calR}{\mathcal{R}}
\newcommand{\calM}{\mathcal{M}}
\newcommand{\calN}{\mathcal{N}}
\newcommand{\frakm}{\mathfrak{m}}
\newcommand{\pr}{\textup{pr}}
\newcommand{\Ann}{\textup{Ann}}
\newcommand{\gr}{\textup{gr}}
\newcommand{\Hom}{\textup{Hom}}
\newcommand{\Tor}{\mathrm{Tor}}
\theoremstyle{plain}
\newtheorem{theorem}{Theorem}[section]
\newtheorem*{theorem*}{Theorem}
\newtheorem{proposition}[theorem]{Proposition}
\newtheorem{lemma}[theorem]{Lemma}
\newtheorem{corollary}[theorem]{Corollary}
\newtheorem{fact}[theorem]{Fact}
\theoremstyle{definition}
\newtheorem{definition}[theorem]{Definition}
\newtheorem{remark}[theorem]{Remark}
\theoremstyle{remark}
\numberwithin{equation}{subsection}
\newcommand{\rring}[1]{\calO(#1)}
\newcommand{\rsheaf}[1]{\calO_{#1}}
\newcommand{\define}[1]{\textit{#1}}
\newcommand{\calA}{\mathcal{A}}
\newcommand{\spn}[1]{\mathrm{span}_{\CC}\{#1\}}
\newcommand{\set}[1]{\left\{#1 \right\}}
\newcommand{\univ}[1]{\calU(\mathfrak{#1})}
\newcommand{\univcent}[1]{\calZ(\mathfrak{#1})}
\newcommand{\Ext}{\mathrm{Ext}}
\newcommand{\Len}{\mathrm{Len}}
\newcommand{\Ker}{\mathrm{Ker}}
\newcommand{\proots}{{\Delta^+}}
\newcommand{\lie}[1]{{\mathfrak{#1}}}
\newcommand{\lieR}[1]{\mathfrak{#1_{\RR}}}
\newcommand{\Mod}{\mathrm{Mod}}
\newcommand{\sect}{\Gamma}
\newcommand{\invshf}{\vee}
\newcommand{\locZuck}[2]{\mathrm{L}^{#1}_{#2}}
\newcommand{\DlocZuck}[3]{\DD^{#3}\locZuck{#1}{#2}}
\newcommand{\zuck}[2]{\Gamma^{#1}_{#2}}
\newcommand{\Dzuck}[3]{\DD^{#3}\zuck{#1}{#2}}
\newcommand{\dual}[1]{{#1}^{\vee}}
\newcommand{\Dsheaf}{\mathscr{A}}
\newcommand{\DsheafB}{\mathscr{B}}
\newcommand{\DsheafC}{\mathscr{C}}
\newcommand{\ntDsheaf}{\mathscr{D}}
\newcommand{\Dalg}[1]{\mathcal{A}_{#1}}
\newcommand{\ntDalg}[1]{\mathcal{D}_{#1}}
\newcommand{\tensorDshf}{\mathbin{\#}}
\newcommand{\tensorBorn}{\mathbin{\#}}
\newcommand{\EP}{\mathrm{EP}}
\newcommand{\opalg}{\mathrm{op}}
\newcommand{\CE}[3]{\mathrm{CE}_{#3}({#1}, {#2})}
\newcommand{\cxdot}{\bullet}
\newcommand{\cpx}[1]{#1^{\cxdot}}
\begin{document}

\title{Family of $\mathscr{D}$-modules and representations with a boundedness property}

\author{Masatoshi Kitagawa}

\date{}

\maketitle

\begin{abstract}
	In the representation theory of real reductive Lie groups, many objects have finiteness properties.
	For example, the lengths of Verma modules and principal series representations are finite, and more precisely, they are bounded.
	In this paper, we introduce a notion of uniformly bounded families of holonomic $\ntDsheaf$-modules to explain and find such boundedness properties.

	A uniform bounded family has good properties. For instance, the lengths of modules in the family are bounded and the uniform boundedness is preserved by direct images and inverse images.
	By the Beilinson--Bernstein correspondence, we can deduce several boundedness results about the representation theory of complex reductive Lie algebras from corresponding results of uniformly bounded families of $\ntDsheaf$-modules.
	In this paper, we concentrate on proving fundamental properties of uniformly bounded families, and preparing abstract results for applications to the branching problem and harmonic analysis.
\end{abstract}
\textbf{Keywords}: representation theory, algebraic group, Lie group, D-module, harmonic analysis, branching problem \\
\textbf{MSC2020}: primary 22E46; secondary 32C38

\section{Introduction}

In this paper, we introduce a notion of uniformly bounded families of $\ntDsheaf$-modules, which are good families of holonomic $\ntDsheaf$-modules with bounded lengths.
We show that the uniform boundedness is preserved by fundamental operations of $\ntDsheaf$-modules such as direct images, inverse images and taking subquotients.
By the Beilinson--Bernstein correspondence \cite{BeBe81}, we can deduce several boundedness results about the representation theory of complex reductive Lie algebras from corresponding results of uniformly bounded families of $\ntDsheaf$-modules.

In the representation theory of real reductive Lie groups, finiteness results about lengths of modules and multiplicities in branching laws are fundamental and enable us to study Harish-Chandra modules and unitary representations.
We list typical examples of the results: finiteness of the lengths of Verma modules and principal series representations, Harish-Chandra's admissibility theorem \cite{Ha53_admissible}, irreducibility of $\univ{g}^K$-actions on $K$-isotypic components, and finiteness of multiplicities in the Plancherel formula of symmetric spaces \cite{Ba87,KoOs13}.

Our main concern is that the finiteness is uniform.
The length of a Verma module is bounded by some constant independent of its highest weight, and a similar result holds for principal series representations.
The former is an easy consequence of Soergel's theorem \cite{So90} (see also Remark \ref{rmk:Soergel}), and the latter is proved by Kobayashi--Oshima in \cite{KoOs13}.

In \cite{KoOs13}, T.\ Kobayashi and T.\ Oshima give criteria for the finiteness and the uniform boundedness of multiplicities in the branching problem and harmonic analysis of real reductive Lie groups.
The criteria are given by conditions on the existence of open orbits in flag varieties,
and proved by using hyperfunction boundary value maps.
A.\ Aizenbud, D.\ Gourevitch and A.\ Minchenko give an alternative proof to some of the results using families of holonomic $\ntDsheaf$-modules in \cite{AiGoDm16}.
T.\ Tauchi proves similar results based on the finiteness of hyperfunction solutions in \cite{Ta18}.
Their results are one of our motivations.

In this paper, we do not deal with concrete applications to the branching problem and harmonic analysis.
We concentrate on providing fundamental properties of uniformly bounded families, and preparing abstract results for such applications.
See Proposition \ref{prop:TorUniformlyBoundedGmod} and Remark \ref{rmk:TorUniformlyBoundedGmod} for an easy application to the estimate of multiplicities.

Let us state the definition of uniformly bounded families and their properties.
Our definition is based on Bernstein's work \cite{Be72}.
In the paper, he has introduced the multiplicity $m(M)$ of a module $M$ of the Weyl algebra $\ntDalg{\CC^n}$, and proved that the multiplicity is well-behaved for direct images, inverse images and taking subquotients.
We denote by $\Mod_h(\ntDsheaf_{X})$ the category of holonomic $\ntDsheaf$-modules on an smooth variety $X$.
Let $f\colon \CC^n \rightarrow \CC^m$ be a morphism of algebraic varieties of degree $d'$
and set $d = \max(1, d')$.
Let $Df_+$ (resp. $L f^*$) denote the direct (resp. inverse) image functor.
Then we have
\begin{align*}
	\sum_i m(D^if_+(\calM)) \leq d^{n+m} m(\calM)\text{ and } \sum_i m(L_i f^*(\calN)) \leq d^{n+m} m(\calN).
\end{align*}
for any $\calM \in \Mod_h(\ntDsheaf_{\CC^n})$ and $\calN \in \Mod_h(\ntDsheaf_{\CC^m})$ (see Fact \ref{fact:WeylAlgebraDirectInverseImage}).
Here we put $m(\calM):= m(\sect(\calM))$.
The key point is that the coefficient $d^{n+m}$ is independent of $\calM$ (or $\calN$).
In other words, the estimates of the multiplicities are uniform with respect to $\calM$ (or $\calN$).
This is the starting point of our definition.

Let $\Dsheaf_{X, \Lambda}:=(\Dsheaf_{X,\lambda})_{\lambda \in \Lambda}$
be a family of algebras of twisted differential operators on a smooth variety $X$ over $\CC$.
We say that $(U,\varphi, \Phi)$ is a trivialization of $\Dsheaf_{X,\Lambda}$
if $\varphi \colon U\rightarrow X$ is a surjective \'etale morphism and $\Phi_\lambda \colon \varphi^\#\Dsheaf_{X,\lambda} \rightarrow \ntDsheaf_{U}$ is an isomorphism.
Here $\varphi^\#$ is the pull back of algebras of twisted differential operators by $\varphi$.
Take a trivialization $(U, \varphi, \Phi)$ with affine $U$ and a closed embedding $\iota \colon U\rightarrow \CC^n$.
Then for a family $\calM \in \prod_{\lambda \in \Lambda}\Mod_h(\Dsheaf_{X,\lambda})$, we can consider a function
\begin{align}
	\Lambda \ni \lambda \mapsto m(\iota_+(\varphi^*(\calM_\lambda))) \in \NN.
	\label{eqn:MultiplicityFunction}
\end{align}
The boundedness of the function does not depend on $\iota$ (see Proposition \ref{prop:LocalMultiplicityEtale}), and does depend on the isomorphisms $\Phi$.

We introduce a relation $\sim$ of trivializations.
For two trivialization $(U,\varphi, \Phi)$ and $(V,\psi, \Psi)$, we write $(U,\varphi, \Phi) \sim (V,\psi, \Psi)$ if the set
\begin{align*}
	\set{\widetilde{\varphi}^\#\Psi_\lambda \circ (\widetilde{\psi}^\#\Phi_\lambda)^{-1}: \lambda \in \Lambda} \subset \Aut(\ntDsheaf_{U\times_X V}) \simeq \calZ(U\times_X V)
\end{align*}
spans a finite-dimensional subspace of the space $\calZ(U\times_X V)$ of closed $1$-forms.
Here $\widetilde{\varphi}\colon U\times_X V\rightarrow V$ and $\widetilde{\psi}\colon U\times_X V \rightarrow U$ are the projections of the fiber product.
See Definition \ref{def:BoundedTrivialization}.

We say that a trivialization $T$ is bounded if $T\sim T$.
Although the relation is not an equivalence relation of trivializations,
it is an equivalence relation of bounded trivializations.
Moreover, if two bounded trivialization $T=(U,\varphi, \Phi)$ and $S=(V, \psi, \Psi)$ with affine $V$ and $U$ are equivalent, the boundedness of the function \eqref{eqn:MultiplicityFunction} defined for $T$ is equivalent to that for $S$.
An equivalence class of bounded trivializations is called a bornology of $\Dsheaf_{X,\Lambda}$ (see Definition \ref{def:EquivalenceBornology}).

For a bornology $\calB$ of $\Dsheaf_{X,\lambda}$, we say that $\calM \in \prod_{\lambda\in \Lambda}\Mod_{h}(\Dsheaf_{X,\lambda})$ is a uniformly bounded family with respect to $\calB$
if the function \eqref{eqn:MultiplicityFunction} defined for any/some $T \in \calB$ is bounded.
We denote by $\Mod_{ub}(\Dsheaf_{X,\Lambda}, \calB)$ the full subcategory of $\prod_{\lambda\in \Lambda}\Mod_{h}(\Dsheaf_{X,\lambda})$ whose objects are uniformly bounded.
Similarly, we define a derived category version $D^b_{ub}(\Dsheaf_{X,\Lambda}, \calB)$.
See Definition \ref{def:UniformlyBoundedFamilyOri} for the details.

Corresponding to operations of $\Dsheaf_{X,\Lambda}$, we define operations of bornologies by natural ways: pull-back $f^\#\calB$, external tensor product $\calB\boxtimes \calB'$, twisting by an invertible sheaf $\calB^\calL$ and opposite $\calB^\opalg$.
The following theorem is a fundamental result about uniformly bounded families.
See Subsections \ref{sect:UniformlyBoundedFamily} and \ref{sect:twisting}.

\begin{theorem}\label{thm:IntroUniformlyBounded}
	The uniform boundedness is preserved by direct images, inverse images, external tensor products, twisting by an invertible sheaf and taking subquotients.	
\end{theorem}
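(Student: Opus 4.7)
The plan is to reduce each of the five operations to Bernstein's multiplicity bound on the Weyl algebra (Fact \ref{fact:WeylAlgebraDirectInverseImage}), by transporting the given family to a family of $\ntDsheaf_{\CC^n}$-modules through a trivialization $(U,\varphi,\Phi)$ in the bornology and a closed embedding $\iota\colon U\hookrightarrow\CC^n$. Proposition \ref{prop:LocalMultiplicityEtale}, together with the earlier remark that boundedness of the multiplicity function \eqref{eqn:MultiplicityFunction} is independent of the closed embedding $\iota$ and of the representative trivialization inside a common bornology, lets me pick, in each case, one convenient trivialization adapted to the operation. The goal is then to identify, on that chosen trivialization, the multiplicity function of the output in terms of that of the input, and to apply the relevant half of Bernstein's bound.

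Subquotients are essentially immediate: the functor $\iota_+\varphi^*$ is exact on holonomic modules, so it sends a short exact sequence to a short exact sequence of Weyl-algebra modules, and Bernstein's multiplicity $m$ is additive, hence the multiplicity of every subquotient is bounded by that of the ambient module, uniformly in $\lambda$. For the external tensor product $\calM\boxtimes\calN$ with respect to $\calB\boxtimes\calB'$, take the product trivialization $(U\times V,\varphi\times\psi,\Phi\boxtimes\Psi)$, a representative of $\calB\boxtimes\calB'$, and the product closed embedding $\iota\times\iota'\colon U\times V\hookrightarrow\CC^{n+m}$; then $(\iota\times\iota')_+(\varphi\times\psi)^*(\calM_\lambda\boxtimes\calN_\mu)$ is the external tensor product of the two individual Weyl-algebra modules, whose multiplicity is controlled by the product of the factors. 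For twisting by an invertible sheaf $\calL$, fix a local trivialization of $\calL$ over an \'etale cover on which $\Dsheaf_{X,\Lambda}$ also trivializes; then $\calM_\lambda\otimes\calL$ becomes isomorphic to $\calM_\lambda$ after pull-back, leaving the multiplicity function unchanged.

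The inverse and direct image cases are the substantial ones. For $Lf^*$ along $f\colon X\rightarrow Y$, pick $(V,\psi,\Psi)\in\calB$ and, using the construction of $f^\#\calB$, a trivialization $(U,\varphi,\Phi)\in f^\#\calB$ together with a lift $\widetilde f\colon U\rightarrow V$ of $f$. Choose closed embeddings $\iota\colon U\hookrightarrow\CC^n$ and $\iota'\colon V\hookrightarrow\CC^m$; since $U$ is closed in $\CC^n$, the morphism $\iota'\circ\widetilde f$ extends to a polynomial map $F\colon\CC^n\rightarrow\CC^m$ of some fixed degree $d$ depending only on the chosen data and not on $\lambda$. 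Base change and the compatibility of inverse image with \'etale maps and closed embeddings identify $\iota_+\varphi^*(L_if^*\calM_\lambda)$ with $L_iF^*(\iota'_+\psi^*\calM_\lambda)$, and Fact \ref{fact:WeylAlgebraDirectInverseImage} yields $\sum_i m(L_iF^*(\calN))\leq d^{n+m}m(\calN)$, a $\lambda$-independent bound. Direct images are reduced by factoring $f=\pi\circ\Gamma_f$ through the closed embedding of the graph followed by a projection; each factor is treated by the same mechanism (the graph becoming a polynomial map of degree one on trivializations, and the projection a coordinate projection on $\CC^{n+m}$), and the direct-image half of Fact \ref{fact:WeylAlgebraDirectInverseImage} is summed over cohomology degrees using the finite cohomological amplitude of $Df_+$ on holonomic modules.

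The principal obstacle I expect is not a single estimate but the compatibility bookkeeping among trivializations, bornologies, and morphisms. Specifically, one must verify that, given a representative $(V,\psi,\Psi)\in\calB$, it can be refined (while staying inside the same bornology) so that $f$ admits a lift $\widetilde f$ to a trivialization of $f^\#\calB$, and dually so that $\Gamma_f$ and $\pi$ admit such lifts; the definitions of $f^\#\calB$ and $\calB\boxtimes\calB'$ and the equivalence relation $\sim$ among bounded trivializations are precisely what allow such refinements without affecting the boundedness of the multiplicity function. A secondary issue is to arrange a polynomial extension of $\iota'\circ\widetilde f$ to $\CC^n\rightarrow\CC^m$ whose degree is independent of $\lambda$; this is automatic since the trivialization data carry no dependence on $\lambda$. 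Once these compatibilities are set up, each of the five assertions reduces to one of the two halves of Bernstein's bound applied to a single Weyl-algebra morphism.
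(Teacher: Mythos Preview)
Your treatment of subquotients, external tensor products, and twisting matches the paper's and is correct. Your inverse image argument is also essentially the paper's, though the identification $\iota_+\varphi^*(L_if^*\calM_\lambda)\simeq L_iF^*(\iota'_+\psi^*\calM_\lambda)$ is not literally a base change (the square with $\iota,\iota',\widetilde f,F$ is not Cartesian); the paper's Proposition~\ref{prop:LocalMultiplicity} makes this step precise via an auxiliary graph factorization on the affine side, which is what you should invoke.

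The real gap is in the direct image case. Factoring $f=\pi\circ\Gamma_f$ does not remove the essential difficulty: after base change along the target trivialization $(U,\varphi,\Phi)\in\calB$, one must compute $D\widetilde f_+$ along $\widetilde f\colon U\times_X Y\to U$, and this fiber product need not be affine even though $U$ is. Your claim that the projection becomes ``a coordinate projection on $\CC^{n+m}$'' presupposes an affine source, and the same issue reappears for $\pi$ in your factorization. Since Proposition~\ref{prop:LocalMultiplicity} and Bernstein's estimate apply only to morphisms of affine varieties, a further reduction is mandatory. The paper supplies it via a \v{C}ech resolution: choose a finite affine open cover $\{V_i\}_{i=0}^r$ of $U\times_X Y$, replace $\calM_\lambda|_{U\times_X Y}$ by its \v{C}ech complex $C^\bullet$, observe that $\bigoplus_i C^i$ is a direct summand of $g_*(\calM_\lambda|_V)$ where $V$ is the $(r{+}1)$-fold fiber product of $\bigsqcup_i V_i$, and bound $m_\iota(D\widetilde f_+(C^\bullet))\le m_\iota(D(\widetilde f\circ g)_+(\calM_\lambda|_V))$ using Lemma~\ref{lem:AdditiveFunctionDerived}. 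Only then is $\widetilde f\circ g\colon V\to U$ a morphism of affine varieties to which Proposition~\ref{prop:LocalMultiplicity} applies. This \v{C}ech step---or some equivalent device controlling higher direct images over a non-affine source---is the missing ingredient in your sketch.
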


For example, for a morphism $f\colon Y\rightarrow X$ of smooth varieties, we can define
a direct image functor
\begin{align*}
	Df_+ \colon D^b_{ub}(\Dsheaf_{X,\Lambda}, \calB) \rightarrow D^b_{ub}(f^\#\Dsheaf_{X,\Lambda}, f^\# \calB)
\end{align*}
via $Df_+(\calM) = (Df_+(\calM_\lambda))_{\lambda \in \Lambda}$,
which is the restriction of the direct product of the direct image functors on $D^b_h(\Dsheaf_{X,\lambda})$ ($\lambda \in \Lambda$).
Here $f^\#\Dsheaf_{X,\Lambda}$ is the family $(f^\# \Dsheaf_{X,\lambda})_{\lambda \in \Lambda}$.

The proofs for the last three operations in Theorem \ref{thm:IntroUniformlyBounded} are easy by the definition of uniformly bounded families.
The proofs for the others are essentially the same as a proof of preserving holonomicity (see e.g.\ \cite[VII. \S 12]{Bo87} and \cite[3.2]{HTT08}).

When each $\Dsheaf_{X,\lambda}$ is $G$-equivariant, we can define a notion of $G$-equivariant bornologies by a natural way.
The $G$-equivariance is preserved by the pull-back by a $G$-equivariant morphism.
It is important for the representation theory that if $X$ is a homogeneous variety $G/H$, there exists a unique $G$-equivariant bornology of a family of $G$-equivariant algebras of twisted differential operators (see Proposition \ref{prop:UniqueBornologyHomogeneous}).

In Beilinson--Bernstein's paper \cite{BeBe81}, they give a way to classify equivariant $\ntDsheaf$-modules.
Combining the classification and the notion of $G$-equivariant bornologies,
we obtain

\begin{theorem}[Theorem \ref{thm:UniformlyBoundedIrreducibles}]\label{thm:IntroIrreducibles}
	Let $\calB$ be a bornology of $\Dsheaf_{X,\Lambda}$.
	Suppose that $X$ is a $G$-variety of an affine algebraic group $G$,
	and $\calB$ and $\Dsheaf_{X,\Lambda}$ are $G$-equivariant.
	If $G$ has finitely many orbits in $X$, then any family of $(\Dsheaf_{X,\lambda}, G)$-modules with bounded lengths
	is uniformly bounded with respect to $\calB$.
\end{theorem}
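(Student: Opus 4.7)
The plan is to combine the Beilinson--Bernstein classification of $G$-equivariant $\ntDsheaf$-modules under the finite-orbit hypothesis with the preservation properties from Theorem \ref{thm:IntroUniformlyBounded} and the uniqueness of the $G$-equivariant bornology on a homogeneous space (Proposition \ref{prop:UniqueBornologyHomogeneous}).

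First, I would reduce to the case where each $\calM_\lambda$ is irreducible in the $(\Dsheaf_{X,\lambda}, G)$-module category. Since the Bernstein multiplicity $m$ is additive on short exact sequences, uniform boundedness is closed under extensions; combined with closure under subquotients (Theorem \ref{thm:IntroUniformlyBounded}), a composition series of length at most $C = \sup_\lambda \ell(\calM_\lambda)$ bounds $m(\calM_\lambda)$ by $C$ times the supremum of $m$ over the irreducible subquotients appearing in the family. Hence it suffices to treat families of irreducible equivariant modules.

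Next, the Beilinson--Bernstein classification identifies each irreducible $(\Dsheaf_{X,\lambda}, G)$-module with a minimal extension $j_{k,!*}(\calL_\lambda)$, where $j_k \colon Y_k \hookrightarrow X$ is the inclusion of one of the finitely many $G$-orbits $Y_k \simeq G/H_k$ and $\calL_\lambda$ is an irreducible $G$-equivariant $j_k^\# \Dsheaf_{X,\lambda}$-module on $Y_k$. Partitioning $\Lambda = \bigsqcup_k \Lambda_k$ according to the supporting orbit, we may fix $k$. Since $j_{k,!*}(\calL_\lambda)$ is a subquotient of $Dj_{k,+}(\calL_\lambda)$, and both direct images and subquotients preserve uniform boundedness (Theorem \ref{thm:IntroUniformlyBounded}), the task reduces to showing that $(\calL_\lambda)_{\lambda \in \Lambda_k}$ is uniformly bounded on $Y_k$ with respect to the pulled-back bornology $j_k^\# \calB$, which is again $G$-equivariant because $j_k$ is $G$-equivariant.

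Finally, because $j_k^\# \calB$ is $G$-equivariant on the homogeneous space $Y_k$, the uniqueness statement of Proposition \ref{prop:UniqueBornologyHomogeneous} identifies it with the canonical $G$-equivariant bornology, so that the multiplicity $m(\iota_+(\varphi^*(\calL_\lambda)))$ may be computed on any trivialization in this canonical bornology. The $G$-equivariance of $\calL_\lambda$ forces its pullback to be a locally free $\calO_U$-module whose rank is controlled by the generic rank of $\calM_\lambda|_{Y_k}$, which in turn is governed by $\ell(\calM_\lambda) \leq C$; hence $m(\iota_+(\varphi^*(\calL_\lambda))) \leq C \cdot m(\iota_+(\varphi^*(\calO_{Y_k})))$ on each $\Lambda_k$. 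Taking the maximum over the finitely many orbits yields the desired uniform bound. The main obstacle I anticipate is precisely this final step: translating the uniqueness of the canonical bornology into an honest bound on Bernstein multiplicity. One must exploit a trivialization that is sufficiently compatible with the $G$-action so that the pullback of an equivariant family decomposes into locally free pieces whose rank is governed by $C$ rather than by some uncontrolled representation-theoretic parameter of the stabilizer $H_k$.
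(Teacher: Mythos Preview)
Your overall architecture matches the paper's: reduce to irreducibles, invoke the Beilinson--Bernstein classification (Fact \ref{fact:ClassificationDmodule}), and use that $j_{k,!*}\calL_\lambda$ is a subquotient of $Dj_{k,+}\calL_\lambda$ together with Theorem \ref{thm:IntroUniformlyBounded}. The reduction to a uniformly bounded family of irreducible equivariant connections $(\calL_\lambda)$ on a single orbit $Y_k \simeq G/H_k$, with the pulled-back bornology identified via Proposition \ref{prop:UniqueBornologyHomogeneous}, is exactly the right intermediate goal.

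The gap is in your final paragraph, and it is twofold. First, the rank of $\calL_\lambda$ is $\dim_\CC F_\lambda$ where $F_\lambda$ is the irreducible $H_k$-module in Fact \ref{fact:ClassificationDmodule}; since $\lie{h}_k$ acts on $F_\lambda$ by a character, this rank is bounded by $|H_k/(H_k)_0|$, not by $\ell(\calM_\lambda)$ (which is $1$ here). Second, and more seriously, a uniform bound on the $\calO$-rank does not by itself yield a uniform Bernstein multiplicity bound across the family: the inequality $m_\iota(\varphi^*\calL_\lambda) \leq r_\lambda \cdot m_\iota(\varphi^*\rsheaf{Y_k})$ you write down has no meaning as stated, because $\rsheaf{Y_k}$ is not an $\ntDsheaf_{Y_k,\mu(\lambda)}$-module, and after trivialising via $\Phi_\lambda$ the resulting rank-$r_\lambda$ connection on $U$ is not in general a direct sum of copies of $\rsheaf{U}$. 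One would need a further \'etale cover splitting the local system, and that cover may depend on $\lambda$.

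The paper circumvents this completely. Rather than bounding the connection by its rank, Lemma \ref{lem:KequivModules} realises every $\Ind_{H_k}^G(F_\lambda)$ as a direct summand of $D^{-n}p_{+,\mu(\lambda)}(\rsheaf{G})$, where $p\colon G \to G/H_k$ is the quotient. Then, using the factorisation $i \circ p = m \circ f_x$ with $m\colon G\times X \to X$ the action and $f_x(g) = (g,x)$, one shows (Proposition \ref{prop:FamilyMorphismPush} plus the equivariance $m^\#\calB = \pi^\#\calB$) that the family $(Dm_+ \circ D(f_x)_{+,\lambda}(\rsheaf{G}))_{x,\lambda}$ is uniformly bounded with respect to $\calB$. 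Every irreducible equivariant module is then a subquotient of a member of this single uniformly bounded family (Lemma \ref{lem:KequivModules2}), and the result follows. The key technical point you are missing is this embedding into direct images of the structure sheaf of $G$; once you have it, no rank estimate is needed.
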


In Section \ref{sect:ExampleUBF}, we give several methods to construct bornologies and uniformly bounded families from algebraic group actions.
In particular, we will see that there are many uniformly bounded families.

Let us state applications of uniformly bounded families to the representation theory.
Let $G$ be a connected reductive algebraic group over $\CC$ and $B$ a Borel subgroup of $G$.
By the Beilinson--Bernstein correspondence, any $\lie{g}$-module with an infinitesimal character is isomorphic to $\sect(\calM)$ for some twisted $\ntDsheaf$-module on $G/B$.
We always choose the twist of $\ntDsheaf$ such that $\sect$ is exact on the category of quasi-coherent twisted $\ntDsheaf$-modules.
We say that a family $(M_i)_{i \in I}$ of $\lie{g}$-modules is uniformly bounded if the lengths of $M_i$ are bounded and the localization of the family of all composition factors of all $M_i$ is a uniformly bounded family on $G/B$.

The uniform boundedness is preserved by several operations of $\lie{g}$-modules such as taking subquotients, tensoring with finite-dimensional modules and cohomological parabolic inductions.
This follows from corresponding results for $\ntDsheaf$-modules in Theorem \ref{thm:IntroUniformlyBounded}.
By Theorem \ref{thm:IntroIrreducibles}, any family of Harish-Chandra modules (or objects in the BGG category $\calO$) with bounded lengths is uniformly bounded.
This implies that many families in the representation theory of real reductive Lie groups are uniformly bounded.

We shall state the preservation result for cohomological parabolic inductions.
Let $P$ be a parabolic subgroup of $G$ and $L$ a Levi subgroup of $P$.
Let $(\lie{g}, K)$ be a pair (see Definition \ref{def:pair}).
Take a reductive subgroup $K_L \subset K$ such that $\lie{k}_L \subset \lie{k}\cap \lie{l}$ and $K_L$ normalizes $\lie{l}$ and $\lie{p}$.

\begin{theorem}[Theorem \ref{thm:uniformlyBoundedLengthCohInd}]
	Let $(M_i)_{i \in I}$ be a uniformly bounded family of $(\lie{l}, K_L)$-modules.
	Then the family $(\Dzuck{K}{K_L}{j}(\univ{g}\otimes_{\univ{p}} M_i))_{i \in I, j \in \ZZ}$ is uniformly bounded,
	where $\Dzuck{K}{K_L}{j}$ is the $j$-th Zuckerman derived functor.
\end{theorem}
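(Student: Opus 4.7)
The plan is to reduce to the D-module side via the Beilinson--Bernstein correspondence and to decompose the composite $\Dzuck{K}{K_L}{j} \circ (\univ{g}\otimes_{\univ{p}} -)$ into operations---D-module direct image, tensoring with a finite-dimensional representation, and taking subquotients---each of which preserves uniform boundedness by Theorem \ref{thm:IntroUniformlyBounded}.

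First I would fix a Borel subgroup $B \subset P$ compatible with the pair $(P, L)$ and set $B_L := B \cap L$, so that $P/B \cong L/B_L$ and there is a closed embedding $i \colon L/B_L \hookrightarrow G/B$. Localization converts the uniformly bounded family $(M_i)_{i \in I}$ of $(\lie{l}, K_L)$-modules into a uniformly bounded family $(\calM_i)_{i \in I}$ of $(\Dsheaf_{L/B_L, \mu}, K_L)$-modules on $L/B_L$; this step relies on the unique $K_L$-equivariant bornology on the homogeneous variety $L/B_L$ provided by Proposition \ref{prop:UniqueBornologyHomogeneous}. Since $\univ{g}\otimes_{\univ{p}} M_i$ is the global sections of the D-module direct image $i_+ \calM_i$ on $G/B$, the direct-image clause of Theorem \ref{thm:IntroUniformlyBounded} implies that the family $(\univ{g}\otimes_{\univ{p}} M_i)_{i \in I}$ of $(\lie{g}, K_L)$-modules is uniformly bounded.

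Next I would invoke the Chevalley--Eilenberg-type resolution of the Zuckerman derived functor: for any $(\lie{g}, K_L)$-module $N$, the collection $(\Dzuck{K}{K_L}{j}(N))_{j \in \ZZ}$ is computed as the $K_L$-invariant cohomology of a complex whose terms are $N \otimes \wedge^\bullet(\lie{k}/\lie{k}_L)^{*}$. Each factor $\wedge^j(\lie{k}/\lie{k}_L)^{*}$ is a finite-dimensional $K_L$-module, and vanishes for $j > \dim(\lie{k}/\lie{k}_L)$, so only finitely many $j$ contribute. Applied to $N = \univ{g}\otimes_{\univ{p}} M_i$, the twisted families $((\univ{g}\otimes_{\univ{p}} M_i) \otimes \wedge^j(\lie{k}/\lie{k}_L)^{*})_{i \in I, j \in \ZZ}$ are therefore uniformly bounded by preservation under tensoring with finite-dimensional modules (a consequence of Theorem \ref{thm:IntroUniformlyBounded}). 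Since the cohomology objects are subquotients of these twists, the subquotient clause yields that $(\Dzuck{K}{K_L}{j}(\univ{g}\otimes_{\univ{p}} M_i))_{i \in I, j \in \ZZ}$ is uniformly bounded, as desired.

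The main obstacle, and where the technical care is needed, is matching the equivariant bornological data along these constructions: one must transfer $(\lie{g}, K_L)$-module uniform boundedness to the D-module side on $G/B$ with the correct $K_L$-equivariant bornology, track that the Chevalley--Eilenberg differentials are compatible with this bornological structure, and verify that the cohomology in fact lands inside the unique $K$-equivariant bornology on $G/B$ given by Proposition \ref{prop:UniqueBornologyHomogeneous}. Once these compatibilities are established, the result follows from iterated application of Theorem \ref{thm:IntroUniformlyBounded}.
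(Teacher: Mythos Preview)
Your step 1 (parabolic induction preserves uniform boundedness) is sound, and in fact more direct than the paper's route: you realize $\univ{g}\otimes_{\univ{p}} M_i$ as $\sect(i_+\calM_i)$ for the closed embedding $i\colon L/B_L \hookrightarrow G/B$, whereas the paper (Proposition \ref{prop:LengthVerma}) rewrites it as $(\univ{g}/\calI_{\chi_i}\otimes M_i)\otimes_{\univ{p}}\CC$ and invokes the finite-orbit machinery. Either way works once one checks $i^\#\calB(G/B,G/U)=\calB(L/B_L,L/U_L)$, which follows from Lemma \ref{lem:pull-backOfBornologyPrincipalBundle}.

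Step 2, however, contains a genuine error. The complex whose $K_L$-invariant cohomology has terms $N\otimes\wedge^\bullet(\lie{k}/\lie{k}_L)^*$ computes the \emph{relative Lie algebra cohomology} $H^\bullet(\lie{k},K_L;N)$, not the Zuckerman derived functor. By definition (see Subsection \ref{sect:bernsteinFunctor}),
\[
\Dzuck{K}{K_L}{j}(N)=H^j(\lie{k},K_L;\rring{K}\otimes N),
\]
so the relevant complex has terms $(\rring{K}\otimes N\otimes\wedge^\bullet(\lie{k}/\lie{k}_L)^*)^{K_L}$. As a $\lie{g}$-module (via the $\mu$-action) each term is an infinite direct sum of copies of $N$, indexed by $K$-types; it is neither of finite length nor a finite tensor twist of $N$, so your subquotient argument does not apply. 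Equivalently, by Fact \ref{fact:BernAndF} one has $\Dzuck{K}{K_L}{j}(N)\simeq\bigoplus_F H^j(\lie{k},K_L;F\otimes N)\otimes F^*$ with the sum over \emph{all} irreducible $K$-modules $F$, and there is no a priori bound on how many $F$ contribute.

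The paper handles this step differently: it localizes the Zuckerman functor itself as a composition of $\ntDsheaf$-module operations on $K\times X$ and $K/K_L\times X$ (Subsection \ref{sect:LocalizationBern}), namely $\DlocZuck{K}{K_L}{i}=L_{m-i}\pi'_+\circ(\cdot)^{K_L/(K_L)_0}\circ L_n q_+\circ a_+\circ\pi^*$, and then applies Theorem \ref{thm:FunctorOnUniformlyBounded} to each factor (Theorem \ref{thm:UniformlyBoundedFamilyBernstein}, packaged as Proposition \ref{prop:FundamentalUniformlyBoundedGmod}(i)). You would need either to adopt this localization, or to find a different finite presentation of $\Dzuck{K}{K_L}{j}$ that stays within uniformly bounded families.
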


As mentioned before, the lengths of Verma modules (or principal series representations) are bounded, which is a special case of the theorem.
It is well-known that the length of a cohomologically induced module is finite (see e.g.\ \cite[Theorem 0.46]{KnVo95_cohomological_induction}).

For the proof of the theorem, we need the localization of the Zuckerman derived functor.
In this paper, we construct the localization following F.\ Bien \cite{Bi90}.
A conceptual treatment of the localization using the equivariant derived category
is given by S.\ N.\ Kitchen \cite{Ki12}.
See also \cite{MiPa98}.
We do not treat the equivariant derived category in this paper.

An algebra of invariant differential operators plays an important role in the representation theory of real reductive Lie groups such as the Schur--Weyl duality and the compact Howe duality \cite{Ho89}, a characterization of compact Gelfand pair \cite{Th82}, and
Harish-Chandra's study of $(\lie{g}, K)$-modules \cite{Ha53_admissible,Ha54_subquotient_theorem}.
If $(\lie{g}, K)$ is a pair with connected reductive group $K$, then the $\univ{g}^K$-action on a non-zero $K$-isotypic component of an irreducible $(\lie{g}, K)$-module is irreducible.
This is a classical result that follows from the Jacobson density theorem and completely reducibility of the $K$-action (see e.g.\ \cite[Section 4.2]{GoWa09}).
The following theorem can be considered as a generalization of the result.

Let $G'$ be a reductive subgroup of $G$ and $(\lie{g'}, K')$ a subpair of $(\lie{g}, K)$.
Suppose that $K'$ is a reductive subgroup of $K$ and $\Ad_{\lie{g}}(K')$ is contained in $\Ad_{\lie{g}}(G')$.

\begin{theorem}[Theorem \ref{thm:uniformlyBoundedLengthGeneralGK}]\label{thm:intro:Zuckerman}
	Let $(V_i)_{i \in I}$ and $(V'_i)_{i \in I}$ be uniformly bounded families of $(\lie{g}, K)$-modules and $(\lie{g'}, K')$-modules, respectively (e.g.\ families of irreducible Harish-Chandra modules).
	Then there exists a constant $C$ such that for any $i \in I$ and $j \in \ZZ$, we have
	\begin{align*}
		\Len_{\univ{g}^{G'}}(H_j(\lie{g'}, K'; V_i \otimes V'_i)) \leq C,
	\end{align*}
	where $\Len(\cdot)$ means the length of a module.
\end{theorem}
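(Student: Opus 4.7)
The plan is to reduce the statement to the preservation properties of uniformly bounded families proved in Theorems~\ref{thm:IntroUniformlyBounded} and~\ref{thm:uniformlyBoundedLengthCohInd}, and then to convert the resulting length bound for a pair into the desired $\univ{g}^{G'}$-length bound by a generalization of the classical Jacobson density argument recalled just before the theorem statement.

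First, form the outer tensor product $(V_i \boxtimes V'_i)_{i \in I}$, which is a family of $(\lie{g}\oplus\lie{g'}, K\times K')$-modules. Via Beilinson--Bernstein localization and the preservation of uniform boundedness under external tensor products (Theorem~\ref{thm:IntroUniformlyBounded}), this family is uniformly bounded on the product flag variety of $G\times G'$.

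Next, observe that $H_j(\lie{g'}, K'; V_i\otimes V'_i)$, computed for the diagonal $(\lie{g'}, K')$-action, equals the relative Lie algebra homology of $V_i \boxtimes V'_i$ along the diagonal embedding $\Delta\colon(\lie{g'}, K')\hookrightarrow(\lie{g}\oplus\lie{g'}, K\times K')$. Via a Chevalley--Eilenberg resolution, this functor fits into the cohomological induction framework covered by Theorem~\ref{thm:uniformlyBoundedLengthCohInd}, so the family $(H_j(\lie{g'}, K'; V_i\otimes V'_i))_{i\in I,\, j\in \ZZ}$ is itself uniformly bounded as a family of modules for the residual structure pair, in which the surviving $\lie{g}$-action (coming from the $V_i$-factor) is precisely the restriction of the $\univ{g}^{G'}$-action appearing in the statement.

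Finally, pass to the $\univ{g}^{G'}$-length. The inclusion $\calU(\lie{g}^{G'})\subseteq \univ{g}^{G'}$ yields $\Len_{\univ{g}^{G'}}(H_j) \leq \Len_{\calU(\lie{g}^{G'})}(H_j)$, and the latter is controlled by the pair-length from the previous step via the generalization of Jacobson density recalled in the introduction: the centralizer of $K'$ in $K$ is reductive and its action commutes with $\univ{g}^{G'}$ on the homology, so each composition factor in the pair contributes only a uniformly bounded number of $\univ{g}^{G'}$-composition factors via $K'$-isotypic decomposition and the density theorem. Summing over $j$ (which has bounded support by the finite global dimension of $(\lie{g'}, K')$-modules) and over composition factors produces the required constant $C$.

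The main obstacle is the second step: identifying the relative Lie algebra homology with a functor in the cohomological induction framework, and carefully tracking the $\univ{g}^{G'}$-action through the Chevalley--Eilenberg resolution so that the uniformly bounded family produced has the right residual structure. Granting this identification, the first step is routine by external tensor products, and the third is a standard application of Jacobson density in the presence of $K'$-complete reducibility.
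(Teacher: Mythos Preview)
Your proposal has a genuine gap in steps 2 and 3, and the gap is precisely the point where the paper does the real work.

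First, the reference to Theorem~\ref{thm:uniformlyBoundedLengthCohInd} is off: that theorem concerns modules of the special shape $\Dzuck{K}{K_L}{j}(\univ{g}\otimes_{\univ{p}} M_i)$, i.e.\ parabolic induction followed by the Zuckerman functor. The relative homology $H_j(\lie{g'}, K'; V_i\otimes V'_i)$ is not of this form, so you cannot plug it into that statement.

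More seriously, your claim that the family $(H_j(\lie{g'},K';V_i\otimes V'_i))$ is ``uniformly bounded as a family of modules for the residual structure pair'' does not typecheck. The $\lie{g}$-action on $V_i$ does \emph{not} descend to the homology, because $\lie{g}$ does not commute with the diagonal $\lie{g'}$-action; only $\univ{g}^{G'}$ survives. Since $\univ{g}^{G'}$ is not the enveloping algebra of a reductive Lie algebra, the notion of ``uniformly bounded family'' (which is defined via localization on a flag variety) simply does not apply to these homologies. Consequently step~3, which tries to pass from a pair-length to a $\univ{g}^{G'}$-length via $\calU(\lie{g}^{G'})\subset\univ{g}^{G'}$ and $K'$-isotypic decomposition, has nothing to start from; and the $K'$-action is already killed in the homology, so there is no $K'$-isotypic decomposition to exploit.

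The paper's route fixes this by working one level up. One applies the Zuckerman functor $\Dzuck{G'}{K'}{j}$ to $V_i\otimes V'_i$, obtaining a uniformly bounded family of $(\lie{g}\oplus\lie{g'}, G')$-modules by Proposition~\ref{prop:FundamentalUniformlyBoundedGmod}~(i) (this is where the localization of the Zuckerman functor and Theorem~\ref{thm:IntroUniformlyBounded} enter). The homology is then identified, via Poincar\'e duality and Lemma~\ref{lem:TorAndBern}, with the $G'$-invariant part of this Zuckerman output. The passage to $\univ{g}^{G'}$-length is Proposition~\ref{prop:AGandLength} (equivalently Corollary~\ref{cor:TorAndBernLength}): taking $G'$-invariants sends an $(\calA,G')$-module to an $\calA^{G'}$-module of no greater length. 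This is indeed a Jacobson-density-type statement, but the relevant group is $G'$ acting on the Zuckerman output, not $K'$ acting on the homology.
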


One of our motivations of Theorem \ref{thm:intro:Zuckerman} is to study multiplicities in the branching problem and harmonic analysis of real reductive Lie groups.
The theorem asserts that the multiplicities are roughly controlled by $\univ{g}^{G'}$.
We can give criteria for the uniformly boundedness of the multiplicities by a ring invariant of $\univ{g}^{G'}$.
We postpone the results to the sequel \cite{Ki20}.

Another motivation of Theorem \ref{thm:intro:Zuckerman} is the Howe duality \cite{Ho89_transcending}.
If $V$ is the Segal--Weil--Shale representation and $(G', H')$ is a reductive dual pair of $G$ (i.e.\ $Z_G(G') = H', Z_{G}(H') = G'$), then Theorem \ref{thm:intro:Zuckerman} asserts that (higher) theta lifts $\Theta_i(V')=H_i(\lie{g'}, K'; V\otimes \dual{V'})$ are of finite length as $\lie{h'}$-modules and the lengths are bounded.
By our method, we can not prove one of important parts of Howe's theorem that the theta lift $\Theta_0(V')$ has a unique irreducible quotient.
However our theorem enables us to define the Euler--Poincar\'e characteristic of the higher theta lifts.
See Theorem \ref{thm:ThetaLift}.
We remark that the Euler–Poincar\'e characteristic of the theta lifting
for p-adic groups is studied in \cite{APS17}.

Let $G$ be a connected reductive algebraic group over $\CC$ and $K$ its connected reductive subgroup.
I. Penkov and G. Zuckerman call a $\lie{g}$-module $M$ a generalized Harish-Chandra module if $M$ is locally finite, completely reducible and admissible as a $\lie{k}$-module (see e.g.\ \cite{PeZu14}).
A relation between generalized Harish-Chandra modules and supports of $\ntDsheaf$-modules on $G/B$ is studied by A. V. Petukhov \cite{Pe12}.

A motivation of our study of the category of generalized Harish-Chandra modules is to study the category of $\univ{g}^K$-modules.
By Lepowsky--McCollum's result \cite{LeMc73}, the category of $\univ{g}^K$-modules can be embedded in the category of $(\lie{g}\oplus \lie{k}, \Delta(K))$-modules.
Hence we can relate the branching problem and harmonic analysis to the study of $(\lie{g}\oplus \lie{k}, \Delta(K))$-modules.
As an application of uniformly bounded families, we prove fundamental results of the category of generalized Harish-Chandra modules: finiteness of equivalence classes of irreducible objects (Corollary \ref{cor:NumberOfIrreducibles}), boundedness of the Loewy lengths of modules (Theorem \ref{thm:LoewyLength}), and 
existence of projective objects (Proposition \ref{prop:EnoughProj}).

This paper is organized as follows.
In Section 2, we review the notions of generalized pairs, pairs $(\lie{g}, K)$, relative Lie algebra cohomologies and truncation functors.
In Section 3, we recall the definition of algebras of twisted differential operators and their operations.
At the end of the section, we study the direct image functors with respect to the projections of principal $G$-bundles.
The definition of uniformly bounded families of $\ntDsheaf$-modules is in Section 4.
Theorem \ref{thm:IntroUniformlyBounded} is proved here.
Section 5 is devoted to constructions of bornologies and uniformly bounded families.
The proof of Theorem \ref{thm:IntroIrreducibles} is given here.
In Section 6, we review the localization of the Zuckerman derived functor following \cite{Bi90}.
Applications to the representation theory are given in Section 7.

\subsection*{Notation and convention}

In this paper, any algebraic variety is assumed to be quasi-projective and defined over $\CC$.
Let $\rsheaf{X}$ and $\rring{X}$ denote the structure sheaf of a variety $X$ and the algebra of its global sections, respectively.
Suppose that $X$ is smooth.
We write $\ntDsheaf_X$ for the algebra of non-twisted (local) differential operators.
We express algebras of twisted differential operators and the spaces of their global sections by script letters and calligraphic letters, respectively.
For example, the space of global sections of algebras $\mathscr{A}_X, \mathscr{B}_{X,\lambda}$ and $\mathscr{D}_X$ are denoted as $\calA_X, \calB_{X,\lambda}$ and $\calD_X$, respectively.

Any representation and module in this paper are assumed to be defined over $\CC$.
We express affine algebraic groups and their Lie algebras by Roman alphabets and corresponding German letters.
For example, the Lie algebras of affine algebraic groups $G, K$ and $H$ are denoted as $\lie{g}, \lie{k}$ and $\lie{h}$.
For a complex Lie algebra $\lie{g}$, we write $\univ{g}$ and $\univcent{g}$ for the universal enveloping algebra and its center, respectively.
For an affine algebraic group $G$, let $G_0$ denote the identity component of $G$.
For a $G$-module (resp.\ a $\lie{g}$-module) $V$, we write $V^G$ (resp.\ $V^{\lie{g}}$) for the space of all invariant vectors in $V$.

We denote by $\Mod(\Dsheaf_X)$, $\Mod(\calA, G)$, $\Mod(\lie{g}, K)$ and $\Mod(\lie{g})$ the categories of (left) modules of a sheaf $\Dsheaf_X$ of algebras, a generalized pair $(\calA, G)$, a pair $(\lie{g}, K)$ and a Lie algebra $\lie{g}$, respectively.
We write $\Len_\calR(V)$ for the length of a $\calR$-module $V$ in each category, e.g.\ $\Len_{\Dsheaf_X}(V)$, $\Len_{\calA, G}(V)$, $\Len_{\lie{g}, K}(V)$ and $\Len_{\lie{g}}(V)$.
We denote by $\Mod_{qc}(\Dsheaf_X)$ (resp.\ $\Mod_{h}(\Dsheaf_X)$) the category of quasi-coherent modules (resp.\ holonomic modules) of an algebra $\Dsheaf_X$ of twisted differential operators.
We use the same notation for categories of equivariant modules such as $\Mod_{qc}(\Dsheaf_X, G)$ and $\Mod_h(\Dsheaf_X, G)$.

For an algebra $\Dsheaf_X$ of twisted differential operators on a smooth variety $X$, let $D^b_{qc}(\Dsheaf_X)$ (resp.\ $D^b_{h}(\Dsheaf_X)$) denote the full subcategory of the derived category $D(\Mod(\Dsheaf_X))$ consisting of objects $\cpx{\calM}$ whose cohomologies $H^i(\cpx{\calM})$ are quasi-coherent (resp.\ holonomic)
and vanish for any $|i| \gg 0$.
We list operations of sheaves:
\begin{itemize}
	\item $\calL^\invshf$: the dual of an invertible sheaf $\calL$
	\item $\sect$, $R\sect$: the global section functor and its right derived functor of sheaves
	\item $f^{-1}$: the inverse image functor of sheaves
	\item $f_*, Rf_*$: the direct image functor and its right derived functor of sheaves
	\item $f^*, Lf^*$: the inverse image functor and its left derived functor of $\rsheaf{Y}$-modules (or twisted $\ntDsheaf$-modules)
	\item $Df_+$: the direct image functor of twisted $\ntDsheaf$-modules.
\end{itemize}
Here $f\colon X\rightarrow Y$ is a morphism of smooth varieties.
We denote by $R^i f_*$, $L_{-i}f^*$ and $D^if_+$ the compositions $H^i\circ Rf_*$, $H^i\circ Lf^*$ and $H^i\circ D f_+$, respectively.

Let $(\cdot)\otimes (\cdot)$ (without subscript) denote the tensor product over $\CC$.
For a $\calR$-module $M$ and an $\calS$-module $N$, we write $M\boxtimes N$ for the external tensor product of $M$ and $N$.

\subsection*{Acknowledgments}

I would like to thank T. Tauchi, Y. Ito and T. Kubo for reading the manuscript very carefully and for many corrections.
The author was partially supported by Waseda University Grants for Special Research Projects (No. 2019C-528).

\section{Preliminary}

In this section, we prepare several known results and definitions.
We deal with generalized pairs, Lie algebra cohomology groups and truncation functors.

\subsection{Generalized pair}

In this subsection, we recall the definitions of generalized pairs and $(\calA, G)$-modules,
and show easy propositions related to generalized pairs.
We refer the reader to \cite[p.96]{KnVo95_cohomological_induction}.

In this paper, any algebraic group is affine and defined over $\CC$,
and any $\CC$-algebra is associative and unital without Lie algebras.
For a representation $V$ of an affine algebraic group $G$ as an abstract group,
we say that $V$ is a $G$-module or $G$ acts rationally on $V$ if
the $G$-action is locally finite and any finite-dimensional $G$-subrepresentation of $V$ is a representation of an algebraic group.

\begin{definition} \label{def:GeneralizedPair}
Let $\calA$ be a $\CC$-algebra and $G$ an affine algebraic group over $\CC$ acting rationally on $\calA$ by algebra automorphisms.
We say that the pair $(\calA, G)$ equipped with a $G$-equivariant algebra homomorphism $\iota\colon\univ{g}\rightarrow \calA$ is a generalized pair if
the adjoint action of $\lie{g}$ on $\calA$ determined by $\iota$ coincides with the differential of the action of $G$ on $\calA$.
\end{definition}

For a generalized pair $(\calA, G)$, we denote by $\Ad_{\calA}$ (or $\Ad$) the action of $G$ on $\calA$.
For example, if $G'$ is a closed subgroup of $G$, then $(\univ{g}, G')$ is a generalized pair.
For a $G$-equivariant algebra $\Dsheaf_X$ of twisted differential operators on a smooth variety $X$,
$(\sect(\Dsheaf_{X}), G)$ forms a generalized pair.

\begin{definition}\label{def:AG-mod}
Let $(\calA, G)$ be a generalized pair.
We say that an $\calA$-module $V$ equipped with a rational $G$-action is an $(\calA, G)$-module if the following two conditions hold.
\begin{enumerate}[(i)]
	\item The differential of the $G$-action on $V$ coincides with the $\lie{g}$-action
	via the composition $\lie{g}\xrightarrow{\iota} \calA \rightarrow \End_{\CC}(V)$ and
	\item $gXv = \Ad(g)(X)gv$ holds for any $g \in G, X\in \calA$ and $v \in V$.
\end{enumerate}
We denote by $\Mod(\calA, G)$ the category of $(\calA, G)$-modules.
\end{definition}

If $G$ is reductive, any $(\calA, G)$-module is completely reducible as a $G$-module.
Hence the functor $\Mod(\calA, G) \ni V\mapsto V^G \in \Mod(\calA^G)$
is exact, where $V^G$ is the space of all $G$-invariant vectors in $V$.
Moreover, we can see that the functor sends an irreducible object to zero or irreducible one.
See e.g.\ \cite[Theorem 4.2.1]{GoWa09}.
Hence we have the following proposition.

\begin{proposition}\label{prop:AGandLength}
Let $(\calA, G)$ be a generalized pair with reductive $G$.
Then for any $(\calA, G)$-module $V$, we have
\begin{align*}
\Len_{\calA^G}(V^G) \leq \Len_{\calA, G}(V),
\end{align*}
where $\Len$ means the length of a module.
\end{proposition}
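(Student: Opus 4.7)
The proof is essentially a direct application of the two facts recalled just before the statement: that $(-)^G \colon \Mod(\calA, G) \rightarrow \Mod(\calA^G)$ is exact (thanks to complete reducibility of rational $G$-modules over a reductive $G$), and that it sends an irreducible $(\calA, G)$-module to either $0$ or an irreducible $\calA^G$-module. My plan is to transport a Jordan--H\"older filtration across this functor and count.

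First I would set $n := \Len_{\calA, G}(V)$. If $n = \infty$ the inequality is vacuous, so assume $n$ is finite and fix a composition series
\begin{equation*}
0 = V_0 \subset V_1 \subset \cdots \subset V_n = V
\end{equation*}
in $\Mod(\calA, G)$, so each $V_i/V_{i-1}$ is an irreducible $(\calA, G)$-module. Applying the exact functor $(-)^G$ yields a chain
\begin{equation*}
0 = V_0^G \subset V_1^G \subset \cdots \subset V_n^G = V^G
\end{equation*}
of $\calA^G$-submodules of $V^G$, together with identifications $V_i^G / V_{i-1}^G \cong (V_i/V_{i-1})^G$ coming from exactness.

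Next I would invoke the second fact: each $(V_i/V_{i-1})^G$ is either $0$ or irreducible in $\Mod(\calA^G)$. Discarding those indices $i$ for which the quotient vanishes (equivalently, for which $V_i^G = V_{i-1}^G$) leaves a strictly increasing filtration of $V^G$ whose successive quotients are all irreducible $\calA^G$-modules, so it is a composition series of $V^G$ of length at most $n$. Hence $\Len_{\calA^G}(V^G) \leq n = \Len_{\calA, G}(V)$, which is the desired inequality.

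There is no real obstacle; the only subtlety worth double-checking is that $(-)^G$ genuinely commutes with the formation of subquotients of $(\calA, G)$-modules (needed for the identification $V_i^G / V_{i-1}^G \cong (V_i/V_{i-1})^G$), but this is immediate from exactness once one notes that $V_i^G = V_i \cap V^G$ inside $V$. The reductivity of $G$ is used solely to guarantee this exactness via complete reducibility of the rational $G$-action.
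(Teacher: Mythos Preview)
Your proof is correct and follows exactly the approach the paper takes: the paper simply records that $(-)^G$ is exact and sends irreducibles to zero or irreducible objects, then states the proposition as an immediate consequence without spelling out the composition-series argument you have written. Your write-up is just a more detailed version of the same reasoning.
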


We will reduce some propositions about $(\calA, G)$-modules to those for $(\calA, G_0)$-module.
To do this, we need the following easy lemma.

\begin{lemma}\label{lem:GeneralizedPairConnected}
Let $(\calA, G)$ be a generalized pair and $V$ an $(\calA, G)$-module.
Then we have
\begin{align*}
\Len_{\calA, G}(V)\leq \Len_{\calA, G_0}(V) \leq |G/G_0|\Len_{\calA, G}(V).
\end{align*}
\end{lemma}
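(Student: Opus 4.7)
The plan is to handle the two inequalities separately, with essentially all of the work in the upper bound. For the lower bound $\Len_{\calA, G}(V) \leq \Len_{\calA, G_0}(V)$, I would observe that every strictly increasing chain of $(\calA, G)$-submodules of $V$ is automatically a strictly increasing chain of $(\calA, G_0)$-submodules (the $\calA$-action is unchanged and the $G_0$-action is the restriction of the $G$-action), so the bound follows by comparing the suprema of lengths of such chains.

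For the upper bound $\Len_{\calA, G_0}(V) \leq |G/G_0|\,\Len_{\calA, G}(V)$, I would first assume $n := \Len_{\calA, G}(V)$ is finite (otherwise there is nothing to prove), pick an $(\calA, G)$-composition series $0 = V_0 \subsetneq \cdots \subsetneq V_n = V$, and use additivity of $\Len_{\calA, G_0}$ on short exact sequences to reduce the problem to the key claim that every simple $(\calA, G)$-module $W$ satisfies $\Len_{\calA, G_0}(W) \leq N := |G/G_0|$. To prove the key claim, I would fix a nonzero $v \in W$ and coset representatives $g_1, \ldots, g_N$ of $G/G_0$. A direct manipulation with the axiom $g\cdot(X\cdot w) = \Ad(g)(X)\cdot(g\cdot w)$, together with $G = \bigsqcup_{i=1}^N g_i G_0$ and the simplicity of $W$ in $\Mod(\calA, G)$, yields $W = \sum_{i=1}^N \calA \cdot G_0 \cdot \sigma_{g_i}(v)$, so $W$ is finitely generated as an $(\calA, G_0)$-module. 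Zorn's lemma then provides a maximal proper $(\calA, G_0)$-submodule $U \subsetneq W$, and the normality of $G_0$ in $G$ combined with the same axiom forces each translate $\sigma_g(U)$ to be a maximal proper $(\calA, G_0)$-submodule. Consequently $\bigcap_{g \in G} \sigma_g(U)$ is an $(\calA, G)$-submodule properly contained in $W$, and hence zero by simplicity. Since $\Stab_G(U) \supseteq G_0$, at most $N$ of the translates $\sigma_g(U)$ are distinct, and the resulting diagonal embedding $W \hookrightarrow \prod_g W/\sigma_g(U)$ realizes $W$ as a submodule of a semisimple $(\calA, G_0)$-module of length at most $N$.

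The main obstacle I anticipate is guaranteeing the existence of a maximal proper $(\calA, G_0)$-submodule of $W$; this is precisely why the finite-generation step must be carried out at the level of $(\calA, G_0)$ rather than of $\calA$ alone. Once that step is in place, the closing intersection-and-embedding argument is a routine Clifford-type descent from $G$ to its identity component $G_0$.
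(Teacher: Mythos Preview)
Your proof is correct. Both inequalities are handled properly: the lower bound is immediate, and for the upper bound your Clifford-type descent is valid. The finite-generation step (writing $W = \sum_i \calA \cdot G_0 \cdot g_i v$ via normality of $G_0$) is exactly what is needed to invoke Zorn for a maximal proper $(\calA, G_0)$-submodule, and the intersection-and-diagonal-embedding argument is sound.

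Your route is dual to the paper's. You work on the \emph{radical} side: find a maximal proper $(\calA, G_0)$-submodule $U$, show the intersection of its $G$-translates is zero, and embed $W$ into a product of at most $|G/G_0|$ simple quotients. The paper works on the \emph{socle} side: it uses Zorn to produce a proper submodule $W'$ such that $V/W'$ has simple socle (by maximizing among submodules avoiding a fixed vector), then intersects translates $g W'$ to locate an irreducible $(\calA, G_0)$-submodule $V_0 \subset V$, and concludes $V = \sum_{g \in G/G_0} g V_0$. Your version is arguably a bit more streamlined, since the finite-generation step gives the maximal submodule directly without the auxiliary ``maximal $S$ with $\bigcap_{g\in S} gW' \neq 0$'' argument; on the other hand, the paper's version yields the slightly stronger conclusion that $V$ is semisimple over $(\calA, G_0)$ as a \emph{sum} of translates of a single simple, not merely as a submodule of a semisimple module. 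Both are standard Clifford-theory arguments and either suffices here.
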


\begin{proof}
The first inequality is trivial.
It is enough to show the second inequality when $V$ is an irreducible $(\calA, G)$-module.

By Zorn's lemma, we can take a proper $(\calA, G_0)$-submodule $W$ such that any non-zero $(\calA, G_0)$-submodule of $V/W$ contains a unique irreducible $(\calA, G_0)$-submodule.
Take a maximal subset $S\subset G/G_0$ such that $W_S:=\bigcap_{g \in S} gW$ is non-zero.
Since $V$ is irreducible as an $(\calA, G)$-module, $S$ is a proper subset of $G/G_0$.
Fix $g \in G/G_0-S$.
Since $W_S\cap gW=0$, the composition $W_S\hookrightarrow V\twoheadrightarrow V/gW$ is injective.
Hence $W_S$ contains an irreducible $(\calA, G_0)$-submodule $V_0$.

Since $V$ is an irreducible $(\calA, G)$-module, we have $V = \sum_{g \in G/G_0} g V_0$.
This implies that $V$ is completely reducible as an $(\calA, G_0)$-module, and hence the length as an $(\calA, G_0)$-module is less than or equal to $|G/G_0|$.
\end{proof}

\subsection{\texorpdfstring{$(\lie{g}, K)$}{(g, K)}-module}\label{sect:CEcomplex}

We review the notion of pairs $(\lie{g}, K)$ and the relative Lie algebra cohomology groups.
We refer the reader to \cite[Chapters I, IV]{KnVo95_cohomological_induction}
and \cite[Chapter I]{BoWa00_continuous_cohomology}.

\begin{definition}\label{def:pair}
Let $\lie{g}$ be a complex Lie algebra and $K$ an affine algebraic group with Lie algebra $\lie{k} \subset \lie{g}$.
We say that $(\lie{g}, K)$ is a \define{pair} if the following two conditions hold.
\begin{enumerate}[(i)]
	\item A rational $K$-action on $\lie{g}$ by Lie algebra automorphisms is given, whose restriction to $\lie{k}$ is equal to the adjoint action of $K$ on $\lie{k}$.
	\item The differential of the $K$-action on $\lie{g}$ coincides with the adjoint action of $\lie{k}$ on $\lie{g}$.
\end{enumerate}
We denote by $\Ad_{\lie{g}}$ (or simply $\Ad$) the action of $K$ on $\lie{g}$.
\end{definition}

If $(\lie{g}, K)$ is a pair, then $(\univ{g}, K)$ forms a generalized pair.
A $(\univ{g}, K)$-module is called a \define{$(\lie{g}, K)$-module}.

For a complex Lie algebra $\lie{g}$, the functor $\Tor^{\univ{g}}_i(\cdot, \cdot)$ can be computed by an explicit complex called the Chevalley--Eilenberg chain complex.
We recall the complex in the relative setting.
Let $(\lie{g}, K)$ be a pair.
Remark that the following results also hold if $K$ is replaced with its Lie algebra $\lie{k}$.
The relative Chevalley--Eilenberg chain complex is a sequence
\begin{align*}
	\cdots \xrightarrow{\partial_{k+1}} \CE{\lie{g}}{K}{k} \xrightarrow{\partial_k} \CE{\lie{g}}{K}{k-1} \xrightarrow{\partial_{k-1}} \cdots \xrightarrow{\partial_1} \CE{\lie{g}}{K}{0} \rightarrow 0,
\end{align*}
where $\CE{\lie{g}}{K}{k}:=\univ{g}\otimes_{\univ{k}}\wedge^k (\lie{g}/\lie{k})$.
The differential $\partial_k$ is given by
\begin{align*}
	&\partial_k(v\otimes X_1 \wedge \cdots \wedge X_k)
	\\= &\sum_i (-1)^{i+1} v\widetilde{X}_i \otimes X_1\wedge \cdots \wedge \hat{X_i} \wedge \cdots \wedge X_k \\
	+ &\sum_{i<j} (-1)^{i+j} v\otimes ([\widetilde{X}_i, \widetilde{X}_j]+\lie{k})\wedge X_1\wedge \cdots \wedge \hat{X_i} \wedge \cdots \wedge \hat{X_j} \wedge \cdots \wedge X_k,
\end{align*}
where each $X_i$ is in $\lie{g}/\lie{k}$ and each $\widetilde{X}_i$ is a representative of $X_i$ in $\lie{g}$.
For a $(\lie{g}, K)$-module $V$, we set
\begin{align*}
	H_i(\lie{g}, K; V)&:= H_i((V\otimes_{\univ{g}}\CE{\lie{g}}{K}{\cxdot})^K) \simeq H_i((V\otimes_{\univ{k}}\wedge^\cxdot (\lie{g}/\lie{k}))^K),\\
	H^i(\lie{g}, K; V)&:= H^i(\Hom_{\lie{g}, K}(\CE{\lie{g}}{K}{\cxdot}, V)) \simeq H^i(\Hom_K(\wedge^\cxdot(\lie{g}/\lie{k}), V)).
\end{align*}
We call them the relative Lie algebra homology and cohomology of $V$, respectively.

If $K$ is reductive, the complex $(\CE{\lie{g}}{K}{\cxdot}, \partial_\cxdot)$ is a projective resolution of $\CC$ in $\Mod(\lie{g}, K)$.
Hence we can compute $\Tor$ and $\Ext$ by the complex.
See \cite[Prposotion 2.117]{KnVo95_cohomological_induction} and \cite[Lemma 3.1.9]{Ku02}.

\begin{fact}\label{fact:LiealgebraCohomology}
	Let $V$ and $W$ be $(\lie{g}, K)$-modules.
	If $K$ is reductive, then we have natural isomorphisms
	\begin{align*}
		H_i(\lie{g}, K; V\otimes W) \simeq \Tor^{\lie{g},K}_i(V, W), \\
		H^i(\lie{g}, K; \Hom_{\CC}(V, W)) \simeq \Ext_{\lie{g}, K}^i(V,W).
	\end{align*}
\end{fact}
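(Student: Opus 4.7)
The plan is to build on the assertion, stated in the paragraph preceding the Fact, that $(\CE{\lie{g}}{K}{\cxdot}, \partial_\cxdot)$ is a projective resolution of the trivial $(\lie{g}, K)$-module $\CC$ when $K$ is reductive. Given this, both isomorphisms follow by standard homological-algebra computations, with naturality in $V$ and $W$ being automatic from the explicit constructions.

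For the Tor isomorphism, I would compute $\Tor_\cxdot^{\lie{g}, K}(V, W)$ using the projective resolution $\CE{\lie{g}}{K}{\cxdot} \to \CC$: the result is the homology of $(V \otimes W) \otimes_{\univ{g}} \CE{\lie{g}}{K}{\cxdot}$ followed by $K$-coinvariants. Applying the tensor identity $X \otimes_{\univ{g}} (\univ{g} \otimes_{\univ{k}} M) \simeq X \otimes_{\univ{k}} M$ (where $V \otimes W$ is made into a right $\univ{g}$-module via the Hopf antipode) and the fact that $K$-coinvariants and $K$-invariants coincide on rational modules (by complete reducibility, since $K$ is reductive), the complex reduces to $((V \otimes W) \otimes_{\univ{k}} \wedge^\cxdot(\lie{g}/\lie{k}))^K$. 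This is by definition the complex whose homology is $H_\cxdot(\lie{g}, K; V \otimes W)$.

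For the Ext isomorphism, I would instead form a projective resolution of $V$ in $\Mod(\lie{g}, K)$ by tensoring: $V \otimes \CE{\lie{g}}{K}{\cxdot} \to V$, with diagonal action on each term. The tensor identity $V \otimes (\univ{g} \otimes_{\univ{k}} M) \simeq \univ{g} \otimes_{\univ{k}} (V|_{\lie{k}} \otimes M)$ expresses each term as an induced module, hence projective in $\Mod(\lie{g}, K)$ provided $V|_{\lie{k}} \otimes M$ is projective in $\Mod(\lie{k}, K)$. Applying $\Hom_{\lie{g}, K}(-, W)$ and unfolding with the usual adjunctions then yields $\Hom_K(\wedge^\cxdot(\lie{g}/\lie{k}), \Hom_\CC(V, W))$, whose cohomology is $H^\cxdot(\lie{g}, K; \Hom_\CC(V, W))$.

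The main technical point is the projectivity of $V|_{\lie{k}} \otimes M$ in $\Mod(\lie{k}, K)$, equivalently the semisimplicity of $\Mod(\lie{k}, K)$ for reductive $K$. For connected $K$ this category coincides with that of rational $K$-modules (the $\lie{k}$-action being determined by the $K$-action), which is semisimple by complete reducibility; the disconnected case is then handled via Lemma \ref{lem:GeneralizedPairConnected}. Beyond this, both arguments are a routine unwinding of definitions combined with the tensor identity for induction.
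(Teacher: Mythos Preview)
The paper does not give its own proof of this statement: it is recorded as a Fact with references to Knapp--Vogan and Kumar, preceded only by the remark that the relative Chevalley--Eilenberg complex is a projective resolution of $\CC$ in $\Mod(\lie{g},K)$ when $K$ is reductive. Your proposal supplies exactly the standard argument those references contain, so your approach and the paper's (deferred) approach coincide.

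One small correction: your appeal to Lemma~\ref{lem:GeneralizedPairConnected} for the disconnected case is misplaced. That lemma compares lengths over $(\calA,G)$ and $(\calA,G_0)$ and says nothing about semisimplicity. What you actually need is that rational representations of a reductive algebraic group $K$ (connected or not) are completely reducible in characteristic zero; this is the standard linear reductivity statement, and it gives the semisimplicity of $\Mod(\lie{k},K)$ directly without any reduction to $K_0$. With that in hand, both the projectivity of each $V \otimes \CE{\lie{g}}{K}{k}$ (for the $\Ext$ side) and the identification of $K$-invariants with $K$-coinvariants (for the $\Tor$ side) follow, and the rest of your outline goes through.
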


The following result is called the Poincar\'e duality.
See \cite[Corollary 3.6]{KnVo95_cohomological_induction}.

\begin{fact}\label{fact:PoincareDuality}
	Let $V$ be a $(\lie{g}, K)$-module.
	Put $n = \dim_{\CC}(\lie{g}/\lie{k})$.
	If $K$ is reductive, then we have a natural isomorphism
	\begin{align*}
		H^i(\lie{g}, K; V\otimes \wedge^n(\lie{g}/\lie{k})) \simeq H_{n-i}(\lie{g}, K; V),
	\end{align*}
	where $\wedge^n(\lie{g}/\lie{k})$ is a $(\lie{g}, K)$-module with the natural $K$-action and the $\lie{g}$-action given by the character $X\mapsto \tr(\Ad_\lie{g}(X))$.
\end{fact}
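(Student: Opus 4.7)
The plan is to compute both sides explicitly with the relative Chevalley--Eilenberg complex of Subsection \ref{sect:CEcomplex} and produce a chain isomorphism between them. Write $\frakm = \lie{g}/\lie{k}$. The cohomology is computed by the cochain groups $C^i := \Hom_K(\wedge^i \frakm, V \otimes \wedge^n \frakm)$, while the homology is computed by the chain groups $C_j := (V \otimes_{\univ{k}} \wedge^j \frakm)^K$. As a preliminary reduction, because $\frakm$ is finite dimensional and $K$ is reductive, the natural projection induces an isomorphism $(V \otimes \wedge^j \frakm)^K \xrightarrow{\sim} (V \otimes_{\univ{k}} \wedge^j \frakm)^K$: semisimplicity of the $\lie{k}$-action on $V \otimes \wedge^j \frakm$ splits off its trivial summand and makes $\lie{k}$-coinvariants agree with $\lie{k}$-invariants there, and these coincide with $K$-invariants under the rationality assumption.

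The duality itself comes from the wedge pairing. Multiplication gives a $K$-equivariant perfect pairing $\wedge^i \frakm \otimes \wedge^{n-i} \frakm \to \wedge^n \frakm$ and hence a $K$-equivariant isomorphism $\wedge^{n-i} \frakm \xrightarrow{\sim} \Hom_\CC(\wedge^i \frakm, \wedge^n \frakm)$. Tensoring with $V$ and passing to $K$-invariants yields a natural isomorphism
\begin{align*}
	\Phi^i : C_{n-i} \xrightarrow{\sim} \Hom_K(\wedge^i \frakm,\, V \otimes \wedge^n \frakm) = C^i.
\end{align*}
The twist by $\wedge^n \frakm$ on the cohomology side is precisely what is needed to make this identification of underlying vector spaces $K$-equivariant; without it even the chain groups of the two complexes would not match.

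The main task is then to check that $\Phi^{\cxdot}$ intertwines the Chevalley--Eilenberg boundary $\partial_{n-i}$ with the coboundary $d^i$ up to an overall sign. I would do this by plugging the explicit formula for $\partial_k$ recalled in Subsection \ref{sect:CEcomplex} into $\Phi^i \circ \partial_{n-i}$ and matching it against $d^i \circ \Phi^{i-1}$ term by term; the two types of summands (those coming from the $\widetilde X_i$-action on $V$ and those coming from the brackets $[\widetilde X_i, \widetilde X_j]$) correspond to the two analogous types of summands that define $d$. The hard part will be the bookkeeping of signs and, more importantly, the appearance of a modular character: dualizing the Koszul operator on $\wedge^\cxdot \frakm$ produces a Jacobian-type correction whose value on $X \in \lie{g}$ is $\tr(\ad_\frakm(X))$, and this must be absorbed by the $\lie{g}$-action on the twist $\wedge^n \frakm$. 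The character $X \mapsto \tr(\Ad_\lie{g}(X))$ in the statement is designed exactly for this: on $\lie{k}$ it equals $\tr(\ad_\frakm(X)) + \tr(\ad_\lie{k}(X)) = \tr(\ad_\frakm(X))$ since $\tr(\ad_\lie{k}(X))=0$ for reductive $\lie{k}$, which is also what makes the $(\lie{g}, K)$-module structure on $\wedge^n \frakm$ well defined. Once this sign and character accounting is carried out (one may follow \cite[Chap.~IV]{KnVo95_cohomological_induction}), $\Phi^{\cxdot}$ is a chain isomorphism up to an overall sign and the claimed isomorphism on (co)homology follows.
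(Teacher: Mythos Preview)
The paper does not prove this statement: it is stated as a \textbf{Fact} and attributed to \cite[Corollary 3.6]{KnVo95_cohomological_induction} in the sentence immediately preceding it. Your sketch via the wedge pairing on the relative Chevalley--Eilenberg complex is precisely the standard argument given in that reference, so there is nothing to compare against in the paper itself; your outline is correct and matches the cited source.
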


\subsection{Truncation functor}\label{sect:truncation}

In many places, we reduce assertions about a complex to those about a single object.
To do so, we need the truncation functors.
We refer the reader to \cite[Definitions 11.3.11 and 12.3.1]{KaSc06}.

Let $\calA$ be an abelian category and $\calC(\calA)$ the category of complexes in $\calA$.
For a complex $(C^\cxdot, d^\cxdot) \in \calC(\calA)$, we set
\begin{align*}
	\tau^{\leq k} C^\cxdot &:= \cdots \rightarrow C^{k-2} \rightarrow C^{k-1} \rightarrow \Ker(d^k) \rightarrow 0 \rightarrow 0 \rightarrow \cdots \\
	\tau^{> k} C^\cxdot &:= \cdots \rightarrow 0 \rightarrow 0 \rightarrow \Im(d^k) \rightarrow C^{k+1} \rightarrow C^{k+2} \rightarrow \cdots \\
	\tau^{\leq k}_s C^\cxdot &:= \cdots \rightarrow C^{k-2} \rightarrow C^{k-1} \rightarrow C^k \rightarrow 0 \rightarrow 0 \rightarrow \cdots \\
	\tau^{> k}_s C^\cxdot &:= \cdots \rightarrow 0 \rightarrow 0 \rightarrow 0 \rightarrow C^{k+1} \rightarrow C^{k+2} \rightarrow \cdots.
\end{align*}
$\tau^{\leq k}$ and $\tau^{> k}$ are called \define{truncation functors},
and $\tau^{\leq k}_s$ and $\tau^{> k}_s$ are called \define{stupid truncation functors}.
Then we have distinguished triangles
\begin{align*}
	&\tau^{\leq k} C^\cxdot \rightarrow C^\cxdot \rightarrow \tau^{> k} C^\cxdot \xrightarrow{+1}, \\
	&\tau^{\leq k}_s C^\cxdot \rightarrow C^\cxdot \rightarrow \tau^{> k}_s C^\cxdot \xrightarrow{+1}.
\end{align*}

\begin{lemma}\label{lem:AdditiveFunctionDerived}
	Let $\calA$ and $\calB$ be abelian categories and $m$ a $\CC$-valued additive function on the Grothendieck group of $\calA$.
	Assume $m(M)\geq 0$ for any $M \in \calA$.
	\begin{enumerate}[(i)]
		\item For any distinguished triangle $(N^\cxdot \rightarrow M^\cxdot \rightarrow L^\cxdot \xrightarrow{+1})$ in $D^b(\calA)$ and $i \in \ZZ$, we have
		\begin{align*}
			m(H^i(M^\cxdot)) \leq m(H^i(N^\cxdot)) + m(H^i(L^\cxdot)).
		\end{align*}
		\item For any functor $F\colon D^b(\calB)\rightarrow D^b(\calA)$ of triangulated categories, complex $M^\cxdot \in D^b(\calB)$ and $i \in \ZZ$, we have
		\begin{align*}
			m(H^i(F(M^\cxdot))) \leq \sum_j m(H^{i-j}(F(H^j(M^\cxdot)))).
		\end{align*}
		\item For any functor $F\colon D^b(\calB)\rightarrow D^b(\calA)$ of triangulated categories, bounded complex $M^\cxdot \in C(\calB)$ and $i \in \ZZ$, we have
		\begin{align*}
			m(H^i(F(M^\cxdot))) \leq \sum_j m(H^{i-j}(F(M^j))).
		\end{align*}
	\end{enumerate}
\end{lemma}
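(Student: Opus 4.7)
My plan is to prove the three parts in order, with (i) serving as the engine for (ii) and (iii), and the two inductive parts only differing in whether one uses the smart or the stupid truncation.

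For (i), I will just unwind the long exact cohomology sequence attached to the distinguished triangle $N^\cxdot \to M^\cxdot \to L^\cxdot \xrightarrow{+1}$. From the sequence
\[
H^{i-1}(L^\cxdot) \to H^i(N^\cxdot) \xrightarrow{\alpha} H^i(M^\cxdot) \xrightarrow{\beta} H^i(L^\cxdot)
\]
I get a short exact sequence $0 \to \Im(\alpha) \to H^i(M^\cxdot) \to \Im(\beta) \to 0$. Since $m$ is additive and nonnegative, $m(\Im(\alpha)) \leq m(H^i(N^\cxdot))$ (quotient) and $m(\Im(\beta)) \leq m(H^i(L^\cxdot))$ (subobject), so additivity on the short exact sequence yields the claim.

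For (ii), I will induct on the number $N(M^\cxdot)$ of integers $j$ with $H^j(M^\cxdot)\ne 0$. The base case $N=0$ gives $F(M^\cxdot) \simeq 0$, and the case $N=1$ reduces to an equality because $M^\cxdot$ is quasi-isomorphic to $H^j(M^\cxdot)[-j]$ for the unique $j$, whence $H^i(F(M^\cxdot)) \simeq H^{i-j}(F(H^j(M^\cxdot)))$. For the inductive step, pick $k$ strictly between the smallest and largest such $j$ and use the distinguished triangle
\[
\tau^{\leq k} M^\cxdot \to M^\cxdot \to \tau^{> k} M^\cxdot \xrightarrow{+1}.
\]
Apply $F$ (which preserves distinguished triangles) and then (i) to obtain
\[
m(H^i(F(M^\cxdot))) \leq m(H^i(F(\tau^{\leq k} M^\cxdot))) + m(H^i(F(\tau^{> k} M^\cxdot))).
\]
Both truncations satisfy $N < N(M^\cxdot)$ and share their nonzero cohomologies with those of $M^\cxdot$, so the inductive hypothesis applied to each summand followed by addition gives the stated bound.

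For (iii), I will run a parallel induction, this time on the number of nonzero terms $M^j$ of the complex as an object of $C(\calB)$, and using the stupid truncation in place of the smart one. The base case is a complex concentrated in a single degree $j$, where $F(M^j[-j]) \simeq F(M^j)[-j]$ gives the claim as an equality. For the inductive step, the short exact sequence of complexes $0 \to \tau^{\leq k}_s M^\cxdot \to M^\cxdot \to \tau^{> k}_s M^\cxdot \to 0$ yields a distinguished triangle in $D^b(\calB)$; applying $F$ and (i) reduces the bound to the corresponding bounds for the two stupid truncations, each with strictly fewer nonzero terms. The only mild subtlety — which is what I expect to watch out for rather than a genuine obstacle — is making sure both inductions handle the edge case where $k$ coincides with the degree of an extremal (co)homology or term, which is why I pick $k$ strictly interior; there is no real difficulty, since (i) is what does all the work.
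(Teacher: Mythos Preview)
Your proposal is correct and follows essentially the same approach as the paper: part (i) from the long exact sequence, part (ii) by induction on the number of nonzero cohomologies via the (smart) truncation triangle and (i), and part (iii) by the parallel induction via the stupid truncation. One small wording fix: in (ii) you need only choose $k$ with $a \le k < b$ (where $a,b$ are the extremal nonzero cohomology degrees), not ``strictly between'' them, since consecutive $a,b$ admit no strictly interior integer; taking $k=a$ already makes both $\tau^{\le k}$ and $\tau^{>k}$ have strictly fewer nonzero cohomologies.
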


\begin{proof}
	(i) is clear by the long exact sequence associated to the distinguished triangle $(N^\cxdot \rightarrow M^\cxdot \rightarrow L^\cxdot \xrightarrow{+1})$.
	Set $l(M^\cxdot):=|\set{n \in \ZZ: H^n(M^\cxdot)\neq 0}|$.
	By induction on $l(M^\cxdot)$ and the truncation functors, we can reduce the assertion (ii) to the case $M^\cxdot \simeq N[n]$ for some $N \in \calB$ and $n \in \ZZ$.
	In fact, we can take $k \in \ZZ$ such that $l(\tau^{\leq k} C^\cxdot), l(\tau^{> k} C^\cxdot) < l(C^\cxdot)$.
	Applying (i) to the following distinguished triangle iteratively, we obtain (ii):
	\begin{align*}
		F(\tau^{\leq k} M^\cxdot) \rightarrow F(M^\cxdot) \rightarrow F(\tau^{> k} M^\cxdot) \xrightarrow{+1}.
	\end{align*}
	Similarly, using the stupid truncation functors, we obtain (iii).
\end{proof}

\section{\texorpdfstring{$\ntDsheaf$}{D}-module and its operations}

The purpose of this section is to summarize fundamental results about $\ntDsheaf$-modules.

\subsection{\texorpdfstring{Twisted $\ntDsheaf$}{D}-module}\label{subsect:DirectImage}

We review algebras of twisted differential operators and their operations.
We refer to \cite{BeBe93}, \cite[Section 1]{KaTa96}, \cite[Sections 3 and 4]{Ka08_dmodule} and \cite{Ka89} for ($G$-equivariant) algebras of twisted differential operators.
In this paper, we denote by $\ntDsheaf_X$ the algebra of non-twisted (local) differential operators on a smooth variety $X$.
Any variety in this paper is assumed to be quasi-projective.

Let $X$ be a (quasi-projective) smooth variety over $\CC$.
Let $i_X$ be the standard homomorphism $\rsheaf{X}\rightarrow \ntDsheaf_X$.
There are several definitions of algebras of twisted differential operators on $X$.
We adopt a definition in which the algebras are locally trivial in the \'etale topology.

\begin{definition}
	We say that a sheaf $\Dsheaf$ of algebras on $X$ equipped with a $\CC$-algebra homomorphism $i\colon \rsheaf{X}\rightarrow \Dsheaf$ is an \define{algebra of twisted differential operators}
	if $\Dsheaf$ is quasi-coherent as a left $\rsheaf{X}$-module and the pair $(\Dsheaf, i)$ is locally isomorphic to $(\ntDsheaf_X, i_X)$ in the \'etale topology (see \cite[A.1]{HMSW87} and \cite[1.1]{KaTa96}).
	We identify $\rsheaf{X}$ with $i(\rsheaf{X})$.
\end{definition}
Then $\Dsheaf$ admits a canonical filtration called the ordered filtration
such that its associated graded algebra is isomorphic to $p_* \rsheaf{T^*X}$, where $p$ is the natural projection from the cotangent bundle $T^*X$ to $X$.

Let $f\colon X\rightarrow Y$ be a morphism of smooth varieties
and $\Dsheaf_{Y}$ an algebra of twisted differential operators on $Y$.
We set
\begin{align*}
\Omega_{f}&:= f^{-1}\Omega_{Y}^{\invshf}\otimes_{f^{-1}\rsheaf{Y}}\Omega_{X},\\
\Dsheaf_{Y\leftarrow X} &:= f^{-1}\Dsheaf_{Y}\otimes_{f^{-1}\rsheaf{Y}}\Omega_{f},\\
\Dsheaf_{X\rightarrow Y} &:= \rsheaf{X} \otimes_{f^{-1}\rsheaf{Y}}f^{-1}\Dsheaf_Y,
\end{align*}
where $\Omega_{X}$ (resp.\ $\Omega_{Y}$) denotes the canonical sheaf of $X$ (resp.\ $Y$).
$f^{\#}\Dsheaf_{Y}$ denotes the sheaf of all differential endomorphisms
of the $\rsheaf{X}$-module $\Dsheaf_{X\rightarrow Y}$ that commute with the right $f^{-1}\Dsheaf_{Y}$-action.
Then $f^{\#}\Dsheaf_Y$ is an algebra of twisted differential operators on $X$
and $\Dsheaf_{Y\leftarrow X}$ is an $(f^{-1}\Dsheaf_{Y}, f^{\#}\Dsheaf_Y)$-bimodule.

The direct image of $\calM^{\cxdot} \in D^b_{qc}(f^{\#}\Dsheaf_Y)$ is defined by
\begin{align*}
D f_{+}(\calM^\cxdot) = Rf_*(\Dsheaf_{Y\leftarrow X}\otimes^L_{f^{\#}\Dsheaf_Y}\calM^{\cxdot}) \in D^b_{qc}(\Dsheaf_Y),
\end{align*}
and the inverse image of $\calN^{\cxdot} \in D^b_{qc}(\Dsheaf_Y)$ is defined by
\begin{align*}
	Lf^*(\calN^\cxdot) = \Dsheaf_{X\rightarrow Y}\otimes^L_{f^{-1}\Dsheaf_Y}f^{-1}(\calN^\cxdot)
	\in D^b_{qc}(f^\#\Dsheaf_Y).
\end{align*}
It is well-known that the functors $D f_{+}$ and $Lf^*$ are local for $Y$,
and preserves holonomicity.

\begin{proposition}\label{prop:DirectImageAffineMorphism}
	Suppose that $f\colon X\rightarrow Y$ is an affine morphism.
	Then $Df_+$ is isomorphic to the left derived functor of $D^0 f_+$.
\end{proposition}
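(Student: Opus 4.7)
The plan is to exploit the fact that $f$ affine makes $f_*$ exact on quasi-coherent modules, and then identify the left derived functor of $D^0 f_+$ via flat resolutions over $f^\#\Dsheaf_Y$.

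First I would unwind the definition. Since $f$ is affine, $Rf_* = f_*$ on the category of quasi-coherent $\rsheaf{X}$-modules, and in particular the cohomology sheaves of $\Dsheaf_{Y\leftarrow X}\otimes^L_{f^\#\Dsheaf_Y}\calM^\cxdot$ are $Rf_*$-acyclic term by term. Therefore
$$Df_+(\calM^\cxdot) \simeq f_*\bigl(\Dsheaf_{Y\leftarrow X}\otimes^L_{f^\#\Dsheaf_Y}\calM^\cxdot\bigr).$$
This immediately shows that the functor $D^0 f_+(\calM) = f_*(\Dsheaf_{Y\leftarrow X}\otimes_{f^\#\Dsheaf_Y}\calM)$ is right exact, as the composition of the right exact tensor product with the exact functor $f_*$.

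Next I would verify that flat $f^\#\Dsheaf_Y$-modules are acyclic for $Df_+$. If $\calP$ is flat over $f^\#\Dsheaf_Y$, the derived tensor product collapses to the underived one, so by the displayed formula $Df_+(\calP)\simeq D^0 f_+(\calP)$ sits in degree $0$. Given this, for any bounded-above complex $\calM^\cxdot$ I would choose a quasi-isomorphism $\calP^\cxdot \to \calM^\cxdot$ with each $\calP^i$ flat over $f^\#\Dsheaf_Y$, and conclude
$$Df_+(\calM^\cxdot)\simeq Df_+(\calP^\cxdot)\simeq D^0 f_+(\calP^\cxdot) = L D^0 f_+(\calM^\cxdot),$$
where the middle isomorphism uses that the flat terms are $Df_+$-acyclic.

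The main technical obstacle is the existence of flat resolutions inside $\Mod(f^\#\Dsheaf_Y)$ of the right kind, that is, resolutions that are at once flat over $f^\#\Dsheaf_Y$ and whose images under $\Dsheaf_{Y\leftarrow X}\otimes_{f^\#\Dsheaf_Y}(\cdot)$ remain $f_*$-acyclic. Since the latter is automatic by the affineness of $f$ and quasi-coherence, I would construct such resolutions in the standard way from locally free $\rsheaf{X}$-modules via $f^\#\Dsheaf_Y\otimes_{\rsheaf{X}}(\cdot)$, reducing the problem to the existence of flat resolutions of quasi-coherent $\rsheaf{X}$-modules, which is classical. A small additional point to verify is that the construction is independent of the resolution and assembles into a functor of triangulated categories, but this is the usual formalism of left derived functors of right exact functors with enough acyclics.
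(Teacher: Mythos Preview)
Your proposal is correct and follows essentially the same strategy as the paper: both use the exactness of $f_*$ for affine $f$ together with locally free (or flat) resolutions over $f^\#\Dsheaf_Y$ to collapse $Df_+$ to a computation with underived tensor products. The only cosmetic difference is that the paper packages the conclusion as an explicit isomorphism $Df_+(\calM^\cxdot)\simeq f_*(\Dsheaf_{Y\leftarrow X})\otimes^L_{f_*f^\#\Dsheaf_Y} f_*(\calM^\cxdot)$ rather than the abstract $LD^0f_+$, and it cites \cite[Corollary 1.4.20]{HTT08} for the existence of locally free resolutions where you sketch the construction by hand.
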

	
\begin{proof}
	Let $\calM^{\cxdot} \in D^b_{qc}(f^{\#}\Dsheaf_Y)$.
	Then $\calM^\cxdot$ has a locally free resolution $\calF^\cxdot$ by \cite[Corollary 1.4.20]{HTT08}.
	$\Dsheaf_{Y\leftarrow X}\otimes_{f^{\#}\Dsheaf_Y} \calF^i$ locally admits a quasi-coherent $\rsheaf{X}$-module structure, and hence it is $f_*$-acyclic.
	Therefore we have
	\begin{align*}
		D f_{+}(\calM^\cxdot) &= f_*(\Dsheaf_{Y\leftarrow X}\otimes_{f^{\#}\Dsheaf_Y} \calF^\cxdot) \\
		&\simeq f_*(\Dsheaf_{Y\leftarrow X})\otimes_{f_*f^{\#}\Dsheaf_Y} f_*(\calF^\cxdot)\\
		&\simeq  f_*(\Dsheaf_{Y\leftarrow X})\otimes_{f_*f^{\#}\Dsheaf_Y}^L f_*(\calM^{\cxdot}).
	\end{align*}
	This shows the proposition.
\end{proof}

The following two facts are fundamental.
See \cite[Lemma 1.1.7, Propositions 1.2.3 and 1.2.6]{KaTa96},
and see \cite[Propositions 1.5.11 and 1.5.21, and Theorem 1.7.3]{HTT08} for the non-twisted case.

\begin{fact}\label{fact:FundamentalDmodule}
	Let $f\colon X\rightarrow Y$ and $g\colon Y\rightarrow Z$ be morphisms of smooth varieties and $\Dsheaf_Z$ an algebra of twisted differential operators on $Z$.
	Then we have $(g\circ f)^\# \Dsheaf_Z = f^\# g^\# \Dsheaf_Z$ and
	\begin{enumerate}[(i)]
		\item \label{eqn:DirectImage} $Dg_+\circ Df_+ = D(g\circ f)_+$,
		\item \label{eqn:InverseImage} $Lf^*\circ Lg^* = L(g\circ f)^*$.
	\end{enumerate}
\end{fact}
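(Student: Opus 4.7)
The plan is to reduce everything to explicit descriptions of the transfer bimodules $\Dsheaf_{X\to Y}$ and $\Dsheaf_{Y\leftarrow X}$, and then invoke associativity of the (derived) tensor product for (ii), together with the projection formula and $R(g\circ f)_*\simeq Rg_*\circ Rf_*$ for (i).

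First, I would settle the equality of twisted differential operators $(g\circ f)^\#\Dsheaf_Z = f^\# g^\#\Dsheaf_Z$. Unpacking the definitions, both sides are realized as subsheaves of $\mathrm{End}_{\CC}$ acting on an $\rsheaf{X}$-module that identifies canonically with $\rsheaf{X}\otimes_{(gf)^{-1}\rsheaf{Z}}(gf)^{-1}\Dsheaf_Z$, and one checks directly that the condition of commuting with the right $f^{-1}g^\#\Dsheaf_Z$-action coincides with the condition of commuting with the right $(gf)^{-1}\Dsheaf_Z$-action. Alternatively, one can trivialize $\Dsheaf_Z$ in the \'etale topology and reduce to the classical non-twisted identity $\ntDsheaf_X = (gf)^\#\ntDsheaf_Z = f^\# g^\#\ntDsheaf_Z$, then transport along the chosen trivializations by functoriality.

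Next, I would establish the two bimodule identifications
\begin{align*}
\Dsheaf_{X\to Z} &\simeq \Dsheaf_{X\to Y}\otimes_{f^{-1}g^\#\Dsheaf_Z} f^{-1}\Dsheaf_{Y\to Z},\\
\Dsheaf_{Z\leftarrow X} &\simeq f^{-1}\Dsheaf_{Z\leftarrow Y}\otimes_{f^{-1}g^\#\Dsheaf_Z} \Dsheaf_{Y\leftarrow X}.
\end{align*}
The first is the functor $\rsheaf{X}\otimes_{f^{-1}\rsheaf{Y}} f^{-1}(-)$ applied to $\Dsheaf_{Y\to Z}$. The second follows by substituting the canonical line-bundle isomorphism $\Omega_{gf}\simeq f^{-1}\Omega_g\otimes_{f^{-1}\rsheaf{Y}}\Omega_f$ into the definition $\Dsheaf_{Z\leftarrow X} = (gf)^{-1}\Dsheaf_Z\otimes_{(gf)^{-1}\rsheaf{Z}}\Omega_{gf}$. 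Both identifications are compatible with the natural bimodule structures.

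With these in place, (ii) follows at once from associativity of the derived tensor product together with exactness of $f^{-1}$. For (i), after unwinding the compositions one is reduced to establishing
\begin{align*}
&Rg_*\bigl(\Dsheaf_{Z\leftarrow Y}\otimes^L_{g^\#\Dsheaf_Z} Rf_*(\Dsheaf_{Y\leftarrow X}\otimes^L_{f^\# g^\#\Dsheaf_Z}\calM)\bigr)\\
&\qquad\simeq R(g\circ f)_*\bigl(\Dsheaf_{Z\leftarrow X}\otimes^L_{(g\circ f)^\#\Dsheaf_Z}\calM\bigr),
\end{align*}
which is obtained by the projection formula (moving $f^{-1}\Dsheaf_{Z\leftarrow Y}$ inside $Rf_*$), the bimodule identification for $\Dsheaf_{Z\leftarrow X}$, and $Rg_*\circ Rf_* \simeq R(g\circ f)_*$. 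The main technical obstacle is the rigorous handling of these derived tensor products and of the projection formula in the twisted, non-commutative setting; this is dealt with either by choosing $K$-flat resolutions on the source, or more concretely by \'etale-local trivialization, reducing to the standard non-twisted statement from \cite{HTT08}.
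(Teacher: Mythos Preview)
Your proposal follows the standard argument and is correct; this is precisely the route taken in the references cited. Note, however, that the paper does not give its own proof of this statement: it is recorded as a \emph{Fact} with references to \cite[Lemma 1.1.7, Propositions 1.2.3 and 1.2.6]{KaTa96} for the twisted case and \cite[Propositions 1.5.11 and 1.5.21, Theorem 1.7.3]{HTT08} for the non-twisted case, so there is nothing in the paper to compare your argument against beyond those citations.
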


\begin{fact}[Base change theorem]\label{fact:BaseChange}
	Suppose that we have the following cartesian square of smooth varieties:
	\begin{align*}
		\xymatrix {
			Y\times_X Z \ar[r]^-{\widetilde{g}} \ar[d]^-{\widetilde{f}}& Y \ar[d]^-f\\
			Z \ar[r]^-g & X.
		}
	\end{align*}
	Let $\Dsheaf_X$ be an algebra of twisted differential operators.
	Then there exists an isomorphism $Lg^*\circ Df_+\simeq D\widetilde{f}_+\circ  L\widetilde{g}^*$ of functors.
\end{fact}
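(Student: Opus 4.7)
The plan is to construct a canonical natural transformation $Lg^*\circ Df_+ \to D\widetilde{f}_+\circ L\widetilde{g}^*$ and then verify it is an isomorphism by reducing to two model cases. I would obtain the comparison morphism by unravelling the definitions $Df_+(\calM) = Rf_*(\Dsheaf_{X\leftarrow Y}\otimes^L_{f^\#\Dsheaf_X}\calM)$ and $Lg^*(\calN) = \Dsheaf_{Z\rightarrow X}\otimes^L_{g^{-1}\Dsheaf_X} g^{-1}\calN$ and combining three ingredients: the classical base change morphism $Lg^*\circ Rf_* \to R\widetilde{f}_*\circ L\widetilde{g}^*$ for quasi-coherent sheaves on the cartesian square; the equality $(g\circ\widetilde{f})^\#\Dsheaf_X = (f\circ\widetilde{g})^\#\Dsheaf_X$ from Fact \ref{fact:FundamentalDmodule}; and a canonical isomorphism of transfer bimodules $L\widetilde{g}^*\Dsheaf_{X\leftarrow Y}\simeq \Dsheaf_{Z\leftarrow Y\times_X Z}$, which comes from the compatibility of canonical bundles and of relative differentials under base change.

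To verify the comparison is an isomorphism, I would factor $f$ through its graph: $f = p_X\circ i_f$, where $i_f\colon Y\hookrightarrow Y\times X$ is the graph embedding (a closed immersion) and $p_X\colon Y\times X\to X$ is the smooth projection. The base change along $g$ then splits into two cartesian squares through the intermediate space $Y\times Z$, and the functorialities of $Df_+$ and $Lf^*$ with respect to composition (Fact \ref{fact:FundamentalDmodule}) reduce the assertion to the following two model cases: (a) $f$ is a closed immersion, and (b) $f$ is a projection of the form $Y_0\times X\to X$ for some smooth variety $Y_0$.

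Case (a) is handled by a direct calculation: the direct image along a closed immersion is a single-term derived tensor with the transfer bimodule, and the base-changed morphism $Y\times_X Z\to Z$ is again a closed immersion; the compatibility then follows from the transfer-bimodule identification above together with the classical base change for $Rf_*$ on quasi-coherent sheaves. Case (b) reduces to flat base change for projections: working over an affine open of $X$ and invoking Proposition \ref{prop:DirectImageAffineMorphism} to compute $Dp_{X,+}$ as the left derived functor of $D^0 p_{X,+}$, one further reduces the statement to the classical base change theorem for $Rp_{X,*}$ applied to a flat resolution by quasi-coherent modules, after checking that the functors $L\widetilde{g}^*$ and tensoring with $\Dsheaf_{X\leftarrow Y_0\times X}$ commute up to the canonical identification from the construction of the comparison map.

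The main obstacle, and the point requiring the most care, is the bimodule identification $L\widetilde{g}^*\Dsheaf_{X\leftarrow Y}\simeq \Dsheaf_{Z\leftarrow Y\times_X Z}$ and the verification that it is compatible with both the $\widetilde{f}^\# g^\#\Dsheaf_X$- and the $\widetilde{g}^{-1}f^\#\Dsheaf_X$-actions, and that it is sufficiently natural in $f$ to be compatible with the graph factorization. Once this identification is set up carefully, the rest of the argument is a fairly standard combination of the quasi-coherent base change theorem with the explicit formulas for direct and inverse images of twisted $\ntDsheaf$-modules.
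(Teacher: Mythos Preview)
The paper does not prove this statement; it records it as a standard fact and cites \cite[Lemma 1.1.7, Propositions 1.2.3 and 1.2.6]{KaTa96} (and \cite[Theorem 1.7.3]{HTT08} for the non-twisted case) for the proof. Your sketch is essentially the textbook argument carried out in those references: construct a canonical comparison morphism from the transfer-bimodule identifications, factor $f$ through its graph, and reduce to the two model cases of a closed immersion and a smooth projection. So your approach is correct and coincides with the literature the paper defers to.

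One small remark: in case (b) you invoke Proposition \ref{prop:DirectImageAffineMorphism} after restricting to an affine open of $X$, but the projection $Y_0\times X\to X$ need not be affine even over an affine base unless $Y_0$ is affine. The usual route is instead to compute $D(p_X)_+$ via the relative de Rham complex (as in \cite[Proposition 1.5.28]{HTT08}), which behaves well under base change because the relative tangent sheaf does; this avoids any affineness hypothesis on $Y_0$. Apart from this detail, the outline is sound.
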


\subsection{Picard algebroid}\label{sect:PicardAlgebroid}

We review the notion of Picard algebroids and describe the action of $f^\# \Dsheaf_Y$ on $\Dsheaf_{Y\leftarrow X}$ using Picard algebroids.
We refer the reader to \cite[\S 2]{BeBe93}.

Let $Z$ be a smooth variety and $\calT_Z$ the tangent sheaf of $Z$.
\begin{definition}[{\cite[1.2 and 2.1.3]{BeBe93}}] \label{def:Picard}
	Let $\widetilde{\calT}$ be a quasi-coherent $\rsheaf{Z}$-module on $Z$.
	$\widetilde{\calT}$ is called a \define{Lie algebroid} on $Z$ if
	$\widetilde{\calT}$ is a sheaf of complex Lie algebras equipped with a $\rsheaf{Z}$-module homomorphism $\sigma\colon \widetilde{\calT} \rightarrow \calT_Z$ such that
	\begin{align*}
		[T, fT'] = (\sigma(T)f) T' + f[T, T']
	\end{align*}
	for any local sections $T, T' \in \widetilde{\calT}$ and $f \in \rsheaf{Z}$.

	A Lie algebroid $\widetilde{\calT}$ on $Z$ is called a \define{Picard algebroid} if
	$\sigma\colon\widetilde{\calT}\rightarrow \calT_Z$ is epimorphic and
	there is an isomorphism $i\colon\rsheaf{Z}\rightarrow \ker(\sigma)$ of $\rsheaf{Z}$-modules such that
	$[T, i(f)] = \sigma(T)f$ for any local sections $T \in \widetilde{\calT}$ and $f \in \rsheaf{Z}$.
\end{definition}

The isomorphism $i$ in the definition is unique.
We identify $\rsheaf{X}$ with $i(\rsheaf{X})$.

For an algebra $(\Dsheaf_Z, i)$ of twisted differential operators on a smooth variety $Z$,
we denote by $\calP(\Dsheaf_Z)$ the sheaf of sections in $\Dsheaf_Z$ with the order less than or equal to $1$.
Then $\calP(\Dsheaf_Z)$ is a Picard algebroid on $Z$ equipped with
the homomorphism $i\colon\rsheaf{Z}\rightarrow \calP(\Dsheaf_Z)$.
Since $\Dsheaf_Z$ is generated by $\calP(\Dsheaf_Z)$, to define an action of $\Dsheaf_Z$, it is enough to define an action of $\calP(\Dsheaf_Z)$ such that $i(1)$ acts by the identity morphism (\cite[Lemma 2.1.4]{BeBe93}).

We describe the action $f^\# \Dsheaf_Y$ on $\Dsheaf_{Y\leftarrow X}$.
Let $X, Y, f$ and $\Dsheaf_Y$ be as in the previous subsection.
We denote by $f^\# \calP(\Dsheaf_Y)$ the fiber product $f^* \calP(\Dsheaf_Y) \times_{f^*\calT_Y} \calT_X$ of
\begin{align*}
	f^* \calP(\Dsheaf_Y) \xrightarrow{f^*\sigma} f^* \calT_Y \leftarrow \calT_X.
\end{align*}
Then $f^\# \calP(\Dsheaf_Y)$ is a Picard algebroid on $X$ equipped with
$i\colon\rsheaf{X}\rightarrow f^\# \calP(\Dsheaf_Y)$ ($h \mapsto (h\otimes 1, 0)$).
We can define an action of $f^\# \calP(\Dsheaf_Y)$ on $\Dsheaf_{X\rightarrow Y}$ via
\begin{align*}
	(\sum_i f_i \otimes T_i, T') \cdot g \otimes S = T'g \otimes S + \sum_i f_i g \otimes T_i S
\end{align*}
for $(\sum_i f_i \otimes T_i, T') \in f^\# \calP(\Dsheaf_Y)$ and $g \otimes S \in \Dsheaf_{X\rightarrow Y}$.
This induces a canonical isomorphism $f^\# \calP(\Dsheaf_Y) \simeq \calP(f^\# \Dsheaf_Y)$ of Picard algebroids.
See \cite[Lemma 2.2]{BeBe93}.

\begin{proposition}\label{prop:ActionOnDXY}
	For local sections $(\sum_i f_i \otimes T_i, T') \in f^\# \calP(\Dsheaf_Y)$ and $S\otimes \tau \otimes \omega \in \Dsheaf_{Y\leftarrow X} = f^{-1}(\Dsheaf_Y \otimes_{\rsheaf{Y}} \Omega_Y^\invshf)\otimes_{\rsheaf{X}} \Omega_X$,
	we define
	\begin{align*}
		S\otimes \tau \otimes \omega \cdot (\sum_i f_i \otimes T_i, T')
		&= \sum_i ST_i \otimes \tau \otimes f_i\omega \\
		&-\sum_i S\otimes \sigma(T_i) \tau \otimes f_i\omega - S \otimes \tau \otimes \sigma(T') \omega,
	\end{align*}
	where $\sigma(T') \omega$ and $\sigma(T_i) \tau$ are defined by the Lie derivative.
	Then this induces a right action of $f^\# \Dsheaf_Y$ on $\Dsheaf_{Y\leftarrow X}$.
\end{proposition}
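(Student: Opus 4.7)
The plan is to use the Picard-algebroid-to-algebra extension principle recalled after Definition \ref{def:Picard}: since $f^\#\Dsheaf_Y$ is generated as a sheaf of algebras by $\calP(f^\#\Dsheaf_Y)\simeq f^\#\calP(\Dsheaf_Y)$, it suffices to exhibit a well-defined right action of the Picard algebroid $f^\#\calP(\Dsheaf_Y)$ on $\Dsheaf_{Y\leftarrow X}$ for which the unit $(1\otimes i_Y(1), 0)$ acts as the identity; the right-module version of \cite[Lemma 2.1.4]{BeBe93} then promotes this to a right action of $f^\#\Dsheaf_Y$.

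First I would verify that the formula is well-defined, which breaks into four checks: (a) $\rsheaf{X}$-linearity in the outer tensor $\otimes_{\rsheaf{X}}\Omega_X$; (b) $\rsheaf{Y}$-balancing inside $\Dsheaf_Y\otimes_{\rsheaf{Y}}\Omega_Y^\invshf$; (c) $f^{-1}\rsheaf{Y}$-balancing on the $f^*\calP(\Dsheaf_Y)=\rsheaf{X}\otimes_{f^{-1}\rsheaf{Y}}f^{-1}\calP(\Dsheaf_Y)$ side; and (d) independence from the choice of representative of an element of the fiber product $f^\#\calP(\Dsheaf_Y)=f^*\calP(\Dsheaf_Y)\times_{f^*\calT_Y}\calT_X$. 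Each is a direct calculation using that $\sigma$ is a Lie algebra homomorphism, the Picard algebroid identity $[T,i(g)]=\sigma(T)g$ in $\calP(\Dsheaf_Y)$, and the Leibniz and naturality properties of the Lie derivative on $\Omega_X$ and $\Omega_Y^\invshf$.

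The main obstacle is verifying the Lie bracket compatibility required of a right Picard algebroid action: for local sections $P_1, P_2$ of $f^\#\calP(\Dsheaf_Y)$,
\[
(m\cdot P_1)\cdot P_2 - (m\cdot P_2)\cdot P_1 = -\,m\cdot [P_1, P_2],
\]
where the bracket is the one inherited by the fiber product from the brackets on $\calP(\Dsheaf_Y)$ and $\calT_X$. Expanded, both sides produce many terms which must be matched using $[T,i(g)]=\sigma(T)g$ together with $[\calL_\xi, \calL_\eta]=\calL_{[\xi,\eta]}$ for Lie derivatives on forms. A cleaner route, which I would prefer in practice, is to argue by \'etale descent: since the statement is local in the \'etale topology on $Y$, pick an \'etale trivialization of $(\Dsheaf_Y, i_Y)$ as $(\ntDsheaf_Y, i_{\ntDsheaf_Y})$, observe that the formula specializes to the classical right $\ntDsheaf_Y$-action on the non-twisted transfer bimodule $\ntDsheaf_{Y\leftarrow X}$ (see e.g.\ \cite[\S 1.3]{HTT08}), and transfer the conclusion back.

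Finally, the unit $(1\otimes i_Y(1), 0)$ acts as the identity: since $i_Y(1)$ lies in $\ker(\sigma)$ and the second component is $0$, both correction terms vanish and only $S\otimes\tau\otimes\omega$ survives. Combining the three verifications with the extension principle gives the desired right $f^\#\Dsheaf_Y$-module structure on $\Dsheaf_{Y\leftarrow X}$.
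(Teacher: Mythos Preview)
Your proposal is correct and matches the paper's approach: the paper simply says ``a straightforward computation shows the proposition'' and omits the details, while you have spelled out exactly what that computation entails (well-definedness across the tensor balancings, the bracket compatibility, and the unit action). Your alternative route via \'etale-local reduction to the non-twisted case is precisely what the paper's subsequent Remark points to when it cites \cite[Lemma 1.3.4]{HTT08}.
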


\begin{proof}
	A straightforward computation shows the proposition.
	Hence we omit the details.
\end{proof}

\begin{remark}
	Since $\Dsheaf_Y$ is locally trivial, we can reduce the computation to the 
	non-twisted case.
	In the case, the action coincides with that in \cite[Lemma 1.3.4]{HTT08}.
	In \cite[1.1.15]{KaTa96}, the action in the proposition is constructed by a formal computation of algebras of twisted differential operators.
\end{remark}

\subsection{\texorpdfstring{$G$}{G}-equivariant module}

In this subsection, we review the notion of $G$-equivariant $\ntDsheaf$-modules.
We refer the reader to \cite[1.8]{BeBe93}, \cite{Ka89} and \cite[Section 3]{Ka08_dmodule}.

Let $G$ be an affine algebraic group and $X$ a smooth $G$-variety.
We write $\mu\colon G\times X\rightarrow X$ for the multiplication map and 
$p_2\colon G\times X\rightarrow X$ for the projection onto the second factor.
An $\rsheaf{X}$-module $\calM$ is \define{$G$-equivariant} if an $\rsheaf{G\times X}$-module isomorphism
$\mu^* \calM \xrightarrow{\simeq} p_2^* \calM$ is specified and satisfies the associative law \cite[(3.1.2)]{Ka08_dmodule}.
The $G$-equivariant structure is sometimes called an (algebraic) $G$-action on $\calM$.
In fact, the $G$-equivariant structure induces a $G$-action on the set of sections of $\calM$.
We can define the differential of the $G$-action, that is, a Lie algebra homomorphism $\lie{g}\rightarrow \End_{\CC}(\calM)$.

We say that an algebra $\Dsheaf$ of twisted differential operators is \define{$G$-equivariant} if
an algebra homomorphism $i_{\lie{g}}\colon\univ{g}\rightarrow \Dsheaf$ and a $G$-action on $\Dsheaf$ are specified and satisfy
the following conditions:
\begin{enumerate}[(i)]
	\item The $G$-action is given by algebra isomorphisms.
	\item $i_{\lie{g}}$ is $G$-equivariant with respect to the adjoint action on $\univ{g}$.
	\item The differential of the $G$-action on $\Dsheaf$ coincides with the adjoint action of $\lie{g}$ on $\Dsheaf$
	coming from $i_{\lie{g}}$.
\end{enumerate}
The $G$-equivariant structure induces an isomorphism
\begin{align}
\mu^{\#}\Dsheaf \simeq p_2^{\#}\Dsheaf (\simeq \ntDsheaf_{G}\boxtimes \Dsheaf) \label{eqn:Gequivariant}
\end{align}
of algebras satisfying the associative law.
See \cite[Lemma 1.8.7]{BeBe93}.

Let $\Dsheaf_X$ be a $G$-equivariant algebra of twisted differential operators on $X$.
An $\Dsheaf$-module $\calM$ is called \define{$G$-equivariant} or an \define{$(\Dsheaf_X, G)$-module} if $\calM$ is $G$-equivariant as a $\rsheaf{X}$-module
and the morphism $\mu^* \calM \xrightarrow{\simeq} p_2^* \calM$ is a $\ntDsheaf_{G}\boxtimes \Dsheaf_X$-isomorphism.

Let $f\colon Y\rightarrow X$ be a $G$-morphism of smooth $G$-varieties.
Then the natural left action of $\univ{g}$ on $\Dsheaf_{Y\rightarrow X}$
induces an algebra homomorphism $\univ{g}\rightarrow f^{\#}\Dsheaf_X$.
Hence the algebra $f^{\#}\Dsheaf_X$ is $G$-equivariant.
The $G$-equivariant structure coincides with the one obtained from the canonical isomorphism
$\calP(f^{\#}\Dsheaf_X) \simeq f^* \calP(\Dsheaf_X)\times_{f^*\calT_X} \calT_Y$
of Picard algebroids.

In this setting, we can define the direct image functor and the inverse image functor
\begin{align*}
&H^i\circ Df_+\colon\Mod_{qc}(f^{\#}\Dsheaf_X, G) \rightarrow \Mod_{qc}(\Dsheaf_X, G), \\
&H^i \circ Lf^*\colon\Mod_{qc}(\Dsheaf_X, G) \rightarrow \Mod_{qc}(f^{\#}\Dsheaf_X, G).
\end{align*}
Although it is more conceptual to use the equivariant derived category,
we do not deal with it in this paper.

Let $\calA$ be a $G$-equivariant algebra of twisted differential operators on $X$.
We consider $G\times X$ as a $G\times G$-variety via the action $(a, b)\cdot (g, x) = (agb^{-1}, bx)$, and the codomain $X$ of $\mu$ (resp.\ $p_2$) as a $G\times G$-variety by letting the second (resp.\ first) factor of $G\times G$ act trivially.
Then $\mu$ and $p_2$ are $G\times G$-equivariant, and hence $\mu^\# \calA$ and $p_2^\# \calA$ are $G\times G$-equivariant algebras.

\begin{proposition}\label{prop:GGequivariant}
	The isomorphism \eqref{eqn:Gequivariant} is $G\times G$-equivariant.
\end{proposition}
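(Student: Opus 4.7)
The plan is to reduce the claim to the cocycle (associativity) axiom satisfied by the structure isomorphism $\phi\colon \mu^\#\calA\xrightarrow{\simeq} p_2^\#\calA$ that encodes the $G$-equivariant structure on $\calA$. Since the two factors of $G\times G$ act on $G\times X$ by commuting group actions, it is enough to verify equivariance with respect to each factor separately.

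I would first unravel the two equivariant structures over $G\times G\times X$. The first factor of $G\times G$ acts on $G\times X$ by $(a,e)\cdot(g,x)=(ag,x)$, by the original action on the codomain of $\mu$, and trivially on the codomain of $p_2$; hence $\mu$ is first-factor-equivariant while $p_2$ is first-factor-invariant. Consequently, the first-factor $G\times G$-equivariance of $\phi$ translates into the equality $p_{23}^\#\phi\circ m_{23}^\#\phi = m_{12}^\#\phi$ of algebra isomorphisms on $G\times G\times X$, where $m_{12}(a,g,x)=(ag,x)$, $m_{23}(a,g,x)=(a,gx)$ and $p_{23}(a,g,x)=(g,x)$. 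This is precisely the cocycle/associativity axiom for $\phi$, which is already part of the definition of a $G$-equivariant algebra of twisted differential operators.

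For the second factor, the action on $G\times X$ is $(e,b)\cdot(g,x)=(gb^{-1},bx)$, trivial on the codomain of $\mu$, and the original on the codomain of $p_2$. The same unravelling yields the equation $q^\#\phi\circ \nu^\#\phi = p_{23}^\#\phi$, where $\nu(b,g,x)=(gb^{-1},bx)$ and $q(b,g,x)=(b,x)$. I would deduce this from the cocycle by pulling back along the automorphism $\psi\colon G\times G\times X\xrightarrow{\simeq} G\times G\times X$, $(b,g,x)\mapsto(gb^{-1},b,x)$. A direct calculation yields $m_{12}\circ\psi = p_{23}$, $m_{23}\circ\psi=\nu$, and $p_{23}\circ\psi=q$, so applying $\psi^\#$ to the first equality and using the functoriality $(f\circ\psi)^\#=\psi^\# f^\#$ from Fact \ref{fact:FundamentalDmodule} gives the desired second-factor equality.

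The main obstacle is purely bookkeeping: one must carefully identify the $G\times G$-equivariant structures inherited by $\mu^\#\calA$ and $p_2^\#\calA$ (from the $G\times G$-equivariance of $\mu$ and $p_2$ together with the prescribed $G\times G$-equivariant structures on the codomain copies of $X$) with the appropriate pullbacks of $\phi$ along maps $G\times G\times X\to G\times X$. Once these identifications are in place no further ingredient is required beyond the cocycle axiom for $\phi$.
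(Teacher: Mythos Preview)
Your proof is correct and follows exactly the approach the paper indicates: the paper's own proof simply states that the assertion follows from the associative law of the $G$-equivariant structure on $\calA$ together with easy diagram chasing, and omits the details. You have supplied precisely those details, reducing each factor of the $G\times G$-equivariance to the cocycle identity $p_{23}^\#\phi\circ m_{23}^\#\phi = m_{12}^\#\phi$ (directly for the first factor, and via the automorphism $\psi$ for the second).
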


\begin{proof}
	The assertion follows from the associative law of the $G$-equivariant structure on $\calA$ and easy diagram chasing.
	Hence we omit the details.
\end{proof}

\subsection{Principal bundle and direct image}\label{sect:PrincipalBundle}

Let $G$ be an affine algebraic group and $p\colon\widetilde{X}\rightarrow X$ a principal $G$-bundle over a smooth
variety $X$ together with a free right action $\widetilde{X}\times G \rightarrow \widetilde{X}$.
In this paper, a principal bundle over an algebraic variety is assumed to be locally trivial in the \'etale topology.
Then the projection $p$ is affine.
In this subsection, we study the direct image functor with respect to the projection $p$.

Let $\Dsheaf_{\widetilde{X}}$ be a $G$-equivariant algebra of twisted differential operators on $\widetilde{X}$
equipped with a $G$-equivariant algebra homomorphism $R\colon\univ{g} \rightarrow \Dsheaf_{\widetilde{X}}$.

\begin{proposition}\label{prop:DsheafLocal}
Assume that $p\colon\widetilde{X} \rightarrow X$ is a trivial bundle, i.e.\ $\widetilde{X}\simeq X\times G$.
Then there exists some algebra $\Dsheaf_X$ of twisted differential operators on $X$
such that
\begin{align*}
\Dsheaf_{\widetilde{X}} \simeq \Dsheaf_{X}\boxtimes \ntDsheaf_{G}
\end{align*}
under the identification $\widetilde{X}\simeq X\times G$.
\end{proposition}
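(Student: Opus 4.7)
The plan is to construct $\Dsheaf_X$ as the pullback of $\Dsheaf_{\widetilde{X}}$ along the identity section of the trivial bundle, and then exhibit the required decomposition by combining the $G$-equivariance datum with a standard description of pulled-back TDOs along a projection.

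First, I would fix the identity $e \in G$, denote by $i_e \colon X \hookrightarrow X \times G = \widetilde{X}$ the closed embedding $x \mapsto (x, e)$, and define $\Dsheaf_X := i_e^\# \Dsheaf_{\widetilde{X}}$, which is a TDO on $X$ by the general theory recalled in Subsection \ref{subsect:DirectImage}. I then aim to establish the chain of algebra isomorphisms
\[ \Dsheaf_{\widetilde{X}} \;\simeq\; \pi_X^\# \Dsheaf_X \;\simeq\; \Dsheaf_X \boxtimes \ntDsheaf_G, \]
where $\pi_X \colon X \times G \to X$ is the first projection.

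For the first isomorphism, I would invoke the $G$-equivariance structure, which in the present right-action setup produces an algebra isomorphism $\alpha \colon \mu^\# \Dsheaf_{\widetilde{X}} \simeq p_1^\# \Dsheaf_{\widetilde{X}}$ of TDOs on $\widetilde{X} \times G = X \times G \times G$, where $\mu$ is the action map and $p_1$ the projection onto $\widetilde{X}$. Pulling $\alpha$ back along the embedding $j \colon X \times G \hookrightarrow X \times G \times G$, $(x, h) \mapsto (x, e, h)$, and computing $\mu \circ j = \id_{X \times G}$ as well as $p_1 \circ j = i_e \circ \pi_X$, the functoriality of $(-)^\#$ from Fact \ref{fact:FundamentalDmodule} then yields $\Dsheaf_{\widetilde{X}} \simeq \pi_X^\# \Dsheaf_X$. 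For the second isomorphism I would pass to the Picard algebroid picture of Subsection \ref{sect:PicardAlgebroid}: the canonical decomposition $\calT_{X \times G} \simeq \pi_X^* \calT_X \oplus \pi_G^* \calT_G$ identifies $\calP(\pi_X^\# \Dsheaf_X) = \pi_X^* \calP(\Dsheaf_X) \times_{\pi_X^* \calT_X} \calT_{X \times G}$ with $\pi_X^* \calP(\Dsheaf_X) \oplus \pi_G^* \calT_G$, which is exactly the Picard algebroid generating $\Dsheaf_X \boxtimes \ntDsheaf_G$; étale-local triviality of $\Dsheaf_X$ reduces the claim to the familiar identity $\ntDsheaf_{X \times G} \simeq \ntDsheaf_X \boxtimes \ntDsheaf_G$.

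The main obstacle will be bookkeeping: one must verify that $j^\# \alpha$ is an algebra isomorphism respecting the $\rsheaf{\widetilde{X}}$-module structure under the identification $\widetilde{X} = X \times G$, which requires carefully tracking the universal constructions defining $(-)^\#$ and the associative law satisfied by the $G$-equivariant structure (cf.\ Proposition \ref{prop:GGequivariant}). Once the identifications are made precisely, the composition of the two displayed isomorphisms delivers the required decomposition $\Dsheaf_{\widetilde{X}} \simeq \Dsheaf_X \boxtimes \ntDsheaf_G$.
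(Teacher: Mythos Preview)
Your proposal is correct and follows essentially the same route as the paper: the paper's section $s$ is your $i_e$, its map $t$ is your $j$, and its projection $p$ is your $\pi_X$, so the chain $\Dsheaf_{\widetilde{X}} = (\mu\circ t)^\#\Dsheaf_{\widetilde{X}} \simeq t^\# p_1^\#\Dsheaf_{\widetilde{X}} \simeq p^\# s^\#\Dsheaf_{\widetilde{X}} \simeq s^\#\Dsheaf_{\widetilde{X}}\boxtimes \ntDsheaf_G$ is exactly your argument. The only cosmetic difference is that the paper states the final identification $p^\#\Dsheaf_X \simeq \Dsheaf_X\boxtimes \ntDsheaf_G$ directly, whereas you justify it via the Picard algebroid description; either way is fine.
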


\begin{proof}
Let $\mu\colon\widetilde{X}\times G \rightarrow \widetilde{X}$ be the multiplication map
and $p_1\colon\widetilde{X}\times G\rightarrow \widetilde{X}$ the projection.
We define $s\colon X\rightarrow \widetilde{X}=X\times G$ via $x\mapsto (x, e)$
and $t\colon\widetilde{X}\rightarrow \widetilde{X}\times G$ via $X\times G \ni (x, g) \mapsto (x, e, g) \in X\times G\times G$.
Then we have $\mu\circ t = \id_{\widetilde{X}}$ and a commutative diagram
\begin{align*}
\xymatrix{
	\widetilde{X} \ar[r]^-t \ar[d]^{p}& \widetilde{X}\times G \ar[r]^-{\mu}\ar[d]^{p_1}& \widetilde{X} \\
	X \ar[r]^s& \widetilde{X}.
}
\end{align*}
We regard $X$ and $\widetilde{X}$ at the bottom as $G$-varieties via the trivial actions
and $\widetilde{X}\times G$ via the right translation on $G$.
Then the morphisms are $G$-equivariant.

Since $\Dsheaf_{\widetilde{X}}$ is $G$-equivariant, we have
\begin{align*}
\mu^{\#}\Dsheaf_{\widetilde{X}} \simeq p_1^{\#}\Dsheaf_{\widetilde{X}}.
\end{align*}
We therefore obtain isomorphisms
\begin{align*}
\Dsheaf_{\widetilde{X}} = (\mu\circ t)^{\#}\Dsheaf_{\widetilde{X}} \simeq t^{\#}p_1^{\#}\Dsheaf_{\widetilde{X}} \simeq p^{\#}s^{\#}\Dsheaf_{\widetilde{X}}
\simeq s^{\#}\Dsheaf_{\widetilde{X}}\boxtimes \ntDsheaf_G
\end{align*}
of $G$-equivariant algebras of twisted differential operators.
\end{proof}

For $\lambda \in (\lie{g}^*)^G$, we set $I_{\lambda}:= \Ker(\lambda) \subset \univ{g}$ and 
\begin{align}
\Dsheaf_{X,\lambda} &:=(\CC_{\lambda-\delta} \otimes_{\univ{g}} p_*\Dsheaf_{\widetilde{X}})^G \nonumber\\
&\simeq (p_*\Dsheaf_{\widetilde{X}}/R(I_{-\lambda+\delta}) p_*\Dsheaf_{\widetilde{X}})^G \nonumber\\
&\simeq p_*\Dsheaf_{\widetilde{X}}^G/(R(I_{-\lambda+\delta}) p_*\Dsheaf_{\widetilde{X}})^G \nonumber\\
&\simeq (p_*\Dsheaf_{\widetilde{X}}/p_*\Dsheaf_{\widetilde{X}}R(I_{-\lambda}))^G, \label{eqn:DefinitionTwisted}
\end{align}
where $\delta \in (\lie{g}^*)^G$ is the character $Z \mapsto \tr(\ad_\lie{g}(Z))$.
Remark that
\begin{align*}
(R(I_{-\lambda+\delta})\rring{G})^G = (\rring{G}R(I_{-\lambda}))^G \subset \ntDalg{G}.
\end{align*}
Then $\Dsheaf_{X,\lambda}$ is a $G$-equivariant algebra of twisted differential operators on $X$
equipped with the homomorphism $\univ{g}\rightarrow \Dsheaf_{X,\lambda}$ ($Z\mapsto \lambda({}^tZ)$).
Here we consider $X$ as a $G$-variety via the trivial action.
Note that if $\lambda$ is a character of $G$ and $\mu\in (\lie{g}^*)^G$, $\Dsheaf_{X,\lambda + \mu}$ is isomorphic to $\calL_{\lambda}\otimes_{\rsheaf{X}}\Dsheaf_{X,\mu}\otimes_{\rsheaf{X}}\calL_{-\lambda}$,
where $\calL_{\lambda}$ is the invertible $\rsheaf{X}$-module corresponding to the line bundle $\widetilde{X}\times_{G}\CC_{\lambda}\rightarrow X$.

For a while, we fix $\lambda \in (\lie{g}^*)^G$.
We write $D p_{+,\lambda}\colon D^b_{qc}(p^{\#}\Dsheaf_{X, \lambda})\rightarrow D^b_{qc}(\Dsheaf_{X, \lambda})$ for the direct image functor in Subsection \ref{subsect:DirectImage}.
We define a right exact functor
\begin{align*}
p_{+,\lambda}(\calM):= \CC_{\lambda-\delta}\otimes_{\univ{g}} p_*(\calM) \in \Mod_{qc}(\Dsheaf_{X,\lambda})
\end{align*}
for $\calM \in \Mod_{qc}(\Dsheaf_{\widetilde{X}})$.
For a character $\lambda$ of $G$, $p_{+,\lambda}(\rsheaf{\widetilde{X}})$ is
isomorphic to $\calL_{\lambda}$ if $G$ is reductive.

%
%

To see $Dp_{+,\lambda} \simeq L p_{+,\lambda}$, we need the following lemma.

\begin{lemma}\label{lem:PrincipalBundleDsheaf}
	$p^{\#}\Dsheaf_{X,\lambda}$ is canonically isomorphic to $\Dsheaf_{\widetilde{X}}$
	as a $G$-equivariant algebra of twisted differential operators.
\end{lemma}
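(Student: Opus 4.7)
The plan is to verify the claim étale-locally and then assemble into a canonical isomorphism. Both sides are algebras of twisted differential operators on $\widetilde{X}$, so it suffices to compare them on an étale cover. Since a principal $G$-bundle is locally trivial in the étale topology and the operation $\Dsheaf \mapsto p^\# \Dsheaf$ is étale-local on the base of $p$, I may replace $X$ by an étale cover on which the bundle becomes trivial. Thus I assume $\widetilde{X} = X \times G$ with $p$ the projection onto the first factor, and by Proposition~\ref{prop:DsheafLocal} write $\Dsheaf_{\widetilde{X}} \simeq \Dsheaf' \boxtimes \ntDsheaf_G$ for some algebra $\Dsheaf'$ of twisted differential operators on $X$.

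In this trivial setting I would compute both sides explicitly. Using the identification $\ntDsheaf_G \simeq \rsheaf{G} \otimes_\CC \univ{g}$ via left-invariant operators, into which $R$ embeds $\univ{g}$, the right ideal $\Dsheaf_{\widetilde{X}} R(I_{-\lambda})$ kills the $\univ{g}$-factor by $-\lambda$, producing $\Dsheaf' \boxtimes \calF_{-\lambda}$ where $\calF_{-\lambda}$ is a $G$-equivariant invertible sheaf on $G$. Taking $G$-invariants under the right regular representation identifies $\Dsheaf_{X,\lambda}$ with $\Dsheaf'$ (possibly twisted by the line bundle associated to $\lambda$), from which
\[
p^\# \Dsheaf_{X,\lambda} \simeq \Dsheaf' \boxtimes \ntDsheaf_G \simeq \Dsheaf_{\widetilde{X}}.
\]

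To make the isomorphism canonical (so that it glues along the étale cover) and $G$-equivariant, I would construct it intrinsically as a morphism $\Dsheaf_{\widetilde{X}} \to p^\# \Dsheaf_{X,\lambda}$. By the definition of $p^\#$, this amounts to producing a left $\Dsheaf_{\widetilde{X}}$-action on $\Dsheaf_{\widetilde{X}\to X}\otimes_{p^{-1}\rsheaf{X}} p^{-1}\Dsheaf_{X,\lambda}$ commuting with the right $p^{-1}\Dsheaf_{X,\lambda}$-action. Since $\Dsheaf_{\widetilde{X}}$ is generated by its Picard algebroid, it suffices to define the action of $\rsheaf{\widetilde{X}}$ (left multiplication on the first factor) and of $\calT_{\widetilde{X}}$ (by Leibniz, using the symbol map together with the splitting $\calT_{\widetilde{X}} \to p^* \calT_X$ furnished by the right $G$-action). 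The vertical fields $R(Y)$ should then act compatibly with the relation $R(Y) \equiv -\lambda(Y)$ imposed on the quotient defining $\Dsheaf_{X,\lambda}$, and the local computation above shows the resulting morphism is an isomorphism; $G$-equivariance follows by functoriality from the $G$-equivariance of the defining data of $\Dsheaf_{X,\lambda}$.

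The main obstacle is the bookkeeping of the twist by $\delta = \tr \circ \ad_{\lie{g}}$ appearing in the definition of $\Dsheaf_{X,\lambda}$. This shift encodes the difference between the left and right quotient presentations in \eqref{eqn:DefinitionTwisted} and originates in the canonical-sheaf factor $\Omega_f$ built into $\Dsheaf_{\widetilde{X}\to X}$. Verifying that the action of $R(\lie{g})$ on $\Dsheaf_{\widetilde{X}\to X}\otimes_{p^{-1}\rsheaf{X}} p^{-1}\Dsheaf_{X,\lambda}$ is compatible with the character $-\lambda$ imposed by the right $\Dsheaf_{X,\lambda}$-structure requires tracing through the explicit formula for the right $f^\#\Dsheaf_Y$-action in Proposition~\ref{prop:ActionOnDXY}; once this bookkeeping is settled, the construction of the canonical isomorphism is essentially forced.
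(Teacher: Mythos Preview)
Your overall strategy is sound, and the local verification would go through, but you are working much harder than necessary and there is some confusion in your bookkeeping.

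The paper's argument is a single clean step: the multiplication map gives an isomorphism
\[
\Dsheaf_{\widetilde{X}\rightarrow X} = \rsheaf{\widetilde{X}}\otimes_{p^{-1}\rsheaf{X}} p^{-1}\Dsheaf_{X,\lambda} \;\xrightarrow{\ \sim\ }\; \Dsheaf_{\widetilde{X}} / \Dsheaf_{\widetilde{X}} R(I_{-\lambda})
\]
of $(\rsheaf{\widetilde{X}}\otimes\univ{g},\,p^{-1}\Dsheaf_{X,\lambda})$-bimodules (checked on a trivializing cover, exactly as you suggest). The left $\Dsheaf_{\widetilde{X}}$-action on the right-hand side is the obvious one, so the map $\Dsheaf_{\widetilde{X}}\to p^\#\Dsheaf_{X,\lambda}$ is already written down for you; there is no need to build it by hand from the Picard algebroid. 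Your third paragraph is essentially rediscovering this multiplication map the hard way.

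Two specific slips: first, you wrote $\Dsheaf_{\widetilde{X}\to X}\otimes_{p^{-1}\rsheaf{X}} p^{-1}\Dsheaf_{X,\lambda}$, but $\Dsheaf_{\widetilde{X}\to X}$ already \emph{is} $\rsheaf{\widetilde{X}}\otimes_{p^{-1}\rsheaf{X}} p^{-1}\Dsheaf_{X,\lambda}$. Second, your worry about the $\delta$-shift is misplaced for this lemma. There is no $\Omega_f$ factor in $\Dsheaf_{\widetilde{X}\to X}$; that factor lives in $\Dsheaf_{X\leftarrow\widetilde{X}}$, and Proposition~\ref{prop:ActionOnDXY} (which you cite) concerns the latter, not the former. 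The $\delta$-shift only appears when you treat $\Dsheaf_{X\leftarrow\widetilde{X}}$ in the next lemma. Here, using the right-ideal presentation $\Dsheaf_{X,\lambda}\simeq (p_*\Dsheaf_{\widetilde{X}}/p_*\Dsheaf_{\widetilde{X}}R(I_{-\lambda}))^G$ from \eqref{eqn:DefinitionTwisted}, the character on $R(\lie{g})$ is exactly $-\lambda$ with no correction.
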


\begin{proof}
	Since $p\colon \widetilde{X}\rightarrow X$ is locally trivial, the multiplication map 
	induces an isomorphism
	\begin{align*}
		\Dsheaf_{\widetilde{X}\rightarrow X} = \rsheaf{\widetilde{X}}\otimes_{p^{-1}\rsheaf{X}} p^{-1}\Dsheaf_{X,\lambda}
		\simeq \Dsheaf_{\widetilde{X}} / \Dsheaf_{\widetilde{X}} R(I_{-\lambda})
	\end{align*}
	of sheaves.
	This is $G$-equivariant and an $(\rsheaf{\widetilde{X}}\otimes \univ{g}, f^{-1}\Dsheaf_{X})$-bimodule isomorphism.
	By the definition of $p^{\#}\Dsheaf_{X,\lambda}$, we obtain 
	an isomorphism $\Dsheaf_{\widetilde{X}} \simeq p^{\#}\Dsheaf_{X,\lambda}$
	of $G$-equivariant algebras of twisted differential operators.
\end{proof}

The isomorphism $\Dsheaf_{\widetilde{X}} \simeq p^{\#}\Dsheaf_{X,\lambda}$ can be obtained from 
the following cartesian square:
\begin{align*}
	\xymatrix{
		p^* \calP(\Dsheaf_{X,\lambda}) \ar[r]^-{p^*\sigma}& p^* \calT_X \\
		\calP(\Dsheaf_{\widetilde{X}}) \simeq p^*(p_*(\calP(\Dsheaf_{\widetilde{X}}))^G) \ar[r] \ar[u] & \calT_{\widetilde{X}} \simeq p^*(p_*(\calT_{\widetilde{X}})^G). \ar[u]
	}
\end{align*}
This implies $\calP(\Dsheaf_{\widetilde{X}}) \simeq p^\# \calP(\Dsheaf_{X, \lambda})$
(see above Proposition \ref{prop:ActionOnDXY}).
We identify $\Dsheaf_{\widetilde{X}}$ with $p^{\#}\Dsheaf_{X,\lambda}$ by the isomorphism.

We shall describe the right $\Dsheaf_{\widetilde{X}}$-action on $\Dsheaf_{X\leftarrow \widetilde{X}}$.
Let $\pi$ be the natural projection $\calT_{\widetilde{X}}^G \twoheadrightarrow p^{-1}(\calT_X)$.
Here $\calT_{\widetilde{X}}^G$ is the sheaf of $G$-invariant local sections of $\calT_{\widetilde{X}}$, that is, $p^{-1}(p_*(\calT_{\widetilde{X}})^G)$.
Fix a basis $X_1, X_2, \ldots, X_{\dim G} \in \lie{g}$.
Since $p\colon\widetilde{X}\rightarrow X$ is a principal $G$-bundle, 
$\Omega_p = p^{-1}(\Omega_X^\invshf)\otimes_{p^{-1}\rsheaf{X}} \Omega_{\widetilde{X}}$ is isomorphic to $\rsheaf{\widetilde{X}}$ as an $\rsheaf{\widetilde{X}}$-module.
The isomorphism is given by
\begin{align*}
	\theta_1\wedge \theta_2 \wedge \cdots \wedge \theta_{\dim X} \otimes \omega
	\mapsto \omega(\widetilde{\theta_1}, \widetilde{\theta_2}, \ldots, \widetilde{\theta_{\dim X}}, 
	R(X_1), R(X_2), \ldots, R(X_{\dim G}))
\end{align*}
for local sections $\theta_1, \theta_2, \ldots, \theta_{\dim X} \in p^{-1}(\calT_X)$ and $\omega \in \Omega_{\widetilde{X}}$,
where each $\widetilde{\theta_i}$ is a local section of $\calT_{\widetilde{X}}^G$ such that $\pi(\widetilde{\theta_i}) = \theta_i$.
Since $[\calT_{\widetilde{X}}^G, R(\lie{g})] = 0$, the isomorphism commutes with the actions of $\calT_{\widetilde{X}}^G$ on $\Omega_p$ and $\rsheaf{\widetilde{X}}$ defined by the Lie derivative.

\begin{lemma}\label{lem:PrincipalBundleDYX}
	$\Dsheaf_{X\leftarrow \widetilde{X}}$ is isomorphic to $\Dsheaf_{\widetilde{X}} / R(I_{-\lambda+\delta}) \Dsheaf_{\widetilde{X}}$
	as a $(p^{-1}\Dsheaf_{X,\lambda}, \Dsheaf_{\widetilde{X}})$-bimodule.
\end{lemma}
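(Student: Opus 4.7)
The plan is to construct the bimodule isomorphism explicitly via a distinguished generator and to verify the required relations using the action formula of Proposition \ref{prop:ActionOnDXY}. Let $e_0 := 1 \otimes \tau_0 \otimes \omega_0 \in \Dsheaf_{X \leftarrow \widetilde{X}}$ be the local section corresponding to the unit of $\rsheaf{\widetilde{X}}$ under the isomorphism $\Omega_p \simeq \rsheaf{\widetilde{X}}$ recalled in the paragraph above, so that $\omega_0(\widetilde{\theta_1}, \ldots, \widetilde{\theta_{\dim X}}, R(X_1), \ldots, R(X_{\dim G})) = 1$. Since $\omega_0$ locally generates $\Omega_p \simeq \rsheaf{\widetilde{X}}$ as an $\rsheaf{\widetilde{X}}$-module, $e_0$ generates $\Dsheaf_{X \leftarrow \widetilde{X}}$ as a right $p^{\#}\Dsheaf_{X,\lambda}$-module; together with the identification $\Dsheaf_{\widetilde{X}} \simeq p^{\#}\Dsheaf_{X,\lambda}$ of Lemma \ref{lem:PrincipalBundleDsheaf}, this yields a surjective right $\Dsheaf_{\widetilde{X}}$-module morphism $\psi \colon \Dsheaf_{\widetilde{X}} \to \Dsheaf_{X \leftarrow \widetilde{X}}$ defined by $D \mapsto e_0 \cdot D$.

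The key step is to show $\psi(R(X)) = -(\lambda - \delta)(X)\, e_0$ for every $X \in \lie{g}$, which will imply that $\psi$ factors through the quotient by $R(I_{-\lambda+\delta}) \Dsheaf_{\widetilde{X}}$. By the same argument as in the proof of Lemma \ref{lem:PrincipalBundleDsheaf}, $R(X)$ corresponds under $\Dsheaf_{\widetilde{X}} \simeq p^{\#}\Dsheaf_{X,\lambda}$ to the element $(-\lambda(X) \otimes 1,\, R(X)) \in p^{\#}\calP(\Dsheaf_{X,\lambda})$, where the second entry is the associated fundamental vector field on $\widetilde{X}$. Applying Proposition \ref{prop:ActionOnDXY} gives
\begin{align*}
\psi(R(X)) = -\lambda(X)\, e_0 \; - \; 1 \otimes \tau_0 \otimes \calL_{R(X)}\omega_0,
\end{align*}
so it remains to prove $\calL_{R(X)} \omega_0 = -\delta(X)\, \omega_0$. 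Evaluating both sides on the tuple $(\widetilde{\theta_1}, \ldots, \widetilde{\theta_{\dim X}}, R(X_1), \ldots, R(X_{\dim G}))$ and using $R(X)(1) = 0$, the $G$-invariance $[R(X), \widetilde{\theta_i}] = 0$, and $[R(X), R(X_j)] = R([X, X_j])$, the defining identity for $\omega_0$ together with antisymmetry reduces the Lie derivative to $-\tr(\ad_{\lie{g}}(X)) = -\delta(X)$. Hence $\psi$ descends to a surjective morphism $\overline{\psi} \colon \Dsheaf_{\widetilde{X}} / R(I_{-\lambda+\delta}) \Dsheaf_{\widetilde{X}} \to \Dsheaf_{X \leftarrow \widetilde{X}}$.

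Left $p^{-1}\Dsheaf_{X,\lambda}$-linearity of $\overline{\psi}$ is a direct verification: by \eqref{eqn:DefinitionTwisted} and the $(p^{-1}, p_*)$-adjunction, there is a canonical algebra morphism $p^{-1}\Dsheaf_{X,\lambda} \to \Dsheaf_{\widetilde{X}} / R(I_{-\lambda+\delta}) \Dsheaf_{\widetilde{X}}$ endowing the source of $\overline{\psi}$ with a left $p^{-1}\Dsheaf_{X,\lambda}$-action by left multiplication, and this action is intertwined by $\overline{\psi}$ with the action on the first tensor factor of $\Dsheaf_{X \leftarrow \widetilde{X}}$ because $\overline{\psi}$ sends the identity to $e_0$ and is $p^{-1}\rsheaf{X}$-linear. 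Finally, injectivity of $\overline{\psi}$ can be checked after étale localization, where by local triviality of $p$ and Proposition \ref{prop:DsheafLocal} we reduce to the trivial case $\widetilde{X} = X \times G$ with $\Dsheaf_{\widetilde{X}} \simeq \Dsheaf_X \boxtimes \ntDsheaf_G$; there both sides are manifestly free left $\rsheaf{\widetilde{X}}$-modules of the same rank. The principal obstacle in this strategy is the Lie derivative computation establishing the $\delta$-shift, which requires careful accounting of sign conventions in Proposition \ref{prop:ActionOnDXY} and in the definition of the fundamental vector field associated to a right group action.
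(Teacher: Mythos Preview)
Your core computation—the $\delta$-shift via $\calL_{R(X)}\omega_0 = -\delta(X)\omega_0$—is correct and is the substantive content of the lemma. The approach is close in spirit to the paper's, but the paper runs the construction in the opposite direction, which makes two of your steps automatic. The paper defines
\[
\iota\colon \Dsheaf_{X\leftarrow\widetilde{X}} = p^{-1}\Dsheaf_{X,\lambda}\otimes_{p^{-1}\rsheaf{X}}\Omega_p \xrightarrow{\;1\otimes i\;} p^{-1}\Dsheaf_{X,\lambda}\otimes_{p^{-1}\rsheaf{X}}\rsheaf{\widetilde{X}} \xrightarrow{\text{mult}} \Dsheaf_{\widetilde{X}}/R(I_{-\lambda+\delta})\Dsheaf_{\widetilde{X}},
\]
where $i\colon\Omega_p\xrightarrow{\sim}\rsheaf{\widetilde{X}}$ is the isomorphism fixed just before the lemma. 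Both arrows are visibly isomorphisms of sheaves and left $p^{-1}\Dsheaf_{X,\lambda}$-module maps, so bijectivity and left-linearity are free. What remains is right $\Dsheaf_{\widetilde{X}}$-linearity, and the paper checks this only on $\calP(\Dsheaf_{\widetilde{X}})^G$ (which generates $\calP(\Dsheaf_{\widetilde{X}})$ over $\rsheaf{\widetilde{X}}$), using Proposition~\ref{prop:ActionOnDXY} together with the already-established fact that $i$ intertwines the Lie-derivative action of $\calT_{\widetilde{X}}^G$. This sidesteps any direct handling of $R(X)$ for non-central $X$.

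By constructing $\overline{\psi}$ in the reverse direction you incur two obligations that are not fully discharged. First, your injectivity argument (``free left $\rsheaf{\widetilde{X}}$-modules of the same rank'') does not suffice: both sides are of infinite rank, and a surjection between such modules need not be injective. You would need a filtration argument (passing to graded pieces, which are of finite rank in each degree) or simply observe that $\overline{\psi}$ is the inverse of the explicit sheaf isomorphism $\iota$ above. Second, your left $p^{-1}\Dsheaf_{X,\lambda}$-linearity argument is too thin: that $\overline{\psi}$ sends $1$ to $e_0$ and is $p^{-1}\rsheaf{X}$-linear does not by itself give linearity over the whole of $p^{-1}\Dsheaf_{X,\lambda}$. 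One must verify, for $S\in\calP(\Dsheaf_{X,\lambda})$ lifted to $\widetilde{S}\in\calP(\Dsheaf_{\widetilde{X}})^G$, that $e_0\cdot\widetilde{S}=S\otimes\tau_0\otimes\omega_0$; this comes down to $\calL_{\sigma(\widetilde{S})}(\tau_0\otimes\omega_0)=0$, which follows from the $\calT_{\widetilde{X}}^G$-equivariance of $i$ but is not the statement you invoke.
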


\begin{proof}
	Let $i\colon\Omega_p \rightarrow \rsheaf{\widetilde{X}}$ be the above isomorphism.
	Composing $i$ with the multiplication map, we obtain an isomorphism
	\begin{align*}
		\Dsheaf_{X\leftarrow \widetilde{X}} = p^{-1}\Dsheaf_{X,\lambda} \otimes_{p^{-1}\rsheaf{X}} \Omega_p
		\rightarrow p^{-1}\Dsheaf_{X,\lambda} \otimes_{p^{-1}\rsheaf{X}} \rsheaf{\widetilde{X}}
		\rightarrow \Dsheaf_{\widetilde{X}} / R(I_{-\lambda+\delta}) \Dsheaf_{\widetilde{X}}.
	\end{align*}
	of sheaves, and denote it by $\iota$.
	It is trivial that $\iota$ is a $(p^{-1}\Dsheaf_{X, \lambda}, \rsheaf{\widetilde{X}})$-module homomorphism.
	By Proposition \ref{prop:ActionOnDXY}, the action of $\theta \in \calP(\Dsheaf_{\widetilde{X}})^G$ is given by
	\begin{align*}
		(S\otimes \omega)\cdot \theta = S\theta \otimes \omega - S\otimes \sigma(\theta) \omega
	\end{align*}
	for $S\otimes \omega \in \Dsheaf_{X\leftarrow \widetilde{X}}$.
	Here $\sigma\colon\calP(\Dsheaf_{\widetilde{X}})^G\rightarrow \calT_{\widetilde{X}}^G$ is the restriction of the morphism $\sigma\colon\calP(\Dsheaf_{\widetilde{X}}) \rightarrow \calT_{\widetilde{X}}$ attached to the Picard algebroid.
	Hence $\iota$ commutes with the $\calP(\Dsheaf_{\widetilde{X}})^G$-action by the definition of $i$.
	Since $\calP(\Dsheaf_{\widetilde{X}})$ is generated by $\calP(\Dsheaf_{\widetilde{X}})^G$ as an $\rsheaf{\widetilde{X}}$-module,
	$\iota$ is a $(p^{-1}\Dsheaf_{X,\lambda}, \Dsheaf_{\widetilde{X}})$-bimodule isomorphism.
\end{proof}

\begin{remark}
	Suppose that $G$ is not unimodular.
	Although $\iota$ is a right $\lie{g}$-homomorphism, the isomorphism $p^{-1}\Dsheaf_{X,\lambda} \otimes_{p^{-1}\rsheaf{X}} \Omega_p
	\rightarrow p^{-1}\Dsheaf_{X,\lambda} \otimes_{p^{-1}\rsheaf{X}} \rsheaf{\widetilde{X}}$ is not.
	This is because $i\colon \Omega_p \rightarrow \rsheaf{\widetilde{X}}$ is not $G$-equivariant.
\end{remark}

We fix the isomorphism $\Dsheaf_{X\leftarrow \widetilde{X}}\simeq \Dsheaf_{\widetilde{X}} / R(I_{-\lambda+\delta}) \Dsheaf_{\widetilde{X}}$.

\begin{proposition}\label{prop:PrincipalBundleDirectImage}
$D p_{+,\lambda}$ is isomorphic to the left derived functor $L p_{+,\lambda}$ of $p_{+,\lambda}$.
\end{proposition}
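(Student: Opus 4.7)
The plan is to combine Proposition~\ref{prop:DirectImageAffineMorphism} with the explicit bimodule description in Lemma~\ref{lem:PrincipalBundleDYX}. The guiding observation is that a principal $G$-bundle projection is an affine morphism: \'etale-locally it is a trivial product $U\times G\rightarrow U$, and $G$ is affine. Hence Proposition~\ref{prop:DirectImageAffineMorphism} applies to $p$ and yields $Dp_+\simeq L(D^0 p_+)$ as functors $D^b_{qc}(p^\#\Dsheaf_{X,\lambda})\rightarrow D^b_{qc}(\Dsheaf_{X,\lambda})$, where we identify $p^\#\Dsheaf_{X,\lambda}$ with $\Dsheaf_{\widetilde{X}}$ via Lemma~\ref{lem:PrincipalBundleDsheaf}.

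Given this, the task reduces to exhibiting a natural isomorphism of right exact functors $D^0 p_+\simeq p_{+,\lambda}$ on $\Mod_{qc}(\Dsheaf_{\widetilde{X}})$, since $p_{+,\lambda}$ is itself right exact and so both sides of the desired isomorphism are the same left derived functor. For $\calM\in\Mod_{qc}(\Dsheaf_{\widetilde{X}})$, I would plug in the bimodule isomorphism $\Dsheaf_{X\leftarrow \widetilde{X}}\simeq \Dsheaf_{\widetilde{X}}/R(I_{-\lambda+\delta})\Dsheaf_{\widetilde{X}}$ of Lemma~\ref{lem:PrincipalBundleDYX} to compute
\begin{align*}
\Dsheaf_{X\leftarrow \widetilde{X}}\otimes_{\Dsheaf_{\widetilde{X}}}\calM
\simeq \calM/R(I_{-\lambda+\delta})\calM
\simeq \CC_{\lambda-\delta}\otimes_{\univ{g}}\calM.
\end{align*}
Since $p$ is affine, $p_*$ is exact on quasi-coherent $\rsheaf{\widetilde{X}}$-modules, hence commutes with cokernels, so it commutes past the right exact coinvariant functor $\CC_{\lambda-\delta}\otimes_{\univ{g}}(-)$:
\begin{align*}
D^0 p_+(\calM)
\simeq p_*(\CC_{\lambda-\delta}\otimes_{\univ{g}}\calM)
\simeq \CC_{\lambda-\delta}\otimes_{\univ{g}} p_*(\calM)
= p_{+,\lambda}(\calM),
\end{align*}
which gives the required natural isomorphism.

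There is essentially no obstacle: the statement is an assembly of Proposition~\ref{prop:DirectImageAffineMorphism} with Lemmas~\ref{lem:PrincipalBundleDsheaf}--\ref{lem:PrincipalBundleDYX}. The only bookkeeping point is to match the twist by $-\lambda+\delta$ appearing in the right $\Dsheaf_{\widetilde{X}}$-action on the transfer bimodule $\Dsheaf_{X\leftarrow \widetilde{X}}$ with the twist used in the definition of $p_{+,\lambda}$; these are designed to agree, and this was arranged precisely so that the computation above goes through without extra shifts.
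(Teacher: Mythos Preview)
Your proposal is correct and follows essentially the same approach as the paper: both invoke Proposition~\ref{prop:DirectImageAffineMorphism} for the affine morphism $p$ and then use Lemma~\ref{lem:PrincipalBundleDYX} to identify the transfer bimodule with $\Dsheaf_{\widetilde{X}}/R(I_{-\lambda+\delta})\Dsheaf_{\widetilde{X}}$. The only cosmetic difference is that the paper works with the explicit form $p_*\Dsheaf_{X\leftarrow \widetilde{X}}\otimes^L_{p_*\Dsheaf_{\widetilde{X}}} p_*(\cdot)$ from the proof of Proposition~\ref{prop:DirectImageAffineMorphism} and verifies the underived tensor agrees with $\CC_{\lambda-\delta}\otimes_{\univ{g}} p_*(\cdot)$ on locally free $\calF$, whereas you show $D^0 p_+\simeq p_{+,\lambda}$ on all quasi-coherent $\calM$ and then pass to left derived functors; both routes are equivalent and use the same ingredients.
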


\begin{proof}
	By Proposition \ref{prop:DirectImageAffineMorphism}, $Dp_{+,\lambda}$ is isomorphic to $p_*\Dsheaf_{X\leftarrow \widetilde{X}}\otimes^L_{p_*\Dsheaf_{\widetilde{X}}} p_*(\cdot)$.
	Hence it is enough to show that there is a natural isomorphism
	\begin{align*}
		p_*(\Dsheaf_{X\leftarrow \widetilde{X}})\otimes_{p_*(\Dsheaf_{\widetilde{X}})} p_*(\calF) \simeq \CC_{\lambda-\delta}\otimes_{\univ{g}} p_*(\calF)
	\end{align*}
	for any locally free $\Dsheaf_{\widetilde{X}}$-module $\calF$.

	Since $p$ is affine, we have $p_*(\Dsheaf_{X\leftarrow \widetilde{X}}) \simeq p_*(\Dsheaf_{\widetilde{X}})/R(I_{-\lambda+\delta})p_*(\Dsheaf_{\widetilde{X}})$ by Lemma \ref{lem:PrincipalBundleDYX}.
	This implies
	\begin{align*}
		p_*(\Dsheaf_{X\leftarrow \widetilde{X}})\otimes_{p_*(\Dsheaf_{\widetilde{X}})} p_*(\calF) &\simeq p_*(\calF)/R(I_{-\lambda+\delta}) p_*(\calF) \\
		&\simeq \CC_{\lambda-\delta}\otimes_{\univ{g}} p_*(\calF).
	\end{align*}
	We have proved the proposition.
\end{proof}

\begin{lemma}\label{lem:PrincipalBundleLocallyProjective}
	Let $U$ be an open subset of $X$.
	If $p_*(\Dsheaf_{\widetilde{X}})|_U$ is acyclic (e.g.\ $U$ is affine), $\sect(U, p_*\Dsheaf_{\widetilde{X}})$ is a projective left/right $\univ{g}$-module.
\end{lemma}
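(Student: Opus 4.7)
The strategy is to apply a PBW/Rees-type lifting to the order filtration on $\Dsheaf_{\widetilde X}$, after decomposing the associated graded via the Atiyah sequence.

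First I would reduce to the case $U$ is affine (the main case listed in the hypothesis). Then $\widetilde U := p^{-1}(U)$ is affine, since $p$ is a principal $G$-bundle with affine group $G$, hence an affine morphism. Set $A := \sect(U, p_*\Dsheaf_{\widetilde X}) = \sect(\widetilde U, \Dsheaf_{\widetilde X})$. The order filtration $F^k \Dsheaf_{\widetilde X}$ is a filtration by coherent $\rsheaf{\widetilde X}$-modules; since $\widetilde U$ is affine, applying $\sect(\widetilde U, -)$ is exact on this filtration and yields a filtration on $A$ with
\[
\gr A \simeq \sect(\widetilde U,\, \Sym_{\rsheaf{\widetilde U}} \calT_{\widetilde U}).
\]

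Next, the free right $G$-action on $\widetilde U$ gives the Atiyah short exact sequence
\[
0 \to \rsheaf{\widetilde U}\otimes_\CC \lie g \xrightarrow{R} \calT_{\widetilde U} \to p^*\calT_U \to 0
\]
of locally free $\rsheaf{\widetilde U}$-modules, coming from the identification of the vertical tangent bundle with the trivial bundle on $\lie g$ via fundamental vector fields. Because $\widetilde U$ is affine, $\Ext^1_{\rsheaf{\widetilde U}}(p^*\calT_U, \rsheaf{\widetilde U}\otimes \lie g) = H^1(\widetilde U, (p^*\calT_U)^\invshf \otimes \lie g) = 0$, so the sequence splits as $\rsheaf{\widetilde U}$-modules. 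Applying $\Sym_{\rsheaf{\widetilde U}}$ and then $\sect(\widetilde U, -)$ produces a decomposition
\[
\gr A \simeq \Sym \lie g \otimes_\CC B, \qquad B := \rring{\widetilde U} \otimes_{\rring U} \sect(U, \Sym_{\rsheaf U}\calT_U),
\]
in which $\gr R(\univ g) = \Sym \lie g$ sits as the first tensor factor. In particular, $\gr A$ is free as a (left and right) $\Sym \lie g$-module.

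Finally, $R\colon \univ g \hookrightarrow A$ is a filtered subalgebra inclusion with $\gr R(\univ g) = \Sym \lie g$, and $\gr A$ is free over this commutative graded subring. A standard PBW/Rees-type lifting then applies: pick a $\CC$-basis $\{b_i\}_{i \in I}$ of $B$ homogeneous for the induced grading, lift each $b_i$ to an element $\tilde b_i \in A$ preserving filtration degree, and argue by induction on filtration degree (using leading-symbol computations and the freeness of $\gr A$ over $\Sym \lie g$) that every element of $A$ can be uniquely written as $\sum_i R(u_i)\tilde b_i$ (respectively $\sum_i \tilde b_i R(u_i)$) with $u_i \in \univ g$. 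Hence $A$ is free, and in particular projective, as both a left and a right $\univ g$-module.

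The main obstacle is the PBW lifting step: although routine, one must carefully verify that in the filtered algebra $A$ the leading symbol of $R(u)\tilde b_i$ (and of $\tilde b_i R(u)$) in $\gr A$ equals $\sigma(u)\cdot b_i$, which rests on the compatibility of the order filtration with both the subalgebra inclusion $R$ and the multiplication in $A$; the rest of the induction is then a formal consequence of the graded freeness established in the previous step.
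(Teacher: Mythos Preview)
Your argument for affine $U$ is correct and in fact proves more than the paper asserts: you obtain that $\sect(\widetilde U,\Dsheaf_{\widetilde X})$ is \emph{free} over $\univ{g}$, not merely projective. The Atiyah sequence splits over an affine base, the identification $\gr R(\lie g)\subset\calT_{\widetilde U}$ with the vertical subbundle is exactly the fundamental-vector-field map, and the filtered lifting step goes through because the order filtration is multiplicative with commutative associated graded, so the symbol of $R(u)\tilde b_i$ (or $\tilde b_i R(u)$) is $\sigma(u)b_i$ as you say.

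The paper's proof is genuinely different. It takes a finite affine \'etale cover $\{U_j\to U\}$ trivializing both the principal bundle and the twisted differential operators; on each piece Proposition~\ref{prop:DsheafLocal} gives $\Dsheaf_{\widetilde X}\simeq \Dsheaf_X\boxtimes\ntDsheaf_G$, so the \v{C}ech terms $C^j$ are free left/right $\univ{g}$-modules (via $\ntDalg{G}$). Acyclicity of $p_*\Dsheaf_{\widetilde X}|_U$ together with the comparison of Zariski and \'etale cohomology for quasi-coherent sheaves makes the \v{C}ech complex exact, and then one peels off short exact sequences from the right end to conclude that the augmentation term $\sect(U,p_*\Dsheaf_{\widetilde X})$ is projective. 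This buys the full generality of the hypothesis (arbitrary $U$ with $p_*\Dsheaf_{\widetilde X}|_U$ acyclic), whereas your PBW method, being pointwise on an affine, yields the stronger conclusion of freeness but only on affines.

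The one genuine gap in your write-up is the first sentence: you do not explain how to ``reduce to the case $U$ is affine'', and there is no evident such reduction short of running a \v{C}ech argument over an affine cover of $U$---which is precisely the paper's route. For the applications in the paper (Theorem~\ref{thm:DirectImageTor}, Lemma~\ref{lem:PrincipalBundleAcylic}) the affine case is what is actually used, so your argument suffices there; but as a proof of the lemma as stated, you should either drop the claimed reduction and prove only the affine case, or supply the \v{C}ech step to pass from acyclic $U$ to its affine pieces.
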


\begin{proof}
	Since the bundle $p\colon \widetilde{X}\rightarrow X$ is locally trivial in the \'etale topology,
	we can take an affine \'etale covering $\set{U_j \rightarrow U}$ such that
	$U_j \times_X \widetilde{X} \rightarrow U_j$ is a trivial principal $G$-bundle.
	Since $p_*\Dsheaf_{\widetilde{X}}$ is a quasi-coherent $\rsheaf{X}$-module,
	the cohomology group $H^i(U, p_*\Dsheaf_{\widetilde{X}})$ is isomorphic to the \'etale cohomology group $H^i(U_{\text{\'et}}, (p_*\Dsheaf_{\widetilde{X}})_{\text{\'et}})$ for any $i$.
	Here $(p_*\Dsheaf_{\widetilde{X}})_{\text{\'et}}$ is the \'etale sheaf associated to $p_*\Dsheaf_{\widetilde{X}}$.
	Hence the \v{C}ech complex
	\begin{align*}
		0 \rightarrow \sect(U, p_*\Dsheaf_{\widetilde{X}}) \rightarrow C^0 \rightarrow C^1 \rightarrow \cdots
	\end{align*}
	associated to the covering $\set{U_j \rightarrow U}$ is exact and each term $C^j$ is a free left/right
	$\univ{g}$-module.
	$\sect(U, p_*\Dsheaf_{\widetilde{X}})$ is therefore a projective left/right $\univ{g}$-module.
\end{proof}

By Lemma \ref{lem:PrincipalBundleLocallyProjective} and Proposition \ref{prop:PrincipalBundleDirectImage}, we obtain the following theorem.
Note that for a generalized pair $(\calA, G)$ and a left $\calA$-module $M$,
$\Tor^{\univ{g}}_i(\CC_{\lambda-\delta}, M)$ admits a natural $(\calA/I_{-\lambda+\delta}\calA)^G$-module structure if $\calA$ is a flat left $\univ{g}$-module.
In fact, $\Tor^{\univ{g}}_i(\CC_{\lambda-\delta}, M)$ can be computed by using a free resolution of the $\calA$-module $M$.

\begin{theorem}\label{thm:DirectImageTor}
	For any $\calM \in \Mod_{qc}(\Dsheaf_{\widetilde{X}})$, we have a natural isomorphism
	\begin{align*}
		D^{-i}p_{+,\lambda}(\calM) \simeq \Tor^{\univ{g}}_{i}(\CC_{\lambda - \delta}, p_*(\calM))
	\end{align*}
	of $\Dsheaf_{X,\lambda}$-modules, where $\Tor^{\univ{g}}_{i}(\CC_{\lambda - \delta}, p_*(\calM))$ denotes the sheafification of the presheaf $(U\rightarrow \Tor^{\univ{g}}_i(\CC_{\lambda - \delta}, \sect(p^{-1}(U), \calM)))$.
\end{theorem}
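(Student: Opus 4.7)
The plan is to reduce the statement to a module-theoretic identification on sections over affine opens by invoking Proposition \ref{prop:PrincipalBundleDirectImage} together with Lemma \ref{lem:PrincipalBundleLocallyProjective}. By Proposition \ref{prop:PrincipalBundleDirectImage}, $Dp_{+,\lambda}$ is the left derived functor of $p_{+,\lambda}$, so fixing a locally free resolution $\calF^\cxdot \to \calM$ in $\Mod_{qc}(\Dsheaf_{\widetilde{X}})$ (which exists by \cite[Corollary 1.4.20]{HTT08}) yields
\[
D^{-i} p_{+,\lambda}(\calM) \simeq H^{-i}\bigl(\CC_{\lambda-\delta} \otimes_{\univ{g}} p_*(\calF^\cxdot)\bigr).
\]
Since the right-hand side is formed via the quotient by $R(I_{-\lambda+\delta})$, a construction that commutes with taking sections, it will suffice to match its sections with $\Tor^{\univ{g}}_i(\CC_{\lambda-\delta}, \sect(p^{-1}(U), \calM))$ on every affine open $U \subset X$.

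Fix such a $U$. As $p$ is affine, $p^{-1}(U)$ is also affine, so the composition $\sect(U, p_*(\cdot)) = \sect(p^{-1}(U), \cdot)$ is exact on quasi-coherent $\rsheaf{\widetilde{X}}$-modules. Hence $\sect(p^{-1}(U), \calF^\cxdot) \to \sect(p^{-1}(U), \calM)$ is a resolution, and the central point is to verify that it is in fact a flat resolution of $\univ{g}$-modules. Each $\calF^j$ being locally free over $\Dsheaf_{\widetilde{X}}$, combined with $\ntDsheaf$-affineness of the smooth affine variety $p^{-1}(U)$, implies that $\sect(p^{-1}(U), \calF^j)$ is flat over $\sect(U, p_*\Dsheaf_{\widetilde{X}})$; this ring is in turn projective, hence flat, over $\univ{g}$ by Lemma \ref{lem:PrincipalBundleLocallyProjective}. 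Transitivity of flatness then delivers flatness over $\univ{g}$.

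Consequently one obtains, on sections over $U$,
\[
H^{-i}\bigl(\CC_{\lambda-\delta} \otimes_{\univ{g}} \sect(p^{-1}(U), \calF^\cxdot)\bigr) \simeq \Tor^{\univ{g}}_i\bigl(\CC_{\lambda-\delta}, \sect(p^{-1}(U), \calM)\bigr),
\]
and since this identification on the basis of affine opens determines both sides as sheaves on $X$, the claimed natural isomorphism follows; naturality is inherited from that of locally free resolutions and of $\Tor$. The main obstacle is the flatness of $\sect(p^{-1}(U), \calF^j)$ over $\univ{g}$, which is addressed by the two-step argument above via Lemma \ref{lem:PrincipalBundleLocallyProjective}; all other ingredients are essentially formal once Proposition \ref{prop:PrincipalBundleDirectImage} and the affineness of $p$ are in hand.
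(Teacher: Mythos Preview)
Your argument is correct and follows the same route the paper indicates: the paper states the theorem as an immediate consequence of Proposition~\ref{prop:PrincipalBundleDirectImage} and Lemma~\ref{lem:PrincipalBundleLocallyProjective}, and you have supplied exactly the details that this entails---computing $Dp_{+,\lambda}$ via a locally free resolution (as in the proof of Proposition~\ref{prop:PrincipalBundleDirectImage}), passing to sections over affine $U$, and then invoking the projectivity of $\sect(U,p_*\Dsheaf_{\widetilde{X}})$ over $\univ{g}$ together with transitivity of flatness. The one point that could bear an extra word is your claim that $\sect(p^{-1}(U),\calF^j)$ is flat over $\sect(p^{-1}(U),\Dsheaf_{\widetilde{X}})$: this is indeed true because on the $\Dsheaf$-affine open $p^{-1}(U)$ the functor $\sect$ identifies locally free $\Dsheaf_{\widetilde{X}}$-modules with flat modules over the ring of global sections (exactness of $(\cdot)\otimes_{\Dsheaf_{\widetilde{X}}}\calF^j$ is local, and $\sect$ is exact on the resulting quasi-coherent sheaves), but it is worth making that explicit rather than citing only ``$\ntDsheaf$-affineness''.
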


By Theorem \ref{thm:DirectImageTor}, there is a natural homomorphism
\begin{align*}
	\Tor^{\univ{g}}_{i}(\CC_{\lambda-\delta}, \sect(\calM)) \rightarrow \sect(D^{-i}p_{+,\lambda}(\calM))
\end{align*}
of $\Dalg{\widetilde{X}}^G$-modules, where $\Dalg{\widetilde{X}} = \sect(\Dsheaf_{\widetilde{X}})$.
In general, it is not isomorphic.
Under some assumption, we can show that the homomorphism is an isomorphism.

\begin{lemma}\label{lem:PrincipalBundleAcylic}
	Assume that $\Dsheaf_{\widetilde{X}}$ is acyclic.
	Then the natural homomorphism $(\Dalg{\widetilde{X}}/R(I_{-\lambda+\delta})\Dalg{\widetilde{X}})^G \rightarrow \Dalg{X,\lambda}$ is bijective.
	Moreover, for a free $\Dsheaf_{\widetilde{X}}$-module $\calF$, the natural homomorphism
	$\CC_{\lambda-\delta}\otimes_{\univ{g}} \sect(\calF) \rightarrow \sect(p_{+,\lambda}(\calF))$ is an isomorphism of $\Dalg{X,\lambda}$-modules.
\end{lemma}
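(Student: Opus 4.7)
The plan is to reduce both assertions to the single key claim
\[
\sect\bigl(X,\,\CC_{\lambda-\delta}\otimes_{\univ{g}} p_*\Dsheaf_{\widetilde{X}}\bigr) \;\cong\; \CC_{\lambda-\delta}\otimes_{\univ{g}}\Dalg{\widetilde{X}}
\]
realized by the natural map. For (i), the fourth line of \eqref{eqn:DefinitionTwisted} gives $\Dsheaf_{X,\lambda} = (\CC_{\lambda-\delta}\otimes_{\univ{g}} p_*\Dsheaf_{\widetilde{X}})^G$, and since taking $G$-invariants commutes with global sections, (i) follows by applying $(-)^G$ to the key claim. For (ii), the functors $p_*$ (as $p$ is affine), $\sect$ (since $X$ is quasi-compact and the sheaves are quasi-coherent), and $\CC_{\lambda-\delta}\otimes_{\univ{g}}(-)$ all commute with arbitrary direct sums, so (ii) reduces to the case $\calF = \Dsheaf_{\widetilde{X}}$, which is exactly the key claim.

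To prove the claim, first apply Lemma \ref{lem:PrincipalBundleLocallyProjective} to the global acyclicity hypothesis: $\Dalg{\widetilde{X}}$ is projective (hence flat) as a left $\univ{g}$-module, and more generally $\sect(U, p_*\Dsheaf_{\widetilde{X}})$ is projective over $\univ{g}$ for every affine open $U \subset X$. Next, choose a finite affine open cover $\calU = \{U_j\}$ of $X$ (possible since $X$ is quasi-projective, hence quasi-compact) whose pairwise intersections are again affine by separatedness, and set $A^\cxdot := \check{C}^\cxdot(\calU, p_*\Dsheaf_{\widetilde{X}})$. Then $A^\cxdot$ is a bounded complex of projective $\univ{g}$-modules, and by the acyclicity of $p_*\Dsheaf_{\widetilde{X}}$ the augmentation $\Dalg{\widetilde{X}}[0] \to A^\cxdot$ is a quasi-isomorphism.

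Since $A^\cxdot$ is a bounded complex of flats and $\Dalg{\widetilde{X}}$ is flat, applying $\CC_{\lambda-\delta}\otimes_{\univ{g}}(-)$ coincides on both sides with its derived functor and so preserves the quasi-isomorphism; this is the main technical step, and rests crucially on the flatness provided by Lemma \ref{lem:PrincipalBundleLocallyProjective}. On the source this yields $\CC_{\lambda-\delta}\otimes_{\univ{g}}\Dalg{\widetilde{X}}$ in degree $0$. On the target, the termwise identity
\[
\CC_{\lambda-\delta}\otimes_{\univ{g}}\sect(V,\,p_*\Dsheaf_{\widetilde{X}}) = \sect\bigl(V,\,\CC_{\lambda-\delta}\otimes_{\univ{g}}p_*\Dsheaf_{\widetilde{X}}\bigr),
\]
valid on every affine open $V$ by exactness of $\sect$ on quasi-coherent sheaves and its commutation with direct sums, identifies $\CC_{\lambda-\delta}\otimes_{\univ{g}} A^\cxdot$ with the \v{C}ech complex of the quasi-coherent sheaf $\CC_{\lambda-\delta}\otimes_{\univ{g}} p_*\Dsheaf_{\widetilde{X}}$; taking $H^0$ gives $\sect(X, \CC_{\lambda-\delta}\otimes_{\univ{g}} p_*\Dsheaf_{\widetilde{X}})$, which equals $\CC_{\lambda-\delta}\otimes_{\univ{g}}\Dalg{\widetilde{X}}$ by the preserved quasi-isomorphism, establishing the key claim and hence both parts of the lemma.
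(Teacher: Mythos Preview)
Your proof is correct, but it takes a route genuinely dual to the paper's. The paper resolves $\CC_{\lambda-\delta}$ by free $\univ{g}$-modules $P_\cxdot$, tensors with $p_*(\calF)$, and uses that each $P_i\otimes_{\univ{g}} p_*(\calF)$ is a direct sum of copies of the acyclic sheaf $p_*(\calF)$; taking global sections of the resulting exact sequence of sheaves then gives the key isomorphism directly. You instead resolve the sheaf side via a \v{C}ech complex over an affine cover and use that each term is projective over $\univ{g}$ (again from Lemma~\ref{lem:PrincipalBundleLocallyProjective}), so that tensoring with $\CC_{\lambda-\delta}$ preserves the quasi-isomorphism to $\Dalg{\widetilde{X}}[0]$. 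Both approaches hinge on Lemma~\ref{lem:PrincipalBundleLocallyProjective}, but apply it on opposite sides of the tensor product. The paper's argument is slightly shorter and avoids the K-flatness step, while yours makes the reduction to the single case $\calF=\Dsheaf_{\widetilde{X}}$ more explicit and isolates the key claim cleanly; either is a perfectly reasonable way to organize the proof.
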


\begin{proof}
	Let $\calF$ be a free $\Dsheaf_{\widetilde{X}}$-module.
	Since $p$ is affine and $\Dsheaf_{\widetilde{X}}$ is acyclic, $p_*(\calF)$ is also acyclic.
	Take a free resolution $\cdots \rightarrow P_1 \xrightarrow{d_1} P_0 \xrightarrow{d_0} \CC_{\lambda-\delta}\rightarrow 0$.
	By Lemma \ref{lem:PrincipalBundleLocallyProjective}, the following sequence is exact:
	\begin{align*}
		\cdots \rightarrow P_1\otimes_{\univ{g}}p_*(\calF) \xrightarrow{d_1} P_0\otimes_{\univ{g}}p_*(\calF) \xrightarrow{d_0} p_{+,\lambda}(\calF) \rightarrow 0.
	\end{align*}
	Since all $P_i\otimes_{\univ{g}}p_*(\calF)$ are acyclic, $\Ker(d_0)$ is also acyclic, and hence
	\begin{align*}
		\sect(P_1\otimes_{\univ{g}}p_*(\calF)) \xrightarrow{d_1} \sect(P_0\otimes_{\univ{g}}p_*(\calF)) \xrightarrow{d_0} \sect(p_{+,\lambda}(\calF)) \rightarrow 0
	\end{align*}
	is exact.
	Since each $P_i$ is free, we have $\sect(P_i\otimes_{\univ{g}} p_*(\calF)) \simeq P_i\otimes_{\univ{g}} \sect(\calF)$.
	This implies that the natural homomorphism $\CC_{\lambda-\delta}\otimes_{\univ{g}} \sect(\calF) \rightarrow \sect(p_{+,\lambda}(\calF))$ is bijective.
	Hence the second assertion follows from the first one.

	Since $(\cdot)^G$ is left exact, we have
	\begin{align*}
		\Dalg{X,\lambda} = \sect(p_{+,\lambda}(\Dsheaf_{\widetilde{X}})^G)=\sect(p_{+,\lambda}(\Dsheaf_{\widetilde{X}}))^G
		\simeq (\Dalg{\widetilde{X}}/R(I_{-\lambda+\delta})\Dalg{\widetilde{X}})^G.
	\end{align*}
	This implies that the natural algebra homomorphism $(\Dalg{\widetilde{X}}/R(I_{-\lambda+\delta})\Dalg{\widetilde{X}})^G\rightarrow \Dalg{X,\lambda}$ is an isomorphism.
\end{proof}

\begin{theorem}\label{thm:PrincipalBundleDirectImageDerived3}
	Let $\calM \in \Mod_{qc}(\Dsheaf_{\widetilde{X}})$ and $i \in \NN$.
	Assume that the global section functors are exact on $\Mod_{qc}(\Dsheaf_{\widetilde{X}})$ and $\Mod_{qc}(\Dsheaf_{X,\lambda})$.
	Then the natural homomorphism
	\begin{align*}
		\Tor_i^{\univ{g}}(\CC_{\lambda-\delta}, \sect(\calM))\rightarrow \sect(D^{-i} p_{+,\lambda}(\calM))
	\end{align*}
	is an isomorphism of $\Dalg{X,\lambda}$-modules.
\end{theorem}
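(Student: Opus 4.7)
The plan is to compute both sides of the claimed identity starting from a single locally free resolution $\cpx{\calF}\rightarrow \calM$ of $\calM$ in $\Mod_{qc}(\Dsheaf_{\widetilde X})$, whose existence is standard (cf.\ \cite[Corollary 1.4.20]{HTT08}). The strategy is to show that the left side $\Tor_i^{\univ{g}}(\CC_{\lambda-\delta}, \sect(\calM))$ is computed by $\sect(\cpx{\calF})$ viewed as a flat resolution of $\univ{g}$-modules, and that the right side $\sect(D^{-i}p_{+,\lambda}(\calM))$ is computed by the complex $\CC_{\lambda-\delta}\otimes_{\univ{g}}\sect(\cpx{\calF})$, so that both sides are naturally identified with the cohomology of one and the same complex.

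First I would use the exactness hypothesis to show every quasi-coherent $\Dsheaf_{\widetilde X}$-module is $\sect$-acyclic; in particular $\Dsheaf_{\widetilde X}$ itself is acyclic on $\widetilde X$ (equivalently, $p_*\Dsheaf_{\widetilde X}$ is acyclic on $X$ since $p$ is affine). Applying Lemma \ref{lem:PrincipalBundleLocallyProjective} with $U = X$ then yields that $\Dalg{\widetilde X}$ is projective as a right $\univ{g}$-module. Since $\widetilde X$ is Noetherian, $\sect$ commutes with the direct sums forming free $\Dsheaf_{\widetilde X}$-modules, so each $\sect(\calF_j)$ is a direct sum of copies of $\Dalg{\widetilde X}$, hence flat over $\univ{g}$. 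Exactness of $\sect$ applied to $\cpx{\calF}\rightarrow \calM$ gives a flat $\univ{g}$-resolution of $\sect(\calM)$, so
\begin{align*}
\Tor_i^{\univ{g}}(\CC_{\lambda-\delta}, \sect(\calM)) \simeq H_{-i}\bigl(\CC_{\lambda-\delta}\otimes_{\univ{g}}\sect(\cpx{\calF})\bigr).
\end{align*}

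For the right-hand side, by Proposition \ref{prop:PrincipalBundleDirectImage} one has $Dp_{+,\lambda}(\calM)\simeq p_{+,\lambda}(\cpx{\calF})$, and since $\sect$ is exact on $\Mod_{qc}(\Dsheaf_{X,\lambda})$ it commutes with taking cohomology:
\begin{align*}
\sect(D^{-i}p_{+,\lambda}(\calM)) \simeq H_{-i}\bigl(\sect(p_{+,\lambda}(\cpx{\calF}))\bigr).
\end{align*}
Now Lemma \ref{lem:PrincipalBundleAcylic} is available, by the acyclicity of $\Dsheaf_{\widetilde X}$ established above, and applying it termwise to each free $\calF_j$ gives natural isomorphisms $\sect(p_{+,\lambda}(\calF_j)) \simeq \CC_{\lambda-\delta}\otimes_{\univ{g}}\sect(\calF_j)$, which assemble to an isomorphism of complexes. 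Combining the two computations finishes the identification.

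The main obstacle, in my view, is verifying that the isomorphism produced by this double computation coincides with the \emph{natural} comparison homomorphism described before the theorem (i.e.\ the one coming from Theorem \ref{thm:DirectImageTor} via sheafification of the Tor-presheaf). This should follow by standard naturality: both sides are functors in $\calM$, the construction via $\cpx{\calF}$ is compatible with morphisms of resolutions, and the comparison map agrees on $H_0$ by Lemma \ref{lem:PrincipalBundleAcylic}, so the identification in all degrees is forced. Secondary checks are the commutation of $\sect$ with the relevant direct sums (handled by Noetherianness of $\widetilde X$) and the switching of left/right $\univ{g}$-module structures on $\sect(\calM)$ via the principal antiautomorphism, both of which are routine.
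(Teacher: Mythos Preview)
Your argument is essentially the paper's own proof: take a free resolution, use Lemma~\ref{lem:PrincipalBundleLocallyProjective} to see that global sections form a projective $\univ{g}$-resolution of $\sect(\calM)$, and use Lemma~\ref{lem:PrincipalBundleAcylic} termwise together with exactness of $\sect$ on $\Mod_{qc}(\Dsheaf_{X,\lambda})$. One small slip: you cite \cite[Corollary 1.4.20]{HTT08} for a \emph{locally} free resolution but then claim each $\sect(\calF_j)$ is a direct sum of copies of $\Dalg{\widetilde X}$, which needs $\calF_j$ genuinely free; simply take a free resolution instead (these exist in $\Mod_{qc}(\Dsheaf_{\widetilde X})$), and then Lemma~\ref{lem:PrincipalBundleAcylic}, stated for free modules, applies directly. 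The remark on the principal antiautomorphism is unnecessary here, since $\sect(\calM)$ is already a left $\univ{g}$-module via $R$ and $\CC_{\lambda-\delta}$ a right one.
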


\begin{proof}
	Remark that any object in $\Mod_{qc}(\Dsheaf_{\widetilde{X}})$ is acyclic because $\sect$ is exact on $\Mod_{qc}(\Dsheaf_{\widetilde{X}})$.
	Take a free resolution $\calF^\cxdot$ of $\calM$.
	By Lemma \ref{lem:PrincipalBundleLocallyProjective}, $\sect(\calF^\cxdot)$ is a projective resolution of $\sect(\calM)$ as a $\lie{g}$-module.
	Hence we have
	\begin{align*}
		\sect(D^{-i} p_{+,\lambda}(\calM)) &\simeq \sect(H^{-i}(p_{+,\lambda}(\calF^\cxdot))) \\
		&\simeq H^{-i}\circ \sect(p_{+,\lambda}(\calF^\cxdot)) \\
		&\simeq H^{-i}(\CC_{\lambda-\delta}\otimes_{\univ{g}} \sect(\calF^\cxdot)) \\
		&\simeq \Tor_i^{\univ{g}}(\CC_{\lambda-\delta}, \sect(\calM)).
	\end{align*}
	Here the third isomorphism follows from Lemma \ref{lem:PrincipalBundleAcylic}.
\end{proof}

\begin{remark}
	One can prove a similar result about the commutativity of $R\sect$ and $\CC_{\lambda-\delta}\otimes^L_{\univ{g}}(\cdot)$ without the exactness of $\sect$.
\end{remark}

$\sect$ is exact for any affine variety and $\lambda$, or for any flag variety and good $\lambda$ (see Fact \ref{fact:BeilinsonBernstein}).
To apply Theorem \ref{thm:PrincipalBundleDirectImageDerived3} to direct products of such varieties,
we shall show the following lemma.

\begin{lemma}
Let $X$ and $Y$ be smooth varieties and $\Dsheaf_X$ (resp.\ $\Dsheaf_Y$) an algebra of twisted differential operators on $X$ (resp.\ $Y$).
If the global section functors on $\Mod_{qc}(\Dsheaf_{X})$ and $\Mod_{qc}(\Dsheaf_{Y})$
are exact, then the global section functor on $\Mod_{qc}(\Dsheaf_{X}\boxtimes \Dsheaf_{Y})$ is also exact.
\end{lemma}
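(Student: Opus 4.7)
My plan is to establish the stronger statement $H^i(X \times Y, \calN) = 0$ for all $i \geq 1$ and all $\calN \in \Mod_{qc}(\Dsheaf_X \boxtimes \Dsheaf_Y)$; the asserted exactness of $\sect$ on $\Mod_{qc}(\Dsheaf_X \boxtimes \Dsheaf_Y)$ then follows from the case $i = 1$ via the long exact sequence in sheaf cohomology. The main tool will be the Leray spectral sequence
\begin{align*}
	E_2^{p,q} = H^p(X, R^q(p_X)_* \calN) \Longrightarrow H^{p+q}(X \times Y, \calN)
\end{align*}
for the projection $p_X \colon X \times Y \to X$, combined with an auxiliary observation that I will invoke on both factors.

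The auxiliary observation is: for any smooth quasi-projective variety $Z$ and any algebra $\Dsheaf_Z$ of twisted differential operators on which $\sect_Z$ is exact on $\Mod_{qc}(\Dsheaf_Z)$, one in fact has $H^i(Z, \calF) = 0$ for every $\calF \in \Mod_{qc}(\Dsheaf_Z)$ and $i \geq 1$. I would prove this by choosing a finite affine open cover $\{U_j\}$ of $Z$ and forming the \v{C}ech-type resolution
\begin{align*}
	0 \to \calF \to \bigoplus_{j} (i_j)_* (\calF|_{U_j}) \to \bigoplus_{j_0 < j_1} (i_{j_0 j_1})_* (\calF|_{U_{j_0 j_1}}) \to \cdots.
\end{align*}
Each term is a quasi-coherent $\Dsheaf_Z$-module (pushforward along an affine open immersion preserves both properties) and is $\sect_Z$-acyclic (the immersion is an affine morphism and $U_{j_0 \cdots j_p}$ is affine), so this is a $\sect_Z$-acyclic resolution computing $H^\bullet(Z, \calF)$. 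Breaking it into short exact sequences inside $\Mod_{qc}(\Dsheaf_Z)$ and applying the exactness hypothesis yields a complex of global sections that is acyclic in positive degrees.

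With this in hand, I return to the Leray spectral sequence. The subalgebra inclusion $p_X^{-1} \Dsheaf_X \hookrightarrow \Dsheaf_X \boxtimes \Dsheaf_Y$ makes $\calN$ a $p_X^{-1} \Dsheaf_X$-module, and adjunction endows $(p_X)_* \calN$ with a natural $\Dsheaf_X$-action; quasi-coherence follows from $p_X$ being quasi-compact and quasi-separated. The auxiliary observation applied to $\Dsheaf_X$ then kills $E_2^{p,0}$ for $p \geq 1$. For $R^q(p_X)_* \calN$ with $q \geq 1$, I compute on an affine open $V \subseteq X$: the restricted projection $p_Y \colon V \times Y \to Y$ is then an affine morphism, so $R^s(p_Y)_* = 0$ for $s \geq 1$ and Leray for $p_Y$ collapses to
\begin{align*}
	H^q(V \times Y, \calN|_{V \times Y}) \simeq H^q(Y, (p_Y)_*(\calN|_{V \times Y})).
\end{align*}
The right-hand pushforward is a quasi-coherent $\Dsheaf_Y$-module by the same subalgebra argument, and the auxiliary observation applied to $\Dsheaf_Y$ makes it vanish for $q \geq 1$. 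Thus all $E_2^{p,q}$ with $p + q \geq 1$ vanish, yielding $H^i(X \times Y, \calN) = 0$ for $i \geq 1$.

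The main obstacle I anticipate is the auxiliary observation itself: the bare hypothesis of $\sect$-exactness a priori only controls connecting maps arising from short exact sequences inside $\Mod_{qc}(\Dsheaf_Z)$, and the \v{C}ech resolution above is precisely what upgrades this to full higher-cohomological vanishing. Everything else reduces to formal manipulations: the two Leray degenerations, the affineness of $p_Y$ over an affine base, and the transfer of $\Dsheaf$-module structure along $(p_X)_*$ and $(p_Y)_*$ via the subalgebra inclusions $p_X^{-1} \Dsheaf_X, p_Y^{-1} \Dsheaf_Y \subset \Dsheaf_X \boxtimes \Dsheaf_Y$.
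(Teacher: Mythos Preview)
Your proof is correct, but it takes a more elaborate route than the paper. The paper argues directly with a short exact sequence $0\to\calM_1\to\calM_2\to\calM_3\to 0$ in $\Mod_{qc}(\Dsheaf_X\boxtimes\Dsheaf_Y)$: for any affine open $U\subset X$ the projection $p\colon U\times Y\to Y$ is affine, so $p_*$ preserves the sequence, and exactness of $\sect$ on $\Mod_{qc}(\Dsheaf_Y)$ gives exactness of $\sect(U\times Y,-)$. This shows the pushforward $q_*\colon\Mod_{qc}(\Dsheaf_X\boxtimes\Dsheaf_Y)\to\Mod_{qc}(\Dsheaf_X)$ along the projection $q\colon X\times Y\to X$ is exact, and then exactness of $\sect$ on $\Mod_{qc}(\Dsheaf_X)$ finishes the job. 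No spectral sequences, no auxiliary acyclicity lemma.

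Your argument instead proves the stronger conclusion that every object of $\Mod_{qc}(\Dsheaf_X\boxtimes\Dsheaf_Y)$ is $\sect$-acyclic, via a \v{C}ech resolution (your auxiliary observation) and two Leray spectral sequences. The core computational step is the same in both proofs (affineness of $V\times Y\to Y$ for $V\subset X$ affine), but the paper packages it as ``$q_*$ is exact'' rather than ``$R^q(p_X)_*=0$''. What your approach buys is the full acyclicity statement, which the paper in fact invokes elsewhere (in the proof of Theorem \ref{thm:PrincipalBundleDirectImageDerived3}, where it remarks that exactness of $\sect$ forces every quasi-coherent module to be acyclic); what the paper's approach buys is brevity and the avoidance of spectral-sequence bookkeeping.
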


\begin{proof}
Let $0\rightarrow \calM_1 \rightarrow \calM_2 \rightarrow \calM_3 \rightarrow 0$
be a short exact sequence of quasi-coherent $\Dsheaf_{X}\boxtimes \Dsheaf_{Y}$-modules.
Let $U$ be an affine open subset of $X$.
We write $p\colon U\times Y \rightarrow Y$ for the projection onto the second factor.
Then $p$ is affine.
Hence we obtain a short exact sequence
\begin{align*}
0\rightarrow p_*(\calM_1|_{U\times Y}) \rightarrow p_*(\calM_2|_{U\times Y}) \rightarrow p_*(\calM_3|_{U\times Y}) \rightarrow 0
\end{align*}
of quasi-coherent $\Dsheaf_Y$-modules.
Since the global section functor $\sect$ on $\Mod_{qc}(\Dsheaf_Y)$ is exact,
we obtain a short exact sequence
\begin{align}
0\rightarrow \sect(U\times Y, \calM_1) \rightarrow \sect(U\times Y, \calM_2) \rightarrow \sect(U\times Y, \calM_3) \rightarrow 0.
\label{eqn:exactexacttoexact}
\end{align}

Let $q\colon X\times Y\rightarrow X$ be the projection onto the first factor.
By (\ref{eqn:exactexacttoexact}), $q_*\colon\Mod_{qc}(\Dsheaf_X\boxtimes \Dsheaf_Y) \rightarrow \Mod_{qc}(\Dsheaf_X)$
is exact.
Since the global section functor $\sect$ on $\Mod_{qc}(\Dsheaf_X)$ is exact,
this implies that the sequence $0\rightarrow \sect(\calM_1) \rightarrow \sect(\calM_2) \rightarrow \sect(\calM_3)\rightarrow 0$
is exact.
We have proved the lemma.
\end{proof}

\section{Uniformly bounded family}

The purpose of this section is to reformulate Bernstein's work \cite{Be72} about the multiplicity of a $\ntDalg{\CC^n}$-module.
We will introduce the notion of uniformly bounded families of twisted $\ntDsheaf$-modules.
A uniformly bounded family is a family with a good boundedness property, which is preserved by direct images and inverse images.
We give several applications of uniformly bounded families in Section \ref{sect:applications}.

\subsection{Multiplicity and functors}

In this subsection, we review Bernstein's work \cite{Be72} about the multiplicity (or the Bernstein degree) of a $\ntDalg{\CC^n}$-module.
We refer the reader to \cite{Be72}, \cite[3.2.2]{HTT08} and \cite[1.\S 3 and \S 4]{Bj79} for the proof of facts.

Let $\ntDsheaf_{\CC^n}$ be the algebra of non-twisted differential operators on $\CC^n$ and $\ntDalg{\CC^n}$ the algebra of global sections of $\ntDsheaf_{\CC^n}$.
Let $(x_1, x_2, \ldots, x_n)$ be the standard coordinate of $\CC^n$ and put $\partial_i = \partial / \partial x_i$.
We denote by $F$ the Bernstein filtration of $\ntDalg{\CC^n}$, and then
$F_0 \ntDalg{\CC^n} = \CC,
F_1 \ntDalg{\CC^n} = \spn{1, x_1, x_2, \ldots, x_n, \partial_1, \partial_2, \ldots, \partial_n},
F_i \ntDalg{\CC^n} = (F_1 \ntDalg{\CC^n})^i$.
The following facts are essential for our study of a family of $\ntDsheaf$-modules.

\begin{fact}\label{fact:WeylAlgebraMultiplicity}
Let $M$ be a finitely generated $\ntDalg{\CC^n}$-module and $M_0$ a generating subspace of $M$ of finite dimension.
Put $F_i M := F_i \ntDalg{\CC^n} \cdot M_0$.
Then
\begin{enumerate}[(i)]
	\item there exists some polynomial $f \in \QQ[t]$ such that $f(i) = \dim_{\CC}(F_i M)$ for any $i \gg 0$
	\item $d(M) := \deg(f)$ does not depend on $M_0$
	\item the coefficient $a_d$ of the degree $d(M)$ of $f$ does not depend on $M_0$
	\item $m(M):=a_d\cdot d(M)!$ is a natural number
	\item $d(M)\geq n$ if $M$ is non-zero.
\end{enumerate}
\end{fact}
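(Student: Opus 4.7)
The plan is to reduce (i)--(iv) to the Hilbert--Samuel theory of graded modules over polynomial rings, and to handle (v) by Bernstein's classical induction argument. The Bernstein filtration satisfies $[F_i\ntDalg{\CC^n},F_j\ntDalg{\CC^n}]\subset F_{i+j-2}\ntDalg{\CC^n}$, so the associated graded $\gr^F\ntDalg{\CC^n}$ is canonically isomorphic to the polynomial ring $\CC[x_1,\ldots,x_n,\xi_1,\ldots,\xi_n]$. Given a finite-dimensional generator $M_0\subset M$, the filtration $F_\bullet M:=F_\bullet\ntDalg{\CC^n}\cdot M_0$ is good, so $\gr^F M$ is a finitely generated graded module over this polynomial ring. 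The Hilbert--Serre theorem then produces a polynomial $h\in\QQ[t]$ with $h(i)=\dim_{\CC}\gr_i^F M$ for $i\gg 0$, and summing over $j\le i$ yields (i) with a polynomial $f$ of degree $\deg(h)+1$.

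For (ii) and (iii), let $M_0'$ be a second finite-dimensional generator of $M$ with Hilbert polynomial $f'$. Finite dimensionality together with the generating condition produces a constant $c$ with $M_0\subset F_c\ntDalg{\CC^n}\cdot M_0'$ and $M_0'\subset F_c\ntDalg{\CC^n}\cdot M_0$, whence $f(i)\le f'(i+c)$ and $f'(i)\le f(i+c)$ for $i\gg 0$; two polynomials sandwiched between translates of each other must share both their degree and their leading coefficient. For (iv), I would invoke the standard fact that the Hilbert polynomial of a finitely generated graded module over a polynomial ring, expressed in the basis $\binom{t}{j}$, has integer coefficients with non-negative leading coefficient; this leading coefficient equals $a_d\cdot d(M)!$ and coincides with the multiplicity of $\gr^F M$ along its top-dimensional support, provable by induction on $d(M)$ via associated-prime filtrations and short exact sequences.

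The main obstacle is (v), Bernstein's inequality. My plan is to prove by induction on $i$ the key lemma that the action map
\begin{align*}
\phi_i\colon F_i\ntDalg{\CC^n}\to \Hom_{\CC}(F_iM,F_{2i}M),\quad P\mapsto (v\mapsto Pv),
\end{align*}
is injective for every $i\ge 0$. The base case $i=0$ is immediate because $F_0M=M_0\neq 0$. For the inductive step, suppose $P\in F_i\ntDalg{\CC^n}$ kills $F_iM$; then for any $Q\in F_1\ntDalg{\CC^n}$ and $v\in F_{i-1}M$ we have both $PQv=0$ (since $Qv\in F_iM$) and $Pv=0$, and the identity $PQv=QPv+[P,Q]v$ forces $[P,Q]v=0$. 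Since $[F_i,F_1]\subset F_{i-1}$ in the Bernstein filtration, the inductive hypothesis gives $[P,Q]=0$ for every $Q\in F_1\ntDalg{\CC^n}$; thus $P$ commutes with all generators $x_j,\partial_j$, is central in $\ntDalg{\CC^n}$, and hence scalar, and its annihilation of the non-zero subspace $F_0M$ forces $P=0$. A dimension count then closes the argument: $\dim_{\CC}F_i\ntDalg{\CC^n}=\binom{i+2n}{2n}$ grows as $i^{2n}/(2n)!$, while $\dim_{\CC}F_iM\cdot\dim_{\CC}F_{2i}M$ grows as a polynomial of degree $2d(M)$, forcing $2n\le 2d(M)$.
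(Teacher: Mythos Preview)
Your proof is correct and follows the classical approach. The paper does not supply its own proof of this statement: it is presented as a \emph{Fact} with references to Bernstein's original paper, to Hotta--Takeuchi--Tanisaki, and to Bj\"ork, where exactly the argument you outline appears. Parts (i)--(iv) via Hilbert--Samuel theory for $\gr^F M$ over $\CC[x_1,\ldots,x_n,\xi_1,\ldots,\xi_n]$, the sandwich argument for independence of $M_0$, and Bernstein's injectivity lemma for (v) are precisely the standard treatment in those sources, so there is nothing to compare.
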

The integer $m(M)$ is called the \define{multiplicity} (or the Bernstein degree) of $M$.
A $\ntDalg{\CC^n}$-module $M$ is said to be \define{holonomic} if $M$ is finitely generated and $d(M)=n$ or $d(M) = 0$ holds.
A $\ntDalg{\CC^n}$-module $M$ is holonomic if and only if the corresponding $\ntDsheaf_{\CC^n}$-module $\ntDsheaf_{\CC^n}\otimes_{\ntDalg{\CC^n}}M$
is holonomic (see \cite[Proposition 3.2.11]{HTT08}).
We put $m(\calM):=m(\sect(\calM))$ for $\calM \in \Mod_{h}(\ntDsheaf_{\CC^n})$
and
\begin{align*}
	m(\calM^\cxdot) := \sum_{i} m(H^i(\calM^\cxdot))
\end{align*}
for $\calM^\cxdot \in D^b_h(\ntDsheaf_{\CC^n})$.

\begin{fact}\label{fact:WeylAlgebraExact}
Let $0\rightarrow L \rightarrow M \rightarrow N \rightarrow 0$ be a short exact sequence of finitely generated $\ntDalg{\CC^n}$-modules.
Then we have $d(M) = \max(d(L), d(N))$.
If in addition $d(L)=d(N)$, then $m(M) = m(L) + m(N)$ holds.
In particular, the length of a holonomic $\ntDalg{\CC^n}$-module is less than or equal to its multiplicity.
\end{fact}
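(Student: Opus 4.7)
The plan is to reduce everything to a statement about Hilbert polynomials associated to the Bernstein filtration, using the short exact sequence to split the count $\dim F_i M$.

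First I would fix a finite-dimensional generating subspace $M_0 \subset M$ and form the good filtration $F_i M := F_i \ntDalg{\CC^n} \cdot M_0$. Writing $\pi \colon M \twoheadrightarrow N$ for the projection, set $F_i N := \pi(F_i M)$ and $F_i L := L \cap F_i M$; for each $i$ this yields a short exact sequence of finite-dimensional vector spaces
\begin{equation*}
0 \longrightarrow F_i L \longrightarrow F_i M \longrightarrow F_i N \longrightarrow 0,
\end{equation*}
so that $\dim F_i M = \dim F_i L + \dim F_i N$ and hence the Hilbert polynomials satisfy $p_M = p_L + p_N$.

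Second, I would argue that $p_N$ and $p_L$ actually compute $d(N), m(N)$ and $d(L), m(L)$. This is immediate for $N$, since $F_\bullet N$ is the good filtration attached to the generating subspace $\pi(M_0) \subset N$. For $L$ the key input is an Artin--Rees type statement for the Bernstein filtration: there exists a constant $c$ such that $F_{i+1} L = F_1 \ntDalg{\CC^n} \cdot F_i L$ for all $i \geq c$, equivalently $F_\bullet L$ is comparable up to a bounded shift with any good filtration of the form $F_i \ntDalg{\CC^n} \cdot L_0$ on $L$. Two filtrations comparable up to shift have Hilbert polynomials of the same degree and the same leading coefficient, so $F_\bullet L$ indeed computes $d(L)$ and the leading coefficient entering $m(L)$. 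Since $p_M$, $p_L$, $p_N$ all have non-negative leading coefficients, the relation $p_M = p_L + p_N$ immediately yields $d(M) = \max(d(L), d(N))$, and when $d(L) = d(N) = d(M)$ the leading coefficients add, giving $m(M) = m(L) + m(N)$ after multiplication by $d!$.

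For the length bound I would combine the additivity just established with property (v) of Fact \ref{fact:WeylAlgebraMultiplicity}. Any non-zero subquotient $Q$ of a non-zero holonomic module $M$ satisfies $n \leq d(Q) \leq d(M) = n$, so $Q$ is itself holonomic with $d(Q) = n$ and integer multiplicity $m(Q) \geq 1$ (the leading coefficient of $p_Q$ is strictly positive because $\dim F_i Q > 0$ for $i \gg 0$). Taking a composition series $0 = M_0 \subset M_1 \subset \cdots \subset M_k = M$, iteration of $m(M_i) = m(M_{i-1}) + m(M_i / M_{i-1})$ gives $m(M) = \sum_{i=1}^{k} m(M_i / M_{i-1}) \geq k$, as desired. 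The real obstacle is the Artin--Rees claim that $F_\bullet L$ is a good filtration in the relevant sense; everything else is linear algebra with polynomials.
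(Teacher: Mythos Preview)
The paper does not supply its own proof of this statement: it is recorded as a \emph{Fact} with references to Bernstein's original paper, Bj\"ork's book, and Hotta--Takeuchi--Tanisaki, so there is nothing to compare against directly. Your argument is correct and is precisely the standard proof one finds in those references: induce filtrations on $L$ and $N$ from a good filtration on $M$, invoke Artin--Rees (valid because $\gr\,\ntDalg{\CC^n}$ is a commutative polynomial ring, hence Noetherian) to see that the induced filtration on $L$ is good, and read off the additivity of leading terms of the Hilbert polynomials.

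One cosmetic point: in the last paragraph you write ``taking a composition series'' before you have established that one exists. The argument you give in fact proves existence---any strictly ascending chain of submodules contributes at least $1$ to the multiplicity at each step, so chains have length at most $m(M)$ and the module is both Noetherian and Artinian---but it reads more cleanly if you phrase it that way rather than assuming a composition series up front.
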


The following fact is an easy consequence of the definition of the multiplicity.
See the proof of \cite[Theorem 3.2]{Be72}.

\begin{fact}\label{fact:MultiplicityTensor}
	Let $N$ and $M$ be modules of $\ntDalg{\CC^n}$ and $\ntDalg{\CC^m}$, respectively.
	If $N$ and $M$ are holonomic, then $N\boxtimes M$ is holonomic and we have $m(N\boxtimes M) = m(N)m(M)$.
	Conversely, if $N\boxtimes M$ is holonomic, then $N$ and $M$ are holonomic.
\end{fact}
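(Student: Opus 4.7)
The plan is to reduce everything to a Hilbert-polynomial computation for the Bernstein filtration on the tensor product. First, I would fix finite-dimensional generating subspaces $N_0 \subset N$ and $M_0 \subset M$, inducing filtrations $F_i N = F_i\ntDalg{\CC^n}\cdot N_0$ and $F_j M = F_j\ntDalg{\CC^m}\cdot M_0$. Viewing $N\boxtimes M = N\otimes M$ as a $\ntDalg{\CC^{n+m}}\simeq \ntDalg{\CC^n}\otimes\ntDalg{\CC^m}$-module with generating subspace $N_0\otimes M_0$, the key identity
\[
F_k\ntDalg{\CC^{n+m}} = \sum_{i+j\leq k}F_i\ntDalg{\CC^n}\cdot F_j\ntDalg{\CC^m}
\]
follows because generators from the two tensor factors commute, so any length-$k$ monomial in $F_1\ntDalg{\CC^{n+m}}$ can be reordered into a product of an $n$-part of some degree $i$ and an $m$-part of degree $j=k-i$. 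Consequently $F_k(N\otimes M)=\sum_{i+j\leq k}F_iN\otimes F_jM$.

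Next I would compare the bi-filtration $V_{i,j}:=F_iN\otimes F_jM$ on $N\otimes M$ with the total-degree filtration $F_k$. Choosing vector-space complements $c_i\subset F_iN$ of $F_{i-1}N$ (and similarly $c_j'\subset F_jM$), one checks directly that
\[
F_iN\otimes F_jM \big/ \bigl(F_{i-1}N\otimes F_jM + F_iN\otimes F_{j-1}M\bigr) \simeq c_i\otimes c_j' \simeq \gr_iN\otimes\gr_jM,
\]
and summing these identifications over $i+j=k$ gives a canonical isomorphism $\gr_k(N\otimes M)\simeq \bigoplus_{i+j=k}\gr_iN\otimes\gr_jM$ of graded vector spaces (in fact of graded $\gr\ntDalg{\CC^{n+m}}$-modules, but only the dimensions matter here).

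With this in hand, set $d_1=d(N)$, $d_2=d(M)$ and use $\dim\gr_iN\sim m(N)\,i^{d_1-1}/(d_1-1)!$, $\dim\gr_jM\sim m(M)\,j^{d_2-1}/(d_2-1)!$. Then
\[
\dim F_k(N\otimes M) = \sum_{i+j\leq k}\dim\gr_iN\cdot\dim\gr_jM
\]
is asymptotic to a Riemann sum which, via the Beta integral $\int_0^1 t^{d_1-1}(1-t)^{d_2-1}\,dt = (d_1-1)!(d_2-1)!/(d_1+d_2-1)!$ and one more integration, evaluates to $m(N)m(M)\,k^{d_1+d_2}/(d_1+d_2)!$. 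Hence $d(N\otimes M)=d(N)+d(M)$ and $m(N\otimes M)=m(N)m(M)$.

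Finally, the holonomicity statements are immediate from this degree formula together with Bernstein's inequality, i.e.\ Fact \ref{fact:WeylAlgebraMultiplicity}(v). If $N,M$ are holonomic and nonzero then $d(N)+d(M)=n+m$, so $N\otimes M$ has dimension $n+m$ and is holonomic (the zero case is trivial). Conversely, if $N\otimes M$ is holonomic and nonzero then $N,M$ are nonzero, and Bernstein's inequality forces $d(N)\geq n$, $d(M)\geq m$; combined with $d(N)+d(M)=n+m$, this gives $d(N)=n$ and $d(M)=m$. The only subtlety worth guarding against is making sure the bi-filtration argument for $\gr(N\otimes M)\simeq\gr N\otimes\gr M$ is clean — this is the main step where one could accidentally confuse tensor-product filtrations with total-degree filtrations, but the splitting by complements makes it elementary.
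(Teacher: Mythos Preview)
Your argument is correct and is exactly the computation the paper has in mind: it does not give its own proof but simply says this is ``an easy consequence of the definition of the multiplicity'' and refers to Bernstein's original paper, where precisely this Hilbert-polynomial calculation for the tensor-product Bernstein filtration appears. The identification $\gr(N\otimes M)\simeq \gr N\otimes \gr M$ via the total-degree filtration and the resulting additivity $d(N\boxtimes M)=d(N)+d(M)$, $m(N\boxtimes M)=m(N)m(M)$ is the standard route.

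One small gap in your converse direction: before you can invoke the formula $d(N)+d(M)=d(N\boxtimes M)$, you need $N$ and $M$ to be finitely generated, which is not part of the hypothesis there. This is easy to supply: if $N\boxtimes M$ is holonomic it is noetherian, and if (say) $N$ were not finitely generated then a strictly increasing chain $N_1\subsetneq N_2\subsetneq\cdots$ would give, upon tensoring with the nonzero $M$ over $\CC$, a strictly increasing chain in $N\boxtimes M$. Once both are finitely generated your Bernstein-inequality argument goes through verbatim.
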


We need a derived functor version of \cite[Theorem 3.2]{Be72}.
The proof is the same as the original version.

\begin{fact}\label{fact:WeylAlgebraDirectInverseImage}
	Let $f\colon \CC^n \rightarrow \CC^m$ be a morphism of affine varieties.
	Set $d:=\max(\deg(f), 1)$.
	Then for any $\calM^\cxdot \in D^b_h(\ntDsheaf_{\CC^n})$ and $\calN^\cxdot \in D^b_h(\ntDsheaf_{\CC^m})$,
	we have
	\begin{align*}
		m(Df_+(\calM^\cxdot)) &\leq d^{n+m} m(\calM^\cxdot) \\
		m(Lf^*(\calN^\cxdot)) &\leq d^{n+m} m(\calN^\cxdot).
	\end{align*}
\end{fact}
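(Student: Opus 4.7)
The plan is to reduce the derived statement to Bernstein's original single-module estimates and then to invoke the latter. The only new ingredient over \cite{Be72} is the passage from single modules to bounded complexes, which is a formal consequence of Lemma \ref{lem:AdditiveFunctionDerived}.

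Concretely, by Facts \ref{fact:WeylAlgebraMultiplicity} and \ref{fact:WeylAlgebraExact} the multiplicity $m$ is a nonnegative additive function on the Grothendieck group of $\Mod_h(\ntDsheaf_{\CC^k})$ for each $k$, and $Df_+$, $Lf^*$ are triangulated functors preserving $D^b_h$. Applying Lemma \ref{lem:AdditiveFunctionDerived}(ii) to $F = Df_+$ yields
\begin{align*}
m(H^i(Df_+(\calM^\cxdot))) \leq \sum_{j \in \ZZ} m(H^{i-j}(Df_+(H^j(\calM^\cxdot)))).
\end{align*}
Summing over the (finite) range of $i$ for which the left-hand side is nonzero collapses the right-hand side to $\sum_j m(Df_+(H^j(\calM^\cxdot)))$. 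Granting the single-module inequality $m(Df_+(N)) \leq d^{n+m} m(N)$ for any holonomic $N$, one concludes
\begin{align*}
m(Df_+(\calM^\cxdot)) \leq d^{n+m} \sum_j m(H^j(\calM^\cxdot)) = d^{n+m} m(\calM^\cxdot),
\end{align*}
and the argument for $Lf^*$ is identical.

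The remaining input is Bernstein's Theorem 3.2 of \cite{Be72} in the single-module case. Its proof factors $f = p \circ i_f$ where $i_f \colon \CC^n \to \CC^n \times \CC^m$ is the graph embedding $x \mapsto (x, f(x))$ and $p \colon \CC^n \times \CC^m \to \CC^m$ is the second projection. A direct filtration computation for the graph embedding, using that each coordinate $y_j$ pulls back to the polynomial $f_j(x)$ of degree at most $d$, produces the factor $d^{n+m}$ in the Hilbert polynomial of a good filtration on the direct image; the projection is handled by a finite-rank relative de Rham complex, which together with Facts \ref{fact:WeylAlgebraExact} and \ref{fact:MultiplicityTensor} introduces no further factor of $d$. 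The inverse image case is dual.

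The main obstacle is thus not the derived reformulation, which is essentially automatic from Lemma \ref{lem:AdditiveFunctionDerived}, but Bernstein's original bookkeeping of the Bernstein filtration under the graph embedding; that computation lives in \cite{Be72} and need not be reproduced.
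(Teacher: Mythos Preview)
Your proposal is correct and matches the paper's treatment: the paper states this as a Fact without proof, merely noting that ``the proof is the same as the original version'' of \cite[Theorem 3.2]{Be72}. Your reduction from bounded complexes to single modules via Lemma~\ref{lem:AdditiveFunctionDerived}(ii), followed by Bernstein's single-module estimate, is exactly the intended passage, and your sketch of Bernstein's graph-embedding argument is accurate.
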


\subsection{\texorpdfstring{$\ntDsheaf$}{D}-modules on affine varieties}

In Fact \ref{fact:WeylAlgebraDirectInverseImage}, we have seen that the multiplicity is well-behaved for operations of $\ntDsheaf$-modules on affine spaces.
In this subsection, we consider similar results about $\ntDsheaf$-modules on affine varieties.

We recall the Kashiwara equivalence \cite[Theorem 1.6.1]{HTT08}.
\begin{fact}\label{fact:Kashiwara}
	Let $f\colon X\rightarrow Y$ be a closed embedding of smooth varieties.
	Then
	\begin{align*}
		f_+ := D^0f_+\colon& \Mod_{qc}(\ntDsheaf_X) \rightarrow \Mod_{qc}^X(\ntDsheaf_Y) \text{ and} \\
		Df_+\colon& D_{qc}^b(\ntDsheaf_X) \rightarrow D_{qc}^{b,X}(\ntDsheaf_Y)
	\end{align*}
	give equivalences of categories.
	Here $\Mod_{qc}^X(\ntDsheaf_Y)$ is the full subcategory of $\Mod_{qc}(\ntDsheaf_Y)$
	whose objects are supported on $X$,
	and $D_{qc}^{b,X}(\ntDsheaf_Y)$ is the full subcategory of $D_{qc}^{b}(\ntDsheaf_Y)$
	consisting of complexes whose cohomologies are supported on $X$.
\end{fact}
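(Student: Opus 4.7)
The plan is to prove the Kashiwara equivalence by reducing to an explicit local model and then to a bare-hands calculation about the interplay between the operators $t$ and $\partial_t$.

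First I would note that both statements are local on $Y$, so I may assume $Y$ is affine. Using smoothness of both $X$ and $Y$, I can factor the closed embedding $f$ (Zariski-locally on $Y$) as a finite composition of codimension-one closed embeddings cut out by a single regular function with nonvanishing differential. By Fact~\ref{fact:FundamentalDmodule}\eqref{eqn:DirectImage} and compatibility of the support condition with composition, it suffices to treat the codimension-one case. After passing to an \'etale neighbourhood and using that the statement is really about the underlying sheaves of $\ntDsheaf$-modules, I can further reduce to the model situation $Y = X \times \CC$ with coordinate $t$ on $\CC$, and $f(x)=(x,0)$. Here $\ntDsheaf_Y \simeq \ntDsheaf_X \boxtimes \ntDsheaf_{\CC}$, and $f$ is affine, so by Proposition~\ref{prop:DirectImageAffineMorphism} $Df_+$ is the left derived functor of $D^0 f_+$, which in this model sends $\calM$ to $\calM \otimes_{\CC} \CC[\partial_t]$ with the evident action ($t$ shifts indices and multiplies by $-k$, and so on).

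Next I would construct the quasi-inverse. For $\calN \in \Mod_{qc}^X(\ntDsheaf_Y)$ define
\[
f^\natural(\calN) := \Ker\bigl(t \colon \calN \to \calN\bigr),
\]
which inherits a $\ntDsheaf_X$-structure because $[\ntDsheaf_X, t] = 0$ in the local model. The heart of the proof is the claim that, for $\calN$ supported set-theoretically on $X$ (equivalently: every local section is annihilated by some power of $t$), one has the canonical decomposition
\[
\calN \;=\; \bigoplus_{k\ge 0} \partial_t^{\,k}\, f^\natural(\calN),
\]
and that $\partial_t^{\,k}$ is injective on $f^\natural(\calN)$. This is the main obstacle, and I expect to handle it via the standard Lie-algebra argument for the relation $[t,\partial_t]=-1$: for a section $s$ annihilated by $t^{n+1}$ but not by $t^n$, the elements $\partial_t^{\,k} t^k s$ can be compared to $s$ using the commutation relation to produce, by induction on $n$, a decomposition of $s$ into eigenvectors of $-t\partial_t$ with nonnegative-integer eigenvalues; the $0$-eigenspace is exactly $f^\natural(\calN)$, and the higher eigenspaces are its $\partial_t$-shifts. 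From this decomposition the adjunction-type isomorphisms $f^\natural\circ f_+ \simeq \id$ and $f_+\circ f^\natural \simeq \id$ follow formally, giving the abelian-category equivalence.

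Finally, for the derived statement, I would note that in the local model $f_+$ is exact on $\Mod_{qc}(\ntDsheaf_X)$ (it is just $\calM \mapsto \calM\otimes\CC[\partial_t]$), so $Df_+$ on $D^b_{qc}$ is obtained by applying $f_+$ termwise. Moreover $f^\natural$ is left exact, and its right derived functor $Rf^\natural$ on $D^{b,X}_{qc}(\ntDsheaf_Y)$ is quasi-inverse to $Df_+$ because the vanishing of $R^{>0}f^\natural$ on objects of the form $f_+(\calM)$ is immediate from the explicit decomposition above (the only $t$-torsion coming from $\partial_t^{\,k}\calM$ at $k=0$). Combining this with the abelian equivalence gives the equivalence of derived categories. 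The support condition is preserved in both directions because the essential image of $f_+$ consists of modules all of whose sections are killed by some power of $t$, which matches exactly the objects in $D^{b,X}_{qc}(\ntDsheaf_Y)$.
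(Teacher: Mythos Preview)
The paper does not give its own proof of this statement: it is recorded as a \emph{Fact} with a citation to \cite[Theorem 1.6.1]{HTT08}, and is only used as a black box later (e.g.\ in Proposition~\ref{prop:LocalMultiplicity}). Your sketch is essentially the standard argument one finds in that reference---reduction to codimension one, the explicit local model $Y=X\times\CC$, and the $t$/$\partial_t$ eigenspace decomposition---so there is nothing to compare; you have simply supplied the textbook proof the paper chose to cite rather than reproduce.
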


For $\calM^\cxdot \in D^b_h(\ntDsheaf_X)$ on a smooth variety $X$
and a closed embedding $\iota\colon X\rightarrow \CC^n$, we set
\begin{align*}
	m_\iota(\calM^\cxdot) := m(D\iota_+(\calM^\cxdot)).
\end{align*}

\begin{proposition}\label{prop:LocalMultiplicity}
	Let $f\colon X\rightarrow Y$ be a morphism of affine smooth varieties.
	Fix closed embeddings $\iota\colon X\rightarrow \CC^n$ and $\iota'\colon Y\rightarrow \CC^m$.
	Then there exists a constant $C>0$ such that
	\begin{align}
		m_{\iota'}(Df_+(\calM^\cxdot)) &\leq C\cdot m_\iota(\calM^\cxdot), \label{eqn:BoundDirectImageAffine}\\
		m_{\iota}(Lf^*(\calN^\cxdot)) &\leq C\cdot m_{\iota'}(\calN^\cxdot) \label{eqn:BoundInverseImageAffine}
	\end{align}
	for any $\calM^\cxdot \in D^b_h(\ntDsheaf_X)$ and $\calN^\cxdot \in D^b_h(\ntDsheaf_Y)$.
\end{proposition}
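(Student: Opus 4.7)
The plan is to reduce both inequalities to Fact \ref{fact:WeylAlgebraDirectInverseImage} by extending $f$ to the ambient affine spaces. Since $X \subset \CC^n$ and $Y \subset \CC^m$ are closed subvarieties, the components of $\iota' \circ f$ lift to polynomials on $\CC^n$, so one may fix a polynomial map $\tilde f\colon \CC^n \to \CC^m$ satisfying $\tilde f \circ \iota = \iota' \circ f$.

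For (\ref{eqn:BoundDirectImageAffine}) the argument is immediate. Composition of direct images (Fact \ref{fact:FundamentalDmodule}) gives $D\iota'_+ \circ Df_+ \simeq D\tilde f_+ \circ D\iota_+$, so $m_{\iota'}(Df_+(\calM^\cxdot)) = m(D\tilde f_+(D\iota_+(\calM^\cxdot)))$, and Fact \ref{fact:WeylAlgebraDirectInverseImage} with $d := \max(\deg(\tilde f), 1)$ bounds this by $d^{n+m} m_\iota(\calM^\cxdot)$.

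The inequality (\ref{eqn:BoundInverseImageAffine}) is more delicate, because the square with morphisms $f, \iota, \iota', \tilde f$ is merely commutative and not cartesian in general, so base change does not apply directly. My plan is to factor $f = p_Y \circ j$ through the graph embedding $j\colon X \to X \times Y$, $x \mapsto (x, f(x))$, and the projection $p_Y\colon X \times Y \to Y$, and to introduce in parallel the graph embedding $\tilde j := (\id_{\CC^n}, \tilde f)\colon \CC^n \to \CC^n \times \CC^m$. The resulting square
\begin{equation*}
\xymatrix{
X \ar[r]^-{j} \ar[d]_-{\iota} & X \times Y \ar[d]^-{\iota \times \iota'}\\
\CC^n \ar[r]^-{\tilde j} & \CC^n \times \CC^m
}
\end{equation*}
commutes by the defining property of $\tilde f$, and a direct check shows it is cartesian: set-theoretically, a point $((x, y), z)$ of the fiber product forces $z = \iota(x)$ and $\iota'(y) = \tilde f(\iota(x)) = \iota'(f(x))$, hence $y = f(x)$; scheme-theoretically the same conclusion holds because $\tilde f$ carries $\iota(X)$ into $\iota'(Y)$, so that the ideal of $\iota(X) \times \iota'(Y)$ pulled back by $\tilde j$ collapses to the ideal of $\iota(X)$. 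Base change (Fact \ref{fact:BaseChange}) then yields the key isomorphism $D\iota_+ \circ Lj^* \simeq L\tilde j^* \circ D(\iota \times \iota')_+$.

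To conclude, I would combine this with the identities $Lp_Y^*(\calN^\cxdot) \simeq \rsheaf{X} \boxtimes \calN^\cxdot$ (since $p_Y$ is a smooth projection) and $D(\iota \times \iota')_+(\rsheaf{X} \boxtimes \calN^\cxdot) \simeq D\iota_+(\rsheaf{X}) \boxtimes D\iota'_+(\calN^\cxdot)$ (direct image commutes with external tensor product). Writing $Lf^* = Lj^* \circ Lp_Y^*$, chaining in the base-change isomorphism, applying Fact \ref{fact:WeylAlgebraDirectInverseImage} to $\tilde j\colon \CC^n \to \CC^{n+m}$, and using multiplicativity of $m$ under $\boxtimes$ (Fact \ref{fact:MultiplicityTensor}) produce the bound with constant $C = d'^{2n+m} m_\iota(\rsheaf{X})$, where $d' := \max(\deg(\tilde j), 1)$; taking the maximum of this and the constant for (\ref{eqn:BoundDirectImageAffine}) handles both cases. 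The main obstacle is locating the right cartesian square: the naive square built from $\tilde f$ fails, and it is the graph factorization that makes base change applicable.
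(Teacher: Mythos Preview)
Your proof is correct and follows essentially the same route as the paper: both prove \eqref{eqn:BoundDirectImageAffine} by extending $f$ to $\tilde f\colon\CC^n\to\CC^m$ and composing direct images, and both prove \eqref{eqn:BoundInverseImageAffine} by factoring $f$ through its graph, observing that the resulting square with $\iota,\,\iota\times\iota',\,j,\,\tilde j$ is cartesian, and invoking base change together with Facts \ref{fact:MultiplicityTensor} and \ref{fact:WeylAlgebraDirectInverseImage}. Your exponent $2n+m$ for the map $\tilde j\colon\CC^n\to\CC^{n+m}$ is in fact the one dictated by Fact \ref{fact:WeylAlgebraDirectInverseImage}, so your constant is slightly sharper than the $d^{n+m}$ recorded in the paper, though of course only the existence of a uniform bound matters here.
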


\begin{proof}
	Fix an extension $\widetilde{f}$ of $f$ to $\CC^n$ such that the diagram
	\begin{align*}
		\xymatrix{
			X \ar[r]^f \ar[d]^-\iota & Y \ar[d]^-{\iota'} \\
			\CC^n \ar[r]^{\widetilde{f}} & \CC^m
		}
	\end{align*}
	is commutative.
	Set $d:=\max(\deg(\widetilde{f}), 1)$.
	By Fact \ref{fact:WeylAlgebraDirectInverseImage}, we obtain
	\begin{align*}
		m_{\iota'}(Df_+(\calM^\cxdot)) = m(D\widetilde{f}_+(D\iota_+(\calM^\cxdot)))
		\leq d^{n+m} m_\iota(\calM^\cxdot).
	\end{align*}
	Here we used $D\iota'_+ \circ Df_+ = D\widetilde{f}_+ \circ D\iota_+$ (Fact \ref{fact:FundamentalDmodule} \eqref{eqn:DirectImage}).
	We have shown the first inequality \eqref{eqn:BoundDirectImageAffine}.

	Consider the following diagram:
	\begin{align*}
		\xymatrix{
			X \ar[r]^-{f_1} \ar[d]^-\iota & X\times Y \ar[r]^-{f_2} \ar[d]^-{\iota\times \iota'} & Y\\
			\CC^n \ar[r]^-{\widetilde{f}_1} & \CC^n\times \CC^m, &
		}
	\end{align*}
	where $f_1(x) = (x, f(x))$, $f_2(x, y) = y$ and $\widetilde{f}_1(x) = (x, \widetilde{f}(x))$.
	Since the left square is cartesian, we have an isomorphism $D\iota_+\circ Lf_1^* \simeq L\widetilde{f}^*_1 \circ D(\iota\times \iota')_+$ of functors by the base change theorem (Fact \ref{fact:BaseChange}).
	Hence we obtain
	\begin{align*}
		m_{\iota_+}(Lf^*(\calN^\cxdot))) &= m(L\widetilde{f}_1^*\circ D(\iota\times \iota')_+ \circ Lf_2^*(\calN^\cxdot))\\
		&\leq d^{n+m}m(D\iota_+(\rsheaf{X})\boxtimes D\iota'_+(\calN^\cxdot)) \\
		&=d^{n+m}m_{\iota}(\rsheaf{X})m_{\iota'}(\calN^\cxdot)
	\end{align*}
	by Fact \ref{fact:MultiplicityTensor} and Fact \ref{fact:WeylAlgebraDirectInverseImage}, which proves the second inequality \eqref{eqn:BoundInverseImageAffine}.
\end{proof}

In the next subsection, we will consider families of twisted $\ntDsheaf$-modules on general smooth varieties.
Although the multiplicity itself is no longer a meaningful value for general smooth varieties, boundedness of multiplicities of twisted $\ntDsheaf$-modules can be defined.

To reduce properties of twisted $\ntDsheaf$-modules on a non-affine variety to that of affine spaces, we have many choices of affine \'etale coverings, closed embeddings to affine spaces, and local trivializations of an algebra of twisted differential operators.
We shall consider the effect on the multiplicity by the choices.

\begin{proposition}\label{prop:LocalMultiplicityEtale}
	Let $f\colon X\rightarrow Y$ be a surjective \'etale morphism of affine smooth varieties.
	Fix closed embeddings $\iota\colon X\rightarrow \CC^n$ and $\iota'\colon Y\rightarrow \CC^m$.
	Then there exists a constant $C>0$ such that
	\begin{align*}
		C^{-1} \cdot m_{\iota'}(\calN^\cxdot) \leq m_{\iota}(Lf^*(\calN^\cxdot)) \leq C\cdot m_{\iota'}(\calN^\cxdot)
	\end{align*}
	for any $\calN^\cxdot \in D^b_h(\ntDsheaf_Y)$.
\end{proposition}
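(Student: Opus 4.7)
The upper bound is immediate: applying the second inequality of Proposition \ref{prop:LocalMultiplicity} to $f$ with the given embeddings gives $m_\iota(Lf^*\calN^\cxdot) \leq C \cdot m_{\iota'}(\calN^\cxdot)$. The real content is the reverse inequality.

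My plan for the lower bound is to exhibit $\calN$ as a holonomic submodule of $Df_+Lf^*\calN$ on $Y$. Since $f$ is étale (hence flat), $Lf^* = f^*$ is exact, so $H^i(Lf^*\calN^\cxdot) \simeq f^*H^i(\calN^\cxdot)$; by additivity of $m(\cdot)$ on cohomologies, it suffices to treat a single holonomic $\calN \in \Mod_h(\ntDsheaf_Y)$. Similarly, $Df_+$ is concentrated in degree $0$ for étale affine $f$ (using Proposition \ref{prop:DirectImageAffineMorphism} together with the canonical isomorphism $\Omega_f \simeq \rsheaf{X}$ for étale $f$), and one checks that $Df_+Lf^*\calN \simeq f_*f^*\calN$ as a $\ntDsheaf_Y$-module. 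This module is holonomic on $Y$ by the preservation of holonomicity under $Df_+$ between smooth varieties.

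The crucial step is that the unit morphism $\eta\colon\calN \to f_*f^*\calN$ is injective: since $f$ is surjective étale, hence faithfully flat, the canonical $\rsheaf{Y}$-homomorphism $\calN \to \calN \otimes_{\rsheaf{Y}} f_*\rsheaf{X}$ is a monomorphism, and the identification $\ntDsheaf_X \simeq f^\#\ntDsheaf_Y$ for étale $f$ ensures that $\eta$ is a $\ntDsheaf_Y$-homomorphism. Applying the exact Kashiwara functor $\iota'_+$ produces an injection of holonomic $\ntDalg{\CC^m}$-modules $\iota'_+\calN \hookrightarrow \iota'_+(f_*f^*\calN)$, both of which (when nonzero) have Bernstein $d$-value equal to $m$; Fact \ref{fact:WeylAlgebraExact} then yields $m_{\iota'}(\calN) \leq m_{\iota'}(Df_+Lf^*\calN)$.

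Chaining this with the first inequality of Proposition \ref{prop:LocalMultiplicity} applied to $Df_+$ with $\calM = Lf^*\calN$ gives
\[
m_{\iota'}(\calN) \;\leq\; m_{\iota'}(Df_+Lf^*\calN) \;\leq\; C \cdot m_\iota(Lf^*\calN),
\]
completing the lower bound. The main subtlety will be verifying holonomicity of $f_*f^*\calN$ when $f$ is not finite — for instance, when $f$ factors nontrivially through an open immersion, so that $f_*\rsheaf{X}$ involves localizations of $\rsheaf{Y}$: unlike the finite étale case, there is no trace-map splitting of $\eta$, and one must rely on the general preservation of the bounded holonomic derived category by $Df_+$ for arbitrary morphisms between smooth varieties.
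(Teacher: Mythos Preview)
Your proof is correct and follows essentially the same route as the paper's: both reduce to a single holonomic $\calN$ by exactness of $f^*$, identify $Df_+$ with $f_*$ for \'etale affine $f$, use the injectivity of the unit $\calN\hookrightarrow f_*f^*\calN$ coming from surjectivity (faithful flatness) of $f$, and then bound $m_{\iota'}(f_*f^*\calN)$ via Proposition~\ref{prop:LocalMultiplicity} applied to $Df_+$. The only cosmetic difference is that the paper cites \cite{CoLe01} for $Df_+\simeq f_*$ in the \'etale case, whereas you derive it from Proposition~\ref{prop:DirectImageAffineMorphism} together with $\Omega_f\simeq\rsheaf{X}$.
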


\begin{proof}
	We have proved the second inequality in Proposition \ref{prop:LocalMultiplicity}.
	We shall show the first inequality.

	Since $f$ is smooth, $f^*$ is exact (\cite[Proposition 1.5.13]{HTT08}).
	Hence we can assume $\calN \in \Mod_{h}(\ntDsheaf_Y)$.

	Since $f$ is \'etale, $f_*(\calM)$ admits a natural $\ntDsheaf_Y$-module structure
	for $\calM \in \Mod_h(\ntDsheaf_X)$
	and the direct image functor $Df_+$ is isomorphic to $Rf_* = f_*$ by \cite[Theorem 2.2]{CoLe01}.
	Hence the canonical morphism $\calN \rightarrow f_*(f^*(\calN))$ of $\rsheaf{Y}$-modules
	is a morphism of $\ntDsheaf_Y$-modules.
	The morphism is monomorphic since $f$ is surjective.
	Applying Proposition \ref{prop:LocalMultiplicity} to $f^*(\calN)$,
	we obtain
	\begin{align*}
		m_{\iota'}(\calN) \leq m_{\iota'}(f_*(f^*(\calN))) \leq C\cdot m_{\iota}(f^*(\calN)),
	\end{align*}
	where $C$ is a constant independent of $\calN$.
\end{proof}

Hereafter we consider the effect on the multiplicity by twisting by automorphisms.

Let $X$ be a smooth affine variety and $\iota\colon X\rightarrow \CC^n$ a closed embedding.
The automorphism group $\Aut(\ntDsheaf_X)$ is isomorphic to the additive group $\calZ(X)$
of closed $1$-forms on $X$ (\cite{BeBe81}).
For $\omega \in \calZ(X)$, we denote by $A_\omega$ the corresponding automorphism given by
\begin{align*}
	A_\omega(T) = T - \omega(T) \in \calT_X\oplus \rsheaf{X}
\end{align*}
for $T \in \calT_X$.
A $\ntDsheaf_X$-module $\calM$ can be twisted by $A_\omega$ and 
the twisted module is denoted by $\calM^\omega$.
We use the same notation for a complex of $\ntDsheaf_X$-modules, e.g.\ $(\calM^\cxdot)^\omega$.

\begin{lemma}\label{lem:RegRingDeg}
	Let $W$ be a finite-dimensional subspace of $\calZ(X)$.
	Then there exists a constant $C$ such that
	\begin{align*}
		m_{\iota}(\rsheaf{X}^\omega) \leq C
	\end{align*}
	for any $\omega \in W$.
\end{lemma}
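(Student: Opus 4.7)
The plan is to directly estimate the growth of the Bernstein filtration on $M_\omega := \sect(\iota_+(\rsheaf{X}^\omega))$ as a $\ntDalg{\CC^n}$-module, uniformly in $\omega \in W$. By Kashiwara's equivalence (Fact \ref{fact:Kashiwara}), $M_\omega$ is cyclic with canonical generator $v_\omega := 1 \otimes 1$. I will work in a local Kashiwara model in which $\iota(X)$ becomes the linear subspace $\{x_{d+1} = \cdots = x_n = 0\}$ and $\iota_+(\rsheaf{X}^\omega) \cong \rsheaf{X}^\omega \otimes_\CC \CC[\partial_{d+1}, \ldots, \partial_n]$ as $\rsheaf{\CC^n}$-modules. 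There, the action of $\ntDalg{\CC^n}$ on $v_\omega$ is: $x_j v_\omega = 0$ for $j > d$ and $\partial_i v_\omega = -\tilde f_i v_\omega$ for $i \leq d$, where $\tilde f_i \in \rring{\CC^n}$ is a polynomial lift of $\omega(\partial_{x_i}|_X) \in \rring{X}$; the remaining generators act as in the untwisted case. The only $\omega$-dependence is in the order-zero correction $\tilde f_i$.

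Next I fix a basis $\omega_1, \ldots, \omega_r$ of $W$ and choose once and for all polynomial lifts $\tilde f_{j, i} \in \rring{\CC^n}$ of the coefficients $\omega_j(\partial_{x_i}|_X)$; let $D \geq 1$ bound their Bernstein degrees. For $\omega = \sum_j t_j \omega_j \in W$ the lifts $\tilde f_i = \sum_j t_j \tilde f_{j, i}$ also have Bernstein degree at most $D$, uniformly in $\omega$.

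The core step is a normal-form reduction. Expanding $P \in F_k \ntDalg{\CC^n}$ in the PBW basis as a linear combination of monomials $x^\alpha \partial^\beta$ with $|\alpha| + |\beta| \leq k$ and applying the above relations iteratively, I will show that $x^\alpha \partial^\beta \cdot v_\omega$ equals, up to a combinatorial scalar, an expression of the form $Q(x_1, \ldots, x_d)\,\partial_{d+1}^{\gamma_{d+1}} \cdots \partial_n^{\gamma_n}\,v_\omega$ with
\begin{align*}
	\deg Q + |\gamma| \leq D k.
\end{align*}
The reasoning is that each tangential $\partial_i$ ($i \leq d$) picks up an $x_{\leq}$-polynomial factor of degree at most $D$; each $x_j$ with $j > d$ either commutes past the $\partial$'s and is absorbed by $x_j v_\omega = 0$, or lowers a $\partial_j$-exponent; and each normal derivative $\partial_j$ ($j > d$) contributes weight $1$ to $|\gamma|$. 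This yields
\begin{align*}
	\dim_\CC F_k M_\omega \leq \binom{Dk + n}{n} = \frac{D^n k^n}{n!} + O(k^{n-1}),
\end{align*}
hence $m(M_\omega) \leq D^n$ uniformly, so $C := D^n$ works.

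The main obstacle will be the combinatorial bookkeeping in the normal-form reduction: although each of the $k$ elementary steps clearly increases either $\deg Q$ by at most $D$ or $|\gamma|$ by at most $1$, one must carefully handle the cross-terms arising from commuting tangential $\partial$'s past the $x_{\leq}$-polynomials produced by earlier reductions, as well as the commutators $[x_j, \partial_j] = -1$ for $j > d$. A straightforward induction on $|\alpha| + |\beta|$, tracking the tangential $\partial$-count and the accumulated $x_{\leq}$-degree separately, should close the estimate.
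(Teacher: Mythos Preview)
Your argument has a genuine gap. The ``local Kashiwara model'' in which $\iota(X)$ becomes the coordinate subspace $\{x_{d+1}=\cdots=x_n=0\}$ exists only \'etale-locally around points of $X$, not globally on $\CC^n$; yet $m_\iota$ is defined through the Bernstein filtration of $\ntDalg{\CC^n}$, a global object that does not localize. The relations you write down ($x_j v_\omega=0$ for $j>d$ and $\partial_i v_\omega=-\tilde f_i v_\omega$ for $i\le d$) are the presentation of $\iota_+(\rsheaf{X}^\omega)$ only when $X$ is literally that coordinate subspace. For a general smooth affine $X\subset\CC^n$ the annihilator of $v_\omega$ contains the full defining ideal $I_X$ (whose generators may have large degree) together with operators $T+\widetilde{\omega(\bar T)}$ for vector fields $T$ tangent to $X$; there is no splitting of $\{1,\dots,n\}$ into ``tangential'' and ``normal'' indices, and your normal-form reduction does not get started.

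A sanity check makes the failure visible: specialize to $W=0$, so $\omega=0$ and $D=1$; your bound would force $m_\iota(\rsheaf{X})\le 1$ for \emph{every} smooth affine $X$ and \emph{every} closed embedding $\iota$. This already fails for the parabola $X=V(y-x^2)\subset\CC^2$. Here $\iota_+(\rsheaf{X})=\ntDalg{\CC^2}v$ with $(y-x^2)v=0$ and $(\partial_x+2x\partial_y)v=0$; the Bernstein leading-term ideal of the annihilator is $(x^2,\,x\eta,\,x\xi+2y\eta,\,y\eta^2)$, the Hilbert function of $\gr M$ is $3k+1$, and $m_\iota(\rsheaf{X})=3$.

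The paper's proof avoids any explicit presentation of $M_\omega$. It assembles all the $\rsheaf{X}^\omega$ into a single $\ntDsheaf_X\otimes\rring{W}$-module $\calM=\rsheaf{X}\otimes\rring{W}$, sets $M=\sect(\iota_+\calM)$ (finitely generated over $\ntDalg{\CC^n}\otimes\rring{W}$), and filters it by $F_iM=(F_i\ntDalg{\CC^n}\otimes\rring{W})\cdot S$ for a finite generating set $S$. Generic freeness of $\gr^F M$ over $\rring{W}$ gives a dense open $U\subset W$ on which $\omega\mapsto\dim_\CC(F_iM/\frakm_\omega F_iM)$, hence $\omega\mapsto m_\iota(\rsheaf{X}^\omega)$, is constant; one then replaces $W$ by $W\setminus U$ and repeats. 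This constructibility argument needs no information about the geometry of $\iota$.
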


\begin{proof}
	Put $\calM:=\rsheaf{X}\otimes \rring{W}$ equipped with a $\calT_X$-action via
	\begin{align*}
		T\cdot (f\otimes g) = Tf \otimes g - \sum_i \omega_i(T) f\otimes \lambda_i g
		&& (T \in \calT_X),
	\end{align*}
	where $\set{\omega_i}_i$ is a basis of $W$ and $\set{\lambda_i}_i$ is its dual basis.
	Then the action on $\calM$ extends to a $\ntDsheaf_X\otimes \rring{W}$-action.
	
	We denote by $\frakm_\omega$ the maximal ideal of $\rring{W}$ corresponding to $\omega \in W$.
	Then by definition, we have $\calM / \frakm_\omega \calM \simeq \rsheaf{X}^\omega$
	for any $\omega \in W$.
	Since the functors $\iota_+$ and $\sect$ are exact, we have
	\begin{align*}
		\sect(\iota_+(\rsheaf{X}^\omega)) \simeq \sect(\iota_+(\calM))/ \frakm_\omega \sect(\iota_+(\calM)).
	\end{align*}
	Put $M:=\sect(\iota_+(\calM))$.
	
	Since the functors $\iota_+$ and $\sect$ preserve the lattice of submodules, $M$ is noetherian and hence finitely generated as a $\ntDalg{\CC^n}\otimes \rring{W}$-module.
	Take a finite generating subspace $S \subset M$ and put $F_i M := (F_i \ntDalg{\CC^n}\otimes \rring{W}) S$ for $i \geq 0$.
	Then the associated graded module $\gr^F M$ is a finitely generated $\rring{\CC^n\times W}$-module.
	
	By \cite[Theorem 24.1]{Ma86}, we can take an affine open subset $U$ of $W$ such that
	$\rring{U}\otimes_{\rring{W}}\gr^F M$ is a free $\rring{U}$-module.
	Hence $\rring{U}\otimes_{\rring{W}}F_i M$ is a projective $\rring{U}$-module for any $i \geq 0$.
	This implies that the function
	\begin{align*}
		W\ni \omega \mapsto \dim_{\CC}(F_i M/\frakm_\omega F_i M)
	\end{align*}
	is constant on $U$.
	Hence $U\ni \omega \mapsto m(M/\frakm_\omega M)$ is a constant function by the definition of the multiplicity.

	Replacing $W$ by $W\backslash U$ and $M$ by $\rring{W\backslash U}\otimes_{\rring{W}} M$, and repeating this argument, we can see that
	$m(M/\frakm_\omega M)$ is bounded on $W$.
\end{proof}

\begin{remark}
	Lemma \ref{lem:RegRingDeg} can be considered as a special case of \cite[Theorem 3.18]{AiGoDm16}
	and the latter half of our proof is essentially the same as theirs.
\end{remark}

\begin{corollary}\label{cor:MultiplicityTwisted}
	Let $\calM^\cxdot \in D^b_h(\ntDsheaf_X)$ and $W$ be a finite-dimensional subspace of $\calZ(X)$.
	Then there exists a constant $C$ independent of $\calM^\cxdot$ such that
	\begin{align*}
		m_{\iota}((\calM^\cxdot)^\omega) \leq C \cdot m_\iota(\calM^\cxdot)
	\end{align*}
	for any $\omega \in W$.
\end{corollary}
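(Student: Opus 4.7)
The plan is to realize the twist $(\cdot)^\omega$ as an internal tensor product with the rank-one $\ntDsheaf_X$-module $\rsheaf{X}^\omega$, then to rewrite that internal tensor product as the derived inverse image of an external tensor product along the diagonal, and finally to estimate each factor separately using results already available in the paper.

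First I would reduce to the case of a single holonomic module. The functor $(\cdot)^\omega$ only modifies the $\ntDsheaf_X$-action on an underlying complex of $\rsheaf{X}$-modules, leaving the differentials untouched, so $H^i((\calM^\cxdot)^\omega) = H^i(\calM^\cxdot)^\omega$. Since $m_\iota$ on $D^b_h(\ntDsheaf_X)$ is by definition the sum of the multiplicities of the cohomologies, the inequality for the complex is an immediate consequence of the corresponding inequality $m_\iota(\calM^\omega) \leq C \cdot m_\iota(\calM)$ for a single $\calM \in \Mod_h(\ntDsheaf_X)$, with $C$ independent of $\calM$.

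The decisive identification is $\calM^\omega \simeq \calM \otimes_{\rsheaf{X}} \rsheaf{X}^\omega$ as $\ntDsheaf_X$-modules: with the tensor-product connection,
\begin{align*}
T \cdot (m \otimes 1) = Tm \otimes 1 + m \otimes (T - \omega(T))(1) = (T - \omega(T))(m) \otimes 1,
\end{align*}
which reproduces precisely the $A_\omega$-twisted action. This module is canonically $L^0\Delta^*(\calM \boxtimes \rsheaf{X}^\omega)$ for the diagonal $\Delta \colon X \to X \times X$, so the higher $L^{-j}\Delta^*$ contribute non-negatively and
\begin{align*}
m_\iota(\calM^\omega) \leq m_\iota(L\Delta^*(\calM \boxtimes \rsheaf{X}^\omega)).
\end{align*}
Applying Proposition \ref{prop:LocalMultiplicity} to the morphism $\Delta$ of affine smooth varieties with the closed embeddings $\iota \colon X \to \CC^n$ and $\iota \times \iota \colon X \times X \to \CC^{2n}$ yields a constant $C_\Delta$ (depending only on $X$ and $\iota$) such that $m_\iota(L\Delta^*(\calN)) \leq C_\Delta \cdot m_{\iota \times \iota}(\calN)$ for every $\calN \in D^b_h(\ntDsheaf_{X\times X})$.

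Specializing to $\calN = \calM \boxtimes \rsheaf{X}^\omega$, the compatibility of direct image with external tensor products gives $(\iota \times \iota)_+(\calM \boxtimes \rsheaf{X}^\omega) \simeq \iota_+\calM \boxtimes \iota_+\rsheaf{X}^\omega$, and Fact \ref{fact:MultiplicityTensor} yields $m_{\iota \times \iota}(\calM \boxtimes \rsheaf{X}^\omega) = m_\iota(\calM) \cdot m_\iota(\rsheaf{X}^\omega)$. Lemma \ref{lem:RegRingDeg} already supplies a constant $C_W$ with $m_\iota(\rsheaf{X}^\omega) \leq C_W$ for every $\omega \in W$. Combining these,
\begin{align*}
m_\iota(\calM^\omega) \leq C_\Delta C_W \cdot m_\iota(\calM),
\end{align*}
and $C := C_\Delta C_W$ is manifestly independent of $\calM$. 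The delicate point of the plan is the $\ntDsheaf_X$-linear identification $\calM^\omega \simeq L^0\Delta^*(\calM \boxtimes \rsheaf{X}^\omega)$, which requires checking that the tensor-product connection on $\calM \otimes_{\rsheaf{X}}\rsheaf{X}^\omega$ agrees with the twist $A_\omega$; all the other ingredients are already established in the paper.
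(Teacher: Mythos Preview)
Your proof is correct and follows essentially the same strategy as the paper's: identify $(\calM^\cxdot)^\omega$ with $\calM^\cxdot \otimes_{\rsheaf{X}} \rsheaf{X}^\omega$, rewrite this as a diagonal inverse image of an external product, and then combine Fact~\ref{fact:MultiplicityTensor} with Lemma~\ref{lem:RegRingDeg}. The only technical difference is that the paper first pushes forward by $\iota$ and uses base change to obtain $D\iota_+((\calM^\cxdot)^\omega) \simeq D\iota_+(\calM^\cxdot)\otimes^L_{\rsheaf{\CC^n}} \iota_+(\rsheaf{X}^\omega)$, so that the diagonal pullback happens on $\CC^n$ where Fact~\ref{fact:WeylAlgebraDirectInverseImage} applies with $d=1$; this yields the sharper bound $m_\iota((\calM^\cxdot)^\omega) \leq m_\iota(\calM^\cxdot)\, m_\iota(\rsheaf{X}^\omega)$ directly, without the extra constant $C_\Delta$ from Proposition~\ref{prop:LocalMultiplicity} and without the preliminary reduction to a single cohomology module.
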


\begin{proof}
	Fix $\omega \in W$.
	Since $(\calM^\cxdot)^\omega \simeq \calM^\cxdot \otimes_{\rsheaf{X}} \rsheaf{X}^\omega$,
	we have
	\begin{align*}
		D\iota_+((\calM^\cxdot)^\omega) \simeq D\iota_+(\calM^\cxdot)\otimes_{\rsheaf{\CC^n}}^L \iota_+(\rsheaf{X}^\omega)
	\end{align*}
	by the base change theorem (Fact \ref{fact:BaseChange}).
	By Facts \ref{fact:MultiplicityTensor} and \ref{fact:WeylAlgebraDirectInverseImage}, we obtain
	\begin{align*}
		m_\iota((\calM^\cxdot)^\omega) = m(D\iota_+(\calM^\cxdot)\otimes^L_{\rsheaf{\CC^n}} \iota_+(\rsheaf{X}^\omega))
		\leq m_\iota(\calM^\cxdot) m_\iota(\rsheaf{X}^\omega).
	\end{align*}
	This inequality and Lemma \ref{lem:RegRingDeg} imply the assertion.
\end{proof}

\subsection{Uniformly bounded family}\label{sect:UniformlyBoundedFamily}

We shall define a good local trivialization of a family of algebras of twisted differential operators.
For an \'etale map $\varphi\colon U\rightarrow V$, we denote by $(\cdot)|_U$ the functors $\varphi^\#(\cdot)$ and $\varphi^*(\cdot)$ by abuse of notation.

Let $\Dsheaf_{X,\Lambda}=(\Dsheaf_{X, \lambda})_{\lambda \in \Lambda}$ be a family of algebras of twisted differential operators on a smooth variety $X$.
Hereafter we deal with $\prod_{\lambda \in \Lambda} \Mod_h(\Dsheaf_{X,\lambda})$ the direct product of categories and its derived category.
Set
\begin{align*}
	\Mod_h(\Dsheaf_{X,\Lambda}) &:= \prod_{\lambda \in \Lambda} \Mod_h(\Dsheaf_{X,\lambda}),\\
	D^b_h(\Dsheaf_{X,\Lambda}) &:= \prod_{\lambda \in \Lambda} D^b_h(\Dsheaf_{X,\lambda}).
\end{align*}
We denote by $H^i, Df_+, Lf^*, (\cdot)|_U$ and $f^\#$ the direct products of the corresponding functors by abuse of notation.

Recall that $\calZ(X)$ is the space of closed $1$-forms on $X$, which is isomorphic to $\Aut(\ntDsheaf_X)$ as an abelian group.

\begin{definition}\label{def:BoundedTrivialization}
	We say that a tuple $(U, \varphi, \Phi)$
	is a \define{trivialization} of $\Dsheaf_{X,\Lambda}$
	if $U$ is a smooth variety, $\varphi\colon U\rightarrow X$ is a surjective \'etale morphism and $\Phi$ is a family of isomorphisms $\Phi_\lambda \colon \Dsheaf_{X,\lambda}|_U \xrightarrow{\simeq} \ntDsheaf_U$.

	Let $T_1=(U, \varphi, \Phi)$ and $T_2=(V, \psi, \Psi)$ be trivializations of $\Dsheaf_{X,\Lambda}$.
	We denote by $\calZ(T_1, T_2)\subset \calZ(U\times_X V)$ the image of
	\begin{align*}
		\set{\widetilde{\varphi}^\#\Psi_{\lambda}\circ (\widetilde{\psi}^\#\Phi_{\lambda})^{-1}: \lambda \in \Lambda}
	\end{align*}
	by the isomorphism $\Aut(\ntDsheaf_{U\times_X V}) \rightarrow \calZ(U\times_X V)$.
	Here $\widetilde{\varphi}\colon U\times_X V\rightarrow V$ and $\widetilde{\psi}\colon U\times_X V \rightarrow U$ are the projections of the fiber product.
	We write $T_1 \sim T_2$ when $\calZ(T_1, T_2)$ spans a finite-dimensional subspace of $\calZ(U\times_X V)$.

	We say that a trivialization $(U, \varphi, \Phi)$ is \define{bounded} if
	$(U,\varphi, \Phi) \sim (U,\varphi, \Phi)$ holds.
\end{definition}

\begin{remark}
	Let $T=(U,\varphi, \Phi)$ be a trivialization of $\Dsheaf_{X,\Lambda}$.
	Then any element of $\calZ(T, T)$ is a $1$-cocycle of the \v{C}ech complex of the sheaf of closed $1$-forms on $X$ with respect to the \'etale covering $\varphi\colon U\rightarrow X$.
	Hence for each $\lambda \in \Lambda$, we have a $1$-cocycle $c(\lambda) \in \calZ(T,T)$, and the cocycle defines an algebra $\ntDsheaf_{X,c(\lambda)}$ of twisted differential operators on $X$.
	Then $\Phi_\lambda$ extends to an isomorphism $\Phi'_\lambda\colon \Dsheaf_{X,\lambda} \rightarrow \ntDsheaf_{X,c(\lambda)}$.
	It is obvious that the correspondence
	\begin{align*}
		(U,\varphi, \Phi) \mapsto (U,\varphi, (c(\lambda))_{\lambda \in \Lambda}, (\Phi'_\lambda)_{\lambda \in \Lambda})
	\end{align*}
	is one-to-one.
	One can use such tuples instead of our trivializations.
\end{remark}

\begin{definition}
	Let $T=(U, \varphi, \Phi)$ be a trivialization of $\Dsheaf_{X,\Lambda}$
	and $f\colon Y\rightarrow X$ a morphism of smooth varieties.
	We set $f^\#T := (U\times_X Y, \widetilde{\varphi}, \widetilde{f}^\# \Phi)$,
	where $\widetilde{\varphi}\colon U\times_X Y\rightarrow Y$ and $\widetilde{f}\colon U\times_X Y \rightarrow U$ are the projections of the fiber product.
\end{definition}

It is clear that $f^\#T$ is a trivialization of $f^\#\Dsheaf_{X,\Lambda}$.

The relation $\sim$ is clearly symmetric and not reflexive in general.
We shall show fundamental properties of bounded trivializations.
The following lemma is well-known and easy.

\begin{lemma}\label{lem:CommutativeClosed1Forms}
	Let $f\colon U\rightarrow V$ be a morphism of smooth varieties.
	Then the following diagram of abelian groups is commutative:
	\begin{align*}
		\xymatrix {
			\Aut(\ntDsheaf_{V}) \ar[r]^-{f^\#} \ar[d]^-{\simeq} & \Aut(\ntDsheaf_U) \ar[d]^-{\simeq} \\
			\calZ(V) \ar[r]^-{f^*} & \calZ(U).
		}
	\end{align*}
	If, in addition, $f$ is dominant, then $f^*$ is injective.
\end{lemma}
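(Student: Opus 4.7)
The plan is to unwind both vertical isomorphisms concretely and then verify the commutativity by a direct computation on Picard algebroids, using the identification $\calP(f^\# \ntDsheaf_V)\simeq f^\# \calP(\ntDsheaf_V)$ recorded in Subsection \ref{sect:PicardAlgebroid}.

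First I would recall the isomorphism $\Aut(\ntDsheaf_W)\simeq \calZ(W)$ for a smooth $W$. An automorphism $A$ of $\ntDsheaf_W$ fixes $\rsheaf{W}$ and preserves the order filtration, and acts trivially on the associated graded. Hence for $T\in \calT_W\subset \calP(\ntDsheaf_W)$, one has $A(T)=T+\omega_A(T)$ with $\omega_A(T)\in \rsheaf{W}$; the map $\omega_A\colon \calT_W\to \rsheaf{W}$ is $\rsheaf{W}$-linear and closed, and $A\mapsto \omega_A$ gives the stated isomorphism. In particular, every automorphism of $\ntDsheaf_W$ is already determined by its restriction to $\calP(\ntDsheaf_W)$, and is trivial on the tangent quotient $\calT_W$.

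Next I would use that $\ntDsheaf_V$ is untwisted: the exact sequence $0\to \rsheaf{V}\to \calP(\ntDsheaf_V)\to \calT_V\to 0$ is canonically split by $\calT_V\hookrightarrow \calP(\ntDsheaf_V)$, and similarly for $U$. Taking the fiber product $f^\#\calP(\ntDsheaf_V)=f^*\calP(\ntDsheaf_V)\times_{f^*\calT_V}\calT_U$ and applying the canonical isomorphism $\calP(f^\#\ntDsheaf_V)\simeq f^\#\calP(\ntDsheaf_V)\simeq \calP(\ntDsheaf_U)$, an element $T'\in \calT_U$ is sent to $(df(T'),T')$, where $df(T')\in f^*\calT_V\subset f^*\calP(\ntDsheaf_V)$ is given by the zero section. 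Now for $\omega\in \calZ(V)$ corresponding to $A_\omega$, the automorphism $f^\#A_\omega$ of $f^\#\calP(\ntDsheaf_V)$ acts on $(s,T')$ by $(f^*(A_\omega)(s),T')$, because $A_\omega$ is trivial on $\calT_V$ so the second factor is preserved. Writing $s=df(T')=\sum_i g_i\otimes T_i$ with $T_i\in \calT_V$, one gets
\begin{align*}
f^*(A_\omega)(s)=\sum_i g_i\otimes(T_i+\omega(T_i))=s+\sum_i g_i\,\omega(T_i),
\end{align*}
and the added term $\sum_i g_i\,\omega(T_i)\in f^*\rsheaf{V}\hookrightarrow \rsheaf{U}$ is by definition $(f^*\omega)(T')$. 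Hence $f^\#A_\omega$ corresponds to the $1$-form $f^*\omega$, which proves commutativity of the diagram.

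For the last assertion, suppose $f\colon U\to V$ is dominant and $\omega\in \calZ(V)$ satisfies $f^*\omega=0$. Since $f$ is a morphism of smooth varieties and its image is dense, by generic smoothness there is a dense open subset $U_0\subset U$ on which the differential $df\colon \calT_U\to f^*\calT_V$ is surjective. At every point $x\in U_0$, the equation $(f^*\omega)_x=\omega_{f(x)}\circ df_x=0$ combined with surjectivity of $df_x$ forces $\omega_{f(x)}=0$. Thus $\omega$ vanishes on the dense subset $f(U_0)\subset V$, and since $\omega$ is a section of a locally free sheaf on the reduced irreducible variety $V$, we conclude $\omega=0$. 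There is no substantive obstacle in the argument; the only care required is in matching the two descriptions of $\calP(\ntDsheaf_U)$ obtained via the splitting of $\ntDsheaf_V$ versus via the Picard algebroid fiber product, and in using the correct definition of the pullback of $1$-forms.
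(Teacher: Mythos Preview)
Your proof is correct. The paper does not actually prove this lemma at all --- it simply introduces it as ``well-known and easy'' and states it without argument --- so there is nothing to compare against. Your computation via the Picard algebroid fiber product $f^\#\calP(\ntDsheaf_V)=f^*\calP(\ntDsheaf_V)\times_{f^*\calT_V}\calT_U$ is exactly the right way to make the commutativity explicit, and your use of generic smoothness (valid since everything is over $\CC$) for the injectivity is standard. One cosmetic point: the paper's sign convention is $A_\omega(T)=T-\omega(T)$ rather than your $A(T)=T+\omega_A(T)$, but this is immaterial since the diagram only involves group homomorphisms and the sign cancels. Also, you write ``reduced irreducible variety $V$'' in the last step, but the paper does not assume irreducibility; however your argument still goes through because a section of a locally free sheaf on a smooth (hence reduced) variety that vanishes on a dense open subset vanishes identically, irreducible or not.
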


\begin{proposition}\label{prop:FundamentalBornology}
	Let $T_i=(U_i, \varphi_i,\Phi_i)$ ($i=1,2,3$) be trivializations of $\Dsheaf_{X,\Lambda}$ and $f\colon Y\rightarrow X$ a morphism of smooth varieties.
	\begin{enumerate}[(i)]
		\item $\sim$ is transitive, i.e.\ $T_1\sim T_2 \text{ and }T_2 \sim T_3\Rightarrow T_1 \sim T_3$.
		\item $T_1 \sim T_2 \Rightarrow f^\# T_1 \sim f^\# T_2$.
		\item If $T_1$ is bounded, then so is $f^\#T_1$.
		\item If $f$ is dominant, the converse of (ii) and (iii) are true.
	\end{enumerate}
	
\end{proposition}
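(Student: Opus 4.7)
The plan is to reduce every claim to a computation on an appropriate fiber product, using two ingredients: first, the canonical isomorphism $\Aut(\ntDsheaf_W) \simeq \calZ(W)$ is an isomorphism of \emph{abelian} groups, so composition of automorphisms corresponds to addition of closed $1$-forms; and second, Lemma \ref{lem:CommutativeClosed1Forms}, which says that pullback of closed $1$-forms is compatible with pullback of automorphisms and is injective for dominant morphisms.

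For (i), I would pull everything back to the triple fiber product $W := U_1 \times_X U_2 \times_X U_3$. Writing $p_{ij}\colon W \to U_i \times_X U_j$ for the natural projections, and $\alpha_{i,\lambda}\colon \Dsheaf_{X,\lambda}|_W \to \ntDsheaf_W$ for the pullback of $\Phi_{i,\lambda}$ along $W \to U_i \to X$, the identity
\[
\alpha_{3,\lambda} \circ \alpha_{1,\lambda}^{-1} = (\alpha_{3,\lambda} \circ \alpha_{2,\lambda}^{-1}) \circ (\alpha_{2,\lambda} \circ \alpha_{1,\lambda}^{-1})
\]
in $\Aut(\ntDsheaf_W)$ translates, via the abelian group isomorphism together with Lemma \ref{lem:CommutativeClosed1Forms}, into the cocycle-type relation
\[
p_{13}^*(c_{13}(\lambda)) = p_{12}^*(c_{12}(\lambda)) + p_{23}^*(c_{23}(\lambda)) \quad \text{in } \calZ(W),
\]
where $c_{ij}(\lambda) \in \calZ(T_i, T_j)$ is the element corresponding to $\widetilde{\varphi_i}^\#\Phi_{j,\lambda} \circ (\widetilde{\varphi_j}^\#\Phi_{i,\lambda})^{-1}$. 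If $\calZ(T_1,T_2)$ and $\calZ(T_2,T_3)$ both span finite-dimensional subspaces, then so does the sum $p_{12}^*(\calZ(T_1,T_2)) + p_{23}^*(\calZ(T_2,T_3))$, hence so does $p_{13}^*(\calZ(T_1,T_3))$. Since $p_{13}$ is a base change of the surjective étale morphism $\varphi_2$ it is dominant, so $p_{13}^*$ is injective by Lemma \ref{lem:CommutativeClosed1Forms}, forcing $\calZ(T_1,T_3)$ itself to lie in a finite-dimensional subspace.

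For (ii), I would identify $(U_1 \times_X Y) \times_Y (U_2 \times_X Y) \simeq U_1 \times_X U_2 \times_X Y$ and write $q$ for the projection onto $U_1 \times_X U_2$. Unwinding the definition of $f^\#T_i$ and invoking Lemma \ref{lem:CommutativeClosed1Forms} once more gives the equality $\calZ(f^\#T_1, f^\#T_2) = q^*(\calZ(T_1, T_2))$; linearity of $q^*$ then yields (ii), and (iii) is the special case $T_2 = T_1$. For (iv), if $f$ is dominant then $q$ is dominant as well (being a base change of $f$), so $q^*$ is injective by Lemma \ref{lem:CommutativeClosed1Forms}, and finite-dimensionality of $q^*(\calZ(T_1,T_2)) = \calZ(f^\#T_1,f^\#T_2)$ then forces that of $\calZ(T_1,T_2)$.

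The proof is essentially bookkeeping. The only points requiring care are the verification that composition in $\Aut(\ntDsheaf_W)$ corresponds to addition in $\calZ(W)$ (immediate from $A_{\omega_1} \circ A_{\omega_2} = A_{\omega_1+\omega_2}$) and keeping straight which projection between fiber products is used at each step; I do not anticipate any real obstacle.
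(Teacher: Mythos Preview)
Your proof is correct and follows essentially the same route as the paper's: for (i) you pull back to the triple fiber product and use the cocycle identity together with injectivity of the surjective projection $p_{13}^*$, and for (ii)--(iv) you use the equality $\calZ(f^\#T_1, f^\#T_2) = q^*(\calZ(T_1, T_2))$ on $U_1\times_X U_2\times_X Y$, exactly as the paper does. Your presentation is slightly more explicit about the abelian group law $A_{\omega_1}\circ A_{\omega_2}=A_{\omega_1+\omega_2}$, but the arguments coincide.
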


\begin{proof}
	To show (i), let $f_{ij}\colon  U_1\times_X U_2 \times_X U_3 \rightarrow U_i \times_X U_j$
	be the projections of the fiber product for $(i,j)=(1,2),(2,3),(1,3)$.
	Assume $T_1\sim T_2$ and $T_2 \sim T_3$.
	Then we have
	\begin{align*}
		f_{13}^*(\calZ(T_1, T_3)) \subset f_{12}^*(\calZ(T_1, T_2)) + f_{23}^*(\calZ(T_2, T_3))
	\end{align*}
	by Lemma \ref{lem:CommutativeClosed1Forms}.
	Since $f_{13}$ is surjective, $f_{13}^*$ is injective.
	Hence $\calZ(T_1, T_3)$ spans a finite-dimensional subspace of $\calZ(U_1\times_X U_3)$.

	By definition, (iii) follows from (ii).
	We shall show (ii) and (iv).
	Let $\widetilde{f}\colon U_1\times_X U_2\times_X Y \rightarrow U_1\times_X U_2$ be the projection.
	By Lemma \ref{lem:CommutativeClosed1Forms}, we have
	\begin{align*}
		\widetilde{f}^*(\calZ(T_1, T_2)) = \calZ(f^\#T_1, f^\#T_2).
	\end{align*}
	This implies (ii) and (iv).
\end{proof}

By Proposition \ref{prop:FundamentalBornology}, the relation $\sim$ is an equivalence relation of bounded trivializations.

\begin{definition}\label{def:EquivalenceBornology}
	An equivalence class of bounded trivializations is called a \define{bornology} of the family $\Dsheaf_{X,\Lambda}$.
\end{definition}

If $\calB$ is a bornology of $\Dsheaf_{X,\Lambda}$ and $(\lambda(i))_{i \in I}$ is a family of elements of $\Lambda$, then $T:=(U,\varphi,(\Phi_{\lambda(i)})_{i \in I})$ is a bounded trivialization of $(\Dsheaf_{X,\lambda(i)})_{i \in I}$ for any $(U,\varphi,\Phi) \in \calB$.
It is clear that the equivalence class of $T$ does not depend on the choice of $(U,\varphi,\Phi) \in \calB$.
We denote by the same symbol $\calB$ the equivalence class of $T$ by abuse of notation.

\begin{definition}\label{def:pull-backBornology}
	Let $f\colon Y\rightarrow X$ be a morphism of smooth varieties
	and $\calB$ a bornology of $\Dsheaf_{X,\Lambda}$.
	By Proposition \ref{prop:FundamentalBornology} (ii), the equivalence class of $f^\#T$ ($T \in \calB$) does not depend on the choice of $T$.
	We denote by $f^\#\calB$ the equivalence class.
\end{definition}

The following proposition is an easy consequence of the definition.

\begin{proposition}\label{prop:pull-backBornology}
	Let $f\colon Y\rightarrow X$ and $g\colon Z\rightarrow Y$ be morphisms of smooth varieties.
	For any bornology $\calB$ of $\Dsheaf_{X,\Lambda}$, we have $(f\circ g)^\#\calB = g^\# f^\# \calB$ as bornologies of $(f\circ g)^\#\Dsheaf_{X,\Lambda} = g^\# f^\# \Dsheaf_{X,\Lambda}$.
\end{proposition}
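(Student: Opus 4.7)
The plan is to fix a representative $T = (U, \varphi, \Phi) \in \calB$ and show that the two natural representatives $T_1 := (f\circ g)^\# T$ and $T_2 := g^\#(f^\# T)$ are equivalent bounded trivializations of the (literally equal) family $(f\circ g)^\# \Dsheaf_{X,\Lambda} = g^\# f^\#\Dsheaf_{X,\Lambda}$. Both are bounded by Proposition \ref{prop:FundamentalBornology}~(iii), so it suffices to verify the relation $T_1 \sim T_2$ and conclude equality of equivalence classes.

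Unfolding the definitions, $T_1 = (U\times_X Z, \varphi_1, h^\#\Phi)$ with projections $\varphi_1\colon U\times_X Z \to Z$ and $h\colon U\times_X Z \to U$, while $T_2 = ((U\times_X Y)\times_Y Z, \varphi_2, \widetilde g^\# \widetilde f^\# \Phi)$ with the corresponding iterated projections. The key tool is the canonical isomorphism $\alpha\colon U\times_X Z \xrightarrow{\simeq} (U\times_X Y)\times_Y Z$ of smooth $Z$-varieties supplied by the universal property of fiber products. Since $\widetilde f\circ \widetilde g\circ \alpha = h$ and $\varphi_2\circ\alpha=\varphi_1$, the functoriality $(g\circ f)^\# = f^\# g^\#$ recorded in Fact \ref{fact:FundamentalDmodule} gives the identity $\alpha^\#(\widetilde g^\# \widetilde f^\# \Phi_\lambda) = h^\#\Phi_\lambda$ for every $\lambda \in \Lambda$.

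To translate this identification into the formal relation $\sim$, I would use $\alpha$ to identify the fiber product $W := (U\times_X Z)\times_Z ((U\times_X Y)\times_Y Z)$ with $(U\times_X Z)\times_Z (U\times_X Z)$. Under this identification, each transition 1-form of $\calZ(T_1, T_2)$ is carried to the corresponding element of $\calZ(T_1, T_1)$. The latter spans a finite-dimensional subspace of closed 1-forms because $T_1$ is bounded, hence $\calZ(T_1, T_2)$ does too, so $T_1 \sim T_2$ and $(f\circ g)^\# \calB = g^\# f^\# \calB$.

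There is no genuine obstacle: the assertion reduces entirely to the universal property of fiber products combined with the functoriality of pull-back of algebras of twisted differential operators. The only substantive check is the identity $\alpha^\#(\widetilde g^\# \widetilde f^\# \Phi_\lambda) = h^\#\Phi_\lambda$, which is immediate from Fact \ref{fact:FundamentalDmodule} applied to the factorization $h = \widetilde f\circ \widetilde g\circ \alpha$.
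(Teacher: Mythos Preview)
Your argument is correct and is precisely the unpacking of what the paper means by ``an easy consequence of the definition'' (the paper gives no further proof). The only minor point is that the identity $\alpha^\#(\widetilde g^\#\widetilde f^\#\Phi_\lambda)=h^\#\Phi_\lambda$ comes from the functoriality of $(\cdot)^\#$ on morphisms of algebras of twisted differential operators rather than from Fact~\ref{fact:FundamentalDmodule} per se, but this is stated alongside that fact in the paper and is in any case immediate from the definition of $f^\#$.
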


It is not obvious that a bornology contains enough many trivializations for applications.
We can make a good bounded trivialization from a bounded trivialization by the following proposition.

\begin{proposition}\label{prop:LiftTrivialization}
	Let $T_U=(U, \varphi, \Phi)$ be a trivialization of $\Dsheaf_{X,\Lambda}$
	and $f\colon V\rightarrow U$ a surjective \'etale morphism.
	Put $T_V:=(V, \varphi\circ f, f^\#\Phi)$.
	Then the following conditions are equivalent:
	\begin{enumerate}[(i)]
		\item $T_U$ is bounded,
		\item $T_V$ is bounded,
		\item $T_U \sim T_V$.
	\end{enumerate}
	In particular, for any bornology $\calB$ of $\Dsheaf_{X,\Lambda}$,
	there exists a trivialization $(W, \psi, \Psi)$ in $\calB$ such that $W$ is affine.
\end{proposition}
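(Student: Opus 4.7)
The plan is to reduce all three conditions to the finite-dimensionality of one space, namely $\calZ(T_U,T_U)\subset\calZ(U\times_X U)$, by exhibiting the other two closed-form spaces as pull-backs of this one along surjective étale morphisms and applying Lemma \ref{lem:CommutativeClosed1Forms}.

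For (i)$\Leftrightarrow$(iii), I would consider the morphism $g\colon U\times_X V\to U\times_X U$ sending $(u,v)\mapsto (u,f(v))$, which is well-defined because any $(u,v)\in U\times_X V$ satisfies $\varphi(u)=(\varphi\circ f)(v)$. Writing $\pi_U,\pi_V$ for the projections from $U\times_X V$ and $q_1,q_2$ for those from $U\times_X U$, one has $q_1\circ g=\pi_U$ and $q_2\circ g=f\circ\pi_V$. Hence, using the compatibility of $(\cdot)^\#$ with composition,
\[
\pi_V^\#((f^\#\Phi)_\lambda)\circ(\pi_U^\#\Phi_\lambda)^{-1} = g^\#\bigl(q_2^\#\Phi_\lambda\circ(q_1^\#\Phi_\lambda)^{-1}\bigr),
\]
and Lemma \ref{lem:CommutativeClosed1Forms} identifies this with $g^*$ applied to the corresponding element of $\calZ(T_U,T_U)$; thus $\calZ(T_U,T_V)=g^*\calZ(T_U,T_U)$. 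Since $f$ is surjective, $g$ is surjective and hence dominant, so $g^*$ is injective and finite-dimensionality is preserved in both directions. For (i)$\Leftrightarrow$(ii) I would repeat the argument with $h=f\times_X f\colon V\times_X V\to U\times_X U$, which is again surjective, obtaining $\calZ(T_V,T_V)=h^*\calZ(T_U,T_U)$.

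For the final assertion, I would start from any $(U,\varphi,\Phi)\in\calB$. Since $U$ is smooth and quasi-projective it is quasi-compact, so it admits a finite affine open cover $\{U_i\}_{i=1}^n$; the finite disjoint union $W=\bigsqcup_i U_i$ is affine, and the canonical map $f\colon W\to U$ is surjective étale, whence the equivalence just proved shows that $(W,\varphi\circ f, f^\#\Phi)$ is a bounded trivialization in the same equivalence class as $(U,\varphi,\Phi)$, hence lies in $\calB$. The main obstacle throughout is the bookkeeping in verifying that the family of automorphisms defining $\calZ(T_U,T_V)$ (and likewise $\calZ(T_V,T_V)$) really factors as $g^\#$ (respectively $h^\#$) applied to the family defining $\calZ(T_U,T_U)$; once this identification is in place, the result is an immediate consequence of the injectivity of pull-back on closed $1$-forms along dominant morphisms.
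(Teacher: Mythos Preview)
Your proof is correct and follows essentially the same approach as the paper: both arguments identify $\calZ(T_U,T_V)$ and $\calZ(T_V,T_V)$ as pull-backs of $\calZ(T_U,T_U)$ along surjective morphisms between the relevant fiber products, and then invoke Lemma~\ref{lem:CommutativeClosed1Forms}. The only cosmetic difference is that the paper factors your map $h=f\times_X f$ as a composition $V\times_X V\to U\times_X V\to U\times_X U$ and proves (ii)$\Leftrightarrow$(iii) rather than (i)$\Leftrightarrow$(ii) directly, and it states the final assertion without spelling out the affine-cover construction; neither point affects the substance.
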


\begin{proof}
	Let $f_1\colon V\times_X V\rightarrow U\times_X V$ and $f_2\colon U \times_X V \rightarrow U\times_X U$ be the morphisms determined by the universal property of the fiber bundles.
	Then by Lemma \ref{lem:CommutativeClosed1Forms}, we have
	\begin{align*}
		f_2^*(\calZ(T_U, T_U)) &= \calZ(T_U, T_V) \\
		f_1^*(\calZ(T_U, T_V)) &= \calZ(T_V, T_V).
	\end{align*}
	Since $f_1$ and $f_2$ are surjective, $f_2^*$ and $f_1^*$ are injective.
	Hence (i), (ii) and (iii) are equivalent.
	The second assertion is clear because for any variety $U$, there is a surjective \'etale morphism $W\rightarrow U$ from an affine variety $W$.
\end{proof}

\begin{definition}\label{def:UniformlyBoundedFamilyOri}
	Let $T=(U, \varphi, \Phi)$ be a trivialization of $\Dsheaf_{X,\Lambda}$ with affine $U$.
	We say that an object $(\calM_\lambda)_{\lambda \in \Lambda} \in \Mod_h(\Dsheaf_{X,\Lambda})$ is \define{uniformly bounded} with respect to $T$ if 
	for any closed embedding $\iota\colon U\rightarrow \CC^n$,
	$m_{\iota}(\calM_\lambda|_{U})$ is bounded as a function on $\Lambda$.
	Here we consider an $\Dsheaf_{X,\lambda}|_{U}$-module as a $\ntDsheaf_{U}$-module by the isomorphism $\Phi_\lambda\colon \Dsheaf_{X,\lambda}|_U \rightarrow \ntDsheaf_U$.

	We say that an object $\calM \in D^b_h(\Dsheaf_{X,\Lambda})$
	is \define{uniformly bounded} with respect to $T$ if $H^i(\calM)$
	is uniformly bounded for any $i$ and $H^i(\calM)$ vanishes for any $|i| \gg 0$.
	Here $H^i(\calM)$ is the family $(H^i(\calM_\lambda))_{\lambda \in \Lambda}$.

	We denote by $\Mod_{ub}(\Dsheaf_{X,\Lambda}, T)$ (resp.\ $D^b_{ub}(\Dsheaf_{X,\Lambda}, T)$)
	the full subcategory of $\Mod_{h}(\Dsheaf_{X,\Lambda})$
	(resp.\ $D^b_{h}(\Dsheaf_{X,\Lambda})$)
	consisting of uniformly bounded objects with respect to $T$.
\end{definition}

\begin{remark}
	By Proposition \ref{prop:LocalMultiplicityEtale}, the boundedness of $m_{\iota}(\calM_\lambda|_{U})$
	does not depend on the choice of the embedding $\iota$.
\end{remark}

The following propositions are easy consequences of the definition.

\begin{proposition}\label{prop:FundamentalBoundeFamily}
	Let $T$ be a bounded trivialization of $\Dsheaf_{X,\Lambda}$.
	Then the following hold.
	\begin{enumerate}[(i)]
		\item $\Mod_{ub}(\Dsheaf_{X,\Lambda}, T)$ is abelian.
		\item For a short exact sequence $0\rightarrow L\rightarrow M \rightarrow N\rightarrow 0$
		in $\Mod_{h}(\Dsheaf_{X,\Lambda})$,
		both $L$ and $N$ are uniformly bounded if and only if so is $M$.
		\item $D^b_{ub}(\Dsheaf_{X,\Lambda}, T)$ is a triangulated subcategory of
		$D^b_{h}(\Dsheaf_{X,\Lambda})$.
	\end{enumerate}
\end{proposition}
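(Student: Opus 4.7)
My plan is to reduce each of (i), (ii), (iii) to the additivity properties of the Weyl algebra multiplicity $m$ established in Fact \ref{fact:WeylAlgebraExact} and Lemma \ref{lem:AdditiveFunctionDerived}. Write $T = (U,\varphi,\Phi)$ with $U$ affine, and fix once and for all a closed embedding $\iota\colon U \to \CC^n$. For each $\lambda\in\Lambda$ set $\Psi_\lambda := \iota_+ \circ \varphi^*$, where we identify $\Dsheaf_{X,\lambda}|_U$ with $\ntDsheaf_U$ via $\Phi_\lambda$. The functor $\varphi^*$ is exact and preserves holonomicity since $\varphi$ is \'etale, and $\iota_+$ is exact and preserves holonomicity by the Kashiwara equivalence (Fact \ref{fact:Kashiwara}). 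Hence $\Psi_\lambda \colon \Mod_h(\Dsheaf_{X,\lambda})\to \Mod_h(\ntDsheaf_{\CC^n})$ is an exact functor, and by definition $(\calM_\lambda)_\lambda$ is uniformly bounded with respect to $T$ iff $\sup_\lambda m(\Psi_\lambda(\calM_\lambda)) < \infty$.

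With this setup, (ii) is essentially immediate. For a short exact sequence $0\to L\to M\to N\to 0$ in $\Mod_h(\Dsheaf_{X,\Lambda})$, applying $\Psi_\lambda$ componentwise yields a short exact sequence in $\Mod_h(\ntDsheaf_{\CC^n})$, and Fact \ref{fact:WeylAlgebraExact} gives $m(\Psi_\lambda(L_\lambda)), m(\Psi_\lambda(N_\lambda)) \le m(\Psi_\lambda(M_\lambda)) \le m(\Psi_\lambda(L_\lambda)) + m(\Psi_\lambda(N_\lambda))$ (the second inequality holds in general and becomes equality when the Bernstein dimensions of $L_\lambda$ and $N_\lambda$ coincide). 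Taking suprema in $\lambda$ proves the equivalence. Then (i) follows at once: kernels, cokernels, and images in $\Mod_h(\Dsheaf_{X,\Lambda})$ are subquotients of the members, so (ii) applied twice shows they are uniformly bounded; composition and zero objects are obvious; hence $\Mod_{ub}(\Dsheaf_{X,\Lambda},T)$ is an abelian (full) subcategory of $\Mod_h(\Dsheaf_{X,\Lambda})$.

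For (iii) the subcategory $D^b_{ub}(\Dsheaf_{X,\Lambda},T)$ trivially contains $0$ and is closed under shifts. Given a distinguished triangle $N^\cxdot \to M^\cxdot \to L^\cxdot \xrightarrow{+1}$ in $D^b_h(\Dsheaf_{X,\Lambda})$ with two of the three vertices uniformly bounded, the third has cohomology in a fixed finite range by the long exact sequence. Apply $\Psi_\lambda$ (which is exact of triangulated categories since it is an exact functor of abelian categories) and invoke Lemma \ref{lem:AdditiveFunctionDerived}(i) with the additive function $m$: for every $i$,
\[
m(H^i(\Psi_\lambda(M^\cxdot_\lambda))) \le m(H^i(\Psi_\lambda(N^\cxdot_\lambda))) + m(H^i(\Psi_\lambda(L^\cxdot_\lambda))).
\]
Rotating the triangle produces the two analogous inequalities, so the multiplicities at any one vertex are bounded by the sum of those at the other two; uniformity in $\lambda$ therefore passes to the third vertex. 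This shows stability under cones and completes the triangulated subcategory structure.

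There is no serious obstacle here; the only point requiring some care is the reduction in the first paragraph, namely verifying that $\Psi_\lambda = \iota_+\circ\varphi^*$ is both exact and holonomic-valued, so that Fact \ref{fact:WeylAlgebraExact} and Lemma \ref{lem:AdditiveFunctionDerived} become directly applicable to sequences and triangles in the family. Once that reduction is in place, all three claims collapse to standard additivity properties of the Bernstein multiplicity.
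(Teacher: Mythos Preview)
Your proof is correct and is exactly the argument the paper has in mind: the paper gives no proof at all, merely stating that the proposition is an ``easy consequence of the definition,'' and your reduction via the exact functor $\Psi_\lambda = \iota_+\circ\varphi^*$ together with Fact~\ref{fact:WeylAlgebraExact} and Lemma~\ref{lem:AdditiveFunctionDerived} is precisely how one unpacks that remark.
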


\begin{proposition}
	Let $T=(U,\varphi, \Phi)$ be a bounded trivialization with affine $U$.
	Then for any $(\calM_\lambda)_{\lambda \in \Lambda} \in \Mod_{ub}(\Dsheaf_{X,\Lambda}, T)$, the function $\Len_{\Dsheaf_{X,\lambda}}(\calM_{\lambda})$ of $\lambda \in \Lambda$
	is bounded.
\end{proposition}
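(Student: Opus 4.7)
The plan is to chain together three length/multiplicity comparisons to relate $\Len_{\Dsheaf_{X,\lambda}}(\calM_\lambda)$ to the bounded quantity $m_\iota(\calM_\lambda|_U)$ (after choosing a closed embedding $\iota \colon U \hookrightarrow \CC^n$).

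First, I would compare lengths on $X$ with lengths on $U$. Since $\varphi \colon U \to X$ is surjective \'etale, the pullback functor $\varphi^\# \colon \Mod(\Dsheaf_{X,\lambda}) \to \Mod(\ntDsheaf_U)$ (that is, $\calM_\lambda \mapsto \calM_\lambda|_U$ under the identification $\Phi_\lambda$) is faithfully flat. Consequently, any strictly descending chain of $\Dsheaf_{X,\lambda}$-submodules of $\calM_\lambda$ remains strictly descending after restriction to $U$, yielding
\begin{equation*}
\Len_{\Dsheaf_{X,\lambda}}(\calM_\lambda) \leq \Len_{\ntDsheaf_U}(\calM_\lambda|_U).
\end{equation*}

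Next, I would pass from $U$ to $\CC^n$ via $\iota$. The Kashiwara equivalence (Fact \ref{fact:Kashiwara}) identifies $\Mod_h(\ntDsheaf_U)$ with the full subcategory of $\Mod_h(\ntDsheaf_{\CC^n})$ consisting of holonomic modules supported on $\iota(U)$, and this equivalence is implemented by $\iota_+ = D^0\iota_+$. Being an equivalence of abelian categories, it preserves lengths, so
\begin{equation*}
\Len_{\ntDsheaf_U}(\calM_\lambda|_U) = \Len_{\ntDsheaf_{\CC^n}}(\iota_+(\calM_\lambda|_U)).
\end{equation*}
Finally, Fact \ref{fact:WeylAlgebraExact} yields $\Len_{\ntDsheaf_{\CC^n}}(\iota_+(\calM_\lambda|_U)) \leq m(\iota_+(\calM_\lambda|_U)) = m_\iota(\calM_\lambda|_U)$.

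Chaining these three inequalities gives $\Len_{\Dsheaf_{X,\lambda}}(\calM_\lambda) \leq m_\iota(\calM_\lambda|_U)$, and the right-hand side is bounded in $\lambda$ by definition of uniform boundedness with respect to $T$ (here we use that $U$ is affine, so a closed embedding $\iota \colon U \hookrightarrow \CC^n$ exists). There is no real obstacle: the only point requiring a line of justification is the first inequality, namely the faithful flatness of the pullback along a surjective \'etale morphism, which ensures strict inclusions of submodules are preserved.
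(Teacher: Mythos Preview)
Your proof is correct and follows essentially the same route as the paper: both use the Kashiwara equivalence (Fact \ref{fact:Kashiwara}) together with Fact \ref{fact:WeylAlgebraExact} to bound $\Len_{\ntDsheaf_U}(\calM_\lambda|_U)$ by $m_\iota(\calM_\lambda|_U)$, and then use that $\varphi^*$ is exact and faithful (surjective \'etale) to pass from lengths on $U$ back to lengths on $X$. The only cosmetic difference is the order of the two reductions and that the paper phrases faithful flatness as ``$\varphi^*$ is exact and sends a non-zero module to a non-zero module''.
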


\begin{proof}
	Fix a closed embedding $\iota\colon U\rightarrow \CC^n$.
	Then we have
	\begin{align*}
		\Len_{\Dsheaf_{X,\lambda}|_U}(\calM_\lambda|_U) = \Len_{\ntDsheaf_{\CC^n}}(\iota_+(\calM_\lambda|_{U})) \leq m_\iota(\calM_\lambda|_{U}).
	\end{align*}
	The first equality follows from the Kashiwara equivalence (Fact \ref{fact:Kashiwara}) and the second inequality from Fact \ref{fact:WeylAlgebraExact}.
	By the definition of uniformly bounded family, there is a constant $C$ independent of $\lambda \in \Lambda$ such that
	\begin{align*}
		\Len_{\Dsheaf_{X,\lambda}|_U}(\calM_\lambda|_U) \leq m_\iota(\calM_\lambda|_{U}) \leq C.
	\end{align*}
	Since $\varphi$ is surjective \'etale, the inverse image functor $\varphi^*$ is exact and sends a non-zero module to a non-zero module (see the proof of Proposition \ref{prop:LocalMultiplicityEtale}).
	Hence we obtain
	\begin{align*}
		\Len_{\Dsheaf_{X,\lambda}}(\calM_\lambda) \leq C
	\end{align*}
	for any $\lambda \in \Lambda$.
\end{proof}

We will show that the uniform boundedness is preserved by inverse images
and direct images.
To do so, we need the following basic proposition.

\begin{proposition}\label{prop:WellDefinedBoundedFamilyCat}
	Let $T_i=(U_i, \varphi_i, \Phi_i)$ ($i=1,2$) be bounded trivializations of $\Dsheaf_{X,\Lambda}$ with affine $U_i$.
	If $T_1 \sim T_2$, then we have
	\begin{align*}
		\Mod_{ub}(\Dsheaf_{X,\Lambda}, T_1) &= \Mod_{ub}(\Dsheaf_{X,\Lambda}, T_2),\\
		D^b_{ub}(\Dsheaf_{X,\Lambda}, T_1) &= D^b_{ub}(\Dsheaf_{X,\Lambda}, T_2).
	\end{align*}
\end{proposition}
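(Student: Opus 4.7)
The plan is to prove both equalities in parallel, reducing the derived case to the module case. Indeed, uniform boundedness of a complex $\calM \in D^b_h(\Dsheaf_{X,\Lambda})$ is defined termwise on its cohomology sheaves, so $D^b_{ub}(\Dsheaf_{X,\Lambda}, T_1) = D^b_{ub}(\Dsheaf_{X,\Lambda}, T_2)$ follows from $\Mod_{ub}(\Dsheaf_{X,\Lambda}, T_1) = \Mod_{ub}(\Dsheaf_{X,\Lambda}, T_2)$. By symmetry it then suffices to prove the inclusion $\Mod_{ub}(\Dsheaf_{X,\Lambda}, T_1) \subseteq \Mod_{ub}(\Dsheaf_{X,\Lambda}, T_2)$, that is, fixing closed embeddings $\iota_i\colon U_i \hookrightarrow \CC^{n_i}$ for $i=1,2$, I want to bound $m_{\iota_2}(\calM_\lambda|_{U_2})$ by a constant multiple of $m_{\iota_1}(\calM_\lambda|_{U_1})$ uniformly in $\lambda$.

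The strategy is to pass through a common affine cover of the fibre product. I would form $V := U_1 \times_X U_2$ with its projections $\widetilde{\psi}\colon V \to U_1$ and $\widetilde{\varphi}\colon V \to U_2$, both surjective and \'etale, and choose a surjective \'etale morphism $f\colon W \to V$ with $W$ affine. Setting $g_1 := \widetilde{\psi}\circ f$ and $g_2 := \widetilde{\varphi}\circ f$, each $g_i\colon W \to U_i$ is a surjective \'etale morphism of affine smooth varieties. The hypothesis $T_1 \sim T_2$ says that the $1$-forms $\omega_\lambda \in \calZ(V)$ corresponding to $\widetilde{\varphi}^\#\Phi_{2,\lambda}\circ (\widetilde{\psi}^\#\Phi_{1,\lambda})^{-1}$ span a finite-dimensional subspace of $\calZ(V)$, so by linearity the pulled-back family $(f^*\omega_\lambda)_{\lambda \in \Lambda}$ spans a finite-dimensional subspace $W_0 \subset \calZ(W)$.

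The main technical point is the identification, for each $\lambda$, of the two $\ntDsheaf_W$-modules obtained from $\calM_\lambda$ through the two routes: letting $\calN_{i,\lambda}$ denote $g_i^*(\calM_\lambda|_{U_i})$ with its $\ntDsheaf_W$-structure coming from $\Phi_{i,\lambda}$, the very construction of $\omega_\lambda$ as the automorphism comparing the two trivialisations forces $\calN_{2,\lambda} \simeq (\calN_{1,\lambda})^{f^*\omega_\lambda}$. I expect this to be the main obstacle: it is not deep but it requires one to carefully unwind the definition of the pull-back $f^\#$ on algebras of twisted differential operators and to match it against the formula $A_\omega(T) = T - \omega(T)$ defining the twist.

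Once this identification is in hand, the rest is a clean chain of three estimates using results of the previous subsection. Fix any closed embedding $\iota_W\colon W \hookrightarrow \CC^{n_W}$. Proposition \ref{prop:LocalMultiplicityEtale} applied to $g_2$ gives a constant $C_1$, independent of $\lambda$, with $m_{\iota_2}(\calM_\lambda|_{U_2}) \leq C_1\, m_{\iota_W}(\calN_{2,\lambda})$; Corollary \ref{cor:MultiplicityTwisted} applied to the finite-dimensional space $W_0$ gives $C_2$ with $m_{\iota_W}(\calN_{2,\lambda}) \leq C_2\, m_{\iota_W}(\calN_{1,\lambda})$; and a second application of Proposition \ref{prop:LocalMultiplicityEtale}, this time to $g_1$, gives $C_3$ with $m_{\iota_W}(\calN_{1,\lambda}) \leq C_3\, m_{\iota_1}(\calM_\lambda|_{U_1})$. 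Concatenating yields $m_{\iota_2}(\calM_\lambda|_{U_2}) \leq C_1 C_2 C_3\, m_{\iota_1}(\calM_\lambda|_{U_1})$, and since the right-hand side is bounded in $\lambda$ by hypothesis, so is the left, proving the desired inclusion.
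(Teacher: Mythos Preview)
Your proposal is correct and follows essentially the same route as the paper's proof: pass to the fibre product $U_1\times_X U_2$, compare the two pulled-back $\ntDsheaf$-module structures via the twist by $\omega_\lambda$, and then combine Proposition~\ref{prop:LocalMultiplicityEtale} (for the two \'etale projections) with Corollary~\ref{cor:MultiplicityTwisted} (for the finite-dimensional span of twisting forms). The only minor difference is that you interpose an extra affine \'etale cover $W\to U_1\times_X U_2$, which is unnecessary since $U_1\times_X U_2$ is already affine ($X$ is separated and the $U_i$ are affine), so the paper works directly on $U_1\times_X U_2$; your extra step is harmless but redundant.
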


\begin{proof}
	By definition, the second equation follows from the first one.

	Let $p_i\colon U_1\times_X U_2 \rightarrow U_i$ ($i=1,2$) be the projections and put $T'_i:=(U_1\times_X U_2, \varphi_i \circ p_i, p_i^\#\Phi_i)$ for $i=1,2$.
	Applying Proposition \ref{prop:LocalMultiplicityEtale} to $f = p_i$, we have
	\begin{align}
		\Mod_{ub}(\Dsheaf_{X,\Lambda}, T_i) = \Mod_{ub}(\Dsheaf_{X,\Lambda}, T'_i). \label{eqn:ProofUniformlyBoundedCat}
	\end{align}
	By $T_1 \sim T_2$, $\calZ(T_1, T_2)$ spans a finite-dimensional subspace of $\calZ(U_1\times_X U_2)$.
	Applying Corollary \ref{cor:MultiplicityTwisted} to $W=\mathrm{span}_\CC \calZ(T_1, T_2)$, we have
	\begin{align*}
		\Mod_{ub}(\Dsheaf_{X,\Lambda}, T'_1) = \Mod_{ub}(\Dsheaf_{X,\Lambda}, T'_2).
	\end{align*}
	This and \eqref{eqn:ProofUniformlyBoundedCat} imply the desired equation.
\end{proof}

The following definition is well-defined by Proposition \ref{prop:WellDefinedBoundedFamilyCat}.

\begin{definition}
	Let $\calB$ be a bornology of $\Dsheaf_{X,\Lambda}$.
	We set
	\begin{align*}
		\Mod_{ub}(\Dsheaf_{X,\Lambda}, \calB) &:= \Mod_{ub}(\Dsheaf_{X,\Lambda}, T),\\
		D^b_{ub}(\Dsheaf_{X,\Lambda}, \calB) &:= D^b_{ub}(\Dsheaf_{X,\Lambda}, T),
	\end{align*}
	where $T=(U,\varphi, \Phi)$ is a bounded trivialization in $\calB$
	with affine $U$.
\end{definition}

\begin{theorem}\label{thm:FunctorOnUniformlyBounded}
	Let $f\colon Y\rightarrow X$ be a morphism of smooth varieties and $\calB$ a bornology of $\Dsheaf_{X,\Lambda}$.
	The direct image functor and the inverse image functor preserve the uniform boundedness, that is, we have functors
	\begin{align*}
		Df_+\colon D^b_{ub}(f^\#\Dsheaf_{X,\Lambda}, f^\# \calB) \rightarrow D^b_{ub}(\Dsheaf_{X,\Lambda}, \calB), \\
		Lf^*\colon D^b_{ub}(\Dsheaf_{X,\Lambda}, \calB)\rightarrow D^b_{ub}(f^\#\Dsheaf_{X,\Lambda}, f^\# \calB).
	\end{align*}
\end{theorem}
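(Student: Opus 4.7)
The plan is to reduce both claims to the affine bound in Proposition \ref{prop:LocalMultiplicity}. By Proposition \ref{prop:LiftTrivialization} I choose a bounded trivialization $T_X=(U,\varphi,\Phi)\in\calB$ with $U$ affine and an affine \'etale surjection $g\colon V\to U\times_X Y$, yielding a representative $T_Y=(V,\widetilde{\varphi}\circ g,\,g^{\#}\widetilde{f}^{\#}\Phi)\in f^{\#}\calB$ with $V$ affine. Here $\widetilde{\varphi}\colon U\times_X Y\to Y$ and $f'\colon U\times_X Y\to U$ are the base-change projections, and I fix closed embeddings $\iota_U\colon U\hookrightarrow\CC^m$ and $\iota_V\colon V\hookrightarrow\CC^n$. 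Since $U\times_X Y$ is separated (the ambient varieties being quasi-projective), every fiber product of affines over $U\times_X Y$ is again affine.

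For the inverse image, Fact \ref{fact:FundamentalDmodule}(ii) applied to $f\circ\widetilde{\varphi}=\varphi\circ f'$ gives $(\widetilde{\varphi}\circ g)^{*}Lf^{*}\calN_\lambda\simeq L(f'\circ g)^{*}\varphi^{*}\calN_\lambda$. The composition $f'\circ g\colon V\to U$ is a morphism of affine smooth varieties, so Proposition \ref{prop:LocalMultiplicity} supplies a constant $C$ with $m_{\iota_V}((\widetilde{\varphi}\circ g)^{*}Lf^{*}\calN_\lambda)\leq C\cdot m_{\iota_U}(\varphi^{*}\calN_\lambda)$, and the right-hand side is bounded in $\lambda$ by the uniform boundedness of $(\calN_\lambda)$ with respect to $T_X$.

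For the direct image, the base change theorem (Fact \ref{fact:BaseChange}) gives $\varphi^{*}Df_+\calM_\lambda\simeq Df'_+\widetilde{\varphi}^{*}\calM_\lambda$, and I must bound $m_{\iota_U}$ of this even though $U\times_X Y$ is not affine. My plan is to take a finite Zariski affine open cover $\{W_i\}_{i=1}^{n}$ of $U\times_X Y$; separatedness makes each intersection $W_S:=\bigcap_{i\in S}W_i$ affine, each inclusion $j_S\colon W_S\hookrightarrow U\times_X Y$ an affine morphism, and consequently $Dj_{S+}=j_{S*}$ on quasi-coherent $\ntDsheaf$-modules. The \v{C}ech complex attached to $\{W_i\}$ is then a bounded resolution of $\widetilde{\varphi}^{*}\calM_\lambda$, and Fact \ref{fact:FundamentalDmodule}(i) rewrites the $p$-th term of its image under $Df'_+$ as $\bigoplus_{|S|=p+1}D(f'\circ j_S)_+(\widetilde{\varphi}^{*}\calM_\lambda|_{W_S})$. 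Since each $f'\circ j_S\colon W_S\to U$ is a morphism of affine smooth varieties, Proposition \ref{prop:LocalMultiplicity} bounds the total multiplicity of this direct image by $C_S\cdot m_{\iota_{W_S}}(\widetilde{\varphi}^{*}\calM_\lambda|_{W_S})$.

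It remains to bound $m_{\iota_{W_S}}(\widetilde{\varphi}^{*}\calM_\lambda|_{W_S})$ uniformly in $\lambda$. Forming $V_S:=V\times_{U\times_X Y}W_S$ (affine by the initial remark) with its \'etale surjection $g_S\colon V_S\to W_S$ and projection $p_S\colon V_S\to V$, Proposition \ref{prop:LocalMultiplicityEtale} applied to $g_S$ combined with Proposition \ref{prop:LocalMultiplicity} applied to $p_S$ produces an estimate $m_{\iota_{W_S}}(\widetilde{\varphi}^{*}\calM_\lambda|_{W_S})\leq C'_S\cdot m_{\iota_V}((\widetilde{\varphi}\circ g)^{*}\calM_\lambda)$, whose right-hand side is bounded by the uniform boundedness of $(\calM_\lambda)$ with respect to $T_Y$. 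Lemma \ref{lem:AdditiveFunctionDerived}(iii) will then aggregate the termwise \v{C}ech bounds into a single bound on $m_{\iota_U}(\varphi^{*}Df_+\calM_\lambda)$, while Lemma \ref{lem:AdditiveFunctionDerived}(ii) reduces a complex input $\calM_\lambda^{\cxdot}$ to its cohomologies. The principal technical hurdle is the \v{C}ech step: one must confirm that affine open immersions into a separated scheme are affine morphisms so that $Dj_{S+}=j_{S*}$ holds, and that the termwise application of $Df'_+$ legitimately computes $Df'_+\widetilde{\varphi}^{*}\calM_\lambda$ in $D^b_{qc}$.
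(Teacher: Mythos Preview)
Your proposal is correct and follows essentially the same route as the paper. The only difference is in how the direct-image \v{C}ech step is packaged: the paper replaces $V$ by the $(r{+}1)$-fold fiber product of $\bigsqcup_i V_i$ over $U\times_X Y$ (where $\{V_i\}$ is a finite affine open cover), so that every \v{C}ech term $C^j$ is simultaneously a direct summand of $g_*(\calM_\lambda|_V)$ and a single application of Proposition~\ref{prop:LocalMultiplicity} to $\widetilde{f}\circ g$ handles all of them at once; you instead keep $V$ generic and bound each intersection $W_S$ separately via $V_S=V\times_{U\times_X Y}W_S$, chaining Proposition~\ref{prop:LocalMultiplicityEtale} (for $g_S$) with Proposition~\ref{prop:LocalMultiplicity} (for $p_S$). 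Both arguments are valid; the paper's choice of $V$ is slicker, while yours makes the contribution of each stratum explicit. The concerns you flag at the end (affineness of $j_S$ when the target is separated, and that $Df'_+$ applied to the \v{C}ech resolution computes $Df'_+\widetilde{\varphi}^*\calM_\lambda$) are routine and present no obstruction.
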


\begin{proof}
	Take $T=(U, \varphi, \Phi) \in \calB$ with affine $U$, and a surjective \'etale morphism $V\rightarrow U\times_X Y$ from an affine variety $V$.
	Consider the following diagram:
	\begin{align*}
		\xymatrix{
			V \ar[r]^-{g} & U\times_X Y \ar[r]^-{\widetilde{f}} \ar[d]^-{\widetilde{\varphi}} & U \ar[d]^-{\varphi} \\
			& Y \ar[r]^-{f} & X,
		}
	\end{align*}
	where $\widetilde{\varphi}$ and $\widetilde{f}$ are the projections.
	Then $(V, \widetilde{\varphi}\circ g, (\widetilde{f}\circ g)^\# \Phi)$
	is in $f^\#\calB$ by Proposition \ref{prop:LiftTrivialization}.
	Since $L(\widetilde{\varphi}\circ g)^* \circ Lf^* = L(\widetilde{f}\circ g)^* \circ L\varphi^*$ holds (Fact \ref{fact:FundamentalDmodule} \eqref{eqn:InverseImage}),
	the assertion for the inverse image functor is reduced to Proposition \ref{prop:LocalMultiplicity} for $f = \widetilde{f}\circ g$.

	We shall show the assertion for $Df_+$.
	Take a finite affine open covering $\set{V_i}_{i = 0,1,2,\ldots, r}$ of $U\times_X Y$ and replace $V$ with the $(r+1)$-fold fiber product of $\bigsqcup_{i} V_i$.
	Let $\calM \in \Mod_{ub}(f^\#\Dsheaf_{X,\Lambda}, f^\#\calB)$
	and fix $\lambda \in \Lambda$.
	Then $\calM_\lambda|_{U\times_X Y}$ is quasi-isomorphic to the \v{C}ech complex
	\begin{align*}
		0\rightarrow C^0 \rightarrow C^1 \rightarrow \cdots \rightarrow C^r \rightarrow 0
	\end{align*}
	with respect to the covering $\set{V_i}$.
	By the construction of \v{C}ech complex, $\bigoplus_i C^i$ is a direct summand of $g_*(\calM_\lambda|_V)$.

	By the base change theorem (Fact \ref{fact:BaseChange}), there is an isomorphism $L\varphi^* \circ Df_+ \simeq D\widetilde{f}_+ \circ L\widetilde{\varphi}^*$ of functors, and hence we have
	\begin{align*}
		L\varphi^* \circ Df_+(\calM_\lambda) \simeq D\widetilde{f}_+ (\calM_\lambda|_{U\times_X Y}) \simeq D\widetilde{f}_+(C^\cxdot).
	\end{align*}
	For a closed embedding $\iota\colon U\rightarrow \CC^n$, we have
	\begin{align*}
		m_\iota(D\widetilde{f}_+(C^\cxdot)) &\leq \sum_i m_{\iota}(D\widetilde{f}_+(C^i)) \\
		&\leq m_\iota(D\widetilde{f}_+ \circ g_* (\calM_\lambda|_V)) \\
		& =  m_\iota(D(\widetilde{f} \circ g)_+ (\calM_\lambda|_V)).
	\end{align*}
	Here the first inequality follows from Lemma \ref{lem:AdditiveFunctionDerived} (iii) for the complex $C^\cxdot$.
	Note that $Dg_+$ is isomorphic to $g_*$ (see the proof of Proposition \ref{prop:LocalMultiplicityEtale}).

	By Proposition \ref{prop:LocalMultiplicity} and $\calM \in \Mod_{ub}(f^\#\Dsheaf_{X,\Lambda}, f^\#\calB)$, there is a constant $C$ independent of $\lambda$ such that
	\begin{align*}
		m_{\iota}(L\varphi^* \circ Df_+(\calM_\lambda)) \leq m_\iota(D(\widetilde{f} \circ g)_+ (\calM_\lambda|_V)) \leq C.
	\end{align*}
	This shows $Df_+(\calM) \in D^b_{ub}(\Dsheaf_{X,\Lambda}, \calB)$.
\end{proof}

We study the external tensor product of uniformly bounded families.
Let $\Dsheaf_{Y,\Lambda}$ be a family of algebras of twisted differential operators on a smooth variety $Y$ with the same index set $\Lambda$ as $\Dsheaf_{X,\Lambda}$.

\begin{definition}\label{def:TensorBornology}
	Let $\calB$ and $\calB'$ be bornologies of $\Dsheaf_{X,\Lambda}$ and $\Dsheaf_{Y,\Lambda}$, respectively.
	We denote by $\calB\boxtimes \calB'$ the equivalence class of $(U\times V, \varphi \times \psi, \Phi\boxtimes \Psi = (\Phi_\lambda \boxtimes \Psi_\lambda)_{\lambda \in \Lambda})$ for some $(U, \varphi, \Phi) \in \calB$
	and $(V, \psi, \Psi) \in \calB'$.
	If $X = Y$, we denote by $\calB\tensorBorn \calB'$ the pull-back of $\calB\boxtimes \calB'$ by the diagonal embedding $X\hookrightarrow X\times X$.
\end{definition}

It is easy to see that the definition is well-defined.
We set $\Dsheaf_{X,\Lambda}\boxtimes \Dsheaf_{Y,\Lambda}:=(\Dsheaf_{X,\lambda}\boxtimes \Dsheaf_{Y,\lambda})_{\lambda \in \Lambda}$ and
$\calM\boxtimes \calN := (\calM_\lambda \boxtimes \calN_\lambda)_{\lambda \in \Lambda}$ for $\calM \in D^b_{ub}(\Dsheaf_{X,\Lambda}, \calB)$
and $\calN \in D^b_{ub}(\Dsheaf_{Y,\Lambda}, \calB')$.
By Fact \ref{fact:MultiplicityTensor}, we obtain the following theorem.

\begin{theorem}\label{thm:TensorUniformlyBounded}
	Let $\calB$ and $\calB'$ be bornologies of $\Dsheaf_{X,\Lambda}$ and $\Dsheaf_{Y,\Lambda}$, respectively.
	Then $\calB\boxtimes \calB'$ is a bornology of $\Dsheaf_{X,\Lambda}\boxtimes \Dsheaf_{Y,\Lambda}$ and we have a bifunctor
	\begin{align*}
		(\cdot) \boxtimes (\cdot)\colon D^b_{ub}(\Dsheaf_{X,\Lambda}, \calB) \times D^b_{ub}(\Dsheaf_{Y,\Lambda}, \calB') \rightarrow D^b_{ub}(\Dsheaf_{X,\Lambda}\boxtimes \Dsheaf_{Y,\Lambda}, \calB\boxtimes \calB').
	\end{align*}
	Moreover, for any $\calM \in D^b_{h}(\Dsheaf_{X,\Lambda})$
	and $\calN \in D^b_{h}(\Dsheaf_{Y,\Lambda})$,
	both $\calM$ and $\calN$ are uniformly bounded if and only if so is $\calM\boxtimes \calN$.
\end{theorem}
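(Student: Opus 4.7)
The plan is to verify, in order, that (1) $\calB\boxtimes\calB'$ is a well-defined bornology, (2) external tensoring of families preserves uniform boundedness via a multiplicativity identity on the level of multiplicities, and (3) the converse direction of the ``moreover'' follows from the lower bound $m\geq 1$ for nonzero holonomic modules.

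For (1), pick representatives $T=(U,\varphi,\Phi)\in\calB$ and $T'=(V,\psi,\Psi)\in\calB'$, and observe the natural identification $(U\times V)\times_{X\times Y}(U\times V)\simeq(U\times_X U)\times(V\times_Y V)$. Since the tangent sheaf of a product splits as $q_1^*\calT_{W_1}\oplus q_2^*\calT_{W_2}$, the Picard-algebroid description of Subsection \ref{sect:PicardAlgebroid} shows that under $\Aut(\ntDsheaf_{\cxdot})\simeq\calZ(\cxdot)$, an automorphism of the form $\alpha\boxtimes\beta$ on $\ntDsheaf_{W_1}\boxtimes\ntDsheaf_{W_2}$ corresponds to the $1$-form $q_1^*\omega_\alpha+q_2^*\omega_\beta$. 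Consequently,
\[
  \calZ(T\boxtimes T',T\boxtimes T')\subset q_1^*\calZ(T,T)+q_2^*\calZ(T',T'),
\]
which is finite-dimensional by the boundedness of $T$ and $T'$; thus $T\boxtimes T'$ is bounded. Applied to two pairs of equivalent representatives, the same estimate shows that $\calB\boxtimes\calB'$ does not depend on the choice of $T,T'$.

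For (2), choose representatives $T\in\calB,T'\in\calB'$ with $U,V$ affine (Proposition \ref{prop:LiftTrivialization}), together with closed embeddings $\iota\colon U\hookrightarrow\CC^n$ and $\iota'\colon V\hookrightarrow\CC^m$; then $\iota\times\iota'\colon U\times V\hookrightarrow\CC^{n+m}$ is a closed embedding. The basic compatibility $D(\iota\times\iota')_+(A\boxtimes B)\simeq D\iota_+(A)\boxtimes D\iota'_+(B)$ of direct images with external tensor along closed embeddings, combined with Fact \ref{fact:MultiplicityTensor}, gives
\[
  m_{\iota\times\iota'}((\calM_\lambda\boxtimes\calN_\lambda)|_{U\times V}) \;=\; m_\iota(\calM_\lambda|_U)\cdot m_{\iota'}(\calN_\lambda|_V).
\]
Thus if $\calM$ and $\calN$ are uniformly bounded, the right-hand side is bounded in $\lambda$ as the product of two bounded $\NN$-valued functions, so $\calM\boxtimes\calN\in\Mod_{ub}(\Dsheaf_{X,\Lambda}\boxtimes\Dsheaf_{Y,\Lambda},\calB\boxtimes\calB')$. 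For the derived version, external tensor over $\CC$ is exact in each variable, so the Künneth decomposition
\[
  H^k(\calM^\cxdot\boxtimes\calN^\cxdot)=\bigoplus_{i+j=k} H^i(\calM^\cxdot)\boxtimes H^j(\calN^\cxdot)
\]
together with Lemma \ref{lem:AdditiveFunctionDerived} reduces the complex case to the module case termwise.

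For (3), assume $\calM\boxtimes\calN$ is uniformly bounded, so $m_\iota(\calM_\lambda|_U)\cdot m_{\iota'}(\calN_\lambda|_V)\leq C$ for a constant $C$ and all $\lambda$. By Fact \ref{fact:WeylAlgebraMultiplicity}(iv)--(v), a nonzero holonomic $\ntDsheaf_{\CC^n}$-module has multiplicity in $\NN_{\geq 1}$; hence at any $\lambda$ where both $\calM_\lambda$ and $\calN_\lambda$ are nonzero the product bound $C$ passes to each factor, while where either vanishes the corresponding multiplicity is zero. The main technical point I expect is the additivity of the $\Aut\leftrightarrow\calZ$ correspondence with respect to external products in step (1); once that is unwound via Picard algebroids, the remainder of the proof is essentially arithmetic on top of Fact \ref{fact:MultiplicityTensor} and Lemma \ref{lem:AdditiveFunctionDerived}.
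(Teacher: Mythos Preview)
Your proof is correct and follows the same approach as the paper, which simply cites Fact~\ref{fact:MultiplicityTensor} (after remarking before Definition~\ref{def:TensorBornology} that the well-definedness of $\calB\boxtimes\calB'$ is easy); you have spelled out exactly the details the paper leaves implicit, including the additivity of the $\Aut\simeq\calZ$ correspondence under $\boxtimes$ and the K\"unneth reduction for complexes. One small caveat in your step~(3): at indices $\lambda$ with $\calM_\lambda=0$ the product bound gives no control on $\calN_\lambda$, so the converse in the ``moreover'' clause tacitly requires that neither family vanish at any $\lambda$---an imprecision the paper's one-line argument shares.
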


\subsection{Twisting, opposite and tensor product}\label{sect:twisting}

We consider operations of algebras of twisted differential operators:
twisting by an invertible sheaf, taking opposite algebras and tensor products.
Corresponding to the operations, we introduce operations of a bornology.

Let $\Dsheaf_{X, \Lambda} = (\Dsheaf_{X,\lambda})_{\lambda \in \Lambda}$ be a family of algebras of twisted differential operators on a smooth variety $X$.
Let $\calB$ be a bornology of $\Dsheaf_{X,\Lambda}$ and $\calL$ an invertible sheaf on $X$.
Then we have a new family
\begin{align*}
	\Dsheaf_{X,\Lambda}^\calL := (\calL\otimes_{\rsheaf{X}} \Dsheaf_{X,\lambda}\otimes_{\rsheaf{X}} \calL^{\invshf})_{\lambda \in \Lambda}.
\end{align*}
Remark that for a morphism $f\colon Y\rightarrow X$ of smooth varieties, there is a canonical isomorphism
\begin{align}
	f^\#(\calL\otimes_{\rsheaf{X}} \Dsheaf_{X,\lambda}\otimes_{\rsheaf{X}} \calL^{\invshf}) \simeq f^*(\calL) \otimes_{\rsheaf{Y}} f^\#\Dsheaf_{X,\lambda}\otimes_{\rsheaf{Y}} f^*(\calL)^\invshf. \label{eqn:IsomTwist}
\end{align}
See e.g.\ \cite[Lemma 1.1.5]{KaTa96}.

We shall construct a bornology of $\Dsheaf^\calL_{X,\Lambda}$.
Since $\calL$ is an invertible sheaf, there is a bounded trivialization $T=(U,\varphi, \Phi) \in \calB$ such that $\calL|_U$ is isomorphic to $\rsheaf{U}$.
Take a trivialization $\alpha\colon \calL|_U \xrightarrow{\simeq} \rsheaf{U}$.
Then $T$ and $\alpha$ induce an isomorphism $\Phi^{\calL, \alpha}$ given by
\begin{align*}
	(\calL\otimes_{\rsheaf{X}} \Dsheaf_{X,\Lambda}\otimes_{\rsheaf{X}} \calL^{\invshf})|_U \xrightarrow{\id\otimes \Phi\otimes \id} \calL|_U\otimes_{\rsheaf{U}} \ntDsheaf_{U}\otimes_{\rsheaf{U}} (\calL|_U)^{\invshf} \rightarrow \ntDsheaf_{U}.
\end{align*}
We obtain a trivialization $T^{\calL, \alpha}=(U,\varphi, \Phi^{\calL,\alpha})$ of $\Dsheaf_{X,\Lambda}^\calL$.

\begin{lemma}\label{lem:BoundedTwistedBornology}
	Take $S=(V,\psi, \Psi) \in \calB$ and an isomorphism $\beta\colon \calL|_V \rightarrow \rsheaf{V}$.
	Then we have $S^{\calL,\beta}\sim T^{\calL,\alpha}$.
	In particular, $T^{\calL,\alpha}$ is a bounded trivialization.
\end{lemma}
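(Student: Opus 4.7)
My plan is to compare the two trivializations on $A := U\times_X V$ by isolating the contribution coming from the mismatch of the trivializations $\alpha$ and $\beta$ of $\calL$, and absorbing the rest into the given relation $T\sim S$. Using the canonical identification $(\varphi\circ\widetilde\psi)^*\calL \simeq (\psi\circ\widetilde\varphi)^*\calL$, both $\widetilde\psi^*\alpha$ and $\widetilde\varphi^*\beta$ are trivializations of the same invertible sheaf $\calL|_A$, so their quotient $(\widetilde\varphi^*\beta)\circ(\widetilde\psi^*\alpha)^{-1}$ is multiplication by a fixed invertible function $f\in \rsheaf{A}^\times$, independent of $\lambda$.

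Next, I would trace through the definition of $\Phi^{\calL,\alpha}_\lambda$ and $\Psi^{\calL,\beta}_\lambda$. Writing $\calA_\lambda := \calL\otimes_{\rsheaf{X}}\Dsheaf_{X,\lambda}\otimes_{\rsheaf{X}}\calL^{\invshf}$ and using the canonical isomorphism \eqref{eqn:IsomTwist} for pull-backs, the map $\Phi^{\calL,\alpha}_\lambda$ factors as $\mathrm{ad}(\alpha)\circ(\id\otimes\Phi_\lambda\otimes\id)$, where $\mathrm{ad}(\alpha)$ denotes the isomorphism $\rsheaf{U}\otimes\ntDsheaf_U\otimes\rsheaf{U}\xrightarrow{\sim}\ntDsheaf_U$ induced by $\alpha$, and similarly for $\Psi^{\calL,\beta}_\lambda$. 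The composition $\widetilde\varphi^\#\Psi^{\calL,\beta}_\lambda\circ(\widetilde\psi^\#\Phi^{\calL,\alpha}_\lambda)^{-1}$ therefore splits as the conjugation-by-$f$ automorphism of $\ntDsheaf_A$ composed with the untwisted cocycle $\widetilde\varphi^\#\Psi_\lambda\circ(\widetilde\psi^\#\Phi_\lambda)^{-1}$.

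Under the group isomorphism $\Aut(\ntDsheaf_A)\simeq \calZ(A)$, composition of automorphisms corresponds to addition of closed $1$-forms (by Lemma \ref{lem:CommutativeClosed1Forms} and the fact that the isomorphism is a group homomorphism), while conjugation by the unit $f$ corresponds to the exact $1$-form $df/f$. Hence
\begin{align*}
\calZ(T^{\calL,\alpha},S^{\calL,\beta}) \subseteq \calZ(T,S) + \CC\cdot (df/f).
\end{align*}
Since $T,S\in\calB$ yields $T\sim S$, the set $\calZ(T,S)$ spans a finite-dimensional subspace of $\calZ(A)$; adding a single fixed form $df/f$ keeps the span finite-dimensional. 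This proves $T^{\calL,\alpha}\sim S^{\calL,\beta}$. Specializing to $S=T$ and $\beta=\alpha$ (so that $f=1$ and $df/f=0$) gives $T^{\calL,\alpha}\sim T^{\calL,\alpha}$, i.e.\ $T^{\calL,\alpha}$ is bounded.

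The main obstacle is the verification in the second paragraph: one must carefully check that the composite automorphism really decomposes as claimed, i.e.\ that the $\lambda$-dependence is confined to the factor $\widetilde\varphi^\#\Psi_\lambda\circ(\widetilde\psi^\#\Phi_\lambda)^{-1}$ and that the remaining factor is genuinely conjugation by $f$ on $\ntDsheaf_A$ (and not some more complicated automorphism depending on $\lambda$). This is really a bookkeeping exercise about how the identification \eqref{eqn:IsomTwist} interacts with pull-back and with the Picard-algebroid action described in Section \ref{sect:PicardAlgebroid}, but it is the only nontrivial computation in the argument.
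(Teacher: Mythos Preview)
Your proposal is correct and follows essentially the same route as the paper. The paper's proof is more terse: it simply defines $f\in\rring{U\times_X V}^\times$ via $\beta|_{U\times_X V}\circ(\alpha|_{U\times_X V})^{-1}$ and asserts the exact identity $\calZ(T^{\calL,\alpha},S^{\calL,\beta}) = f^{-1}df + \calZ(T,S)$ (a translate of sets rather than your containment in a subspace), whence the finite-dimensionality of the span is immediate; your second paragraph is precisely the bookkeeping that justifies this identity.
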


\begin{proof}
	$\alpha$ and $\beta$ induce an isomorphism
	\begin{align*}
		\rsheaf{U\times_X V} \xrightarrow{\alpha^{-1}|_{U\times_X V}} \calL|_{U\times_X V} \xrightarrow{\beta|_{U\times_X V}} \rsheaf{U\times_X V}.
	\end{align*}
	We write $f \in \rring{U\times_X V}^{\times}$ for the image of $1$ by the isomorphism.
	Then we have
	\begin{align*}
		\calZ(T^{\calL,\alpha},S^{\calL,\beta}) = f^{-1}df + \calZ(T,S)
	\end{align*}
	(see Definition \ref{def:BoundedTrivialization}).
	This shows the lemma.
\end{proof}

By the lemma, the following definition is well-defined.

\begin{definition}
	We denote by $\calB^\calL$ the equivalence class of $T^{\calL, \alpha}$.
\end{definition}

It is well-known that the functor
\begin{align*}
	\calL\otimes_{\rsheaf{X}}(\cdot) \colon \Mod_h(\Dsheaf_{X,\lambda})
	\rightarrow \Mod_h(\calL\otimes_{\rsheaf{X}}\Dsheaf_{X,\lambda}\otimes_{\rsheaf{X}} \calL^{\invshf})
\end{align*}
gives an equivalence of categories.
We denote by $\calL\otimes_{\rsheaf{X}}(\cdot)$ the direct product $\prod_{\lambda \in \Lambda}\calL\otimes_{\rsheaf{X}}(\cdot)$ of the functors by abuse of notation.

\begin{proposition}
	The functor $\calL\otimes_{\rsheaf{X}}(\cdot)$ preserves the uniform boundedness, that is, we have functors
	\begin{align*}
		\calL\otimes_{\rsheaf{X}}(\cdot) \colon& \Mod_{ub}(\Dsheaf_{X,\Lambda}, \calB)
		\rightarrow \Mod_{ub}(\Dsheaf_{X,\Lambda}^\calL, \calB^\calL), \\
		\calL\otimes_{\rsheaf{X}}(\cdot) \colon& D^b_{ub}(\Dsheaf_{X,\Lambda}, \calB)
		\rightarrow D^b_{ub}(\Dsheaf_{X,\Lambda}^\calL, \calB^\calL).
	\end{align*}
	Moreover, the two functors give equivalences of categories.
\end{proposition}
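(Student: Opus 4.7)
The plan is to use Lemma \ref{lem:BoundedTwistedBornology} to reduce the problem to a local comparison where $\calL$ becomes trivial. Choose a bounded trivialization $T = (U, \varphi, \Phi) \in \calB$ with $U$ affine and $\calL|_U$ isomorphic to $\rsheaf{U}$ (such a $T$ exists by the discussion preceding Lemma \ref{lem:BoundedTwistedBornology}), and fix a trivialization $\alpha : \calL|_U \xrightarrow{\simeq} \rsheaf{U}$. Then $T^{\calL, \alpha} = (U, \varphi, \Phi^{\calL, \alpha}) \in \calB^\calL$ is a bounded trivialization with the same affine $U$, so I may use $T$ to test uniform boundedness on the source side and $T^{\calL, \alpha}$ on the target side.

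The key step is the following compatibility: the $\rsheaf{U}$-module isomorphism $\alpha \otimes \id : (\calL \otimes_{\rsheaf{X}} \calM_\lambda)|_U \xrightarrow{\simeq} \calM_\lambda|_U$ intertwines the $\ntDsheaf_U$-action coming from $\Phi^{\calL, \alpha}_\lambda$ on the left with the one coming from $\Phi_\lambda$ on the right. This is essentially built into the definition of $\Phi^{\calL,\alpha}$: both the algebra-side identification $\Phi^{\calL,\alpha}_\lambda$ and the module-side identification $\alpha \otimes \id$ are assembled from the single datum $\alpha$ (together with its inverse transpose $\alpha^\invshf$), and unwinding the action of $\calL \otimes_{\rsheaf{X}} \Dsheaf_{X,\lambda} \otimes_{\rsheaf{X}} \calL^\invshf$ on $\calL \otimes_{\rsheaf{X}} \calM_\lambda$ shows that the twist by $\alpha$ on the algebra factor and the twist by $\alpha^\invshf$ on the coalgebra factor cancel against the twist by $\alpha$ on the module, leaving no residual closed $1$-form. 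Granting this, for every closed embedding $\iota : U \hookrightarrow \CC^n$ one obtains the exact equality
\[
m_\iota\bigl((\calL \otimes_{\rsheaf{X}} \calM_\lambda)|_U\bigr) = m_\iota(\calM_\lambda|_U),
\]
so the boundedness in $\lambda$ on the left is equivalent to that on the right. Hence $\calL \otimes_{\rsheaf{X}}(\cdot)$ sends $\Mod_{ub}(\Dsheaf_{X,\Lambda}, \calB)$ into $\Mod_{ub}(\Dsheaf_{X,\Lambda}^\calL, \calB^\calL)$.

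Since $\calL$ is locally free, $\calL \otimes_{\rsheaf{X}}(\cdot)$ is exact and commutes with cohomology, so the same estimate immediately gives the derived-category version. For the equivalence, apply the construction symmetrically with $\calL^\invshf$ in place of $\calL$: this produces a functor $\calL^\invshf \otimes_{\rsheaf{X}}(\cdot)$ in the opposite direction under the canonical identifications $(\Dsheaf_{X,\Lambda}^\calL)^{\calL^\invshf} \simeq \Dsheaf_{X,\Lambda}$ and $(\calB^\calL)^{\calL^\invshf} = \calB$, and the natural isomorphisms $\calL^\invshf \otimes_{\rsheaf{X}} \calL \simeq \rsheaf{X} \simeq \calL \otimes_{\rsheaf{X}} \calL^\invshf$ exhibit the two compositions as isomorphic to the identity functors.

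The main obstacle is the intertwining assertion in the second paragraph; the rest of the argument is purely formal. While the verification is routine once the definitions are unwound, it requires careful bookkeeping of how the action of the twisted algebra on the twisted module transforms under a chosen trivialization $\alpha$. The payoff of doing this cleanly is that no loss of constant appears and Corollary \ref{cor:MultiplicityTwisted} need not be invoked, giving an exact preservation of the multiplicity rather than merely a bounded one.
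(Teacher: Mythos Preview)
Your proposal is correct and follows essentially the same approach as the paper: both reduce to the case where $\calL$ is trivial by passing to an \'etale cover $U$ on which $\calL$ trivializes, whereupon the functor becomes the identity and the multiplicities agree on the nose. The paper compresses this into the single sentence ``since the assertion is local for $X$, we can assume $\calL\simeq\rsheaf{X}$,'' while you have unwound what locality means here---choosing $T=(U,\varphi,\Phi)\in\calB$ with $U$ affine and $\calL|_U$ trivial, and checking the intertwining of $\Phi^{\calL,\alpha}$ with $\Phi$ via $\alpha$---but the substance is identical.
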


\begin{proof}
	Since the assertion is local for $X$, we can assume $\calL \simeq \rsheaf{X}$.
	In the case, the proposition is clear.
\end{proof}

Next we consider the family of the opposite algebras $\Dsheaf_{X,\lambda}^{\opalg}$.
Note that $(\cdot)^\opalg$ is a functor on the category of algebras of twisted differential operators on $X$.
We set $\Dsheaf_{X,\Lambda}^\opalg:= (\Dsheaf_{X,\lambda}^{\opalg})_{\lambda \in \Lambda}$.
We shall construct a bornology of $\Dsheaf_{X,\Lambda}^\opalg$ from the bornology $\calB$.

Recall that there is a canonical isomorphism
\begin{align}
	\ntDsheaf_{X}^\opalg \simeq \Omega_{X}\otimes_{\rsheaf{X}} \ntDsheaf_{X} \otimes_{\rsheaf{X}} \Omega_X^\invshf, \label{eqn:IsomOp}
\end{align}
where $\Omega_X$ is the canonical sheaf of $X$.
See \cite[Lemma 1.2.7]{HTT08}.
The isomorphism induces an automorphism of the space $\calZ(X)$ of closed $1$-forms as
\begin{align*}
	\calZ(X) \xrightarrow{\simeq}& \Aut(\ntDsheaf_X) \xrightarrow{(\cdot)^\opalg} \Aut(\ntDsheaf_X^\opalg)
	\xrightarrow{\simeq} \Aut(\Omega_{X}\otimes_{\rsheaf{X}} \ntDsheaf_{X} \otimes_{\rsheaf{X}} \Omega_X^\invshf) \\
	&\xrightarrow{\simeq} \Aut(\ntDsheaf_X) \xrightarrow{\simeq} \calZ(X).
\end{align*}
Here the fourth isomorphism comes from the isomorphism $\ntDsheaf_X \simeq \Omega_{X}^\invshf\otimes_{\rsheaf{X}}(\Omega_{X}\otimes_{\rsheaf{X}} \ntDsheaf_{X} \otimes_{\rsheaf{X}} \Omega_X^\invshf)\otimes_{\rsheaf{X}} \Omega_X$.

\begin{lemma}\label{lem:OpAutomorphism}
	The automorphism of $\calZ(X)$ is the multiplication map by $-1$.
\end{lemma}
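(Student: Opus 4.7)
The statement is local on $X$, so I pass to an \'etale neighborhood where $\Omega_X$ admits a nowhere-vanishing section $\omega_0$, and write $\mathrm{div}_{\omega_0}(T) \in \rsheaf{X}$ for the function with $L_T\omega_0 = \mathrm{div}_{\omega_0}(T)\omega_0$. Trivialization by $\omega_0$ gives an $\rsheaf{X}$-bilinear algebra isomorphism $\alpha \colon \Omega_X \otimes_{\rsheaf{X}} \ntDsheaf_X \otimes_{\rsheaf{X}} \Omega_X^\invshf \xrightarrow{\simeq} \ntDsheaf_X$, $\omega_0 \otimes D \otimes \omega_0^\vee \mapsto D$. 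Because the fourth isomorphism in the chain comes from the canonical identification $\rsheaf{X} = \Omega_X^\invshf \otimes \Omega_X$, it transfers automorphisms by acting on the middle factor alone; composing with $\alpha$, the composite automorphism of $\ntDsheaf_X$ associated to $\omega \in \calZ(X)$ is $\phi \circ A_\omega^{\opalg} \circ \phi^{-1}$, where $\phi := \alpha \circ \Psi$ and $\Psi$ denotes the canonical isomorphism $\ntDsheaf_X^\opalg \xrightarrow{\simeq} \Omega_X \otimes \ntDsheaf_X \otimes \Omega_X^\invshf$.

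The key step is to identify $\phi$ as the formal adjoint with respect to $\omega_0$. Here $\Psi$ is characterized by the compatibility of the natural left actions on $\Omega_X$: the left $\ntDsheaf_X^\opalg$-action sends $T \in \calT_X$ to $T \cdot_\opalg \omega := \omega \cdot T = -L_T\omega$, while the action of $\Omega_X \otimes \ntDsheaf_X \otimes \Omega_X^\invshf$ sends $(\omega_0 \otimes E \otimes \omega_0^\vee) \cdot (f\omega_0)$ to $(Ef)\omega_0$. Writing $\phi(T) = E$ and equating the actions on $f\omega_0$ yields $(Ef)\omega_0 = -(Tf + f\,\mathrm{div}_{\omega_0}(T))\omega_0$, so $E = -T - \mathrm{div}_{\omega_0}(T) =: T^*$. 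Thus $\phi$ is the formal adjoint, which is an involution on the underlying set, so $\phi^{-1} = \phi$.

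I then compute directly. For $T \in \calT_X$,
\begin{align*}
\phi(A_\omega(\phi(T))) &= \phi(A_\omega(-T - \mathrm{div}_{\omega_0}(T))) = \phi(-T + \omega(T) - \mathrm{div}_{\omega_0}(T)) \\
&= T + \mathrm{div}_{\omega_0}(T) + \omega(T) - \mathrm{div}_{\omega_0}(T) = T + \omega(T) = A_{-\omega}(T),
\end{align*}
using that $\phi$ is $\CC$-linear, fixes $\rsheaf{X}$, and sends $-T$ to $T + \mathrm{div}_{\omega_0}(T)$. Hence the transferred automorphism of $\ntDsheaf_X$ is $A_{-\omega}$, and the induced map on $\calZ(X)$ is $\omega \mapsto -\omega$. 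The main obstacle is Step 1, pinning down the local form of $\Psi$ as the formal adjoint; once that identification is secured, the remaining calculation is an elementary algebraic manipulation.
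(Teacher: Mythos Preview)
Your proof is correct and is precisely the ``easy explicit computation'' that the paper alludes to but omits; you have supplied the details by identifying the composite $\phi = \alpha \circ \Psi$ with the formal adjoint relative to $\omega_0$ and then verifying $\phi A_\omega \phi^{-1} = A_{-\omega}$ on generators. There is nothing to add---this is exactly the intended argument.
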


\begin{proof}
	The lemma can be shown by an easy explicit computation.
\end{proof}

Remark that for an \'etale morphism $f\colon U\rightarrow X$, there are canonical isomorphisms
\begin{align*}
	f^*\Omega_X &\simeq \Omega_U, \\
	\Omega_f &\simeq \rsheaf{U}, \\
	f^\#(\Dsheaf_{X,\lambda}^\opalg) &\simeq (f^\#\Dsheaf_{X,\lambda})^\opalg.
\end{align*}
In particular, the canonical isomorphism \eqref{eqn:IsomOp} commutes with the pull-back by the \'etale morphism.

Take a bounded trivialization $T=(U,\varphi, \Phi) \in \calB$ such that $\Omega_X|_U\simeq \Omega_U$ is isomorphic to $\rsheaf{U}$.
Take a trivialization $\alpha\colon \Omega_U \xrightarrow{\simeq} \rsheaf{U}$.
Then $T$ and $\alpha$ induce an isomorphism $\Phi^{\opalg, \alpha}$ given by
\begin{align*}
	\ntDsheaf_{X,\Lambda}^{\opalg}|_U \xrightarrow{\Phi^\opalg} \ntDsheaf_{U}^\opalg \xrightarrow{\simeq} \Omega_U\otimes_{\rsheaf{U}} \ntDsheaf_{U}\otimes_{\rsheaf{U}} \Omega_U^{\invshf} \rightarrow \ntDsheaf_{U}.
\end{align*}
We obtain a trivialization $T^{\opalg, \alpha}=(U,\varphi, \Phi^{\opalg,\alpha})$.

\begin{lemma}
	Take $S=(V,\psi,\Psi) \in \calB$ and an isomorphism $\beta\colon \Omega_V \rightarrow \rsheaf{V}$.
	Then we have $S^{\opalg, \beta}\simeq T^{\opalg, \alpha}$.
	In particular, $T^{\opalg, \alpha}$ is bounded.
\end{lemma}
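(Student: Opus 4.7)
The plan is to mimic the proof of Lemma \ref{lem:BoundedTwistedBornology}, inserting the sign change supplied by Lemma \ref{lem:OpAutomorphism}. Set $W := U \times_X V$ and let $\widetilde{\psi}\colon W \to U$, $\widetilde{\varphi}\colon W \to V$ be the projections. Since both projections are \'etale, the canonical isomorphisms $\widetilde{\psi}^* \Omega_U \simeq \Omega_W \simeq \widetilde{\varphi}^* \Omega_V$ let us regard both $\widetilde{\psi}^*\alpha$ and $\widetilde{\varphi}^*\beta$ as trivializations of $\Omega_W$; define $f \in \rring{W}^{\times}$ to be the unique unit with $\widetilde{\varphi}^*\beta = f \cdot \widetilde{\psi}^*\alpha$.

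Next I would decompose the transition automorphism
\begin{align*}
\widetilde{\varphi}^{\#} \Psi_\lambda^{\opalg, \beta} \circ (\widetilde{\psi}^{\#} \Phi_\lambda^{\opalg, \alpha})^{-1} \in \Aut(\ntDsheaf_W)
\end{align*}
into three independent pieces: first, the pullback of the transition $\Psi_\lambda \circ \Phi_\lambda^{-1}$ to $W$, whose associated closed 1-form lies in $\calZ(T, S)$; second, the effect of passing through the opposite algebra via the canonical isomorphism \eqref{eqn:IsomOp}, which by Lemma \ref{lem:OpAutomorphism} negates the associated 1-form; and third, the discrepancy between the trivializations $\alpha$ and $\beta$ of the canonical sheaf, which is inner conjugation by the unit $f$ and therefore contributes the closed 1-form $f^{-1} df$, exactly as in the proof of Lemma \ref{lem:BoundedTwistedBornology}. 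Adding these contributions I expect an identity of the shape
\begin{align*}
\calZ(T^{\opalg, \alpha}, S^{\opalg, \beta}) = f^{-1} df - \calZ(T, S).
\end{align*}
Since $T \sim S$, the set $\calZ(T,S)$ spans a finite-dimensional subspace of $\calZ(W)$, and hence so does the left-hand side, yielding $T^{\opalg,\alpha} \sim S^{\opalg, \beta}$.

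For the final assertion I would specialize to $S = T$ and $\beta = \alpha$: then $f = 1$, so $f^{-1} df = 0$ and the right-hand side becomes $-\calZ(T, T)$, which spans a finite-dimensional subspace by boundedness of $T$; hence $T^{\opalg, \alpha}$ is bounded. The main technical obstacle is verifying the three contributions on the Picard-algebroid level carefully enough to confirm the signs and additive structure of the displayed identity; however, by local triviality of $\Dsheaf_{X,\lambda}$ this reduces to an explicit computation with $\ntDsheaf_U$, essentially the same bookkeeping that underlies Lemma \ref{lem:OpAutomorphism} and Lemma \ref{lem:BoundedTwistedBornology} combined.
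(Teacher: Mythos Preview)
Your proposal is correct and follows essentially the same route as the paper: the paper's proof consists precisely of the identity $\calZ(T^{\opalg,\alpha}, S^{\opalg,\beta}) = f^{-1}df - \calZ(T,S)$, obtained by combining the argument of Lemma~\ref{lem:BoundedTwistedBornology} with the sign from Lemma~\ref{lem:OpAutomorphism}, and then observes that this shows the lemma. Your write-up simply unpacks the three contributions to this identity more explicitly than the paper does.
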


\begin{proof}
	As we have seen in the proof of Lemma \ref{lem:BoundedTwistedBornology}, there is $f \in \rring{U\times_X V}^\times$ such that
	\begin{align*}
		\calZ(T^{\opalg,\alpha}, S^{\opalg, \beta}) = f^{-1}df - \calZ(T,S).
	\end{align*}
	The sign before $\calZ(T,S)$ comes from Lemma \ref{lem:OpAutomorphism}.
	This shows the lemma.
\end{proof}

\begin{definition}
	We denote by $\calB^\opalg$ the equivalence class of $T^{\opalg,\alpha}$.
\end{definition}

Corresponding to canonical isomorphisms of algebras of twisted differential operators, there are identities of bornologies.
Let $\iota$ be the diagonal embedding $X\hookrightarrow X\times X$.
For two algebras $\Dsheaf_1$ and $\Dsheaf_2$ of twisted differential operators on $X$,
we set
\begin{align*}
	\Dsheaf_1 \tensorDshf \Dsheaf_2 := \iota^\#(\Dsheaf_1 \boxtimes \Dsheaf_2).
\end{align*}
We use the same notation for families of algebras.

Let $\Dsheaf_{X,\Lambda}, \DsheafB_{X,\Lambda}$ and $\DsheafC_{X,\Lambda}$
be families of algebras of twisted differential operators on $X$ with the same index set $\Lambda$.
Let $f\colon Y\rightarrow X$ be a morphism of smooth varieties.
For the constant family $\ntDsheaf_{X,\Lambda}:=(\ntDsheaf_{X})_{\lambda \in \Lambda}$, we consider a bounded trivialization $(X,\id_X, \id)$.
We denote by $\calB_\id$ the equivalence class of the trivialization.
We use the same notation for the constant family $\ntDsheaf_{Y,\Lambda}$ on $Y$.
Fix an invertible sheaf $\calL$ on $X$.

By \cite[\S 1]{KaTa96}, we have canonical isomorphisms
\begin{align*}
	(\Dsheaf_{X,\Lambda}^\calL)^\opalg &\simeq (\Dsheaf_{X,\Lambda}^\opalg)^{\calL^\invshf} \\
	\Dsheaf_{X,\Lambda} \tensorDshf \DsheafB_{X,\Lambda} &\simeq \DsheafB_{X,\Lambda} \tensorDshf \Dsheaf_{X,\Lambda}\\
	(\Dsheaf_{X,\Lambda} \tensorDshf \DsheafB_{X,\Lambda})\tensorDshf \DsheafC_{X,\Lambda} &\simeq \Dsheaf_{X,\Lambda} \tensorDshf (\DsheafB_{X,\Lambda}\tensorDshf \DsheafC_{X,\Lambda}) \\
	\ntDsheaf_{X,\Lambda} \tensorDshf \Dsheaf_{X,\Lambda} &\simeq \Dsheaf_{X,\Lambda} \\
	\Dsheaf_{X,\Lambda}^\opalg \tensorDshf \Dsheaf_{X,\Lambda} &\simeq \ntDsheaf_{X,\Lambda}^\opalg \\
	f^\# \ntDsheaf_{X,\Lambda} &\simeq \ntDsheaf_{Y,\Lambda} \\
	f^\# (\Dsheaf_{X,\Lambda}^\calL) &\simeq (f^\#\Dsheaf_{X,\Lambda})^{f^*\calL} \\
	f^\#(\Dsheaf_{X,\Lambda}^\opalg)^{\Omega_f} &\simeq (f^\#(\Dsheaf_{X,\Lambda}))^\opalg.
\end{align*}
Here we set $\Omega_f = f^{-1}\Omega_{X}^{\invshf}\otimes_{f^{-1}\rsheaf{X}}\Omega_{Y}$ (see Subsection \ref{subsect:DirectImage}).
Since the isomorphisms are canonical, they are natural in $\Dsheaf_{X,\Lambda}, \DsheafB_{X,\Lambda}$ and $\DsheafC_{X,\Lambda}$.
It is easy to see that the isomorphisms and the operations commute with the pull-back by the following cartesian square:
\begin{align*}
	\xymatrix{
		Y \ar[r]^-f & X \\
		U\times_X Y \ar[r]^-{\widetilde{f}} \ar[u]^-{\widetilde{\varphi}}& U \ar[u]^-{\varphi},
	}
\end{align*}
where $\varphi\colon U\rightarrow X$ is an \'etale morphism.
For example, the following diagram commutes:
\begin{align*}
	\xymatrix{
		f^\#(\Dsheaf_{X,\Lambda}^\opalg)^{\Omega_f}|_{U\times_X Y} \ar[r]^{\simeq} & (f^\#(\Dsheaf_{X,\Lambda}))^\opalg|_{U\times_X Y} \\
		\widetilde{f}^\#((\Dsheaf_{X,\Lambda}|_U)^\opalg)^{\Omega_{\widetilde{f}}} \ar[r]^{\simeq} \ar[u]^{\simeq} & (\widetilde{f}^\#(\Dsheaf_{X,\Lambda}|_U))^\opalg \ar[u]^\simeq.
	}
\end{align*}

\begin{proposition}\label{prop:IdentityBornology}
	Let $\calB_1, \calB_2$ and $\calB_3$ be bornologies of $\Dsheaf_{X,\Lambda}$,
	$\DsheafB_{X,\Lambda}$ and $\DsheafC_{X,\Lambda}$.
	Under the above identifications, we have
	\begin{enumerate}[(i)]
		\item $(\calB_1^\calL)^\opalg = (\calB_1^\opalg)^{\calL^\invshf}$,
		\item $\calB_1 \tensorBorn \calB_2 = \calB_2 \tensorBorn \calB_1$,
		\item $(\calB_1 \tensorBorn \calB_2) \tensorBorn \calB_3 = \calB_1 \tensorBorn (\calB_2 \tensorBorn \calB_3)$,
		\item $\calB_{\id} \tensorBorn \calB_1 = \calB_1$,
		\item $\calB^\opalg_1 \tensorBorn \calB_1 = \calB^\opalg_{\id}$,
		\item $f^\#\calB_\id = \calB_\id$,
		\item $f^\#(\calB_1^\calL) = (f^\# \calB_1)^{\calL^\invshf}$,
		\item $f^\#(\calB_1^\opalg)^{\Omega_f} = (f^\#(\calB))^\opalg$.
	\end{enumerate}
\end{proposition}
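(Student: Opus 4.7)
All eight identities follow from the same template: on the algebra level the canonical isomorphisms listed just before the proposition are valid, so it suffices to exhibit, on each side of a given identity, a representative bounded trivialization, and to show that under the canonical identification of algebras these two representatives differ by a family of closed $1$-forms spanning a finite-dimensional subspace of $\calZ$. By Proposition \ref{prop:LiftTrivialization}, we may lift any representative from $\calB_1$ (and from $\calB_2$, $\calB_3$) to a common affine \'etale cover $\varphi\colon U \to X$ on which $\calL|_U$ and $\Omega_X|_U$ are free, and the constructions of $\calB_1\tensorBorn \calB_2$, $\calB_1^\calL$ and $\calB_1^\opalg$ then produce explicit trivializations to compare.

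The formal identities (ii), (iii), (iv) and (vi) require no correction cocycles at all. Indeed, the external tensor product $\Phi_\lambda \boxtimes \Psi_\lambda$ of trivializations inherits commutativity, associativity and the unit property from the corresponding canonical structure on the algebras, and the identity trivialization of $\ntDsheaf_X$ pulls back along an \'etale morphism to the identity trivialization of $\ntDsheaf_Y$, as one sees from Lemma \ref{lem:CommutativeClosed1Forms}.

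The remaining identities (i), (v), (vii) and (viii) involve the $\opalg$ and $\calL$-twist constructions, so one must track the exact $1$-forms $\alpha^{-1}d\alpha$ and $\beta^{-1}d\beta$ arising from the chosen trivializations $\alpha\colon \calL|_U \xrightarrow{\simeq} \rsheaf{U}$ and $\beta\colon \Omega_X|_U \xrightarrow{\simeq} \rsheaf{U}$, together with the sign prescribed by Lemma \ref{lem:OpAutomorphism}. In (i), both sides exhibit a trivialization of $\calL\otimes_{\rsheaf{X}}\Dsheaf^\opalg_{X,\lambda}\otimes_{\rsheaf{X}}\calL^\invshf$, and the two correction $1$-forms cancel once the sign from Lemma \ref{lem:OpAutomorphism} is inserted, leaving only an exact discrepancy. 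Identity (v) reduces to writing out the canonical isomorphism $\Dsheaf^\opalg_{X,\lambda}\tensorDshf \Dsheaf_{X,\lambda}\simeq \ntDsheaf_X^\opalg$ and observing that $\Phi^\opalg_\lambda\otimes\Phi_\lambda$ matches the identity trivialization up to the exact contribution from $\beta$. For (vii), the trivialization $\widetilde{f}^*\alpha$ is a valid choice of local trivialization of $(f^*\calL)|_{U\times_X Y}$, and the construction of $(\cdot)^\calL$ commutes on the nose with pull-back along $\widetilde{f}$. Identity (viii) is obtained by combining (vii) for the opposite construction with the fact that the twist by $\Omega_f$ exactly absorbs the $1$-form arising from the mismatch between $\widetilde{f}^*\Omega_X$ and $\Omega_{U\times_X Y}$.

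The main obstacle is purely bookkeeping: in (i) and (viii) one must track the sign from Lemma \ref{lem:OpAutomorphism} and the exact $1$-forms contributed by every auxiliary choice in the prescribed order. Once all correction forms are written out and shown to be exact, the resulting discrepancies lie in a one-dimensional subspace of $\calZ$ on the common affine \'etale cover, and the required equalities of bornologies follow from Lemma \ref{lem:BoundedTwistedBornology} together with Proposition \ref{prop:WellDefinedBoundedFamilyCat}.
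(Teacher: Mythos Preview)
Your approach is correct and is precisely the argument the paper has in mind: its own proof is the single sentence ``clear by the constructions of bornologies and the naturality of the canonical isomorphisms,'' and the naturality with respect to \'etale pull-back highlighted just before the proposition is exactly what makes the $\lambda$-dependent parts of the competing trivializations cancel, leaving only the $\lambda$-independent exact corrections from the auxiliary choices $\alpha,\beta$. One small cleanup: the final citation of Proposition~\ref{prop:WellDefinedBoundedFamilyCat} is superfluous, since once you have shown $T_1\sim T_2$ the equality of bornologies is immediate from Definition~\ref{def:EquivalenceBornology}.
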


\begin{proof}
	The proposition is clear by the constructions of bornologies and the naturality of the canonical isomorphisms as mentioned above.
\end{proof}

\subsection{Integral transform}

We consider integral transforms of $\ntDsheaf$-modules.
Let $\Dsheaf_{X,\Lambda}$ and $\Dsheaf_{Y,\Lambda}$ be families of algebras of twisted differential operators on smooth varieties $X$ and $Y$ with the same index set $\Lambda$, respectively.
Fix bornologies $\calB_X$ and $\calB_Y$ of $\Dsheaf_{X,\Lambda}$ and $\Dsheaf_{Y,\Lambda}$, respectively.

Recall that there is a canonical isomorphism
\begin{align*}
	\Dsheaf_{X,\lambda}^\opalg \tensorDshf \Dsheaf_{X,\lambda} \simeq \ntDsheaf_X^\opalg.
\end{align*}
We write $\iota\colon X\rightarrow X\times X$ for the diagonal embedding.
The isomorphism is induced from the action of $\ntDsheaf_X^\opalg$ on $\iota^*(\Dsheaf_{X,\lambda}^\opalg \boxtimes \Dsheaf_{X,\lambda}) \simeq \Dsheaf_{X,\lambda}\otimes_{\rsheaf{X}}\Dsheaf_{X,\lambda}$ given by
\begin{align}
	Z\cdot (A\otimes B) = A\widetilde{Z}\otimes B - A\otimes \widetilde{Z}B \label{eqn:ActionTX}
\end{align}
for $Z \in \calT_X (\subset \ntDsheaf_{X}^\opalg)$ and $A,B\in \Dsheaf_{X,\lambda}$, where $\widetilde{Z}$
is a section of $\calP(\Dsheaf_{X,\lambda})$ such that $\sigma(\widetilde{Z}) = Z$.
See Definition \ref{def:Picard} for the notation of Picard algebroids.

\begin{lemma}\label{lem:AssociativeTensor}
	Fix $\lambda \in \Lambda$.
	For any $\calA \in \Mod_{qc}(\ntDsheaf_{X}^\opalg)$, $\calB \in \Mod_{qc}(\Dsheaf_{X,\lambda}^\opalg)$ and $\calC \in \Mod_{qc}(\Dsheaf_{X,\lambda})$, we have
	\begin{align*}
		\calA \otimes_{\ntDsheaf_{X}} (\Omega_X^\invshf \otimes_{\rsheaf{X}}(\calB \otimes_{\rsheaf{X}} \calC)) \simeq (\calA \otimes_{\rsheaf{X}} \Omega_X^\invshf \otimes_{\rsheaf{X}}\calB) \otimes_{\Dsheaf_{X,\lambda}} \calC
	\end{align*}
	as sheaves on $X$.
\end{lemma}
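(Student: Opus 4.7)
The plan is to construct the isomorphism \'etale-locally and glue.

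First step: Both sides are quasi-coherent $\rsheaf{X}$-modules whose formation commutes with \'etale base change, since all of $\otimes_{\rsheaf{X}}$, $\otimes_{\ntDsheaf_X}$ and $\otimes_{\Dsheaf_{X,\lambda}}$ are right exact and compatible with flat pull-back. Thus the assertion is \'etale-local on $X$. Since algebras of twisted differential operators are locally trivial in the \'etale topology, after shrinking $X$ we may assume $\Dsheaf_{X,\lambda}=\ntDsheaf_X$; the canonical isomorphism $\Dsheaf_{X,\lambda}^\opalg\tensorDshf\Dsheaf_{X,\lambda}\simeq\ntDsheaf_X^\opalg$ is then given by \eqref{eqn:ActionTX} with $\widetilde{Z}=Z$.

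Second step: Define the candidate isomorphism and its inverse on representatives of the underlying $\rsheaf{X}$-tensor products by
\[
\Phi\colon a\otimes(\omega\otimes b\otimes c)\mapsto(a\otimes\omega\otimes b)\otimes c,\qquad
\Psi\colon(a\otimes\omega\otimes b)\otimes c\mapsto a\otimes(\omega\otimes b\otimes c).
\]
These are manifestly mutually inverse on representatives, so the lemma reduces to verifying that each descends to the tensor quotient in question. Since $\ntDsheaf_X$ is generated as an $\rsheaf{X}$-algebra by the Picard algebroid $\calP(\ntDsheaf_X)=\calT_X\oplus\rsheaf{X}$, and the $\rsheaf{X}$ generators give the tautological relation, it suffices to check the relation for $T=Z\in\calT_X$. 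The left $\ntDsheaf_X$-action on $\Omega_X^\invshf\otimes(\calB\otimes_{\rsheaf{X}}\calC)$ on the LHS, obtained by side-changing the right $\ntDsheaf_X$-action \eqref{eqn:ActionTX} via $\Omega_X^\invshf$, and the right $\ntDsheaf_X$-action on $\calA\otimes_{\rsheaf{X}}\Omega_X^\invshf\otimes_{\rsheaf{X}}\calB$ on the RHS, obtained by the symmetric instance of the same canonical isomorphism together with the side-change of the right $\ntDsheaf_X$-structure on $\calA$, unpack into the same identity in $\calA\otimes_{\rsheaf{X}}\Omega_X^\invshf\otimes_{\rsheaf{X}}\calB\otimes_{\rsheaf{X}}\calC$ once one collects the contribution of $L_Z\omega$ with the two terms of \eqref{eqn:ActionTX}. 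Hence $\Phi$ and $\Psi$ descend.

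Third step: Naturality of every construction in $\calA$, $\calB$, $\calC$ and in the trivialization data implies that $\Phi$ is canonical, so the \'etale-local isomorphisms glue to a global isomorphism.

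The main obstacle is the relation check in the second step: one must carefully track the sign contributions coming from side-changing by $\Omega_X^\invshf$ and verify that they cancel against the two terms of \eqref{eqn:ActionTX} on each side. This is the heart of the statement, and expresses the fact that \eqref{eqn:ActionTX} describes the canonical isomorphism $\Dsheaf_{X,\lambda}^\opalg\tensorDshf\Dsheaf_{X,\lambda}\simeq\ntDsheaf_X^\opalg$ symmetrically in its two tensor factors, so that the two avatars of the isomorphism (encoded by the LHS and RHS structures) are interchanged by the obvious re-bracketing.
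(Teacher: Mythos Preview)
Your approach is essentially the same as the paper's: both reduce to checking that the two $\calT_X$-relations imposed on $\calA \otimes_{\rsheaf{X}} \Omega_X^\invshf \otimes_{\rsheaf{X}}\calB \otimes_{\rsheaf{X}} \calC$ coincide, using the Picard algebroid description \eqref{eqn:ActionTX}. The paper phrases this as ``both sides are the $\calT_X$-coinvariants for the same action,'' which is exactly your second step.

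The one difference is that your first step (\'etale-localize to make $\Dsheaf_{X,\lambda}=\ntDsheaf_X$) is unnecessary. The point of the Picard algebroid formalism is precisely that the computation in your second step works globally for arbitrary $\Dsheaf_{X,\lambda}$: a local section $Z\in\calT_X$ lifts to some $\widetilde{Z}\in\calP(\Dsheaf_{X,\lambda})$, and \eqref{eqn:ActionTX} shows the resulting action is independent of the lift. So the paper skips the localization and gluing entirely and just computes once with $\widetilde{Z}$. Your route is correct but slightly longer; the third step (gluing) is then genuinely trivial since your maps $\Phi,\Psi$ are defined on the $\rsheaf{X}$-tensor level without reference to any trivialization.
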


\begin{proof}
	Both sides of the expression can be regarded as the sheaves of $\calT_X$-coinvariants in
	$\calA \otimes_{\rsheaf{X}} \Omega_X^\invshf \otimes_{\rsheaf{X}}\calB \otimes_{\rsheaf{X}} \calC$.
	It is easy to see that the two actions of $\calT_X$ coincide by using the action of Picard algebroids (see \eqref{eqn:ActionTX}).
\end{proof}

\begin{theorem}\label{thm:UniformlyBoundedIntegralTransform}
	Let $\calM \in D^b_{ub}(\Dsheaf_{X,\Lambda}, \calB_X)$
	and $\calN \in D^b_{ub}(\Dsheaf_{Y,\Lambda}\boxtimes \Dsheaf_{X,\Lambda}^\opalg, \calB_Y\boxtimes \calB^\opalg_X)$.
	Then we have
	\begin{align*}
		(Rq_*(\calN_\lambda \otimes^L_{p^{-1}\Dsheaf_{X,\lambda}} p^{-1}\calM_\lambda))_{\lambda \in \Lambda} \in D^b_{ub}(\Dsheaf_{Y,\Lambda}, \calB_Y),
	\end{align*}
	where $p$ (resp.\ $q$) is the projection from $Y\times X$ onto $X$ (resp.\ $Y$).
\end{theorem}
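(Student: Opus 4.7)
The plan is to realize the integral transform as a composition of four operations---external tensor product, inverse image along a closed diagonal, twisting by an invertible sheaf (side-switching), and $D$-module direct image---each of which has already been shown to preserve uniform boundedness in the preceding subsections.

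Let $\Delta\colon Y\times X\to Y\times X\times X$, $(y,x)\mapsto(y,x,x)$, denote the closed diagonal embedding. By Theorem \ref{thm:TensorUniformlyBounded}, $\calN\boxtimes\calM$ lies in $D^b_{ub}(\Dsheaf_{Y,\Lambda}\boxtimes\Dsheaf_{X,\Lambda}^\opalg\boxtimes\Dsheaf_{X,\Lambda},\, \calB_Y\boxtimes\calB_X^\opalg\boxtimes\calB_X)$. Applying $L\Delta^*$ and invoking Theorem \ref{thm:FunctorOnUniformlyBounded}, Definition \ref{def:TensorBornology}, and Proposition \ref{prop:IdentityBornology}(v), the pullback bornology collapses to $\calB_Y\boxtimes\calB_\id^\opalg$ over the algebra $\Dsheaf_{Y,\Lambda}\boxtimes\ntDsheaf_X^\opalg$. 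Next, side-switching via the invertible sheaf $p^*\Omega_X^\invshf$, together with the canonical isomorphism $\ntDsheaf_X^\opalg\simeq\Omega_X\otimes_{\rsheaf X}\ntDsheaf_X\otimes_{\rsheaf X}\Omega_X^\invshf$, turns this into a uniformly bounded $q^\#\Dsheaf_{Y,\Lambda}$-module with bornology $q^\#\calB_Y$ (using Proposition \ref{prop:IdentityBornology} to track the twists). Applying $Dq_+$ (Theorem \ref{thm:FunctorOnUniformlyBounded}) then produces an object of $D^b_{ub}(\Dsheaf_{Y,\Lambda},\calB_Y)$.

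The nontrivial step is to identify this composition with the expression $Rq_*(\calN\otimes^L_{p^{-1}\Dsheaf_{X,\lambda}}p^{-1}\calM)$ in the statement. The standard behavior of $L\Delta^*$ on an external tensor product yields $L\Delta^*(\calN\boxtimes\calM)\simeq\calN\otimes^L_{\Dsheaf_{X,\lambda}^\opalg\tensorDshf\Dsheaf_{X,\lambda}}p^*\calM$ as a module over $\Dsheaf_{Y,\Lambda}\boxtimes\ntDsheaf_X^\opalg$. Invoking Lemma \ref{lem:AssociativeTensor} to interchange tensoring over $\ntDsheaf_X$ and $\Dsheaf_{X,\lambda}$ at the cost of the appropriate canonical-sheaf twist, and using the canonical isomorphism $\Dsheaf_{X,\lambda}^\opalg\tensorDshf\Dsheaf_{X,\lambda}\simeq\ntDsheaf_X^\opalg$, one then recovers $\calN\otimes^L_{p^{-1}\Dsheaf_{X,\lambda}}p^{-1}\calM$ after the side-switching, and $Dq_+$ reduces to $Rq_*$ in this form. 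The main obstacle will be the precise bookkeeping of the various canonical-sheaf twists and verifying that the output bornology is exactly $\calB_Y$ and not some twist of it; Proposition \ref{prop:IdentityBornology} does essentially all of the heavy lifting throughout.
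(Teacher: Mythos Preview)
Your proposal is correct and follows essentially the same approach as the paper's proof: both realize the integral transform as the composition $Dq_+\circ(p^*\Omega_X^\invshf\otimes_{\rsheaf{Y\times X}}(\,\cdot\,))\circ L\Delta^*\circ(\,\cdot\boxtimes\cdot\,)$, use Lemma~\ref{lem:AssociativeTensor} for the identification with $Rq_*(\calN_\lambda\otimes^L_{p^{-1}\Dsheaf_{X,\lambda}}p^{-1}\calM_\lambda)$, and track the bornology via Proposition~\ref{prop:IdentityBornology} to obtain $((\calB_Y\boxtimes\calB_X^\opalg)\tensorBorn p^\#\calB_X)^{p^*\Omega_X^\invshf}=q^\#\calB_Y$. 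The paper's write-up is slightly more compressed, computing the chain of isomorphisms and the bornology identity in a single display each, but the content is identical.
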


\begin{proof}
	Fix $\lambda \in \Lambda$.
	Then we have
	\begin{align*}
		&Dq_+(p^*\Omega_X^\invshf \otimes^L_{\rsheaf{Y\times X}}(\calN_{\lambda}\otimes^L_{\rsheaf{Y\times X}} Lp^*\calM_{\lambda}))\\
		\simeq &Rq_*((\Dsheaf_{Y,\lambda}\boxtimes \Omega_X) \otimes_{\Dsheaf_{Y,\lambda}\boxtimes \ntDsheaf_{X}}^L (p^*\Omega_X^\invshf \otimes^L_{\rsheaf{Y\times X}}(\calN_{\lambda}\otimes^L_{\rsheaf{Y\times X}} Lp^*\calM_{\lambda}))) \\
		\simeq &Rq_*(p^{-1}\Omega_X \otimes_{p^{-1}\ntDsheaf_{X}}^L (p^{-1}\Omega_X^\invshf \otimes^L_{p^{-1}\rsheaf{X}}(\calN_{\lambda}\otimes^L_{p^{-1}\rsheaf{X}} p^{-1}\calM_{\lambda}))) \\
		\simeq &Rq_*((p^{-1}(\Omega_X \otimes_{\rsheaf{X}} \Omega_X^\invshf) \otimes^L_{p^{-1}\rsheaf{X}}\calN_{\lambda})\otimes^L_{p^{-1}\Dsheaf_{X,\lambda}} p^{-1}\calM_{\lambda}) \\
		\simeq &Rq_*(\calN_\lambda \otimes^L_{p^{-1}\Dsheaf_{X,\lambda}} p^{-1}\calM_\lambda).
	\end{align*}
	The third isomorphism follows from Lemma \ref{lem:AssociativeTensor} by
	taking a flat resolution of $\Omega_X$, $\calN_\lambda$ and $\calM_\lambda$.
	By Proposition \ref{prop:IdentityBornology}, we have
	\begin{align*}
		((\calB_Y\boxtimes \calB^\opalg_X)\tensorBorn p^\#\calB_X)^{p^*\Omega_X^\invshf}
		= (\calB_Y\boxtimes \calB_\id^\opalg)^{\rsheaf{Y}\boxtimes \Omega_X^\invshf} = \calB_Y\boxtimes \calB_\id = q^\# \calB_Y.
	\end{align*}
	Therefore the theorem follows from Theorem \ref{thm:FunctorOnUniformlyBounded}.
\end{proof}

\subsection{Family of easy morphisms}

Retain the notation $X$, $Y$, $\Dsheaf_{X,\Lambda}$, $\Dsheaf_{Y,\Lambda}$, $\calB_X$ and $\calB_Y$ in the previous subsection.
We consider operations of $\ntDsheaf$-modules by the following family of morphisms:
\begin{align*}
	&f_y\colon X\rightarrow X\times Y \quad (y \in Y), \\
	&f_y(x) = (x, y).
\end{align*}

\begin{proposition}\label{prop:FamilyMorphismPull}
	For any $\calM \in D^b_{ub}(\Dsheaf_{X,\Lambda}\boxtimes \Dsheaf_{Y,\Lambda}, \calB_X\boxtimes \calB_Y)$, the family $(Lf_y^*(\calM_\lambda))_{\lambda\in \Lambda, y \in Y}$ is uniformly bounded with respect to the bornology $\calB_X$.
\end{proposition}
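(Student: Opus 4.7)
The plan is to use the same base-change strategy that drove the proof of Theorem \ref{thm:FunctorOnUniformlyBounded}, but to observe that the affine-space extensions of the morphisms $f_y$ can be chosen to have degree independent of $y$, so that the constant in Proposition \ref{prop:LocalMultiplicity} is uniform in $y$ as well as in $\lambda$.

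First, I would fix a bounded trivialization $(U\times V, \varphi\times \psi, \Phi\boxtimes \Psi) \in \calB_X\boxtimes \calB_Y$ with $U$ and $V$ affine, coming from $(U,\varphi, \Phi) \in \calB_X$ and $(V,\psi, \Psi) \in \calB_Y$ via Definition \ref{def:TensorBornology} and Proposition \ref{prop:LiftTrivialization}. Choose closed embeddings $\iota\colon U\rightarrow \CC^n$ and $\kappa\colon V\rightarrow \CC^k$. For each $y \in Y$, pick $v \in V$ with $\psi(v)=y$ (possible by surjectivity) and consider the cartesian square
\begin{align*}
\xymatrix{
U \ar[r]^-{f_v}\ar[d]^-{\varphi} & U\times V \ar[d]^-{\varphi\times \psi} \\
X \ar[r]^-{f_y} & X\times Y,
}
\end{align*}
where $f_v(u)=(u,v)$. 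The base change theorem (Fact \ref{fact:BaseChange}) yields $L\varphi^*\circ Lf_y^* \simeq Lf_v^*\circ L(\varphi\times\psi)^*$, and under the trivialization $\Phi\boxtimes\Psi$ the right-hand side becomes $Lf_v^*$ applied to the $\ntDsheaf_{U\times V}$-module $(\calM_\lambda)|_{U\times V}$.

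The crucial observation is that for every $v \in V$ the morphism $f_v$ extends to the affine map
\begin{align*}
\widetilde{f}_v\colon \CC^n\rightarrow \CC^n\times\CC^k,\quad x\mapsto (x,\kappa(v)),
\end{align*}
which has degree $1$ regardless of $v$. Running the proof of Proposition \ref{prop:LocalMultiplicity} (the inverse-image half) verbatim with $\widetilde{f}_v$ in place of $\widetilde{f}$, the degree-$1$ property forces the power of $d$ to be $1$, so one obtains
\begin{align*}
m_\iota(Lf_v^*(\calN^\cxdot)) \leq m_\iota(\rsheaf{U})\cdot m_{\iota\times\kappa}(\calN^\cxdot)
\end{align*}
for every $\calN^\cxdot \in D^b_h(\ntDsheaf_{U\times V})$ and every $v\in V$, with the constant $m_\iota(\rsheaf{U})$ independent of both $v$ and $\calN^\cxdot$.

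Applying this with $\calN^\cxdot = (\calM_\lambda)|_{U\times V}$ and combining with the uniform bound on $m_{\iota\times\kappa}((\calM_\lambda)|_{U\times V})$ provided by $\calM \in D^b_{ub}(\Dsheaf_{X,\Lambda}\boxtimes \Dsheaf_{Y,\Lambda}, \calB_X\boxtimes\calB_Y)$ (using Lemma \ref{lem:AdditiveFunctionDerived}(ii) to reduce the $D^b$ statement to the level of cohomologies), I obtain a bound on $m_\iota(L\varphi^*Lf_y^*(\calM_\lambda))$ uniform in $(\lambda,y) \in \Lambda\times Y$. Since $(U,\varphi,\Phi)$ lies in $\calB_X$, this is exactly the uniform boundedness of $(Lf_y^*(\calM_\lambda))_{\lambda\in\Lambda, y\in Y}$ with respect to $\calB_X$. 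The only real point requiring care is verifying that $m_\iota(\rsheaf{U})$ indeed serves as a constant independent of $v$; this is the content of the degree-$1$ observation above.
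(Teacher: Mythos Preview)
Your approach is essentially the same as the paper's: both hinge on the observation that the affine-space extension $\widetilde{f}_v\colon \CC^n\to\CC^n\times\CC^k$ has degree $1$ independently of $v$, so that Fact \ref{fact:WeylAlgebraDirectInverseImage} yields a uniform constant. The paper carries this out slightly more directly by applying base change to the square with $\iota$ and $\iota\times\kappa$ (which is genuinely cartesian) and then invoking Fact \ref{fact:WeylAlgebraDirectInverseImage} itself, obtaining the sharper constant $1$ rather than your $m_\iota(\rsheaf{U})$; but for uniform boundedness this makes no difference.

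One correction is needed. The square you display is \emph{not} cartesian: the fiber product $U\times_{X\times Y}(U\times V)$ is $(U\times_X U)\times\psi^{-1}(y)$, not $U$, since $\varphi$ and $\psi$ are \'etale but generally not injective. So you cannot invoke the base change theorem here. Fortunately the isomorphism $L\varphi^*\circ Lf_y^*\simeq Lf_v^*\circ L(\varphi\times\psi)^*$ that you actually use holds for the simpler reason that the square commutes, $f_y\circ\varphi=(\varphi\times\psi)\circ f_v$, together with Fact \ref{fact:FundamentalDmodule} \eqref{eqn:InverseImage}. With this fix your argument goes through.
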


\begin{proof}
	It is enough to show the assertion for $\calM \in \Mod_{ub}(\Dsheaf_{X,\Lambda}\boxtimes \Dsheaf_{Y,\Lambda}, \calB_X\boxtimes \calB_Y)$.
	Fix $\lambda \in \Lambda$ and $y \in Y$.

	Take $(U, \varphi, \Phi) \in \calB_X$ and $(V, \psi, \Psi) \in \calB_Y$
	with affine $U$ and $V$.
	Fix closed embeddings $\iota_U\colon U\rightarrow \CC^n$ and $\iota_V\colon V\rightarrow \CC^m$, and $y'\in \psi^{-1}(y)$.
	Then we have a commutative diagram
	\begin{align*}
		\xymatrix{
			\CC^n \ar[r]^-{f''_y} & \CC^n \times \CC^m \\
			U \ar[r]^-{f'_y}\ar[u]^-{\iota_U} \ar[d]_-\varphi & U\times V \ar[u]_-{\iota_U\times \iota_V} \ar[d]^-{\varphi \times\psi}\\
			X \ar[r]^-{f_y}& X\times Y,
		}
	\end{align*}
	where $f'_y(x) = (x, y')$ and $f''_y(x) = (x, \iota_V(y'))$.
	Remark that the upper square is cartesian.

	By Facts \ref{fact:FundamentalDmodule} (ii) and \ref{fact:BaseChange}, we have
	\begin{align*}
		D(\iota_U)_+ \circ L\varphi \circ Lf_y^*(\calM_\lambda) \simeq L(f''_y)^*\circ D(\iota_U\times \iota_V)_+ \circ L(\varphi \times \psi)^*(\calM_\lambda).
	\end{align*}
	Since the degree of $f''_y$ is $1$, we have
	\begin{align*}
		m_{\iota_U}(L\varphi \circ Lf_y^*(\calM_\lambda)) \leq m_{\iota_U\times \iota_V}(L(\varphi \times \psi)^*(\calM_\lambda))
	\end{align*}
	by Fact \ref{fact:WeylAlgebraDirectInverseImage}.
	This shows the proposition.
\end{proof}

\begin{proposition}\label{prop:FamilyMorphismPush}
	Let $\calN \in D^b_{ub}(\Dsheaf_{X,\Lambda}, \calB_X)$.
	The family $(D(f_y)_+(\calN_\lambda))_{\lambda\in \Lambda, y \in Y}$ is uniformly bounded with respect to the bornology $\calB_X\boxtimes \calB_Y$.
\end{proposition}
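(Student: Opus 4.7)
The plan is to mimic the proof of Proposition \ref{prop:FamilyMorphismPull}, exchanging the inverse image with the direct image. By Lemma \ref{lem:AdditiveFunctionDerived} it suffices to treat $\calN \in \Mod_{ub}(\Dsheaf_{X,\Lambda}, \calB_X)$. Fix $(U,\varphi,\Phi) \in \calB_X$ and $(V,\psi,\Psi) \in \calB_Y$ with $U$ and $V$ affine, together with closed embeddings $\iota_U\colon U\hookrightarrow\CC^n$ and $\iota_V\colon V\hookrightarrow\CC^m$. The goal is to bound
\[
m_{\iota_U\times\iota_V}\bigl(L(\varphi\times\psi)^{*}D(f_y)_+(\calN_\lambda)\bigr)
\]
by a constant independent of $\lambda\in\Lambda$ and $y\in Y$. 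The key geometric input is that the fiber of $\varphi\times\psi\colon U\times V\to X\times Y$ over $f_y(X)=X\times\{y\}$ is $U\times\psi^{-1}(y)$, and since $\psi$ is a surjective étale morphism of finite type between quasi-projective varieties, $|\psi^{-1}(y)|$ is uniformly bounded by a constant $N$ independent of $y$.

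First I would apply the base change theorem (Fact \ref{fact:BaseChange}) to the cartesian square whose left vertical is the projection $U\times\psi^{-1}(y)\to X$, whose top is the inclusion $U\times\psi^{-1}(y)\hookrightarrow U\times V$, and whose bottom--right data are $f_y$ and $\varphi\times\psi$. Decomposing $U\times\psi^{-1}(y)=\bigsqcup_{y'\in\psi^{-1}(y)} U\times\{y'\}$ gives
\[
L(\varphi\times\psi)^{*}D(f_y)_+(\calN_\lambda) \;\simeq\; \bigoplus_{y'\in\psi^{-1}(y)} D(f'_{y'})_+\bigl(L\varphi^{*}\calN_\lambda\bigr),
\]
where $f'_{y'}\colon U\to U\times V$ is $x\mapsto(x,y')$. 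For each such $y'$, the identity $(\iota_U\times\iota_V)\circ f'_{y'}=f''_{y'}\circ\iota_U$ holds, where $f''_{y'}\colon\CC^n\to\CC^n\times\CC^m$, $x\mapsto(x,\iota_V(y'))$, has polynomial degree $1$. Hence Fact \ref{fact:WeylAlgebraDirectInverseImage} gives
\[
m_{\iota_U\times\iota_V}\bigl(D(f'_{y'})_+(L\varphi^{*}\calN_\lambda)\bigr) = m\bigl(D(f''_{y'})_+\circ D\iota_{U,+}(L\varphi^{*}\calN_\lambda)\bigr) \le m_{\iota_U}\bigl(L\varphi^{*}\calN_\lambda\bigr).
\]
Summing over the at most $N$ preimages and invoking the uniform boundedness of $\calN$ with respect to $\calB_X$, which controls $m_{\iota_U}(L\varphi^{*}\calN_\lambda)$ uniformly in $\lambda$, produces the required bound.

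The main obstacle is to justify that $|\psi^{-1}(y)|$ is uniformly bounded in $y$. For an étale morphism of finite type, the fiber-cardinality function is constructible on the target; since $Y$ is quasi-projective and hence quasi-compact, it takes only finitely many values and is bounded. Should one wish to bypass this fact, one may replace $(V,\psi,\Psi)$ by the trivialization associated to $\bigsqcup_i U_i\to Y$, where $\{U_i\hookrightarrow Y\}$ is a finite open covering on which $\psi$ admits sections $s_i\colon U_i\to V$ (possible by quasi-compactness of $Y$ and the étale local structure of $\psi$); by Proposition \ref{prop:WellDefinedBoundedFamilyCat} this replacement does not affect uniform boundedness, and the new fiber cardinalities are bounded by the size of the cover.
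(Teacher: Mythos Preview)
Your proof is correct and follows essentially the same route as the paper: base change along the \'etale cover $\varphi\times\psi$, reduction to the contribution of $\psi^{-1}(y)$, and boundedness of \'etale fiber cardinality. The only cosmetic difference is that the paper packages the base-changed module as the external tensor product $L\varphi^*(\calN_\lambda)\boxtimes D\iota_+(\rsheaf{\psi^{-1}(y)})$ and invokes the multiplicativity of multiplicity (Fact~\ref{fact:MultiplicityTensor}) to obtain the exact value $m_{\iota_U}(L\varphi^*\calN_\lambda)\cdot|\psi^{-1}(y)|$, whereas you decompose into a direct sum over $y'\in\psi^{-1}(y)$ and bound each summand via Fact~\ref{fact:WeylAlgebraDirectInverseImage}; both arrive at the same bound.
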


\begin{proof}
	We retain the notation in the proof of Proposition \ref{prop:FamilyMorphismPull}.
	Then we have
	\begin{align*}
		L(\varphi \times \psi)^*\circ D(f_y)_+(\calN_\lambda) \simeq L\varphi^*(\calN_\lambda)\boxtimes D\iota_+(\rsheaf{\psi^{-1}(y)}),
	\end{align*}
	where $\iota\colon \psi^{-1}(y) \rightarrow V$ is the inclusion map.
	By Fact \ref{fact:MultiplicityTensor}, we have
	\begin{align*}
		m_{\iota_U\times \iota_V}(L\varphi^*(\calN_\lambda)\boxtimes D\iota_+(\rsheaf{\psi^{-1}(y)})) &= m_{\iota_U}(L\varphi^*(\calN_\lambda))
		m_{\iota_V}(D\iota_+(\rsheaf{\psi^{-1}(y)})).
	\end{align*}
	Since the multiplicity of the unique irreducible holonomic $\ntDsheaf_{\CC^m}$-module supported on a point is $1$, we have 
	\begin{align*}
		m_{\iota_V}(D\iota_+(\rsheaf{\psi^{-1}(y)})) = |\psi^{-1}(y)|.
	\end{align*}
	Since $\psi$ is \'etale, $|\psi^{-1}(y)|$ is bounded on $Y$.
	Therefore we have shown the proposition.
\end{proof}

\section{Examples of uniformly bounded family}\label{sect:ExampleUBF}

In general, it is not easy to construct bornologies and uniformly bounded families of twisted $\ntDsheaf$-modules.
An easy way to construct them is to use group actions.
In this section, we construct uniformly bounded families using 
principal bundles and group actions with finite orbits.

\subsection{Bornology of a principal bundle}\label{sect:BornologyPrincipalBundle}

Let $G$ be an affine algebraic group and $p\colon \widetilde{X}\rightarrow X$ a principal $G$-bundle over a smooth variety $X$.
Let $\Dsheaf_{\widetilde{X}}$ be a $G$-equivariant algebra of twisted differential operators.
For each $\lambda \in (\lie{g}^*)^G$, we have defined a $G$-equivariant algebra $\Dsheaf_{X,\lambda}$
of twisted differential operators on $X$ in (\ref{eqn:DefinitionTwisted}).
Then we obtain a family $(\Dsheaf_{X,\lambda})_{\lambda \in (\lie{g}^*)^G}$.
Put $\Lambda := (\lie{g}^*)^G$.

In this subsection, we shall show that the family admits a standard bornology
determined by the bundle $\widetilde{X}\rightarrow X$.

We can take a surjective \'etale morphism $\varphi\colon U\rightarrow X$ such that
the pull-back $p\colon U \times_{X} \widetilde{X} \rightarrow U$ of the $G$-bundle
is trivial and $\Dsheaf_{\widetilde{X}}|_{U\times_X \widetilde{X}}$
is isomorphic to the algebra $\ntDsheaf_{U\times_X \widetilde{X}}$.
Fix a section $s\colon U\rightarrow U\times_X \widetilde{X}$
and an isomorphism $\alpha\colon s^\#(\Dsheaf_{\widetilde{X}}|_{U\times_X \widetilde{X}}) \rightarrow \ntDsheaf_{U}$.

The section $s$ determines a trivialization $U\times G \simeq U\times_X \widetilde{X}$
and $\alpha$ induces an isomorphism $\Dsheaf_{\widetilde{X}}|_{U\times_X \widetilde{X}}\simeq \ntDsheaf_U\boxtimes \ntDsheaf_G$ by Proposition \ref{prop:DsheafLocal}.
Then we have an isomorphism
\begin{align*}
	\Phi^{U, s,\alpha}_\lambda\colon \Dsheaf_{X,\lambda}|_{U} = p_*(\Dsheaf_{\widetilde{X}}/R(I_{-\lambda+\delta})\Dsheaf_{\widetilde{X}})^G|_U \xrightarrow{\simeq} \ntDsheaf_{U}
\end{align*}
for any $\lambda \in \Lambda$.
See \eqref{eqn:DefinitionTwisted} for the notation.
Hence we obtain a trivialization $(U, \varphi, \Phi^{U, s,\alpha})$ of $\Dsheaf_{X,\Lambda}$.

\begin{proposition}\label{prop:BornologyPrincipalBundle}
	$(U, \varphi, \Phi^{U, s,\alpha})$ is bounded and its equivalence class does not depend on the choice of $\varphi\colon U\rightarrow X$, $s$ and $\alpha$.
\end{proposition}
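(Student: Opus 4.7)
The plan is to compute the family $\calZ(T_1, T_2)$ of closed $1$-forms attached to any two trivializations $T_i = (U_i, \varphi_i, \Phi^{U_i, s_i, \alpha_i})$ of the prescribed form and to show that it spans a finite-dimensional subspace of $\calZ(U_1 \times_X U_2)$. The special case $T_1 = T_2$ gives the boundedness assertion, while the general case shows $T_1 \sim T_2$, so the equivalence class of $(U, \varphi, \Phi^{U,s,\alpha})$ depends on none of $U, \varphi, s, \alpha$.

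First I would check that the construction $\Phi^{U,s,\alpha}$ is natural under \'etale pullback: for any \'etale morphism $q\colon V \to U$, one has $q^\#\Phi^{U,s,\alpha}_\lambda = \Phi^{V, q^*s, q^*\alpha}_\lambda$ since every ingredient in (\ref{eqn:DefinitionTwisted}) and Proposition \ref{prop:DsheafLocal} commutes with base change. Writing $W = U_1\times_X U_2$ with projections $q_i\colon W \to U_i$, the comparison of $T_1$ and $T_2$ reduces to computing the automorphism
\[
A_\lambda := \Phi^{W, q_2^*s_2, q_2^*\alpha_2}_\lambda \circ (\Phi^{W, q_1^*s_1, q_1^*\alpha_1}_\lambda)^{-1} \in \Aut(\ntDsheaf_W) \simeq \calZ(W).
\]
On $W$ the principal bundle $W \times_X \widetilde{X} \to W$ is trivial, and using the section $\widetilde{s}_1 := q_1^* s_1$ to identify $W\times_X \widetilde{X} \simeq W\times G$ so that $\widetilde{s}_1$ becomes $w\mapsto (w,e)$, the section $\widetilde{s}_2 := q_2^* s_2$ takes the form $w\mapsto (w, g(w))$ for a unique morphism $g\colon W \to G$.

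Next I would compute $A_\lambda$ explicitly. The discrepancy between the two trivializations splits into two contributions: the change of section from $\widetilde{s}_1$ to $\widetilde{s}_2$, which amounts to right translation by $g(\cdot)$ on the $G$-factor, and the change of $\alpha$, which is $\lambda$-independent and contributes a fixed closed $1$-form $\eta \in \calZ(W)$. A direct calculation---which in the abelian case $G = \CC^\times$ recovers the familiar formula $\lambda\, g^{-1}dg$ arising from the two trivializations of the $\lambda$-th tensor power of a line bundle---shows that the section contribution equals $\langle \lambda, g^* \theta\rangle$, where $\theta \in \Omega^1_G \otimes \lie{g}$ is a Maurer--Cartan form on $G$. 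Thus
\[
A_\lambda \;\leftrightarrow\; \omega_\lambda = \langle \lambda, g^*\theta\rangle + \eta \in \calZ(W).
\]
Since $\lambda \mapsto \langle \lambda, g^*\theta\rangle$ is $\CC$-linear in $\lambda$, its image over $\lambda \in \Lambda = (\lie{g}^*)^G$ lies in a subspace of $\calZ(W)$ of dimension at most $\dim_\CC \Lambda \leq \dim_\CC \lie{g}$. Consequently $\calZ(T_1, T_2)$ is contained in a finite-dimensional affine subspace of $\calZ(W)$, which proves the proposition.

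The main obstacle is the explicit identification of $A_\lambda$ in terms of $g$ and $\lambda$, as it requires tracking several conventions simultaneously (the side of the $G$-action on $\widetilde{X}$, whether $R$ produces left- or right-invariant vector fields on $G$, and how the Picard algebroid structure from Section \ref{sect:PicardAlgebroid} transports through Proposition \ref{prop:DsheafLocal}). A clean way to organize the computation is to reduce first to the universal situation $X = \Spec \CC$, $\widetilde{X} = G$, where $\Dsheaf_{X,\lambda}$ is simply the quotient $(\ntDalg{G}/\ntDalg{G}R(I_{-\lambda}))^G$ and two sections amount to two points of $G$; verify the formula there by a direct calculation; and then deduce the general statement by the naturality of the whole construction with respect to base change along $W \to X$.
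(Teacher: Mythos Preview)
Your proposal is correct and follows essentially the same strategy as the paper: reduce via \'etale pullback to the situation where the bundle is trivialized over $W$, separate the $\lambda$-independent contribution coming from $\alpha$ from the section-change contribution, and observe that the latter depends linearly on $\lambda$ through a finite-dimensional space of closed $1$-forms. The only cosmetic difference is that you package the section contribution as $\langle \lambda, g^*\theta\rangle$ via the Maurer--Cartan form, whereas the paper chooses a basis $\{X_i\}$ of $\lie{g}$, writes the analogous vector-field identity $T_s f = Tf - \sum_i \omega_i^s(T)\,L(X_i)f$ explicitly for each section, and lands in the span of the finitely many forms $\{\omega_i^s,\omega_j^t\}$; the underlying computation is the same.
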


\begin{proof}
	Let $(\psi\colon V\rightarrow X, t, \beta)$ be another choice of $(\varphi, s, \alpha)$.
	By considering the pull-back of $s, \alpha, t, \beta, \Phi^{s,\alpha}$ and $\Phi^{t,\beta}$ to $U\times_X V$, our computation can be done only on $U\times_X V$.
	Hence we can assume $U=V=X$, $\widetilde{X}=X\times G$ and $\Dsheaf_{\widetilde{X}} = \ntDsheaf_{\widetilde{X}} = \ntDsheaf_{X}\boxtimes \ntDsheaf_G$.
	
	We identify $s^\#(\ntDsheaf_{\widetilde{X}})$ and $t^\#(\ntDsheaf_{\widetilde{X}})$ with $\ntDsheaf_X$ by the canonical isomorphisms.
	Then $\alpha$ and $\beta$ are automorphisms of $\ntDsheaf_X$.
	Since $\alpha$ and $\beta$ are independent of $\lambda \in \Lambda$, the choice of $\alpha$ and $\beta$ does not affect the equivalence.
	Hence we can assume $\alpha = \beta = \id$.

	Fix $\lambda \in \Lambda$.
	By the decomposition $X\times G = s(X)G$, we have a monomorphism
	$\iota_s\colon \ntDsheaf_X \simeq \ntDsheaf_{s(X)} \rightarrow p_*(\ntDsheaf_{X\times G})^G$
	and the isomorphism $(\Phi^{U,s,\alpha})^{-1}\colon \ntDsheaf_X \rightarrow \ntDsheaf_{X,\lambda}$ factors through the monomorphism.
	We define $\iota_t$ similarly.
	Then $\Phi^{V,t,\beta}_\lambda\circ (\Phi^{U,s,\alpha}_\lambda)^{-1}$ is given by the following dot arrow:
	\begin{align*}
		\xymatrix{
			\ntDsheaf_X \simeq \ntDsheaf_{s(X)} \ar@{.>}[d]\ar[r]^-{\iota_s} & p_*(\ntDsheaf_{X\times G})^G \ar@{->>}[r]& \ntDsheaf_{X,\lambda} \ar@{=}[d] \\
			\ntDsheaf_X \simeq \ntDsheaf_{t(X)} \ar[r]^-{\iota_t} & p_*(\ntDsheaf_{X\times G})^G \ar@{->>}[r]& \ntDsheaf_{X,\lambda}.
		}
	\end{align*}

	We write $s(x) = (x, s'(x))$ ($x \in X$) and
	define an automorphism $a$ of $X\times G$ by $a(x, g) = (x, s'(x)g)$.
	For a local section $T \in \calT_X$, we denote by $T_s$
	the corresponding section of $\calT_{s(X)}$.

	There exist closed $1$-forms $\omega^s_1, \omega^s_2, \cdots, \omega^s_n$ $(n=\dim_\CC(\lie{g}))$ on $X$ such that for any local sections $T \in \calT_{X}$ and $f \in \rsheaf{X}\otimes \rring{G}$,
	\begin{align*}
		T_s f = ((a^*)^{-1}\circ T \circ a^*) f = Tf - \sum_i \omega_i^s(T) L(X_i)f,
	\end{align*}
	where $\set{X_i}_{i=1,2,\ldots, n}$ is a basis of $\lie{g}$
	and $L$ is the differential of the left translation on $G$.
	Similarly, we define $\set{\omega_j^t}$ for $t$.
	Therefore $\set{\Phi^{V,t,\beta}_\lambda\circ (\Phi^{U,s,\alpha}_\lambda)^{-1}}_{\lambda \in \Lambda}$ is contained in a finite-dimensional subspace spanned by $\set{\omega_i^s}$ and $\set{\omega_j^t}$ in $\calZ(X)$.
	We have proved the proposition.
\end{proof}

\begin{definition}
	We denote by $\calB(X, \widetilde{X})$ the equivalence class of the bounded trivialization $(U, \varphi, \Phi^{U,s,\alpha})$.
\end{definition}

Let $f\colon Y\rightarrow X$ be a morphism of smooth varieties.
Then we have a cartesian square
\begin{align*}
	\xymatrix{
		Y\times_X \widetilde{X} \ar[r]^-{\widetilde{f}} \ar[d]^-{q}&\widetilde{X} \ar[d]^-p\\
		Y \ar[r]^-f & X.
	}
\end{align*}
Put
\begin{align*}
	\widetilde{Y}&:= Y\times_X \widetilde{X}, \\
	\Dsheaf_{\widetilde{Y}} &:= \widetilde{f}^\#\Dsheaf_{\widetilde{X}}.
\end{align*}
It is easy to see that $q\colon \widetilde{Y}\rightarrow Y$ is a principal $G$-bundle
and $\Dsheaf_{\widetilde{Y}}$ is $G$-equivariant.
For each $\lambda \in \Lambda$, we can define an algebra $\Dsheaf_{Y,\lambda}$
of twisted differential operators on $Y$ as in \eqref{eqn:DefinitionTwisted}
and we have a canonical isomorphism
\begin{align*}
	\Dsheaf_{Y,\lambda} \simeq f^\#\Dsheaf_{X,\lambda}.
\end{align*}
We identify the two algebras by the isomorphism.
Then we obtain two bornologies $\calB(Y,\widetilde{Y})$
and $f^\#\calB(X,\widetilde{X})$ of $\Dsheaf_{Y, \Lambda} := (\Dsheaf_{Y,\lambda})_{\lambda \in \Lambda}$.

\begin{lemma}\label{lem:pull-backOfBornologyPrincipalBundle}
	$\calB(Y,\widetilde{Y})$ and $f^\#\calB(X,\widetilde{X})$
	are equal.
\end{lemma}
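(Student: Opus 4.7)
The plan is to show that both bornologies contain a common representative, namely the pull-back under $f$ of any trivialization of $\calB(X,\widetilde{X})$. By Proposition \ref{prop:LiftTrivialization} we may take $(U,\varphi,\Phi^{U,s,\alpha}) \in \calB(X,\widetilde{X})$ with $U$ affine, and by the construction of $\calB(X,\widetilde{X})$ we may assume that $\varphi\colon U\rightarrow X$ trivializes the bundle $\widetilde{X}\rightarrow X$ via the section $s$, and that $\alpha$ is an isomorphism $s^\#(\Dsheaf_{\widetilde{X}}|_{U\times_X\widetilde{X}})\xrightarrow{\simeq}\ntDsheaf_U$. Set $V:=U\times_X Y$ with the projections $\widetilde{\varphi}\colon V\rightarrow Y$ and $\widetilde{f}\colon V\rightarrow U$; then $\widetilde{\varphi}$ is surjective \'etale.

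Next I would observe that the base change of the principal $G$-bundle to $V$ is trivialized by pulling back $s$. Using the canonical isomorphism
\begin{align*}
V\times_Y \widetilde{Y} = (U\times_X Y)\times_Y (Y\times_X \widetilde{X})\simeq (U\times_X\widetilde{X})\times_U V,
\end{align*}
the section $s$ induces a section $t\colon V\rightarrow V\times_Y\widetilde{Y}$, and the identification $\Dsheaf_{\widetilde{Y}}|_{V\times_Y\widetilde{Y}}\simeq \widetilde{f}^{\prime\,\#}(\Dsheaf_{\widetilde{X}}|_{U\times_X\widetilde{X}})$ (where $\widetilde{f}'\colon V\times_Y\widetilde{Y}\rightarrow U\times_X\widetilde{X}$ is the projection) transforms $\alpha$ into an isomorphism $\beta:=\widetilde{f}^\#\alpha\colon t^\#(\Dsheaf_{\widetilde{Y}}|_{V\times_Y\widetilde{Y}})\xrightarrow{\simeq}\ntDsheaf_V$. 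This produces a trivialization $(V,\widetilde{\varphi},\Phi^{V,t,\beta})$ representing $\calB(Y,\widetilde{Y})$.

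The core of the argument is then to identify $\widetilde{f}^\#\Phi^{U,s,\alpha}_\lambda$ with $\Phi^{V,t,\beta}_\lambda$ for every $\lambda\in\Lambda$, under the canonical isomorphism $\widetilde{\varphi}^\#\Dsheaf_{Y,\lambda}\simeq \widetilde{f}^\#(\Dsheaf_{X,\lambda}|_U)$ coming from Proposition \ref{prop:pull-backBornology}. By the definition of $\Phi^{U,s,\alpha}_\lambda$ given before Proposition \ref{prop:BornologyPrincipalBundle}, this map is obtained by pushing forward $\Dsheaf_{\widetilde{X}}/R(I_{-\lambda+\delta})\Dsheaf_{\widetilde{X}}$ along $p$, taking $G$-invariants, restricting along $\varphi$, and evaluating along $s$ using $\alpha$. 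Each of these operations commutes with flat base change along the \'etale morphism $\widetilde{f}$, so applying $\widetilde{f}^\#$ term by term yields exactly the recipe producing $\Phi^{V,t,\beta}_\lambda$. Once this compatibility is established, the trivialization $(V,\widetilde{\varphi},\widetilde{f}^\#\Phi^{U,s,\alpha})$ that represents $f^\#\calB(X,\widetilde{X})$ (by Definition \ref{def:pull-backBornology}) coincides with $(V,\widetilde{\varphi},\Phi^{V,t,\beta})$, so both bornologies are equal.

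The main obstacle is not conceptual but bookkeeping: one must carefully track the canonical isomorphisms between the fiber products, the pulled-back algebras, and the operations $p_*$, $(\cdot)^G$, and division by $R(I_{-\lambda+\delta})$ used in \eqref{eqn:DefinitionTwisted}. All compatibilities are instances of naturality of $\#$ under \'etale base change (as already used in the proof of Proposition \ref{prop:BornologyPrincipalBundle}), together with the observation that the data $(s,\alpha)$ and $(t,\beta)$ are tautologically related by pull-back, so no genuinely new input is required.
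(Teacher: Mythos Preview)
Your proposal is correct and is exactly the unpacking of what the paper asserts in one line (``clear from the definition of $\calB(X,\widetilde{X})$ and its pull-back''): you exhibit a single trivialization $(V,\widetilde{\varphi},\Phi)$ that simultaneously represents $f^\#\calB(X,\widetilde{X})$ (by Definition~\ref{def:pull-backBornology}) and $\calB(Y,\widetilde{Y})$ (via the pulled-back data $(t,\beta)$), and verify $\widetilde{f}^\#\Phi^{U,s,\alpha}_\lambda=\Phi^{V,t,\beta}_\lambda$ by naturality of the construction in \eqref{eqn:DefinitionTwisted} under \'etale base change. The paper omits this bookkeeping, but your detailed version is the intended argument.
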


\begin{proof}
	It is clear from the definition of $\calB(X,\widetilde{X})$ and its pull-back (Definition \ref{def:pull-backBornology}).
\end{proof}

The following Theorem is a consequence of Lemma \ref{lem:pull-backOfBornologyPrincipalBundle} and
Theorem \ref{thm:FunctorOnUniformlyBounded}.

\begin{theorem}\label{thm:FunctorUniformlyBoundedPrincipalBundle}
	We have functors
	\begin{align*}
		Df_+\colon D^b_{ub}(\Dsheaf_{Y,\Lambda}, \calB(Y,\widetilde{Y})) \rightarrow D^b_{ub}(\Dsheaf_{X,\Lambda}, \calB(X,\widetilde{X})), \\
		Lf^*\colon D^b_{ub}(\Dsheaf_{X,\Lambda}, \calB(X,\widetilde{X}))\rightarrow D^b_{ub}(\Dsheaf_{Y,\Lambda}, \calB(Y,\widetilde{Y})),
	\end{align*}
	which are the restrictions of the direct image functor and the inverse image functor, respectively.
\end{theorem}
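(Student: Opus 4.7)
The plan is to invoke the two results explicitly flagged in the text. First, I would apply Theorem~\ref{thm:FunctorOnUniformlyBounded} to the morphism $f\colon Y\rightarrow X$ and the bornology $\calB(X,\widetilde{X})$ on $\Dsheaf_{X,\Lambda}$. That theorem supplies, with no further work, the functors
\begin{align*}
	Df_+\colon D^b_{ub}(f^\#\Dsheaf_{X,\Lambda}, f^\# \calB(X,\widetilde{X})) &\rightarrow D^b_{ub}(\Dsheaf_{X,\Lambda}, \calB(X,\widetilde{X})), \\
	Lf^*\colon D^b_{ub}(\Dsheaf_{X,\Lambda}, \calB(X,\widetilde{X})) &\rightarrow D^b_{ub}(f^\#\Dsheaf_{X,\Lambda}, f^\# \calB(X,\widetilde{X})).
\end{align*}

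Next, I would identify the source/target categories on the $Y$-side with those in the statement. Since the family $\Dsheaf_{Y,\Lambda}$ is, by construction, the family associated to the pulled-back principal bundle $q\colon \widetilde{Y}=Y\times_X\widetilde{X}\rightarrow Y$ and the $G$-equivariant algebra $\Dsheaf_{\widetilde{Y}}=\widetilde{f}^\#\Dsheaf_{\widetilde{X}}$, the canonical isomorphism $\Dsheaf_{Y,\lambda}\simeq f^\#\Dsheaf_{X,\lambda}$ lets us identify $f^\#\Dsheaf_{X,\Lambda}$ with $\Dsheaf_{Y,\Lambda}$. Lemma~\ref{lem:pull-backOfBornologyPrincipalBundle} then identifies the pulled-back bornology $f^\#\calB(X,\widetilde{X})$ with the standard bornology $\calB(Y,\widetilde{Y})$ of the principal bundle on $Y$.

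Substituting these identifications into the two functors produced above yields precisely the desired functors of the theorem. There is no real obstacle here: the content lies entirely in the preceding two results, and the present theorem is a formal consequence obtained by plugging $\calB=\calB(X,\widetilde{X})$ into Theorem~\ref{thm:FunctorOnUniformlyBounded} and rewriting $f^\#\calB(X,\widetilde{X})$ as $\calB(Y,\widetilde{Y})$ via Lemma~\ref{lem:pull-backOfBornologyPrincipalBundle}.
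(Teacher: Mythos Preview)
Your proposal is correct and matches the paper's approach exactly: the paper states that the theorem is a consequence of Lemma~\ref{lem:pull-backOfBornologyPrincipalBundle} and Theorem~\ref{thm:FunctorOnUniformlyBounded}, and you have spelled out precisely this deduction.
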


\begin{corollary}\label{cor:FamilyOfUniformlyBounded}
	Let $\calM \in D_h^b(\Dsheaf_{\widetilde{X}})$.
	Then the family $(Dp_{+,\lambda}(\calM))_{\lambda \in \Lambda}$ is
	uniformly bounded with respect to $\calB(X,\widetilde{X})$.
\end{corollary}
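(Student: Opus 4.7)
The plan is to reduce to a trivial principal bundle locally, then use a twist-by-line-bundle trick to convert the varying parameter $\lambda$ in the pushforward functor into a varying twist of the source module, so that Fact \ref{fact:WeylAlgebraDirectInverseImage} together with Corollary \ref{cor:MultiplicityTwisted} applies. Fix a trivialization $T=(U,\varphi,\Phi^{U,s,\alpha})\in\calB(X,\widetilde X)$ with $U$ affine (possible by Proposition \ref{prop:LiftTrivialization}) and a closed embedding $\iota\colon U\hookrightarrow\CC^n$. The task is to bound $m_\iota(\Phi^{U,s,\alpha}_\lambda L\varphi^*Dp_{+,\lambda}(\calM))$ uniformly in $\lambda\in\Lambda$. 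By Lemma \ref{lem:AdditiveFunctionDerived} applied to the cohomologies of $\calM$, we may assume from the outset that $\calM$ is a single holonomic module.

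By the base change theorem (Fact \ref{fact:BaseChange}) and Proposition \ref{prop:PrincipalBundleDirectImage}, one has $L\varphi^*\circ Dp_{+,\lambda}\simeq Lq_{+,\lambda}\circ L\widetilde\varphi^*$, where $q\colon V:=U\times_X\widetilde X\to U$ is the pulled-back principal bundle. By construction of $T$, the section $s$ gives an isomorphism $V\simeq U\times G$ and $\alpha$ gives $\widetilde\varphi^\#\Dsheaf_{\widetilde X}\simeq\ntDsheaf_U\boxtimes\ntDsheaf_G$. Set $\calN:=L\widetilde\varphi^*\calM\in D^b_h(\ntDsheaf_U\boxtimes\ntDsheaf_G)$, which is independent of $\lambda$; since $V$ is affine we may fix a closed embedding $\iota'\colon V\hookrightarrow\CC^N$.

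For each $\mu\in\Lambda$ put $\mathcal{E}_\mu:=\ntDsheaf_G/\ntDsheaf_G R(I_\mu)$, a rank-one holonomic $\ntDsheaf_G$-module invertible as an $\rsheaf{G}$-module, and let $q_G\colon V\simeq U\times G\to G$ denote the projection onto $G$. The key technical step is to establish a natural isomorphism of $\ntDsheaf_U$-modules
\[
\Phi^{U,s,\alpha}_\lambda\bigl(Lq_{+,\lambda}(\calN)\bigr)\simeq \Phi^{U,s,\alpha}_0\bigl(Lq_{+,0}(\calN\otimes_{\rsheaf{V}}q_G^*\mathcal{E}_{c(\lambda)})\bigr)
\]
for a suitable linear map $c\colon\Lambda\to\Lambda$ depending on sign conventions involving $\delta$. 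Morally, tensoring $\calN$ by $q_G^*\mathcal{E}_{c(\lambda)}$ shifts the right $\lie g$-action via $R$ by the character $c(\lambda)$, which converts the $\lambda$-twisted coinvariant functor $\CC_{\lambda-\delta}\otimes^L_{\univ{g}}(-)$ into the $0$-twisted one $\CC_{-\delta}\otimes^L_{\univ{g}}(-)$; the two trivializations $\Phi^{U,s,\alpha}_\lambda$ and $\Phi^{U,s,\alpha}_0$ then match up through the explicit construction recalled in Subsection \ref{sect:BornologyPrincipalBundle}.

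Granting this identification, the conclusion is routine. The family $\{q_G^*\mathcal{E}_{c(\lambda)}\}_{\lambda\in\Lambda}$ of invertible $\rsheaf{V}$-modules has its associated family of closed $1$-forms on $V$ depending linearly on $\lambda$, and since $\Lambda=(\lie g^*)^G$ is finite-dimensional these $1$-forms span a finite-dimensional subspace of $\calZ(V)$. Corollary \ref{cor:MultiplicityTwisted} applied to $\iota'$ then yields $m_{\iota'}(\calN\otimes_{\rsheaf{V}}q_G^*\mathcal{E}_{c(\lambda)})\leq C_1\cdot m_{\iota'}(\calN)$ uniformly in $\lambda$, and Fact \ref{fact:WeylAlgebraDirectInverseImage} applied to the now $\lambda$-independent morphism $q$ bounds the multiplicity of $\Phi^{U,s,\alpha}_0 Lq_{+,0}(-)$ in terms of $m_{\iota'}(-)$; combining these two bounds gives the desired uniform estimate. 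The principal obstacle is establishing the displayed identification: one must carefully track how $\Phi^{U,s,\alpha}_\lambda$ varies with $\lambda$ through the $G$-invariant $1$-forms $\omega_i^s$ appearing in the proof of Proposition \ref{prop:BornologyPrincipalBundle} and match this variation with the character twist sitting inside $\mathcal{E}_{c(\lambda)}$.
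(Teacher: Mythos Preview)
Your approach is workable in principle but considerably more laborious than the paper's, and you leave precisely the delicate bookkeeping step (the displayed identification) unfinished. The paper sidesteps that obstacle entirely with a one-line conceptual argument: apply Theorem~\ref{thm:FunctorUniformlyBoundedPrincipalBundle} to $f=p\colon\widetilde X\to X$. By Lemma~\ref{lem:pull-backOfBornologyPrincipalBundle}, $p^\#\calB(X,\widetilde X)=\calB(\widetilde X,\widetilde X\times_X\widetilde X)$; but the fiber product $\widetilde X\times_X\widetilde X$ is canonically the \emph{trivial} bundle $\widetilde X\times G\to\widetilde X$ (via $(x,g)\mapsto(x,xg)$), so this bornology is just the trivial one containing $(\widetilde X,\id,\id)$. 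Hence the constant family $(\calM)_{\lambda\in\Lambda}$ is uniformly bounded upstairs for free, and the direct image functor $Dp_{+,\lambda}$ carries it into $D^b_{ub}(\Dsheaf_{X,\Lambda},\calB(X,\widetilde X))$.

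What your route buys is a direct, hands-on estimate that does not invoke the general Theorem~\ref{thm:FunctorOnUniformlyBounded}; what the paper's route buys is that all the tracking of $\Phi^{U,s,\alpha}_\lambda$ and the twist-by-$\mathcal E_{c(\lambda)}$ trick is absorbed into the already-proven machinery (Proposition~\ref{prop:BornologyPrincipalBundle}, Lemma~\ref{lem:pull-backOfBornologyPrincipalBundle}, Theorem~\ref{thm:FunctorOnUniformlyBounded}), so the corollary becomes a two-line observation about triviality of the pulled-back bundle. Your ``principal obstacle'' never arises in the paper's argument.
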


\begin{proof}
	We shall apply Theorem \ref{thm:FunctorUniformlyBoundedPrincipalBundle} to $Y=\widetilde{X}$ and $f = p$.
	The fiber product $\widetilde{X}\times_X \widetilde{X}$ is canonically isomorphic to the trivial bundle $\widetilde{X}\times G$.
	The isomorphism is given by $\widetilde{X}\times G \ni (x, g) \mapsto (x, xg) \in \widetilde{X}\times_X \widetilde{X}$.
	Hence the following diagram is a cartesian square:
	\begin{align*}
		\xymatrix{
			\widetilde{X} \times G \ar[r]^-m \ar[d]^{\pr}&\widetilde{X} \ar[d]^-p\\
			\widetilde{X} \ar[r]^-p & X,
		}
	\end{align*}
	where $m$ is the multiplication map and $\pr$ is the projection onto the first factor.
	Then the constant family $(\calM)_{\lambda \in \Lambda}$ is uniformly bounded with respect to $\calB(\widetilde{X}, \widetilde{X}\times G)$.
	Therefore the assertion follows from Theorem \ref{thm:FunctorUniformlyBoundedPrincipalBundle}.
\end{proof}

By Corollary \ref{cor:FamilyOfUniformlyBounded}, we can construct many uniformly bounded families of $\ntDsheaf$-modules parametrized by $(\lie{g}^*)^G$ using principal $G$-bundles.

\subsection{\texorpdfstring{$G$}{G}-equivariant bornology}

Let $X$ be a smooth $G$-variety of an affine algebraic group $G$, and $\Dsheaf_{X,\Lambda}$ be a family of $G$-equivariant algebras of twisted differential operators.
We write $\pi\colon G\times X\rightarrow X$ and $m\colon G\times X\rightarrow X$ for the projection and the multiplication map, respectively.
Since all $\Dsheaf_{X, \lambda}$ are $G$-equivariant, we have a canonical isomorphism
\begin{align*}
	\pi^\# \Dsheaf_{X,\Lambda} \simeq m^\# \Dsheaf_{X,\Lambda}.
\end{align*}
See \eqref{eqn:Gequivariant}.

\begin{definition}\label{def:EquivariantBornology}
	We say that a bornology $\calB$ of $\Dsheaf_{X,\Lambda}$ is $G$-equivariant if $\pi^\# \calB = m^\# \calB$ holds under the isomorphism $\pi^\# \Dsheaf_{X,\Lambda} \simeq m^\# \Dsheaf_{X,\Lambda}$.
\end{definition}

The following proposition is clear by the definition and Proposition \ref{prop:pull-backBornology}.

\begin{proposition}
	Let $f\colon Y\rightarrow X$ be a morphism of smooth $G$-varieties and $\calB$ a $G$-equivariant bornology of $\Dsheaf_{X,\Lambda}$.
	Then $f^\# \calB$ is $G$-equivariant.
\end{proposition}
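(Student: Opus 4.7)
The plan is to reduce the claim to the functoriality of pull-back of bornologies (Proposition \ref{prop:pull-backBornology}) together with two commutative squares of $G$-morphisms coming from the $G$-equivariance of $f$.

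First I would set up notation: let $\pi_X, m_X\colon G\times X\to X$ and $\pi_Y, m_Y\colon G\times Y\to Y$ be the projections and multiplication maps, and write $\widetilde{f}:=\id_G\times f\colon G\times Y\to G\times X$. Since $f$ is $G$-equivariant, we have the two commutative squares
\begin{align*}
	\pi_X\circ \widetilde{f} = f\circ \pi_Y, \qquad m_X\circ \widetilde{f} = f\circ m_Y.
\end{align*}
Applying Proposition \ref{prop:pull-backBornology} (contravariant functoriality of the pull-back of bornologies) to these equalities yields
\begin{align*}
	\pi_Y^\# f^\#\calB = (f\circ\pi_Y)^\#\calB = \widetilde{f}^\#\pi_X^\#\calB,\\
	m_Y^\# f^\#\calB = (f\circ m_Y)^\#\calB = \widetilde{f}^\# m_X^\#\calB,
\end{align*}
and the $G$-equivariance hypothesis $\pi_X^\#\calB = m_X^\#\calB$ then gives the desired equality $\pi_Y^\# f^\#\calB = m_Y^\# f^\#\calB$.

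The only substantive point to verify is that these identifications are compatible with the canonical $G$-equivariance isomorphism of the family $f^\#\Dsheaf_{X,\Lambda}$. That is, the identification $\pi_Y^\# f^\#\Dsheaf_{X,\Lambda}\simeq m_Y^\# f^\#\Dsheaf_{X,\Lambda}$ (coming from the induced $G$-equivariant structure on $f^\#\Dsheaf_{X,\Lambda}$, as reviewed at the end of Subsection on $G$-equivariant modules) coincides, under the canonical isomorphisms $\pi_Y^\# f^\# = \widetilde{f}^\#\pi_X^\#$ and $m_Y^\# f^\# = \widetilde{f}^\# m_X^\#$ (Fact \ref{fact:FundamentalDmodule}), with $\widetilde{f}^\#$ applied to $\pi_X^\#\Dsheaf_{X,\Lambda}\simeq m_X^\#\Dsheaf_{X,\Lambda}$. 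This is precisely the naturality of the $G$-equivariant structure under pull-back along a $G$-morphism, which is established through Picard-algebroid descriptions already used in Subsection on Picard algebroids.

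The main (minor) obstacle is this last bookkeeping: making sure the two bornologies being compared live on the same family of twisted differential operators via the correct canonical isomorphisms. Once that identification is made explicit, the proof is a one-line consequence of Proposition \ref{prop:pull-backBornology}.
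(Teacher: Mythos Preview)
Your proof is correct and is exactly the argument the paper has in mind; the paper itself simply says the proposition ``is clear by the definition and Proposition \ref{prop:pull-backBornology},'' and your write-up spells out precisely those two ingredients (the commuting squares $\pi_X\circ\widetilde{f}=f\circ\pi_Y$, $m_X\circ\widetilde{f}=f\circ m_Y$ and the functoriality $(fg)^\#=g^\# f^\#$). Your additional remark about compatibility of the canonical identifications is a reasonable sanity check, but the paper treats this as implicit in the naturality already established.
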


We set $m_g:=m(g,\cdot)$ for $g \in G$.
Then $m_g$ is an automorphism of $X$.

\begin{proposition}\label{prop:TranslationByG}
	Let $\calM \in D^b_{ub}(\Dsheaf_{X,\Lambda}, \calB)$.
	Then $(Lm_g^*(\calM_\lambda))_{\lambda \in \Lambda, g\in G}$ is uniformly bounded with respect to $\calB$.
\end{proposition}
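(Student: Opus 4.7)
The strategy is to factor the family of automorphisms $\{m_g\colon X\to X\}_{g\in G}$ through the single action morphism. Let $m,\pi\colon G\times X\to X$ be the multiplication and second projection, and for each $g\in G$ let $i_g\colon X\to G\times X$ be the closed immersion $x\mapsto (g,x)$. Then $m\circ i_g = m_g$, so Fact \ref{fact:FundamentalDmodule} \eqref{eqn:InverseImage} gives a canonical isomorphism
\begin{align*}
	Lm_g^*(\calM_\lambda) \simeq Li_g^*\bigl(Lm^*(\calM_\lambda)\bigr)
\end{align*}
for every $\lambda\in\Lambda$ and every $g\in G$.

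First, I would apply Theorem \ref{thm:FunctorOnUniformlyBounded} to obtain $Lm^*\calM \in D^b_{ub}(m^\#\Dsheaf_{X,\Lambda}, m^\#\calB)$. The $G$-equivariance of $\calB$ (Definition \ref{def:EquivariantBornology}) together with the canonical isomorphism $m^\#\Dsheaf_{X,\Lambda}\simeq \pi^\#\Dsheaf_{X,\Lambda}$ of \eqref{eqn:Gequivariant} gives $m^\#\calB = \pi^\#\calB$, so $Lm^*\calM$ in fact lies in $D^b_{ub}(\pi^\#\Dsheaf_{X,\Lambda}, \pi^\#\calB)$.

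Next, I would identify $\pi^\#\calB$ with the external tensor product $\calB_{\id}\boxtimes \calB$ on $\ntDsheaf_G \boxtimes \Dsheaf_{X,\Lambda}$ (using Proposition \ref{prop:IdentityBornology} (vi) and the fact that $\pi$ is a projection). Explicitly, for a representative $T=(U,\varphi,\Phi)\in\calB$ with affine $U$, the pullback $\pi^\#T$ has underlying variety $U\times_X(G\times X)\simeq G\times U$, trivializing \'etale morphism $\id_G\times\varphi\colon G\times U\to G\times X$, and the trivialization is the pullback of $\Phi$ along the second projection $G\times U\to U$; under the canonical isomorphism $\pi^\#\Dsheaf_{X,\Lambda}\simeq \ntDsheaf_G \boxtimes \Dsheaf_{X,\Lambda}$ this coincides with $(G,\id_G,\id)\boxtimes T$, yielding the claimed equality of bornologies.

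Finally, I would apply Proposition \ref{prop:FamilyMorphismPull} (in the symmetric form where the parameter factor sits on the left, i.e.\ to the family $i_g\colon X\to G\times X$, $x\mapsto(g,x)$). Taking $Y=G$, $\calB_Y=\calB_\id$, and the uniformly bounded object $Lm^*\calM$ on $G\times X$, the proposition yields that the family $(Li_g^*(Lm^*\calM_\lambda))_{\lambda\in\Lambda,\,g\in G}$ is uniformly bounded with respect to $\calB$. By the factorization above, this is the family $(Lm_g^*\calM_\lambda)_{\lambda\in\Lambda,\,g\in G}$, completing the proof. The main technical point is the identification $\pi^\#\calB = \calB_\id\boxtimes\calB$ in the third paragraph: although conceptually obvious, it requires a careful bookkeeping of the constructions of pull-back (Definition \ref{def:pull-backBornology}) and external tensor product (Definition \ref{def:TensorBornology}) of bornologies.
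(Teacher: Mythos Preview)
Your proposal is correct and follows essentially the same route as the paper's proof: factor $m_g = m\circ i_g$, use Theorem \ref{thm:FunctorOnUniformlyBounded} and $G$-equivariance to get $Lm^*\calM$ uniformly bounded with respect to $\pi^\#\calB = m^\#\calB$, then apply Proposition \ref{prop:FamilyMorphismPull}. The paper's proof is terser---it writes simply ``$\calB = f_g^\#\pi^\#\calB$'' without spelling out the identification $\pi^\#\calB = \calB_{\id}\boxtimes\calB$ that you carefully justify---but the argument is the same.
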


\begin{proof}
	For $g \in G$, let $f_g$ denotes the morphism $f_g\colon X\rightarrow G\times X$ defined by $f_g(x) = (g, x)$.
	Then we have $m_g = m \circ f_g$.
	Since $\calB$ is $G$-equivariant, $Lm^*(\calM)$ is uniformly bounded with respect to $\pi^\#\calB=m^\#\calB$.
	By Proposition \ref{prop:FamilyMorphismPull}, $(Lf_g^*\circ Lm^*(\calM_\lambda))_{\lambda \in \Lambda, g\in G}$ is uniformly bounded with respect to $\calB = f_g^\#\pi^\#\calB$.
	This shows the assertion.
\end{proof}

In Subsection \ref{sect:BornologyPrincipalBundle}, we have given a way to construct a bornology using a principal bundle.
We shall show that the bornology is $G$-equivariant if the bundle has $G$-equivariant structure.
Let $G$ and $T$ be affine algebraic groups and $p\colon \widetilde{X}\rightarrow X$ a principal $T$-bundle over a smooth variety $X$.
Suppose that $\widetilde{X}$ and $X$ are $G\times T$-varieties and $p$ is $G\times T$-equivariant.
Let $\Dsheaf_{\widetilde{X}}$ be a $G\times T$-equivariant algebra of twisted differential operators on $\widetilde{X}$.

Put $\Lambda :=(\lie{t}^*)^T$.
Then we have a family $\Dsheaf_{X,\Lambda} = (\Dsheaf_{X,\lambda})_{\lambda \in \Lambda}$ of $G\times T$-equivariant algebras, and its bornology $\calB(X,\widetilde{X})$ as in Subsection \ref{sect:BornologyPrincipalBundle}.

\begin{proposition}\label{prop:G-equivariantBornologyPrincipal}
	$\calB(X,\widetilde{X})$ is $G$-equivariant.
\end{proposition}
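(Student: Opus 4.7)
The plan is to apply Lemma \ref{lem:pull-backOfBornologyPrincipalBundle} to reduce the equality $\pi^\#\calB(X,\widetilde{X}) = m^\#\calB(X,\widetilde{X})$ to comparing bornologies of two pull-back principal $T$-bundles over $G\times X$, and then to use the $G$-action on $\widetilde{X}$ to identify these bundles. Concretely, write $\widetilde{X}_\pi$ and $\widetilde{X}_m$ for the fiber products $(G\times X)\times_X\widetilde{X}$ formed along $\pi$ and along $m$ respectively, each equipped with the pull-back $G\times T$-equivariant algebra of twisted differential operators on the total space coming from $\Dsheaf_{\widetilde{X}}$. By Lemma \ref{lem:pull-backOfBornologyPrincipalBundle},
\begin{align*}
\pi^\#\calB(X,\widetilde{X}) &= \calB(G\times X,\widetilde{X}_\pi), \\
m^\#\calB(X,\widetilde{X}) &= \calB(G\times X,\widetilde{X}_m),
\end{align*}
and under the canonical isomorphism $\pi^\#\Dsheaf_{X,\Lambda}\simeq m^\#\Dsheaf_{X,\Lambda}$ furnished by \eqref{eqn:Gequivariant}, it suffices to show that these two bornologies of the same family agree.

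The $G\times T$-action on $\widetilde{X}$ supplies an isomorphism of principal $T$-bundles over $G\times X$,
\begin{align*}
\Phi\colon\widetilde{X}_\pi \xrightarrow{\simeq} \widetilde{X}_m,\qquad ((g,x),\tilde{x})\mapsto ((g,x),g\tilde{x}),
\end{align*}
well-defined because $g\cdot p(\tilde{x}) = p(g\tilde{x})$, and $T$-equivariant because the $G$- and $T$-actions on $\widetilde{X}$ commute. Moreover, the $G$-equivariance of $\Dsheaf_{\widetilde{X}}$ lifts $\Phi$ to an isomorphism of the pulled-back $G\times T$-equivariant algebras of twisted differential operators on the two total spaces. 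Crucially, descending this total-space isomorphism to $G\times X$ via the $T$-invariants construction \eqref{eqn:DefinitionTwisted} reproduces exactly the canonical isomorphism $\pi^\#\Dsheaf_{X,\lambda}\simeq m^\#\Dsheaf_{X,\lambda}$ used above.

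Granting this compatibility, take any representative $(V,\varphi,\Phi^{V,s,\alpha}) \in \calB(G\times X,\widetilde{X}_\pi)$ built from a local trivializing section $s$ of $\widetilde{X}_\pi$ on $V$ and an algebra isomorphism $\alpha$. Composing with $\Phi$, the data $(\Phi\circ s,\alpha')$ (with $\alpha'$ the transport of $\alpha$ via the lifted algebra isomorphism) provides a trivializing datum for $\widetilde{X}_m$ on $V$, and its associated representative $(V,\varphi,\Phi^{V,\Phi\circ s,\alpha'})$ of $\calB(G\times X,\widetilde{X}_m)$ coincides entry by entry in $\lambda \in \Lambda$ with the original trivialization once $\pi^\#\Dsheaf_{X,\Lambda}$ and $m^\#\Dsheaf_{X,\Lambda}$ are identified. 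By the choice-independence part of Proposition \ref{prop:BornologyPrincipalBundle}, both trivializations represent the same equivalence class, which yields the desired equality of bornologies and hence the $G$-equivariance of $\calB(X,\widetilde{X})$.

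The main obstacle is the naturality assertion at the end of the second paragraph: that the total-space algebra isomorphism produced by $\Phi$ together with the $G$-equivariant structure of $\Dsheaf_{\widetilde{X}}$ descends through \eqref{eqn:DefinitionTwisted} to the canonical $G$-equivariance isomorphism of $\Dsheaf_{X,\lambda}$ on the base, rather than to some twist thereof. This reduces to a diagram chase unwinding how the $G$-equivariant structure on $\Dsheaf_{X,\lambda}$ is read off from that on $\Dsheaf_{\widetilde{X}}$ via the $T$-invariants construction, and should be routine once the identifications are set up carefully.
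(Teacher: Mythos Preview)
Your approach is correct and essentially the same as the paper's. The paper packages the argument into a single commutative diagram
\[
\xymatrix{
\widetilde{X} \ar[d]^-{p} & G\times \widetilde{X} \ar[d]^-{\id\times p}\ar[l]_-{\pi}\ar[r]^-{m}& \widetilde{X} \ar[d]^-{p} \\
X & G\times X \ar[l]_-{\pi}\ar[r]^-{m}& X,
}
\]
in which both squares are cartesian; this amounts to your identifications $\widetilde{X}_\pi\simeq G\times\widetilde{X}\simeq\widetilde{X}_m$, with your map $\Phi$ being precisely the composite of the two cartesian-square isomorphisms. Applying Lemma~\ref{lem:pull-backOfBornologyPrincipalBundle} to each square then gives $\pi^\#\calB(X,\widetilde{X})=\calB(G\times X,G\times\widetilde{X})=m^\#\calB(X,\widetilde{X})$ directly.

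Regarding the naturality concern you flag: it is indeed routine, and the paper does not spell it out either. The point is that the $G$-equivariant structure on $\Dsheaf_{X,\lambda}$ is \emph{defined} by applying the $T$-invariants construction \eqref{eqn:DefinitionTwisted} to the $G$-equivariant structure on $\Dsheaf_{\widetilde{X}}$, so the compatibility you need holds by construction rather than requiring a separate check. Once both squares above are recognized as cartesian and the algebra on $G\times\widetilde{X}$ is identified (via the $G$-equivariance of $\Dsheaf_{\widetilde{X}}$) as a single $T$-equivariant algebra giving rise to a single family $\Dsheaf_{G\times X,\Lambda}$, the canonical isomorphisms $\pi^\#\Dsheaf_{X,\Lambda}\simeq\Dsheaf_{G\times X,\Lambda}\simeq m^\#\Dsheaf_{X,\Lambda}$ are exactly those appearing in Lemma~\ref{lem:pull-backOfBornologyPrincipalBundle}, and no twist can appear.
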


\begin{proof}
	Consider the following commutative diagram:
	\begin{align*}
		\xymatrix{
			\widetilde{X} \ar[d]^-{p} & G\times \widetilde{X} \ar[d]^-{\id\times p}\ar[l]_-{\pi}\ar[r]^-{m}& \widetilde{X} \ar[d]^-{p} \\
			X & G\times X \ar[l]_-{\pi}\ar[r]^-{m}& X,
		}
	\end{align*}
	where $\pi$ and $m$ are the projection and the multiplication map, respectively.
	Since $\Dsheaf_{\widetilde{X}}$ is $G$-equivariant, $\pi^\# \Dsheaf_{\widetilde{X}}$ and $m^\# \Dsheaf_{\widetilde{X}}$ are canonically isomorphic.
	We obtain a family $\Dsheaf_{G\times X, \Lambda}$ constructed from the principal $T$-bundle $G\times \widetilde{X}\rightarrow G\times X$.
	Then $\pi^\# \Dsheaf_{X,\Lambda}$ and $m^\# \Dsheaf_{X,\Lambda}$ are canonically isomorphic to $\Dsheaf_{G\times X, \Lambda} = \ntDsheaf_G\boxtimes \Dsheaf_{X,\Lambda}$.
	Under this identification, by Lemma \ref{lem:pull-backOfBornologyPrincipalBundle}, we have
	\begin{align*}
		\pi^\#\calB(X,\widetilde{X}) = \calB(G\times X, G\times \widetilde{X}) = m^\#\calB(X,\widetilde{X}).
	\end{align*}
	This implies that $\calB(X,\widetilde{X})$ is $G$-equivariant.
\end{proof}

We shall show the uniqueness of $G$-equivariant bornologies on a homogeneous variety.
Let $G$ and $H$ be affine algebraic group and its closed subgroup,
and $\ntDsheaf_G$ the algebra of non-twisted differential operators.
We write $p\colon G\rightarrow G/H$ for the natural projection.
Then we obtain a $G$-equivariant algebra $\ntDsheaf_{G/H,\lambda}$ of twisted differential operators on $G/H$ for any $\lambda \in (\lie{h}^*)^H$.
See \eqref{eqn:DefinitionTwisted}.

It is well-known that any $G$-equivariant algebra of twisted differential operators is canonically isomorphic to some $\ntDsheaf_{G/H,\lambda}$ (see \cite[Theorem 4.9.2]{Ka89}).
This is because it is generated by $\univ{g}$ and $\rsheaf{G/H}$.
Hence we consider a bornology of a family $\ntDsheaf_{G/H,\Lambda}:=(\ntDsheaf_{G/H,\Lambda(r)})_{r \in R}$ for $\Lambda\colon R\rightarrow (\lie{h}^*)^H$.

\begin{proposition}\label{prop:UniqueBornologyHomogeneous}
	There exists a unique $G$-equivariant bornology of $\ntDsheaf_{G/H,\Lambda}$.
\end{proposition}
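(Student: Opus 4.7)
For existence, apply Proposition~\ref{prop:G-equivariantBornologyPrincipal} to the principal $H$-bundle $p\colon G\to G/H$: the group $G\times H$ acts on $G$ via $(g,h)\cdot g' = gg'h^{-1}$ and on $G/H$ through the $G$-factor by left translation, so $p$ is $G\times H$-equivariant and $\ntDsheaf_G$ is a $G\times H$-equivariant algebra of twisted differential operators. The resulting bornology $\calB_0 := \calB(G/H, G)$ is $G$-equivariant.

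For uniqueness, let $\calB$ be any $G$-equivariant bornology. Using Proposition~\ref{prop:LiftTrivialization} together with passage to a common refinement of étale covers, I may choose representatives $T_0 = (U,\varphi,\Phi^0) \in \calB_0$ and $T = (U,\varphi,\Phi) \in \calB$ sharing the same affine $U$ and the same surjective étale map $\varphi\colon U\to G/H$. By Lemma~\ref{lem:CommutativeClosed1Forms} the automorphisms $\Phi_\lambda\circ(\Phi^0_\lambda)^{-1}$ of $\ntDsheaf_U$ correspond to a family of closed $1$-forms $\omega_\lambda \in \calZ(U)$, and the equality $\calB=\calB_0$ is equivalent to $T\sim T_0$, i.e.\ to the family $\{\omega_\lambda : \lambda\in\Lambda\}$ spanning a finite-dimensional subspace of $\calZ(U)$.

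The crux is to exploit the two equivariance conditions $\pi^\#\calB = m^\#\calB$ and $\pi^\#\calB_0 = m^\#\calB_0$ simultaneously, where $\pi,m\colon G\times G/H\to G/H$ are the projection and the multiplication. Pulling both back to a common refinement $V$ of the two fiber products $U\times_{G/H,\pi}(G\times G/H)$ and $U\times_{G/H,m}(G\times G/H)$, each condition asserts that a certain family of $1$-forms on $V$ (obtained by comparing the two pulled-back trivializations through the canonical isomorphism $\pi^\#\ntDsheaf_{G/H,\lambda}\simeq m^\#\ntDsheaf_{G/H,\lambda}$) spans a finite-dimensional subspace of $\calZ(V)$. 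Subtracting the statement for $\calB_0$ from that for $\calB$ causes these $\lambda$-dependent canonical isomorphisms to cancel, and what remains is the assertion that $\widetilde{m}^*\omega_\lambda - \widetilde{\pi}^*\omega_\lambda$ spans a finite-dimensional subspace of $\calZ(V)$, where $\widetilde{\pi},\widetilde{m}\colon V\to U$ denote the two natural projections.

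To conclude, note that the kernel of the map $\calZ(U)\to\calZ(V)$, $\omega\mapsto \widetilde{m}^*\omega-\widetilde{\pi}^*\omega$, consists of closed $1$-forms on $U$ that descend via $\varphi$ to $G$-invariant closed $1$-forms on $G/H$; this kernel is finite-dimensional because $G$-invariant $1$-forms on the homogeneous space $G/H$ correspond to $H$-invariants in $(\lie{g}/\lie{h})^*$. Together with the bound on $\widetilde{m}^*\omega_\lambda-\widetilde{\pi}^*\omega_\lambda$ just established, this forces $\{\omega_\lambda\}$ itself to lie in a finite-dimensional subspace of $\calZ(U)$. The main obstacle will be the careful tracking of the $\lambda$-dependent canonical isomorphisms $\pi^\#\ntDsheaf_{G/H,\lambda}\simeq m^\#\ntDsheaf_{G/H,\lambda}$ through both pullbacks, so that subtraction of the two equivariance conditions does yield a clean equation purely in terms of the pullbacks of $\omega_\lambda$, with no residual $\lambda$-twisting.
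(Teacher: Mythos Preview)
Your strategy is sound and can be completed, but it is considerably more laborious than the paper's argument, and one step is misdescribed. Your identification of the kernel of $\omega\mapsto \widetilde m^{*}\omega-\widetilde\pi^{*}\omega$ with pullbacks of $G$-invariant forms on $G/H$ is not correct: at any point $(g_0,u_0,u_0')\in V=\{(g,u,u')\,:\,g\varphi(u)=\varphi(u')\}$, the tangent vectors of the form $(\xi,0,\eta')$ fill out all of $T_{u_0'}U$ in the third slot (because the orbit map $G\to G/H$ is a submersion), so $\widetilde\pi^{*}\omega=\widetilde m^{*}\omega$ forces $\omega_{u_0'}=0$ for every $u_0'$. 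The kernel is therefore $\{0\}$, which of course still gives you what you need. Your subtraction step, on the other hand, is fine: writing $\Phi_\lambda=A_{\omega_\lambda}\circ\Phi^0_\lambda$ and unwinding the definitions, the cocycle comparing $\pi^\#T$ with $m^\#T$ differs from that comparing $\pi^\#T_0$ with $m^\#T_0$ by exactly $\widetilde m^{*}\omega_\lambda-\widetilde\pi^{*}\omega_\lambda$, so the $\lambda$-dependent equivariance isomorphisms do cancel.

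The paper bypasses all of this with a one-line trick. Rather than comparing $\calB$ with $\calB_0$ on étale charts, it pulls both back along the projection $p\colon G\to G/H$, which is dominant and hence injects bornologies by Proposition~\ref{prop:FundamentalBornology}~(iv). Writing $\iota\colon G\to G\times G/H$, $\iota(g)=(g,eH)$, one has $m\circ\iota=p$ while $\pi\circ\iota$ is the \emph{constant} map $g\mapsto eH$. Hence for any $G$-equivariant bornology $\calB$,
\[
p^\#\calB=\iota^\# m^\#\calB=\iota^\#\pi^\#\calB=\iota^\#(\calB_{\id}\boxtimes\calB)=\calB_{\id},
\]
the trivial bornology of the constant family $(\ntDsheaf_G)$ on $G$. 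Since the right-hand side no longer depends on $\calB$, uniqueness follows immediately. The gain over your approach is that the $\lambda$-dependence is killed not by a cancellation of two cocycles on an auxiliary space $V$, but simply because $\pi\circ\iota$ collapses $G/H$ to a point; no étale bookkeeping or kernel computation is needed.
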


\begin{proof}
	The existence is clear because $\calB(G/H, G)$ is a $G$-equivariant bornology of $\ntDsheaf_{G/H,\Lambda}$ by Proposition \ref{prop:G-equivariantBornologyPrincipal}.
	
	We shall show the uniqueness.
	Let $\calB$ be a $G$-equivariant bornology of $\ntDsheaf_{G/H,\Lambda}$.
	By Proposition \ref{prop:FundamentalBornology} (iv), it is enough to show $p^\# \calB = p^\# \calB(G/H,G)$.
	Let $\pi, m\colon G\times G/H\rightarrow G/H$ be the projection and the multiplication map, respectively, and $\iota\colon G\rightarrow G\times G/H$ a morphism given by $\iota(g) = (g, eH)$.
	Using the $G$-equivariant structure, we identify the following three families:
	\begin{align*}
		m^\#\ntDsheaf_{G/H, \Lambda},\quad \pi^\# \ntDsheaf_{G/H,\Lambda},\quad (\ntDsheaf_{G}\boxtimes \ntDsheaf_{G/H,\lambda(r)})_{r \in R}.
	\end{align*}

	Since $m\circ \iota = p$, by Proposition \ref{prop:pull-backBornology}, we have
	\begin{align*}
		p^\# \calB = \iota^\# m^\#\calB = \iota^\# \pi^\# \calB
		= \iota^\# (\calB_{\id}\boxtimes \calB) = \calB_{\id},
	\end{align*}
	where $\calB_{\id}$ is the equivalence class of the trivialization $(G, \id_G, \id)$ of the constant family $(\ntDsheaf_{G})_{r \in R}$.
	Therefore we have $p^\# \calB = \calB_{\id} = p^\# \calB(G/H,G)$.
\end{proof}

\subsection{Uniformly bounded family of irreducible modules}

Let $K$ be an affine algebraic group and $X$ a $K$-variety.
Let $\Dsheaf_{X,\Lambda}:=(\Dsheaf_{X,\lambda})_{\lambda\in \Lambda}$ be a family of $K$-equivariant algebras of twisted differential operators on $X$.
Fix a $K$-equivariant bornology $\calB$ of $\Dsheaf_{X,\Lambda}$.
A classification of $K$-equivariant $\Dsheaf_{X,\lambda}$-modules
is given by Beilinson--Bernstein \cite{BeBe81} (see also \cite[Theorem 2.4]{HMSW87}).

We review the classification.
Fix $\lambda \in \Lambda$ and $x \in X$.
We write $i\colon Kx \hookrightarrow X$ and $p\colon K\rightarrow Kx$ for the inclusion and the natural surjection, respectively.
Let $K_x$ denote the stabilizer of $x$ in $K$.
Since $i^\# \Dsheaf_{X,\lambda}$ is $K$-equivariant and $Kx$ is homogeneous,
there is a unique element $\mu(\lambda)$ of $(\lie{k}_x^*)^{K_x}$ such that
$i^\# \Dsheaf_{X,\lambda}$ is canonically isomorphic to
\begin{align*}
	\ntDsheaf_{Kx, \mu(\lambda)} := (p_*(\ntDsheaf_{K})\otimes_{\calU(\lie{k}_x)} \CC_{\mu(\lambda)})^{K_x},
\end{align*}
See \cite[Theorem 4.9.2]{Ka89}.
We identify $i^\# \Dsheaf_{X,\lambda}$ with $\ntDsheaf_{Kx, \mu(\lambda)}$ by the isomorphism.

\begin{fact}\label{fact:ClassificationDmodule}
	Let $\calM$ be an irreducible coherent $(\Dsheaf_{X,\lambda}, K)$-module whose support is $\overline{Kx}$.
	Then there exists a unique irreducible $K_x$-module $F$ such that
	\begin{enumerate}[(i)]
		\item $\lie{k}_x$ acts on $F$ by the character $\mu(\lambda)$,
		\item $\calM$ is isomorphic to the unique irreducible submodule
		of $D^0i_+(\Ind_{K_x}^K(F))$,
	\end{enumerate}
	where $\Ind_{K_x}^K(F)$ is the $(\ntDsheaf_{Kx,\mu(\lambda)}, K)$-module of local sections of the associated vector bundle $K\times_{K_x} F$ over $Kx \simeq K/K_x$.
	In particular, $\calM$ is holonomic.
\end{fact}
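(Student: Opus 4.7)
I will follow the standard Beilinson–Bernstein strategy: restrict $\calM$ to the open $K$-orbit via Kashiwara's equivalence, classify equivariant modules on the homogeneous space, and then extend back to $X$ as a unique irreducible submodule. First, because $\calM$ is irreducible and $K$-equivariant, its support is an irreducible $K$-stable closed subset of $X$, hence equals $\overline{Kx}$. Let $U := X \setminus (\overline{Kx}\setminus Kx)$, a $K$-stable open subvariety in which $Kx$ is smooth and closed. Write $k \colon U \hookrightarrow X$ for the open embedding and $j \colon Kx \hookrightarrow U$ for the closed embedding, so $i = k\circ j$. I claim $\calM|_U = k^*\calM$ is a nonzero irreducible $(\Dsheaf_{X,\lambda}|_U, K)$-module: nonzero since $\mathrm{supp}(\calM)\cap U = Kx \neq \emptyset$, and irreducible because any proper nonzero equivariant submodule $\calM'' \subset \calM|_U$ would lift, as the preimage of $k_*\calM''$ under the adjunction map $\calM \to k_*(\calM|_U)$, to a proper nonzero equivariant submodule of $\calM$, contradicting irreducibility.

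Next, by the equivariant version of Kashiwara's equivalence (Fact~\ref{fact:Kashiwara}, applied with the $K$-equivariance tracked through the adjunction), $\calM|_U \simeq D^0 j_+(\calN)$ for a unique irreducible $(\ntDsheaf_{Kx,\mu(\lambda)}, K)$-module $\calN$ on $Kx$. On the homogeneous space $Kx \simeq K/K_x$, descent along the principal $K_x$-bundle $K \to Kx$ produces a category equivalence $F \mapsto \Ind_{K_x}^K(F)$ between rational $K_x$-modules on which $\lie{k}_x$ acts by $\mu(\lambda)$ and $(\ntDsheaf_{Kx,\mu(\lambda)}, K)$-modules (this is the module-theoretic counterpart of Proposition~\ref{prop:UniqueBornologyHomogeneous}; the $\lie{k}_x$-action is constrained by the choice of twist $\mu(\lambda)$). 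The equivalence preserves irreducibility, so $\calN \simeq \Ind_{K_x}^K(F)$ for a unique irreducible $K_x$-module $F$ satisfying (i).

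For (ii), since $k$ is open we have $D k_+ = Rk_*$, so $D^0 i_+(\Ind_{K_x}^K(F)) \simeq k_*(D^0 j_+(\Ind_{K_x}^K(F))) \simeq k_*(\calM|_U)$. The adjunction map $\calM \to k_*(\calM|_U)$ is injective, as its kernel is the subsheaf of sections of $\calM$ supported on $X \setminus U$, which vanishes by irreducibility of $\calM$ (the kernel is $K$-stable and its support is contained in $X\setminus U$, so cannot equal $\overline{Kx}$). For uniqueness: from the formula $k_*\calF(V) = \calF(V\cap U)$ one sees that $k_*(\calM|_U)$ admits no nonzero sections supported on $X\setminus U$, so every nonzero submodule restricts nontrivially to $U$, hence (by irreducibility of $\calM|_U$) to all of $\calM|_U$. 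Among submodules restricting to $\calM|_U$, the image of $\calM$ is the smallest: any such submodule contains the image of $\calM$ via adjunction. Consequently this image is the unique irreducible submodule, isomorphic to $\calM$. For holonomicity, $\Ind_{K_x}^K(F)$ is $\rsheaf{Kx}$-coherent and hence holonomic; $D^0 j_+$ preserves holonomicity by Fact~\ref{fact:Kashiwara} and $k_*$ preserves it for open $k$, so $D^0 i_+(\Ind_{K_x}^K(F))$ is holonomic, and submodules of holonomic modules are holonomic.

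The main obstacle I foresee is the uniqueness assertion in (ii): one must verify both that $k_*(\calM|_U)$ has no submodule supported on the boundary $\overline{Kx}\setminus Kx$ (forcing every irreducible submodule to restrict fully onto $\calM|_U$) and that the image of $\calM$ is the minimal lift of $\calM|_U$. Both follow from the formal properties of the open direct image $k_*$, but care is needed to confirm compatibility with the $K$-equivariant structure throughout the adjunction argument.
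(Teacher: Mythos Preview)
The paper does not give its own proof of this statement: it is recorded as a \emph{Fact} and attributed to Beilinson--Bernstein \cite{BeBe81} (with a pointer to \cite[Theorem 2.4]{HMSW87}). Your proposal reproduces the standard argument behind these references---restrict to an open set where the orbit is closed, apply Kashiwara's equivalence, classify equivariant modules on the homogeneous space via the fiber functor, and then identify $\calM$ inside the open direct image---so there is nothing to compare on the level of strategy.

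Your argument is essentially correct, but the justification of uniqueness in (ii) is phrased loosely. The sentence ``any such submodule contains the image of $\calM$ via adjunction'' is not quite the reason: adjunction gives a map out of $\calM$, not into an arbitrary submodule $N$. The clean way to finish is to note that $D^0 i_+(\Ind_{K_x}^K(F)) \simeq k_*(\calM|_U)$ is holonomic, hence of finite length, so an irreducible submodule exists; and if $N_1, N_2$ were two distinct irreducible submodules then $N_1\cap N_2 = 0$, yet restricting to $U$ (an exact functor) gives $(N_1\cap N_2)|_U = N_1|_U \cap N_2|_U = \calM|_U \neq 0$, a contradiction. Since $\calM$ itself embeds as an irreducible submodule, it is the unique one. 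With this adjustment your proof is complete.
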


We shall show that a family of $(\Dsheaf_{X,\lambda}, K)$-modules with bounded lengths is uniformly bounded if $K$ has finitely many orbits in $X$.

\begin{lemma}\label{lem:KequivModules}
	Let $F$ be an irreducible $K_x$-module.
	Put $n=\dim_{\CC}(K_x)$.
	Assume that $\lie{k}_x$ acts on $F$ by the character $\mu(\lambda)$.
	Then $\Ind_{K_x}^K(F)$ is isomorphic to a direct summand of $D^{-n}p_{+,\mu(\lambda)}(\rsheaf{K})$.
\end{lemma}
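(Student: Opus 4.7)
The plan is to identify $D^{-n}p_{+,\mu(\lambda)}(\rsheaf{K})$ explicitly as an $\rsheaf{Kx}$-coherent $(\ntDsheaf_{Kx,\mu(\lambda)}, K)$-module and then use the Beilinson--Bernstein equivalence on the homogeneous variety $Kx = K/K_x$ to reduce the claim to a statement in the representation theory of $K_x$, which in turn follows from semisimplicity.

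First, Theorem \ref{thm:DirectImageTor} gives $D^{-n}p_{+,\mu(\lambda)}(\rsheaf{K}) \simeq \Tor_n^{\univ{k_x}}(\CC_{\mu(\lambda)-\delta}, p_*\rsheaf{K})$. Choosing an \'etale trivialization $p^{-1}(U) \simeq U \times K_x$ of the principal $K_x$-bundle $p$, we have $(p_*\rsheaf{K})|_U \simeq \rsheaf{U} \otimes_\CC \rring{K_x}$, and flatness over $\CC$ reduces us to computing the vector space $E := \Tor_n^{\univ{k_x}}(\CC_{\mu(\lambda)-\delta}, \rring{K_x})$, with $\lie{k_x}$ acting via right translation $R$. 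Poincar\'e duality (Fact \ref{fact:PoincareDuality}) applied to the pair $(\lie{k_x}, \{e\})$ identifies
\[
E \;\simeq\; (\rring{K_x})^{R,\,-\mu(\lambda)},
\]
the $(-\mu(\lambda))$-weight subspace under $R$, and the commuting left-translation action $L$ makes $E$ into a $K_x$-module. Using the hypothesis $\mu(\lambda) \in (\lie{k_x}^*)^{K_x}$, a direct computation shows that $\lie{k_x}$ acts on $E$ by the character $\mu(\lambda)$ via $L$, and in fact $E \simeq \Ind_{K_x^0}^{K_x}(\chi_{\mu(\lambda)})$ as $K_x$-modules, where $\chi_{\mu(\lambda)}$ is the character of $K_x^0$ integrating $\mu(\lambda)$ (which exists because $F$ is assumed to exist). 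In particular $E$ is finite-dimensional of dimension $|K_x/K_x^0|$, so $D^{-n}p_{+,\mu(\lambda)}(\rsheaf{K})$ is $\rsheaf{Kx}$-coherent.

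The Beilinson--Bernstein equivalence on the single $K$-orbit $Kx = K/K_x$---the equivalence of categories underlying Fact \ref{fact:ClassificationDmodule}---matches $\rsheaf{Kx}$-coherent $(\ntDsheaf_{Kx,\mu(\lambda)}, K)$-modules with rational $K_x$-modules on which $\lie{k_x}$ acts by $\mu(\lambda)$, the inverse being $V \mapsto \Ind_{K_x}^K(V)$. Applying this gives $D^{-n}p_{+,\mu(\lambda)}(\rsheaf{K}) \simeq \Ind_{K_x}^K(E)$, and since the equivalence is exact it suffices to show that $F$ is a direct summand of $E$ as a $K_x$-module. Frobenius reciprocity and the hypothesis on $F$ yield $\Hom_{K_x}(F, E) \simeq \Hom_{K_x^0}(F|_{K_x^0}, \chi_{\mu(\lambda)}) \simeq \CC^{\dim F}$, so $F$ embeds into $E$.

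To promote this embedding to a direct summand, I will use that the category of rational $K_x$-modules on which $\lie{k_x}$ acts by the character $\mu(\lambda)$ is semisimple: its objects are modules over the finite-dimensional twisted group algebra of the finite group $K_x/K_x^0$, which is semisimple in characteristic zero (a twisted Maschke theorem). Hence every embedding of an irreducible splits and $F$ is a direct summand of $E$. The hardest part will be carefully tracking how the right- and left-translation actions of $K_x$ on $\rring{K_x}$ interact so as to identify $E$ with $\Ind_{K_x^0}^{K_x}(\chi_{\mu(\lambda)})$ as a $K_x$-module; in particular, verifying that the $L$-action places $E$ in the same category as $F$---where $\lie{k_x}$ acts by $+\mu(\lambda)$, not $-\mu(\lambda)$---depends crucially on the $K_x$-invariance of $\mu(\lambda)$.
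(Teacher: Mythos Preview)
Your proof is correct and follows essentially the same route as the paper: both compute $D^{-n}p_{+,\mu(\lambda)}(\rsheaf{K})$ via Theorem~\ref{thm:DirectImageTor}, apply Poincar\'e duality (Fact~\ref{fact:PoincareDuality}) to land in degree zero, and then conclude by Frobenius reciprocity together with the semisimplicity of $K_x$-modules on which $\lie{k}_x$ acts by a fixed character. The paper's proof is simply more compressed---it applies Poincar\'e duality directly to the sheaf $p_*(\rsheaf{K})$ to obtain $(p_*(\rsheaf{K})\otimes\CC_{\mu(\lambda)})^{(K_x)_0}$ globally on $Kx$, rather than first passing through a local trivialization and then invoking the equivalence of categories on $K/K_x$ to reassemble.
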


\begin{proof}
	By Theorem \ref{thm:DirectImageTor} and the Poincar\'e duality (Fact \ref{fact:PoincareDuality}), we have
	\begin{align*}
		D^{-n}p_{+,\mu(\lambda)}(\rsheaf{K}) \simeq \Tor_n^{\calU(\lie{k}_x)}(\CC_{\mu(\lambda) - \delta}, p_*(\rsheaf{K}))
		\simeq (p_*(\rsheaf{K}) \otimes \CC_{\mu(\lambda)})^{(K_x)_0},
	\end{align*}
	where $\delta$ is the character $\lie{k}_x \ni X\mapsto \tr(\ad_{\lie{k}_x}(X))$.
	The assertion follows from the isomorphisms and the Frobenius reciprocity.
\end{proof}

\begin{lemma}\label{lem:KequivModules2}
	Let $\calM$ be an irreducible coherent $(\Dsheaf_{X,\lambda}, K)$-module whose support is $\overline{Kx}$.
	Then $\calM$ is isomorphic to a direct summand of $H^{-n} \circ Di_+\circ Dp_{+,\mu(\lambda)}(\rsheaf{K})$.
\end{lemma}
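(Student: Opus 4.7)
The plan is to chain the preceding two results via the exactness of the direct image along the closed immersion $i$. Specifically, Fact \ref{fact:ClassificationDmodule} realizes $\calM$ inside $D^0 i_+(\Ind_{K_x}^K(F))$ for an appropriate irreducible $K_x$-module $F$, while Lemma \ref{lem:KequivModules} realizes $\Ind_{K_x}^K(F)$ as a direct summand of $D^{-n}p_{+,\mu(\lambda)}(\rsheaf{K})$; pushing this summand forward by $Di_+$ produces the desired summand of $H^{-n}(Di_+ \circ Dp_{+,\mu(\lambda)}(\rsheaf{K}))$.

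First I would apply Fact \ref{fact:ClassificationDmodule} to $\calM$ to obtain an irreducible $K_x$-module $F$ on which $\lie{k}_x$ acts via $\mu(\lambda)$, together with an embedding $\calM \hookrightarrow D^0 i_+(\Ind_{K_x}^K(F))$. Next, Lemma \ref{lem:KequivModules} gives a decomposition $D^{-n}p_{+,\mu(\lambda)}(\rsheaf{K}) \simeq \Ind_{K_x}^K(F) \oplus N$ in $\Mod_h(\ntDsheaf_{Kx,\mu(\lambda)})$. Since $i\colon Kx \hookrightarrow X$ is a closed immersion (after, if necessary, replacing $X$ by the $K$-stable open complement of $\overline{Kx}\setminus Kx$, which does not affect $\calM$ since its support meets this open set in $Kx$), the functor $D^0 i_+$ is exact and $D^k i_+ = 0$ for $k \neq 0$ by the Kashiwara equivalence (Fact \ref{fact:Kashiwara}). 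Consequently, for any $\cpx{N} \in D^b_h(\ntDsheaf_{Kx,\mu(\lambda)})$ we have the canonical identification
\begin{align*}
H^{-n}(Di_+(\cpx{N})) \simeq D^0 i_+(H^{-n}(\cpx{N})).
\end{align*}

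Applying this identity to $\cpx{N} = Dp_{+,\mu(\lambda)}(\rsheaf{K})$ yields
\begin{align*}
H^{-n}(Di_+ \circ Dp_{+,\mu(\lambda)}(\rsheaf{K})) \simeq D^0 i_+\bigl(D^{-n} p_{+,\mu(\lambda)}(\rsheaf{K})\bigr) \simeq D^0 i_+(\Ind_{K_x}^K(F)) \oplus D^0 i_+(N),
\end{align*}
where the last isomorphism uses exactness of $D^0 i_+$ on the summand decomposition from Lemma \ref{lem:KequivModules}. Since $i$ is closed and $\Ind_{K_x}^K(F)$ is irreducible as a $(\ntDsheaf_{Kx,\mu(\lambda)}, K)$-module, Kashiwara's equivalence identifies $\calM$ with $D^0 i_+(\Ind_{K_x}^K(F))$, which is thus a direct summand of $H^{-n}(Di_+ \circ Dp_{+,\mu(\lambda)}(\rsheaf{K}))$ as claimed.

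The main obstacle is the treatment of the embedding $i$: in general $Kx$ is only locally closed in $X$, so $Di_+$ is not a priori exact, and the naive identification $H^{-n}\circ Di_+ = D^0 i_+ \circ H^{-n}$ fails. The fix is to exploit that $K$-orbits are locally closed, so there is a $K$-stable Zariski open $U \subset X$ in which $Kx$ is closed; one verifies the lemma on $U$, where everything reduces to the exact, Kashiwara setting, and then transfers the conclusion back to $X$ (the statement is local around the generic points of $\overline{Kx}$, which all lie in $U$).
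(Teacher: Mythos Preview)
Your overall strategy is right, but the step where you ``transfer the conclusion back to $X$'' is not justified, and this is where the argument breaks. Being a direct summand (or even a submodule) is not a local property: knowing that $\calM|_U$ is a direct summand of $H^{-n}(Di_+ \circ Dp_{+,\mu(\lambda)}(\rsheaf{K}))|_U$ does not by itself produce a splitting on all of $X$, and both sheaves carry information along $\overline{Kx}\setminus Kx$ that is invisible on $U$. Relatedly, your claim that Kashiwara identifies $\calM$ with $D^0 i_+(\Ind_{K_x}^K(F))$ is only valid after restricting to $U$; on $X$, Fact~\ref{fact:ClassificationDmodule} gives $\calM$ only as the unique irreducible \emph{submodule} of $D^0 i_+(\Ind_{K_x}^K(F))$, and the latter is typically reducible when $i$ is merely locally closed.

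The paper avoids passing to any open subset. The key observation is that for a \emph{locally closed} immersion $i$ one still has $D^k i_+(\calN) = 0$ for all $k < 0$ (factor $i$ as an open immersion followed by a closed one: $Rj_*$ lives in nonnegative degrees, and the closed pushforward is exact). Since $Dp_{+,\mu(\lambda)}(\rsheaf{K})$ is concentrated in degrees $\geq -n$, a truncation argument gives directly on $X$ the identification
\[
H^{-n}\bigl(Di_+ \circ Dp_{+,\mu(\lambda)}(\rsheaf{K})\bigr) \;\simeq\; D^0 i_+\bigl(D^{-n}p_{+,\mu(\lambda)}(\rsheaf{K})\bigr).
\]
One then applies additivity of $D^0 i_+$ to the splitting from Lemma~\ref{lem:KequivModules}, and Fact~\ref{fact:ClassificationDmodule} embeds $\calM$ into $D^0 i_+(\Ind_{K_x}^K(F))$. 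No localization is needed.
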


\begin{proof}
	Since $Kx$ is locally closed in $X$, the cohomology $D^{k}i_+(\calN)$ vanishes for any $k < 0$ and $\calN \in \Mod_{qc}(\Dsheaf_{X,\lambda})$.
	Using truncation functors (see Subsection \ref{sect:truncation}),
	we have
	\begin{align*}
		H^{-n} \circ Di_+\circ Dp_{+,\mu(\lambda)}(\rsheaf{K}) \simeq D^{0}i_+(D^{-n}p_{+,\mu(\lambda)}(\rsheaf{K})).
	\end{align*}
	Hence the assertion follows from Fact \ref{fact:ClassificationDmodule} and Lemma \ref{lem:KequivModules}.
\end{proof}

Let $\pi$ and $m$ be the projection and the multiplication map
from $K\times X$ to $X$, respectively.
We write $f_x(g) = (g, x)$ for $g \in K$ and $x \in X$.
Then we have $i\circ p = m\circ f_x$.
We denote by $D(f_x)_{+,\lambda}$ the direct image functor
$D^b_{qc}(\ntDsheaf_{G}) \rightarrow D^b_{qc}(\pi^\#\Dsheaf_{X,\lambda})$.
By Proposition \ref{prop:FamilyMorphismPush}, $(D(f_x)_{+, \lambda}(\rsheaf{K}))_{x \in X, \lambda \in \Lambda}$ is uniformly bounded with respect to $\pi^\# \calB$.

Since $\calB$ and any algebra in $\Dsheaf_{X,\Lambda}$ are $K$-equivariant,
we have $m^\# \Dsheaf_{X,\Lambda} \simeq \pi^\# \Dsheaf_{X,\Lambda}$
and $m^\#\calB = \pi^\# \calB$.
Therefore $(Dm_+ \circ D(f_x)_{+,\lambda}(\rsheaf{K}))_{x \in X, \lambda \in \Lambda}$ is uniformly bounded with respect to $\calB$ by Theorem \ref{thm:FunctorOnUniformlyBounded}.
By Lemma \ref{lem:KequivModules2} and $i\circ p = m\circ f_x$, we obtain

\begin{proposition}\label{prop:UniformlyBoundedIrreducibles}
	Let $\calM \in \prod_{\lambda \in \Lambda}\Mod_{h}(\Dsheaf_{X,\lambda}, K)$.
	Assume that each $\calM_\lambda$ is irreducible and its support is the closure of some $K$-orbit dependent on $\lambda$.
	Then $\calM$ is a uniformly bounded family with respect to $\calB$.
\end{proposition}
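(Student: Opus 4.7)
The plan is to unfold the proof exactly along the chain of lemmas and propositions that the authors have just assembled. First, for each $\lambda \in \Lambda$ I pick a point $x_\lambda \in X$ with $\mathrm{supp}(\calM_\lambda) = \overline{K x_\lambda}$ and set $n_\lambda = \dim_{\CC} K_{x_\lambda}$. Applying Lemma \ref{lem:KequivModules2} (with $i\colon Kx_\lambda \hookrightarrow X$ and $p\colon K \to Kx_\lambda$ the orbit map), $\calM_\lambda$ appears as a direct summand of $H^{-n_\lambda}(Di_+ \circ Dp_{+,\mu(\lambda)}(\rsheaf{K}))$. By Proposition \ref{prop:FundamentalBoundeFamily}(ii) it therefore suffices to show that the family of cohomology objects $(H^{-n_\lambda}(Di_+ \circ Dp_{+,\mu(\lambda)}(\rsheaf{K})))_{\lambda\in\Lambda}$ is uniformly bounded with respect to $\calB$.

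Next I rewrite $Di_+ \circ Dp_{+,\mu(\lambda)}$ using the identity $i\circ p = m \circ f_{x_\lambda}$ and Fact \ref{fact:FundamentalDmodule}(i), obtaining $Di_+ \circ Dp_{+,\mu(\lambda)}(\rsheaf{K}) \simeq Dm_+ \circ D(f_{x_\lambda})_{+,\lambda}(\rsheaf{K})$ inside $D^b_h(\Dsheaf_{X,\lambda})$. The advantage of this reformulation is that the two morphisms $f_x$ and $m$ now depend on $\lambda$ only through the choice of $x_\lambda$, so I can view the whole construction as restricting a family indexed by the larger set $\Lambda\times X$ to the section $\lambda\mapsto(\lambda,x_\lambda)$. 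Proposition \ref{prop:FamilyMorphismPush} gives that $(D(f_x)_{+,\lambda}(\rsheaf{K}))_{(\lambda,x)\in\Lambda\times X}$ is uniformly bounded with respect to $\pi^\#\calB$; the $K$-equivariance of $\calB$ (Definition \ref{def:EquivariantBornology}) yields $m^\#\calB = \pi^\#\calB$ under the canonical isomorphism $m^\#\Dsheaf_{X,\Lambda} \simeq \pi^\#\Dsheaf_{X,\Lambda}$, so Theorem \ref{thm:FunctorOnUniformlyBounded} propagates the uniform boundedness through $Dm_+$ to give a uniformly bounded family $(Dm_+\circ D(f_x)_{+,\lambda}(\rsheaf{K}))_{(\lambda,x)}$ with respect to $\calB$. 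Restricting to $x=x_\lambda$ preserves uniform boundedness (since the defining boundedness of $m_\iota$ passes to any subfamily), and taking the cohomology $H^{-n_\lambda}$ of a uniformly bounded complex is again uniformly bounded by definition of $D^b_{ub}$.

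The principal thing to watch is bookkeeping of the twistings: one has to check that the isomorphism $Di_+\circ Dp_{+,\mu(\lambda)} \simeq Dm_+\circ D(f_{x_\lambda})_{+,\lambda}$ is compatible with the identifications $i^\#\Dsheaf_{X,\lambda}\simeq \ntDsheaf_{Kx_\lambda,\mu(\lambda)}$ and $p^\#\ntDsheaf_{Kx_\lambda,\mu(\lambda)}\simeq \ntDsheaf_K$ on one side, and with $f_{x_\lambda}^\#\pi^\#\Dsheaf_{X,\lambda}\simeq \ntDsheaf_K$ on the other. These identifications are canonical given the $K$-equivariance of $\Dsheaf_{X,\lambda}$, so the compatibility amounts to naturality of $\#$ and $D(\cdot)_+$ under composition, which is Fact \ref{fact:FundamentalDmodule}. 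A minor remark is that $n_\lambda$ varies with $\lambda$; this causes no trouble because the cohomological range of $Dm_+\circ D(f_x)_+(\rsheaf{K})$ is bounded independently of $(\lambda,x)$ by the dimensions of $K$ and $X$, and within a uniformly bounded complex each individual cohomology is uniformly bounded.
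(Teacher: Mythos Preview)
Your proof is correct and follows exactly the same route as the paper: the paper's argument (given in the paragraphs immediately preceding the proposition) uses Lemma~\ref{lem:KequivModules2} to embed each $\calM_\lambda$ into $H^{-n}\bigl(Di_+\circ Dp_{+,\mu(\lambda)}(\rsheaf{K})\bigr)$, rewrites via $i\circ p = m\circ f_x$, and then appeals to Proposition~\ref{prop:FamilyMorphismPush}, the $K$-equivariance $m^\#\calB = \pi^\#\calB$, and Theorem~\ref{thm:FunctorOnUniformlyBounded}. Your additional remarks on the compatibility of twistings and on the variation of $n_\lambda$ are welcome clarifications that the paper leaves implicit.
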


\begin{theorem}\label{thm:UniformlyBoundedIrreducibles}
	Let $\calM \in \prod_{\lambda \in \Lambda}\Mod_{h}(\Dsheaf_{X,\lambda}, K)$.
	Assume that $K$ has finitely many orbits in $X$ and the length of each $\calM_\lambda$ is bounded by a constant independent of $\lambda \in \Lambda$.
	Then $\calM$ is a uniformly bounded family with respect to $\calB$.
\end{theorem}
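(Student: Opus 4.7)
My plan is to reduce the theorem to Proposition \ref{prop:UniformlyBoundedIrreducibles} by decomposing each $\calM_\lambda$ into its composition factors in $\Mod_h(\Dsheaf_{X,\lambda}, K)$. Fix a bounded trivialization $T=(U,\varphi,\Phi) \in \calB$ with $U$ affine and a closed embedding $\iota\colon U\hookrightarrow \CC^n$; it suffices to bound $m_\iota(\calM_\lambda|_U)$ uniformly in $\lambda$. Let $N$ be the uniform bound on $\Len_{\Dsheaf_{X,\lambda},K}(\calM_\lambda)$. For each $\lambda$, I would choose a composition series $0=\calM^{(0)}_\lambda\subset \calM^{(1)}_\lambda\subset \cdots \subset \calM^{(k_\lambda)}_\lambda=\calM_\lambda$ in $\Mod_h(\Dsheaf_{X,\lambda},K)$ with $k_\lambda\leq N$, set $F^{(i)}_\lambda:=\calM^{(i)}_\lambda/\calM^{(i-1)}_\lambda$ for $1\leq i\leq k_\lambda$, and extend by zero for $k_\lambda<i\leq N$. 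Each nonzero $F^{(i)}_\lambda$ is then an irreducible $(\Dsheaf_{X,\lambda},K)$-module.

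Next I would bring in the finite-orbit hypothesis to control supports. Since $K$ has only finitely many orbits on $X$, every closed irreducible $K$-invariant subvariety of $X$ is the closure of a single $K$-orbit (as any closed $K$-invariant subset is a finite union of orbit closures, one of which must be the whole irreducible set). Hence the support of each nonzero $F^{(i)}_\lambda$ is of the form $\overline{Kx}$ for some $x = x(\lambda,i)\in X$. For each fixed $i\in\{1,\ldots,N\}$, the family $(F^{(i)}_\lambda)_{\lambda\in\Lambda}\in \prod_\lambda \Mod_h(\Dsheaf_{X,\lambda},K)$ therefore satisfies the hypothesis of Proposition \ref{prop:UniformlyBoundedIrreducibles} (the zero entries contribute trivially). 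That proposition then yields a constant $C_i$ with $m_\iota(F^{(i)}_\lambda|_U)\leq C_i$ for every $\lambda$.

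Finally I would assemble the bound. The restriction $\varphi^*$ is exact since $\varphi$ is \'etale, and the direct image $D\iota_+$ along the closed embedding $\iota$ is exact, so the composition series on $\calM_\lambda$ pushes forward to a filtration of $D\iota_+(\calM_\lambda|_U)$ in $\Mod_h(\ntDalg{\CC^n})$ with successive quotients $D\iota_+(F^{(i)}_\lambda|_U)$. Additivity of the multiplicity on short exact sequences of holonomic modules (Fact \ref{fact:WeylAlgebraExact}, noting that all terms have dimension $n$ or are zero) then gives
\begin{align*}
m_\iota(\calM_\lambda|_U) \;=\; \sum_{i=1}^{N} m_\iota(F^{(i)}_\lambda|_U) \;\leq\; \sum_{i=1}^{N} C_i,
\end{align*}
a bound independent of $\lambda$, establishing uniform boundedness by Definition \ref{def:UniformlyBoundedFamilyOri}.

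The substantive content is carried entirely by Proposition \ref{prop:UniformlyBoundedIrreducibles}; the argument above is essentially a formal reduction via composition series and additivity of multiplicity. The only place where one must be a little careful is the use of the finite-orbit hypothesis to guarantee that every irreducible $K$-equivariant composition factor has support equal to the closure of a single $K$-orbit, so that Proposition \ref{prop:UniformlyBoundedIrreducibles} is genuinely applicable; this is where the finiteness of orbits is indispensable.
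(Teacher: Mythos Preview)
Your proof is correct and matches the paper's intended argument: the paper states the theorem immediately after Proposition~\ref{prop:UniformlyBoundedIrreducibles} without a separate proof, clearly meaning it as the straightforward consequence via composition series and additivity of the multiplicity that you spell out. One small remark: you could cite Fact~\ref{fact:ClassificationDmodule} (or the standard fact that the support of a simple holonomic module is irreducible) to justify that each irreducible $K$-equivariant factor has support equal to a single orbit closure, rather than arguing this from scratch.
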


For the representation theory of real reductive Lie groups, we generalize the theorem to the universal covering group of $K$ in a sense.
Retain the notation $X, K, \Dsheaf_{X,\Lambda}, \calB$ as above
and assume that $K$ is connected.

Fix $\lambda \in \Lambda$ for a while.
Let $\nu$ be a character of $\lie{k}$ and $\calM$ a quasi-coherent $\Dsheaf_{X,\lambda}$-module.
We say that $\calM$ is a twisted $(\Dsheaf_{X,\lambda}, K)$-module with twist $\nu$ if the action of $\lie{k}$ on $\calM\otimes \CC_{\nu}$ lifts to an action of $K$.
Let $\Dsheaf_{X,(\lambda, \nu)}$ be the $K$-equivariant algebra $\Dsheaf_{X,\lambda}\otimes \End_{\CC}(\CC_{\nu})$, which is isomorphic to $\Dsheaf_{X,\lambda}$ without the $K$-equivariant structures.
Then $\calM$ is a twisted $(\Dsheaf_{X,\lambda}, K)$-module with twist $\nu$
if and only if $\calM$ admits a $K$-equivariant structure as an $\Dsheaf_{X,(\lambda, \nu)}$-module.

Take $(U, \varphi, \Phi) \in \calB$.
Then $(U, \varphi, (\Phi_\lambda)_{\lambda \in \Lambda, \nu \in (\lie{k}^*)^K})$
is a bounded trivialization of $(\Dsheaf_{X,(\lambda, \nu)})_{\lambda\in \Lambda, \nu \in (\lie{k}^*)^K}$.
Since the $K$-action on $\Dsheaf_{X,(\lambda, \nu)}$ is the same as that on $\Dsheaf_{X,\lambda}$, the bornology defined by $(U, \varphi, (\Phi_\lambda)_{\lambda \in \Lambda, \nu \in (\lie{k}^*)^K})$ is $K$-equivariant.

\begin{corollary}\label{cor:UniformlyBoundedIrreducibles}
	Proposition \ref{prop:UniformlyBoundedIrreducibles} and Theorem \ref{thm:UniformlyBoundedIrreducibles} hold even if all $\calM_\lambda$
	are twisted $(\Dsheaf_{X, \lambda}, K)$-modules.
\end{corollary}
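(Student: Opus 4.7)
The plan is to reduce the twisted case to the equivariant case that has already been handled in Proposition \ref{prop:UniformlyBoundedIrreducibles} and Theorem \ref{thm:UniformlyBoundedIrreducibles}, by enlarging the parameter set from $\Lambda$ to $\Lambda\times (\lie{k}^*)^K$ and using the family $(\Dsheaf_{X,(\lambda,\nu)})_{(\lambda,\nu)}$ introduced immediately before the statement.

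First, for each $\lambda\in\Lambda$, choose a character $\nu_\lambda\in (\lie{k}^*)^K$ such that $\calM_\lambda\otimes\CC_{\nu_\lambda}$ carries a lift of the $\lie{k}$-action to a $K$-action; this is possible by hypothesis. Then $\calM_\lambda$ may be viewed as a genuine $(\Dsheaf_{X,(\lambda,\nu_\lambda)},K)$-module. Next, I would consider the (injective) reindexing $\Lambda\to \Lambda\times (\lie{k}^*)^K$, $\lambda\mapsto (\lambda,\nu_\lambda)$, and regard $(\calM_\lambda)_{\lambda\in\Lambda}$ as a subfamily of $\prod_{(\lambda,\nu)}\Mod_h(\Dsheaf_{X,(\lambda,\nu)},K)$ indexed by the image of this map (extending by zero on the complement if needed, which does not affect uniform boundedness).

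The crucial point is that the bornology produced by $(U,\varphi,(\Phi_\lambda)_{(\lambda,\nu)})$ on the extended family $(\Dsheaf_{X,(\lambda,\nu)})$ is $K$-equivariant (this is the assertion in the paragraph preceding the corollary, which I may take for granted). Moreover, since $\Dsheaf_{X,(\lambda,\nu)}$ agrees with $\Dsheaf_{X,\lambda}$ as a sheaf of algebras (only the $K$-equivariant structure differs), the multiplicity function $\lambda\mapsto m_\iota(\calM_\lambda|_U)$ coincides with the multiplicity function of the reindexed family with respect to this new bornology. Thus \emph{uniform boundedness with respect to $\calB$ is equivalent to uniform boundedness with respect to the extended bornology}. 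Having established this equivalence, I would now invoke Proposition \ref{prop:UniformlyBoundedIrreducibles} or Theorem \ref{thm:UniformlyBoundedIrreducibles} in the setting of the extended $K$-equivariant family $(\Dsheaf_{X,(\lambda,\nu)})_{(\lambda,\nu)}$: in the Proposition-case each $\calM_\lambda$ is irreducible supported on the closure of a $K$-orbit, and in the Theorem-case the lengths are bounded and $K$ has finitely many orbits, so the hypotheses transfer verbatim.

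The only point needing a brief check is the compatibility of the multiplicity with the forgetful identification $\Dsheaf_{X,(\lambda,\nu)}\simeq\Dsheaf_{X,\lambda}$ as sheaves of algebras, but this is immediate: the closed embedding $\iota\colon U\to\CC^n$ and the trivialization $\Phi_\lambda$ are shared between the two descriptions, so the function $\lambda\mapsto m_\iota(\calM_\lambda|_U)$ is literally the same. No further obstacle arises, and the conclusion follows.
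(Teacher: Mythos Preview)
Your proposal is correct and follows exactly the approach implicit in the paper: the paragraph immediately preceding the corollary sets up the extended family $(\Dsheaf_{X,(\lambda,\nu)})_{\lambda\in\Lambda,\,\nu\in(\lie{k}^*)^K}$ with its $K$-equivariant bornology given by the same trivialization data $(U,\varphi,(\Phi_\lambda))$, and the corollary is then immediate from Proposition \ref{prop:UniformlyBoundedIrreducibles} and Theorem \ref{thm:UniformlyBoundedIrreducibles} applied to this extended family. Your observation that the multiplicity function is literally unchanged under the identification $\Dsheaf_{X,(\lambda,\nu)}\simeq\Dsheaf_{X,\lambda}$ is precisely the point that makes the reduction work.
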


\subsection{Finite orbits and uniformly bounded family}

Retain the notation $X, K, \Dsheaf_{X, \Lambda}, \calB$ in the previous subsection.
Assume that $K$ has finitely many orbits in $X$ and $K$ is connected.
In this subsection, we consider the $\Dsheaf_{X,\lambda}$-module $\Tor^{\univ{k}}_i(\Dsheaf_{X,\lambda}, F)$
for a finite-dimensional $\lie{k}$-module $F$.

To estimate the length of $\Tor^{\univ{k}}_i(\Dsheaf_{X,\lambda}, F)$, we need the following lemma about a complex of filtered modules.

\begin{lemma}\label{lem:FilteredCohomology}
	Let $\calA$ be a filtered ring and $(C^\cxdot, d^\cxdot)$ a complex of filtered $\calA$-modules.
	Then $\gr(H^i(C^\cxdot))$ is isomorphic to a subquotient of $H^i(\gr(C^\cxdot))$ for any $i \in \ZZ$.
\end{lemma}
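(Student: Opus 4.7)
The plan is the standard spectral-sequence style argument, carried out at the $E_1$-page by hand so that no convergence is needed. Set $Z^i_k := F_k C^i \cap \ker d^i$ and $B^i := \im d^{i-1}$, equip $H^i(C^\cxdot)$ with its natural filtration $F_k H^i(C^\cxdot) := (Z^i_k + B^i)/B^i$, and note that the second isomorphism theorem gives
$$\gr_k H^i(C^\cxdot) \;\simeq\; Z^i_k / \bigl(Z^i_k \cap (Z^i_{k-1} + B^i)\bigr).$$
The task is then to exhibit this, for each $k$, as a subquotient of $H^i(\gr C^\cxdot)_k = \ker(\gr_k d^i)/\im(\gr_k d^{i-1})$, after which summing over $k$ gives the lemma.

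First I would construct a natural homomorphism $\Phi_k \colon Z^i_k / Z^i_{k-1} \to H^i(\gr C^\cxdot)_k$. The composition $Z^i_k \hookrightarrow F_k C^i \twoheadrightarrow \gr_k C^i$ has kernel $Z^i_k \cap F_{k-1} C^i = Z^i_{k-1}$, and since $d^i$ respects the filtration its image lies inside $\ker(\gr_k d^i)$. Post-composing with $\ker(\gr_k d^i) \twoheadrightarrow H^i(\gr C^\cxdot)_k$ yields $\Phi_k$, and in particular $\im \Phi_k$ is automatically a subobject of $H^i(\gr C^\cxdot)_k$.

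The remaining step is to identify $\gr_k H^i(C^\cxdot)$ as a quotient of $\im \Phi_k$. Unwinding the definitions,
$$\ker \Phi_k \;=\; \bigl(Z^i_k \cap (d^{i-1}(F_k C^{i-1}) + F_{k-1} C^i)\bigr)\big/ Z^i_{k-1},$$
and the only point to check, which I expect to be the sole (routine) subtlety, is the identity $Z^i_k \cap (B^i + F_{k-1} C^i) = Z^i_k \cap (B^i + Z^i_{k-1})$: if $z = b + y$ with $z \in \ker d^i$ and $b \in B^i$, then $d^i y = 0$, forcing $y \in Z^i_{k-1}$. Since $d^{i-1}(F_k C^{i-1}) \subset B^i$, this shows $\ker \Phi_k \subset \bigl(Z^i_k \cap (B^i + Z^i_{k-1})\bigr)/Z^i_{k-1}$, and hence $\im \Phi_k \simeq (Z^i_k/Z^i_{k-1})/\ker \Phi_k$ surjects onto $Z^i_k/(Z^i_k \cap (B^i + Z^i_{k-1})) \simeq \gr_k H^i(C^\cxdot)$, completing the proof. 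The main thing to be careful about is not conflating $F_{k-1} C^i$ with $Z^i_{k-1}$ at intermediate steps.
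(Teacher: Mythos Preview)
Your proof is correct and is essentially the same argument as the paper's, just unpacked explicitly at each graded piece. The paper phrases it more compactly by observing the chain of injections
\[
\Im(\gr(d^{i-1})) \hookrightarrow \gr(\Im(d^{i-1})) \hookrightarrow \gr(\Ker(d^i)) \hookrightarrow \Ker(\gr(d^i))
\]
together with $\gr(H^i(C^\cxdot)) \simeq \gr(\Ker d^i)/\gr(\Im d^{i-1})$; your map $\Phi_k$ is exactly the third injection at level $k$, and your key identity $Z^i_k \cap (B^i + F_{k-1}C^i) = Z^i_k \cap (B^i + Z^i_{k-1})$ is what underlies the second injection and the identification $\gr(H^i) \simeq \gr(\Ker)/\gr(\Im)$.
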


\begin{proof}
	Fix $i \in \ZZ$.
	It is easy to see that the following canonical homomorphisms are injective:
	\begin{align}
		\Im(\gr(d^{i-1})) \rightarrow \gr(\Im(d^{i-1})) \rightarrow \gr(\Ker(d^i)) \rightarrow \Ker(\gr(d^i)), \label{eqn:ProofFilteredComplex}
	\end{align}
	where $\gr(d^k)\colon \gr(C^k)\rightarrow  \gr(C^{k+1})$ is the homomorphism induced from $d^k\colon C^k \rightarrow C^{k+1}$.
	The filtrations on $\Im(d^{i-1})$, $\Ker(d^{i})$ and $H^i(C^\cxdot)$
	are induced from that on $C^i$.
	Hence we have $\gr(H^i(C^\cdot)) \simeq \gr(\Ker(d^i))/\gr(\Im(d^{i-1}))$.
	This isomorphism and \eqref{eqn:ProofFilteredComplex} show the lemma.
\end{proof}

Let $\pi\colon T^*X\rightarrow X$ be the cotangent bundle.
We have a homomorphism $\sigma\colon S(\lie{k}) \rightarrow \rsheaf{T^*X}$
defined by taking the principal symbol of $\Dsheaf_{X,\lambda}$.
The homomorphism $\sigma$ does not depend on the choice of the $K$-equivariant algebra $\Dsheaf_{X,\lambda}$.
In fact, the composition $\lie{k}\rightarrow \calP(\Dsheaf_{X,\lambda})\rightarrow \calT_X$ coincides with the differential of the $K$-action on $X$,
and $\sigma$ is determined by $\sigma|_{\lie{k}}$.
Here $\calP(\Dsheaf_{X,\lambda})$ is the Picard algebroid associated to $\Dsheaf_{X,\lambda}$ (see Subsection \ref{sect:PicardAlgebroid}).

\begin{lemma}\label{lem:HolonomicBoundLength}
	Fix $\lambda \in \Lambda$.
	Let $\calM$ be an $\Dsheaf_{X,\lambda}$-module with a filtration,
	and $\calN$ a coherent $\pi_*\rsheaf{T^*X}$-module annihilated by $\sigma(\lie{k})$.
	If $\gr(\calM)$ is isomorphic to a subquotient of $\calN^{\oplus n}$,
	then $\calM$ is holonomic and there exists a constant $C(\calN)$ depending only on $\calN$ such that
	\begin{align*}
		\Len_{\Dsheaf_{X,\lambda}}(\calM) \leq C(\calN) \cdot n.
	\end{align*}
\end{lemma}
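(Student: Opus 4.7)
The plan is to bound the characteristic variety of $\calM$ to obtain holonomicity, and then to control its length via the characteristic cycle.

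First, since $\pi_*\rsheaf{T^*X}$ is locally noetherian and $\calN^{\oplus n}$ is coherent, the subquotient $\gr(\calM)$ is coherent, so the given filtration on $\calM$ is good. Hence $\calM$ is a coherent $\Dsheaf_{X,\lambda}$-module with characteristic variety $\Ch(\calM)=\mathrm{supp}(\gr(\calM))\subseteq \mathrm{supp}(\calN)$. The annihilation of $\calN$ by $\sigma(\lie{k})$ forces $\mathrm{supp}(\calN)\subseteq V(\sigma(\lie{k}))$. Under the canonical identification $\gr(\Dsheaf_{X,\lambda})\simeq \pi_*\rsheaf{T^*X}$, $\sigma(Y)$ for $Y\in\lie{k}$ is the function $(x,\xi)\mapsto \langle \xi, Y_x\rangle$, where $Y_x$ denotes the tangent vector at $x$ induced by $Y$. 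Consequently $V(\sigma(\lie{k}))=\bigsqcup_{i=1}^{m}\overline{T^*_{\calO_i}X}$ is the union of conormal bundles to the finitely many $K$-orbits $\calO_1,\ldots,\calO_m$; each component is an irreducible Lagrangian of dimension $\dim X$, so $\Ch(\calM)$ has dimension $\leq\dim X$ and $\calM$ is holonomic.

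Second, write the characteristic cycles $CC(\calN)=\sum_i c_i(\calN)[\overline{T^*_{\calO_i}X}]$ and $CC(\calM)=\sum_i c_i(\calM)[\overline{T^*_{\calO_i}X}]$. The generic multiplicity along a fixed irreducible component is additive on short exact sequences of coherent $\pi_*\rsheaf{T^*X}$-modules and hence weakly monotone under subquotients; since $\gr(\calM)$ is a subquotient of $\calN^{\oplus n}$, we obtain $c_i(\calM)\leq n\cdot c_i(\calN)$ for every $i$. Put $C(\calN):=\sum_i c_i(\calN)$, which depends only on $\calN$.

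Finally, I invoke the standard length-vs-multiplicity inequality for holonomic modules: for any holonomic $\Dsheaf_{X,\lambda}$-module the length is bounded by the total multiplicity of the characteristic cycle, because $CC$ is additive in short exact sequences and each simple holonomic composition factor contributes a positive-integer multiple of a single irreducible Lagrangian. This yields $\Len_{\Dsheaf_{X,\lambda}}(\calM)\leq \sum_i c_i(\calM)\leq n\cdot C(\calN)$. The main subtle point is verifying this inequality in the twisted setting; it can be reduced to the classical untwisted affine case by choosing an affine \'etale trivialization $(U,\varphi,\Phi)$ of $\Dsheaf_{X,\lambda}$ and a closed embedding $\iota\colon U\hookrightarrow\CC^N$, applying the faithful exactness of $\varphi^*$ together with the Kashiwara equivalence (Fact \ref{fact:Kashiwara}) and the Bernstein inequality of Fact \ref{fact:WeylAlgebraExact}.
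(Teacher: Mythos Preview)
Your proof is correct and follows essentially the same route as the paper's: coherence of $\gr(\calM)$ gives a good filtration, the support lies in the union of conormal bundles to the $K$-orbits so $\calM$ is holonomic, and the length is bounded by the total multiplicity of the characteristic cycle, with $C(\calN)$ taken to be the sum of multiplicities of $\calN$ along these conormals. One small imprecision: a simple holonomic module need not have its characteristic cycle supported on a \emph{single} irreducible Lagrangian; the correct (and sufficient) fact is merely that its characteristic cycle is nonzero, hence contributes at least $1$ to the total multiplicity, which is exactly the content of \cite[Proposition 5.1.9]{HTT08} invoked in the paper.
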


\begin{proof}
	Put $\widetilde{\calN}:=\rsheaf{T^*X}\otimes_{\pi^{-1}\pi_* \rsheaf{T^*X}}\calN$.
	Since $\sigma(\lie{k})$ annihilates $\widetilde{\calN}$ and $K$ has finitely many orbits in $X$, the support of $\widetilde{\calN}$ is contained in the union of the conormal bundles of all $K$-orbits in $X$.
	Since $\gr(\calM)$ is isomorphic to a subquotient of $\calN^{\oplus n}$, the filtration of $\calM$ is good, and hence $\calM$ is coherent by \cite[Theorem 2.1.3]{HTT08}.
	Moreover, the characteristic variety of $\calM$ is a union of the conormal bundles of some $K$-orbits in $X$.
	This shows that $\calM$ is holonomic.

	Let $C(\calN)$ be the sum of multiplicities of $\widetilde{\calN}$ along the conormal bundles of all $K$-orbits.
	Let $m(\calM)$ be the sum of multiplicities in the characteristic cycle of $\calM$.
	Then we have $m(\calM) \leq C(\calN)\cdot n$.
	Since the length of $\calM$ is bounded by $m(\calM)$ (see \cite[Proposition 5.1.9]{HTT08}), this shows the lemma.
\end{proof}

Let $U$ be the unipotent radical of $K$.
If necessary, replacing $K$ with its finite covering, we may assume that $[K/U, K/U]$ is simply-connected.

\begin{lemma}\label{lem:FiniteOrbitDmodule}
	There exists some constant $C > 0$ such that
	for any finite-dimensional $\lie{k}$-module $F$, $\lambda \in \Lambda$ and $i\in \ZZ$, we have
	\begin{align*}
		\Len_{\Dsheaf_{X,\lambda}}(\Tor^{\univ{k}}_i(\Dsheaf_{X,\lambda}, F))
		\leq C\cdot \dim_{\CC}(F),
	\end{align*}
	where $\Dsheaf_{X,\lambda}$ is considered  as a $\univ{k}$-module by the right action.
	Moreover, any composition factor of $\Tor^{\univ{k}}_i(\Dsheaf_{X,\lambda}, F)$ is a holonomic twisted $(\Dsheaf_{X,\lambda}, K)$-module.
\end{lemma}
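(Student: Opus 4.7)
My plan is to compute $\Tor^{\univ{k}}_i(\Dsheaf_{X,\lambda}, F)$ explicitly via a Koszul-type resolution and read off the bound from the associated graded with respect to the order filtration. Take the standard projective resolution $P_\cxdot(F) = \univ{k} \otimes_\CC \wedge^\cxdot \lie{k} \otimes_\CC F$ of $F$ as a left $\univ{k}$-module, so that $\Tor^{\univ{k}}_i(\Dsheaf_{X,\lambda}, F)$ is the $i$-th homology of the complex $C_\cxdot := \Dsheaf_{X,\lambda} \otimes \wedge^\cxdot \lie{k} \otimes F$, whose Chevalley--Eilenberg differential splits into three parts on an elementary tensor $D \otimes X_1\wedge\cdots\wedge X_k \otimes f$: (a) a right-multiplication term $\sum_j (-1)^{j+1} (DX_j) \otimes (X_1\wedge\cdots\wedge \hat{X_j}\wedge\cdots\wedge X_k) \otimes f$ that uses the right $\univ{k}$-action on $\Dsheaf_{X,\lambda}$, (b) a module term of the form $\sum_j(\pm) D \otimes (\cdots \hat{X_j} \cdots) \otimes (X_j\cdot f)$, and (c) the standard Lie bracket correction on the exterior factor.

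Equip $C_k$ with the shifted order filtration in which $D \otimes \omega \otimes f$ has filtration degree $\leq n$ iff $D$ has order $\leq n-k$ in $\Dsheaf_{X,\lambda}$. A direct check shows that $d$ preserves this filtration and that only the right-multiplication term contributes on the associated graded, since the other two parts land in strictly lower filtration degree. Hence the associated graded complex is the Koszul complex for the sequence $\sigma(X_1), \ldots, \sigma(X_n)$ in $\pi_*\rsheaf{T^*X}$ (for $\{X_i\}$ a basis of $\lie{k}$) acting on $(\pi_*\rsheaf{T^*X}) \otimes_\CC F$. The key observation is that this complex---and therefore its homology, which is a direct sum of $\dim_\CC(F)$ copies of a fixed coherent $\pi_*\rsheaf{T^*X}$-module $\calN'_i$---is intrinsic to the $K$-action on $X$ and independent of $\lambda$, since $\gr(\Dsheaf_{X,\lambda}) \simeq \pi_*\rsheaf{T^*X}$ canonically and $\sigma$ is determined by the $\lie{k}$-action on $X$. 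By construction of the Koszul complex, $\calN'_i$ is annihilated by $\sigma(\lie{k})$.

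By Lemma \ref{lem:FilteredCohomology}, $\gr(\Tor^{\univ{k}}_i(\Dsheaf_{X,\lambda}, F))$ is a subquotient of $(\calN'_i)^{\oplus \dim_\CC(F)}$, so Lemma \ref{lem:HolonomicBoundLength} yields both holonomicity and the length estimate $C(\calN'_i) \cdot \dim_\CC(F)$, with $C(\calN'_i)$ independent of $\lambda$ and $F$. Since $\calN'_i$ vanishes for $i > \dim_\CC \lie{k}$, taking $C := \sum_i C(\calN'_i)$ gives a single constant valid for all $i$ and $\lambda$. For the final assertion about composition factors, I would reduce by induction on $\len(F)$, using a composition series of $F$ and the long exact sequence of Tor, to the case where $F$ is irreducible; under the standing assumption that $[K/U, K/U]$ is simply connected, any irreducible finite-dimensional $\lie{k}$-module is a twisted $K$-module for some character $\nu$ of $\lie{k}$, and then the resolution $P_\cxdot(F)$ carries a compatible twisted $K$-equivariant structure, endowing $\Tor^{\univ{k}}_i(\Dsheaf_{X,\lambda}, F)$ with a twisted $(\Dsheaf_{X,\lambda}, K)$-module structure.

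The main obstacle is really the bookkeeping in the filtration argument---verifying that the shift $n \mapsto n-k$ makes all three parts of the CE differential filtered while causing only the Koszul part to survive on the associated graded. Once this is set up, the conclusion follows by direct application of the two preceding lemmas, and the crucial uniformity in $\lambda$ is automatic because $\calN'_i$ depends only on the $K$-geometry of $X$, not on the twist.
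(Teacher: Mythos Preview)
Your proposal is correct and follows essentially the same argument as the paper: both compute $\Tor^{\univ{k}}_i(\Dsheaf_{X,\lambda},F)$ via the Chevalley--Eilenberg complex $\Dsheaf_{X,\lambda}\otimes\wedge^\cxdot\lie{k}\otimes F$, endow it with the shifted order filtration, identify the associated graded with the Koszul complex computing $\Tor^{S(\lie{k})}_i(\pi_*\rsheaf{T^*X},\CC)\otimes F$ (your $\calN'_i=\Tor^{S(\lie{k})}_i(\pi_*\rsheaf{T^*X},\CC)$), and then invoke Lemmas~\ref{lem:FilteredCohomology} and~\ref{lem:HolonomicBoundLength}. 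The only cosmetic differences are that the paper reduces to irreducible $F$ at the outset (rather than only for the equivariance claim), takes $C=\max_i C_i$ instead of $\sum_i C_i$, and is slightly more explicit about the twisted $K$-structure, invoking the $h$-complex formalism of Bernstein--Lunts rather than asserting directly that the resolution is equivariant.
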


\begin{remark}
	The lemma for $n=1$ is proved in \cite{Ta18}.
\end{remark}

\begin{proof}
	Fix $F$ and $\lambda$.
	By induction on the length of $F$, the assertion can be reduced to the case of irreducible $F$.
	Since $F$ is irreducible, $\lie{k}/\Ann_\lie{k}(F)$ is reductive, where $\Ann$ means the annihilator of a module.
	Hence we can take a character $\mu$ of $\lie{k}$ such that $F\otimes \CC_\mu$ lifts to a $K$-module.
	This implies that $\Tor^{\univ{k}}_i(\Dsheaf_{X,\lambda}, F)$ is a twisted $(\Dsheaf_{X,\lambda}, K)$-module with twist $\mu$.
	In fact, the homology can be computed by an $h$-complex of weak $(\Dsheaf_{X,(\lambda, \mu)}, K)$-modules in the sense of Bernstein--Lunts \cite[2.5]{BeLu95}.
	See \cite[Proposition 3.3]{Ki12} for the complex.
	Here $\Dsheaf_{X,(\lambda, \mu)}$ is a $K$-equivariant algebra defined before Corollary \ref{cor:UniformlyBoundedIrreducibles}.

	To compute $\Tor^{\univ{k}}_i(\Dsheaf_{X,\lambda}, F)$, we shall use the Chevalley--Eilenberg chain complex.
	See Fact \ref{fact:LiealgebraCohomology}.
	Let $(\Dsheaf_{X,\lambda}\otimes F \otimes \wedge^{-\cxdot} \lie{k}, d^\cxdot )$ be the complex.
	For any $i\geq 0$, the differential $d^{-i}$ is given by
	\begin{align}
		&d^{-i}(P\otimes f \otimes (X_1 \wedge X_2 \wedge \cdots \wedge X_i)) \nonumber \\
		= &\sum_{a} (-1)^{a+1} (PX_a\otimes f - P\otimes X_a f) \otimes X_1 \wedge X_2 \wedge \cdots \wedge \hat{X_a} \wedge \cdots \wedge X_i \nonumber \\
		+ &\sum_{a < b} (-1)^{a+b} P\otimes f \otimes [X_a, X_b]\wedge X_1 \wedge X_2 \wedge \cdots \wedge \hat{X_a} \wedge \cdots \wedge \hat{X_b} \wedge \cdots \wedge X_i. \nonumber \\
		\label{eqn:ProofChevalleyEilenberg}
	\end{align}

	We denote by $G$ the order filtration of $\Dsheaf_{X,\lambda}$.
	It induces a filtration $\widetilde{G}^i$ on $\Dsheaf_{X,\lambda}\otimes F \otimes \wedge^i \lie{k}$ as
	\begin{align*}
		\widetilde{G}^i_n(\Dsheaf_{X,\lambda}\otimes F \otimes \wedge^i \lie{k})
		= G_{n-i}(\Dsheaf_{X,\lambda})\otimes F\otimes \wedge^i \lie{k}
	\end{align*}
	for any $i \geq 0$.
	Then the complex $(\Dsheaf_{X,\lambda}\otimes F \otimes \wedge^{-\cxdot} \lie{k}, d^\cxdot )$ is a complex of filtered $\Dsheaf_{X,\lambda}$-modules.
	By \eqref{eqn:ProofChevalleyEilenberg}, we have
	\begin{align*}
		H^{-i}(\gr(\Dsheaf_{X,\lambda}\otimes F \otimes \wedge^{-\cxdot} \lie{k}))
		\simeq \Tor^{S(\lie{k})}_i(\pi_*\rsheaf{T^*X}, \CC) \otimes F
	\end{align*}
	as $\pi_* \rsheaf{T^*X}$-modules.
	Note that $\Tor^{S(\lie{k})}_i(\pi_*\rsheaf{T^*X}, \CC)$ is a coherent $\pi_*\rsheaf{T^*X}$-module because each term of the complex is coherent.

	By Lemma \ref{lem:FilteredCohomology}, $\gr(\Tor^{\univ{k}}_i(\Dsheaf_{X,\lambda}, F))$ is isomorphic to a subquotient of $\Tor^{S(\lie{k})}_i(\pi_*\rsheaf{T^*X}, \CC) \otimes F$.
	We can apply Lemma \ref{lem:HolonomicBoundLength} to $\calM=\Tor^{\univ{k}}_i(\Dsheaf_{X,\lambda}, F)$ and $\calN=\Tor^{S(\lie{k})}_i(\pi_*\rsheaf{T^*X}, \CC)$.
	Hence there is a constant $C_i$ depending only on $\Tor^{S(\lie{k})}_i(\pi_*\rsheaf{T^*X}, \CC)$ such that
	\begin{align*}
		\Len_{\Dsheaf_{X,\lambda}}(\Tor^{\univ{k}}_i(\Dsheaf_{X,\lambda}, F))
		\leq C_i \cdot \dim_{\CC}(F).
	\end{align*}
	$C:=\max_i\set{C_i}$ exists because $\Tor^{\univ{k}}_i(\cdot, \cdot)$ vanishes for any $i > \dim_\CC(\lie{k})$.
	The assertion in the lemma holds for this $C$.
\end{proof}

The following corollary is a direct consequence of Lemma \ref{lem:FiniteOrbitDmodule} and Corollary \ref{cor:UniformlyBoundedIrreducibles}.

\begin{corollary}\label{cor:UniformlyBoundedTor}
	Let $\calF$ be a set of $\lie{k}$-modules with bounded dimensions.
	Then the family $(\Tor^{\univ{k}}_{i}(\Dsheaf_{X,\lambda}, F))_{i \in \ZZ, F \in \calF, \lambda \in \Lambda}$ is uniformly bounded with respect to $\calB$.
\end{corollary}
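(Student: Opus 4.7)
The plan is to combine the length bound and equivariance statement from Lemma \ref{lem:FiniteOrbitDmodule} with the uniform boundedness of irreducible twisted $(\Dsheaf_{X,\lambda}, K)$-modules recorded in Corollary \ref{cor:UniformlyBoundedIrreducibles}. Set $D := \sup_{F \in \calF}\dim_{\CC}(F) < \infty$. Lemma \ref{lem:FiniteOrbitDmodule} then provides a constant $C > 0$, independent of $i$, $F$, and $\lambda$, with
$$\Len_{\Dsheaf_{X,\lambda}}\!\bigl(\Tor^{\univ{k}}_i(\Dsheaf_{X,\lambda}, F)\bigr) \leq C\dim_{\CC}(F) \leq CD,$$
and guarantees that every composition factor of $\Tor^{\univ{k}}_i(\Dsheaf_{X,\lambda}, F)$ is an irreducible holonomic twisted $(\Dsheaf_{X,\lambda}, K)$-module.

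Next I would uniformly bound each such composition factor. Since $K$ has finitely many orbits in $X$, the $K$-invariant support of any such factor is automatically the closure of a single $K$-orbit. Recording the twist $\nu \in (\lie{k}^*)^K$ of each factor, these composition factors assemble into a single family parametrized by (a subset of) $\Lambda \times (\lie{k}^*)^K$. Corollary \ref{cor:UniformlyBoundedIrreducibles}, applied to this family together with the extended $K$-equivariant bornology on $(\Dsheaf_{X,(\lambda, \nu)})_{\lambda, \nu}$ induced by $\calB$, then yields — once $(U, \varphi, \Phi) \in \calB$ with $U$ affine and a closed embedding $\iota \colon U \hookrightarrow \CC^n$ are fixed — a single constant $K_0$ with $m_\iota(S|_U) \leq K_0$ for every composition factor $S$. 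Here I use that the underlying sheaf of algebras of $\Dsheaf_{X,(\lambda, \nu)}$ agrees with that of $\Dsheaf_{X,\lambda}$, so the multiplicity under the extended trivialization coincides with the one coming from $\calB$.

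To assemble, I would pull a composition series of $\Tor^{\univ{k}}_i(\Dsheaf_{X,\lambda}, F)$ (of length at most $CD$) back to $U$ via the exact functor $\varphi^*$, obtaining a filtration of $\Tor^{\univ{k}}_i(\Dsheaf_{X,\lambda}, F)|_U$ whose graded pieces are the pullbacks $\varphi^* S$ of the composition factors. Additivity of the Bernstein multiplicity on short exact sequences of holonomic modules (Fact \ref{fact:WeylAlgebraExact}) then gives
$$m_\iota\bigl(\Tor^{\univ{k}}_i(\Dsheaf_{X,\lambda}, F)|_U\bigr) \leq CD \cdot K_0,$$
uniformly in $(i, F, \lambda)$, establishing uniform boundedness with respect to $\calB$. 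The substantive content is already packaged into Lemma \ref{lem:FiniteOrbitDmodule}, so there is no serious obstacle; the only minor point requiring care is the bookkeeping around the varying twist $\nu$, since distinct composition factors of a single Tor module may carry distinct twists — which is precisely why the generalization in Corollary \ref{cor:UniformlyBoundedIrreducibles}, rather than the untwisted Theorem \ref{thm:UniformlyBoundedIrreducibles}, is needed.
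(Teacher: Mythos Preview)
Your proposal is correct and follows essentially the same route as the paper, which simply states that the corollary is ``a direct consequence of Lemma \ref{lem:FiniteOrbitDmodule} and Corollary \ref{cor:UniformlyBoundedIrreducibles}.'' You have accurately unpacked this one-line justification, including the subtlety that composition factors may carry varying twists $\nu$, which indeed necessitates Corollary \ref{cor:UniformlyBoundedIrreducibles} rather than Theorem \ref{thm:UniformlyBoundedIrreducibles}.
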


Let $\Dsheaf_{Y, \Lambda}$ be a family of twisted differential operators on a smooth variety $Y$.
Fix a bornology $\calB'$ of $\Dsheaf_{Y, \Lambda}$.
We write $q\colon X\times Y\rightarrow Y$ for the projection onto the second factor.

\begin{theorem}\label{thm:UniformlyBoundedFiniteOrbits}
	Let $\calM \in \Mod_{ub}(\Dsheaf_{X,\Lambda}\boxtimes \Dsheaf_{Y,\Lambda}, \calB\boxtimes \calB')$.
	If all $\calM_{\lambda}$ are $q_*$-acyclic, then there exists a constant $C > 0$ such that
	\begin{align*}
		\Len_{\Dsheaf_{Y,\lambda}}(\Tor^{\univ{k}}_i(F, q_*(\calM_\lambda))) \leq C\cdot \dim_{\CC}(F)
	\end{align*}
	for any finite-dimensional $\lie{k}$-module $F$, $i \in \ZZ$ and $\lambda \in \Lambda$.
	Moreover, the family $(\Tor^{\univ{k}}_i(F, q_*(\calM_\lambda)))_{\lambda \in \Lambda, i \in \ZZ, F \in \calF}$ is uniformly bounded with respect to $\calB'$.
	Here $\calF$ is a set of finite-dimensional $\lie{k}$-modules whose dimensions are bounded.
\end{theorem}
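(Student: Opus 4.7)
The plan is to reduce the theorem to Corollary \ref{cor:UniformlyBoundedTor} and Lemma \ref{lem:FiniteOrbitDmodule} by using $\calM_\lambda$ as an integration kernel in Theorem \ref{thm:UniformlyBoundedIntegralTransform}. The crucial derived identity, which relies on the $q_*$-acyclicity hypothesis, is
\begin{align*}
F \otimes^L_{\univ{k}} q_*\calM_\lambda \;\simeq\; Rq_*(\calM_\lambda \otimes^L_{\univ{k}} F) \;\simeq\; Rq_*\bigl(\calM_\lambda \otimes^L_{\Dsheaf_{X,\lambda}} (\Dsheaf_{X,\lambda} \otimes^L_{\univ{k}} F)\bigr);
\end{align*}
the first isomorphism holds because for any $\univ{k}$-free resolution $P^\cxdot \to F$, each term $\calM_\lambda \otimes_{\univ{k}} P^k$ is a direct sum of copies of $\calM_\lambda$ and hence $q_*$-acyclic, while the second is the associativity of the derived tensor product. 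The $i$-th cohomology of either side is $\Tor^{\univ{k}}_i(F, q_*\calM_\lambda)$.

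First I would view $\calM_\lambda$, originally a $\Dsheaf_{X,\lambda}\boxtimes\Dsheaf_{Y,\lambda}$-module on $X\times Y$, as a kernel $\widetilde{\calM}_\lambda$ on $Y\times X$ of type $\Dsheaf_{Y,\lambda}\boxtimes\Dsheaf_{X,\lambda}^\opalg$, by swapping the factors and applying the standard side-switching that converts left $\Dsheaf_{X,\lambda}$-modules to left $\Dsheaf_{X,\lambda}^\opalg$-modules (via tensoring with $\Omega_X$ and using $\Dsheaf_{X,\lambda}^\opalg\simeq\Omega_X\otimes_{\rsheaf{X}}\Dsheaf_{X,\lambda}\otimes_{\rsheaf{X}}\Omega_X^\invshf$). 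By the bornology operations of Subsection \ref{sect:twisting}, in particular Proposition \ref{prop:IdentityBornology}, $\widetilde{\calM}_\lambda$ is uniformly bounded with respect to $\calB'\boxtimes\calB^\opalg$. Next I would apply Theorem \ref{thm:UniformlyBoundedIntegralTransform} with $\widetilde{\calM}_\lambda$ as kernel and the complex $F\otimes^L_{\univ{k}}\Dsheaf_{X,\lambda}$ as operand; the latter lies in $D^b_{ub}(\Dsheaf_{X,\Lambda\times\calF},\calB)$ by Corollary \ref{cor:UniformlyBoundedTor} when $F$ ranges over $\calF$ with bounded dimensions. The output lies in $D^b_{ub}(\Dsheaf_{Y,\Lambda\times\calF},\calB')$, and the identity above identifies it with $F\otimes^L_{\univ{k}}q_*\calM_\lambda$; taking $i$-th cohomology yields the uniform boundedness of the family $(\Tor^{\univ{k}}_i(F,q_*\calM_\lambda))_{\lambda,i,F\in\calF}$.

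For the length bound $\Len_{\Dsheaf_{Y,\lambda}}(\Tor^{\univ{k}}_i(F,q_*\calM_\lambda))\leq C\dim F$ valid for every finite-dimensional $F$, I would run the same integral transform for each individual $F$ and track the multiplicity constants through the proof of Theorem \ref{thm:UniformlyBoundedIntegralTransform}. That proof invokes Facts \ref{fact:WeylAlgebraDirectInverseImage} and \ref{fact:MultiplicityTensor}, both of which multiply multiplicities by constants depending only on the varieties, bornologies, and their trivializations. Combined with the estimate $\Len_{\Dsheaf_{X,\lambda}}(\Tor^{\univ{k}}_q(\Dsheaf_{X,\lambda},F))\leq C_0\dim F$ from Lemma \ref{lem:FiniteOrbitDmodule} (and the corresponding multiplicity bound arising from its proof), this yields the claimed linear dependence on $\dim F$. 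The main obstacle will be the careful bookkeeping of the side-switching and opposite-algebra bornologies in the first step so that everything matches the hypotheses of Theorem \ref{thm:UniformlyBoundedIntegralTransform}, together with verifying the commutation $Rq_*\circ(-\otimes^L_{\univ{k}}F)\simeq(Rq_* -)\otimes^L_{\univ{k}}F$ by applying $q_*$-acyclicity termwise to a finite $\univ{k}$-free resolution of $F$.
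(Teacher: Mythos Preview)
Your overall strategy --- realize $\Tor^{\univ{k}}_i(F,q_*\calM_\lambda)$ as the integral transform of $F\otimes^L_{\univ{k}}\Dsheaf_{X,\lambda}$ against the kernel $\calM_\lambda$ via Theorem \ref{thm:UniformlyBoundedIntegralTransform}, using $q_*$-acyclicity to commute $q_*$ past $\otimes^L_{\univ{k}}F$, and then feed in Corollary \ref{cor:UniformlyBoundedTor} --- is exactly the paper's. Two comments on the execution.

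First, the explicit side-switching via $\Omega_X$ is unnecessary and makes the bookkeeping harder than it needs to be (note also that your formula $\calM_\lambda\otimes^L_{\Dsheaf_{X,\lambda}}(\Dsheaf_{X,\lambda}\otimes^L_{\univ{k}}F)$ tensors two \emph{left} modules over $\Dsheaf_{X,\lambda}$). The paper simply sets, for a right $\Dsheaf_{X,\lambda}$-module $\calN$,
\[
T^i_\lambda(\calN):=R^iq_*\bigl(p^{-1}\calN\otimes^L_{p^{-1}\Dsheaf_{X,\lambda}}\calM_\lambda\bigr),
\]
and invokes Theorem \ref{thm:UniformlyBoundedIntegralTransform} in its obvious left/right-symmetric form; the operand is then $J^\cxdot\otimes_{\univ{k}}\Dsheaf_{X,\lambda}\in D^b_h(\Dsheaf_{X,\lambda}^{\opalg})$ for a free resolution $J^\cxdot$ of $F$, and your derived identity drops out immediately without any $\Omega_X$-twist.

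Second, for the linear bound in $\dim_\CC F$ the paper's argument is sharper than ``tracking multiplicity constants through the proof''. It first applies $T^i_\lambda$ only to \emph{irreducible} twisted $(\Dsheaf_{X,\lambda}^{\opalg},K)$-modules; these form a uniformly bounded family by Corollary \ref{cor:UniformlyBoundedIrreducibles}, so Theorem \ref{thm:UniformlyBoundedIntegralTransform} gives a uniform bound $C_1=\sup_{\lambda,i,\calN\in I_\lambda}\Len_{\Dsheaf_{Y,\lambda}}(T^i_\lambda(\calN))$. Since Lemma \ref{lem:FiniteOrbitDmodule} says every composition factor of $\Tor^{\univ{k}}_j(F,\Dsheaf_{X,\lambda})$ lies in $I_\lambda$ and there are at most $C_2\dim_\CC F$ of them, Lemma \ref{lem:AdditiveFunctionDerived} (i), (ii) yields $\Len_{\Dsheaf_{Y,\lambda}}(\Tor^{\univ{k}}_i(F,q_*\calM_\lambda))\leq C_1C_2(\dim_\CC\lie{k}+1)\dim_\CC F$. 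This stays entirely at the level of lengths and avoids reopening the proof of the integral-transform theorem; your multiplicity-tracking route works too, but requires converting the length bound of Lemma \ref{lem:FiniteOrbitDmodule} into a multiplicity bound via Corollary \ref{cor:UniformlyBoundedIrreducibles} and then carrying Bernstein multiplicities through $Dq_+$, $Lp^*$, and the tensor product by hand.
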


\begin{proof}
	For $\calN \in D^b_h(\Dsheaf_{X,\lambda}^\opalg)$, put
	\begin{align*}
		T^i_\lambda(\calN) := R^iq_*(p^{-1}\calN\otimes^L_{p^{-1}\Dsheaf_{X,\lambda}} \calM_\lambda),
	\end{align*}
	where $p\colon X\times Y \rightarrow X$ is the projection onto the first factor.

	For $\lambda \in \Lambda$, let $I_\lambda$ be the set of all (isomorphism classes of) irreducible twisted $(\Dsheaf^\opalg_{X,\lambda}, K)$-modules.
	By Corollary \ref{cor:UniformlyBoundedIrreducibles}, the family $(\calN)_{\lambda \in \Lambda, \calN \in I_\lambda}$ is a uniformly bounded family with respect to $\calB$.
	Hence by Theorem \ref{thm:UniformlyBoundedIntegralTransform}, we can define a constant $C_1$ as
	\begin{align*}
		C_1 := \max\set{\Len_{\Dsheaf_{Y,\lambda}}(T^i_\lambda(\calN)): \lambda \in \Lambda, \calN \in I_\lambda, i\in \ZZ}.
	\end{align*}

	Fix a finite-dimensional $\lie{k}$-module $F$.
	Take a free resolution $J^\cxdot$ of the $\univ{k}$-module $F$.
	Then we have
	\begin{align*}
		T^{-i}_\lambda(J^\cxdot \otimes_{\univ{k}} \Dsheaf_{X,\lambda}) &= R^{-i}q_*(p^{-1}(J^\cxdot \otimes_{\univ{k}} \Dsheaf_{X,\lambda})\otimes^L_{p^{-1}\Dsheaf_{X,\lambda}} \calM_\lambda) \\
		&\simeq R^{-i} q_*(J^\cxdot \otimes_{\univ{k}}\calM_\lambda) \\
		&\simeq H^{-i}(J^\cxdot \otimes_{\univ{k}}q_*(\calM_\lambda)) \\
		&\simeq \Tor^{\univ{k}}_i(F, q_*(\calM_\lambda)).
	\end{align*}
	Here the second isomorphism holds because $J^k \otimes_{\univ{k}}\calM_\lambda$ is isomorphic to a direct sum of some copies of $\calM_\lambda$
	as a sheaf, and $\calM_\lambda$ is $q_*$-acyclic.
	This shows the second assertion by Theorem \ref{thm:UniformlyBoundedIntegralTransform} and Corollary \ref{cor:UniformlyBoundedTor}.
	
	To show the first assertion, we remark that $H^{-i}(J^\cxdot \otimes_{\univ{k}} \Dsheaf_{X,\lambda})$ is isomorphic to $\Tor^{\univ{k}}_i(F, \Dsheaf_{X,\lambda})$ as an $\Dsheaf_{X,\lambda}^\opalg$-module.
	By Lemma \ref{lem:AdditiveFunctionDerived} (ii), we have
	\begin{align*}
		\Len_{\Dsheaf_{Y,r}}(\Tor^{\univ{k}}_i(F, q_*(\calM_\lambda))) &= \Len_{\Dsheaf_{Y,r}}(T^{-i}_\lambda(J^\cxdot \otimes_{\univ{k}} \Dsheaf_{X,\lambda})) \\
		&\leq \sum_{j=0}^{\dim_{\CC}(\lie{k})} \Len_{\Dsheaf_{Y,\lambda}} (T^{-i+j}_\lambda(\Tor^{\univ{k}}_j(F, \Dsheaf_{X,\lambda}))).
	\end{align*}
	By Lemma \ref{lem:FiniteOrbitDmodule}, there is a constant $C_2$ independent of $F$ such that
	\begin{align*}
		\Len_{\Dsheaf_{X,\lambda}^\opalg}(\Tor^{\univ{k}}_j(F, \Dsheaf_{X,\lambda})) \leq C_2 \cdot \dim_{\CC}(F)
	\end{align*}
	for any $j \in \ZZ$ and $\lambda \in \Lambda$,
	and any composition factor of the module is in $I_\lambda$.
	By Lemma \ref{lem:AdditiveFunctionDerived} (i), we obtain
	\begin{align*}
		\Len_{\Dsheaf_{Y,\lambda}}(\Tor^{\univ{k}}_i(F, q_*(\calM_\lambda))) \leq C_1\cdot C_2 \cdot \dim_{\CC}(F) (\dim_{\CC}(\lie{k}) + 1).
	\end{align*}
	We have taken $C_1$ and $C_2$ independently of $F$, $i$ and $\lambda$.
	Therefore we have proved the theorem.
\end{proof}

\section{Zuckerman derived functor and its localization}

In this section, we review the Zuckerman derived functors and their localization.
We use the functors to study the relative Lie algebra cohomology/homology.
The localization can be realized by a composition of direct image functors and inverse image functors.
Hence we can apply results about uniformly bounded families to study the functors and the cohomologies.

\subsection{Zuckerman functor}\label{sect:bernsteinFunctor}

In this subsection we review the Zuckerman derived functor.
We refer the reader to \cite[I.8]{BoWa00_continuous_cohomology} and \cite[6.3]{Wa88_real_reductive_I} for our construction.

Let $(\calA, G)$ be a generalized pair and $H$ a reductive subgroup of $G$.
Then $(\calA, H)$ forms a generalized pair and $(\lie{g}, H)$ forms a pair (see Definitions \ref{def:GeneralizedPair} and \ref{def:pair}).
Since $\calA$ is a $G$-module, for any $X \in \calA$, we can take $f_1, \ldots, f_n \in \rring{G}$
and $X_1, \ldots, X_n \in \calA$ such that
\begin{align*}
\Ad(g^{-1})(X) = \sum_i f_i(g)X_i
\end{align*}
for any $g \in G$.

Let $V$ be an $(\calA, H)$-module.
We define three actions on $\rring{G}\otimes V$ via
\begin{align*}
\mu(X)(f\otimes v) &= \sum_i f_i f \otimes X_i v, & (X \in \calA)\\
r(Y)(f\otimes v) &= R(Y)f \otimes v + f \otimes Yv, & (Y \in \lie{g})\\
r(g)(f\otimes v) &= R(g)f \otimes gv, & (g \in H)\\
l(g)(f\otimes v) &= L(g)f \otimes v & (g\in G)
\end{align*}
for $f \in \rring{G}$ and $v \in V$.
Here $L$ (resp.\ $R$) denotes the left (resp.\ right) regular action of $G$ on $\rring{G}$
and $f_i, X_i$ are the elements taken above for $X$.
It is easy to see that $\mu(X)$ does not depend on the choice of $\set{f_i}$ and $\set{X_i}$.
Note that the actions $\mu$ and $l$ commute with $r$
and we have
\begin{align*}
(\mu(X)-l(X))(\rring{G}\otimes V)^{r(\lie{g})} = 0
\end{align*}
for any $X \in \lie{g}$.
This implies that $\zuck{G}{H}(V) := (\rring{G}\otimes V)^{r(\lie{g}), r(H)}$ is an $(\calA, G)$-module via $\mu$ and $l$.

For an $(\calA, H)$-module $V$ and $i \in \NN$, we set
\begin{align*}
\Dzuck{G}{H}{i}(V):= H^i(\lie{g}, H; \rring{G}\otimes V),
\end{align*}
where $\rring{G}\otimes V$ is considered as a $(\lie{g}, H)$-module via the action $r$ to take the relative Lie algebra cohomology.
The two actions $l$ and $\mu$ satisfy the definition of $(\calA, G)$-modules (Definition \ref{def:AG-mod}), and hence $\Dzuck{G}{H}{i}(V)$ is an $(\calA, G)$-module.
See e.g.\ \cite[Proposition I.8.2]{BoWa00_continuous_cohomology} and \cite[Theorem 1.6]{MiPa98}.
Remark that we can prove that $\Dzuck{G}{H}{i}(V)$ is an $(\calA, G)$-module
under the weaker assumption that $G/H$ is affine without reductivity of $H$.
See Remark \ref{rmk:LocalZuckerman}.

\begin{fact}\label{fact:Zuckerman}
	$\Dzuck{G}{H}{i}(V)$ admits an $(\calA, G)$-module structure defined by $\mu$ and $l$.
	If, in addition, $\calA$ is flat as a right $\univ{g}$-module, then $\Dzuck{G}{H}{i}$ is isomorphic to the $i$-th right derived functor of $\zuck{G}{H}$.
\end{fact}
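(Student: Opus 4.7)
The plan is to address the two assertions separately. For the $(\calA, G)$-module structure, the key observation is that $\mu$ and $l$ each commute with $r$, so they descend from $\rring{G}\otimes V$ to well-defined operators on $\Dzuck{G}{H}{i}(V)=H^i(\lie{g}, H; \rring{G}\otimes V)$. The algebra axiom for $\mu$ and the group axiom for $l$ follow directly from their definitions on $\rring{G}\otimes V$, while the compatibility $l(g)\mu(X)l(g)^{-1}=\mu(\Ad(g)X)$ comes from the construction of $\mu$ via the expansion $\Ad(g^{-1})X = \sum_i f_i(g) X_i$. The condition that the differential of $l$ coincides with $\mu|_{\lie{g}}$ on $\Dzuck{G}{H}{i}(V)$ is the cohomological extension of the identity $(\mu(X)-l(X))(\rring{G}\otimes V)^{r(\lie{g})}=0$ recorded in the discussion preceding the statement: one verifies that $\mu(X)-l(X)$, regarded as a chain map on the Chevalley--Eilenberg complex for $(\lie{g}, H)$, is null-homotopic via the Cartan magic formula for the $r(\lie{g})$-action.

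For the derived-functor identification, I would verify the three properties of a universal $\delta$-functor. In degree zero, $\Dzuck{G}{H}{0}(V) = (\rring{G}\otimes V)^{r(\lie{g}), r(H)} = \zuck{G}{H}(V)$. The $\delta$-functor property follows from the exactness of $\rring{G}\otimes(-)$ over $\CC$: any short exact sequence in $\Mod(\calA, H)$ is transformed to a short exact sequence of $(\lie{g}, H)$-modules, and the long exact sequence of relative Lie algebra cohomology from Subsection \ref{sect:CEcomplex} supplies the connecting morphisms. For effaceability, the flatness hypothesis enters: $\calA\otimes_{\univ{g}}(-) \colon \Mod(\lie{g}, H) \to \Mod(\calA, H)$ is then an exact left adjoint to the forgetful functor, so the forgetful functor sends injective $(\calA, H)$-modules to injective $(\lie{g}, H)$-modules.

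The remaining and main obstacle is then to show that if $V$ is injective in $\Mod(\lie{g}, H)$, then $H^i(\lie{g}, H; \rring{G}\otimes V)=0$ for $i>0$. The subtlety is that $\rring{G}\otimes V$ under the $r$-action is \emph{not} literally the Shapiro-type coinduced module $\Hom_{\univ{h}}(\univ{g}, V)$, because the $\calA$-action on $V$ (and in particular the $\lie{g}$-part of it) enters non-trivially in the formula for $r(Y)$. Following \cite[I.8]{BoWa00_continuous_cohomology} and \cite[Ch.~6]{Wa88_real_reductive_I}, I would overcome this by exhibiting $\rring{G}\otimes V$ as a relatively injective $(\lie{g}, H)$-module: one twists the $r$-action by a formal coordinate change in a neighborhood of $e \in G$ to transfer the $V$-part of $r(\lie{g})$ onto the $\rring{G}$-factor, after which the argument reduces to the classical Shapiro lemma and the vanishing of $(\lie{h}, H)$-cohomology in positive degree (which is automatic since $\lie{h}/\lie{h}=0$ and $H$ is reductive). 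Combining this with the first two paragraphs yields the identification $\Dzuck{G}{H}{i} \simeq R^i\zuck{G}{H}$.
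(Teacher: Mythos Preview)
The paper does not give its own proof of this statement: it is recorded as a \emph{Fact} with a pointer to \cite[Proposition I.8.2]{BoWa00_continuous_cohomology} and \cite[Theorem 1.6]{MiPa98}, and no argument is supplied. So there is nothing to compare your approach \emph{against}; the relevant question is only whether your outline reproduces the standard proof in those references.

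Your sketch does follow that standard route and is largely correct. For the second assertion, the reduction via flatness (exact left adjoint $\calA\otimes_{\univ{g}}(-)$, hence forgetful preserves injectives) and the $\delta$-functor/effaceability strategy are exactly what the references do. One small caution: the phrase ``formal coordinate change in a neighborhood of $e$'' is vaguer than what is actually needed; in the algebraic setting you cannot literally change coordinates on $G$, and the argument in \cite[I.8]{BoWa00_continuous_cohomology} proceeds instead by exhibiting $\rring{G}\otimes V$ (with the $r$-action) as a direct limit of modules of the form $F\otimes V$ with $F$ a finite-dimensional $G$-module, on which the claim reduces to $\Ext^i_{\lie g,H}(F^*,V)=0$.

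For the first assertion, your identification of the crux is right, but the Cartan-formula step is more delicate than you indicate. From the definitions one computes $\mu(X)-l(X)=\sum_i m_{f_i}\circ r(X_i)$ on $\rring{G}\otimes V$, where $m_{f_i}$ is multiplication by $f_i$. Each $r(X_i)$ is individually null-homotopic on the Chevalley--Eilenberg complex by Cartan's identity, but the naive homotopy $h=\sum_i m_{f_i}\iota_{X_i}$ fails to give $\mu(X)-l(X)=[d,h]$ because $d$ does not commute with $m_{f_i}$; the commutator $[d,m_{f_i}]$ produces correction terms involving $R(Y_j)f_i$. The argument in the references handles this either by the finite-dimensional reduction above (so that $l$ becomes a genuine $G$-action on each piece and the differential of $l$ equals the $\lie g$-action on cohomology, which is trivially zero, matching the triviality of $r$ on cohomology) or by a more careful homotopy. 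Your outline is on the right track but would need this refinement to be complete.
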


The functors $\Dzuck{G}{H}{i}$ are called the \define{Zuckerman derived functors}.

The following property is well-known and easy to see from the above isomorphism and the algebraic Peter--Weyl theorem (\cite[Theorem 4.2.7]{GoWa09}).
See e.g.\ \cite[Theorem I.8.8]{BoWa00_continuous_cohomology}.

\begin{fact}\label{fact:BernAndF}
	Let $V$ be an $(\calA, H)$-module.
	Assume that $G$ is reductive.
	For any $i \in \NN$, the irreducible decomposition of $\Dzuck{G}{H}{i}(V)$ as a $G$-module is given by
	\begin{align*}
		\Dzuck{G}{H}{i}(V) \simeq \bigoplus_F H^i(\lie{g}, H; F\otimes V)\otimes F^*,
	\end{align*}
	where the direct sum is over all isomorphism classes of irreducible $G$-modules.
	The isomorphism is natural in $V$.
\end{fact}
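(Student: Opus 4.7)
The plan is to reduce the computation of $\Dzuck{G}{H}{i}(V) = H^i(\lie{g}, H; \rring{G}\otimes V)$ to a $G$-isotypic decomposition coming from the algebraic Peter--Weyl theorem, using the observation that the action $l$ (defining the output $G$-action) and the action $r$ (defining the $(\lie{g}, H)$-structure used in the cohomology) come from the left and right regular representations on $\rring{G}$ respectively, which commute and act on distinct tensor factors after Peter--Weyl.

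First I would invoke the algebraic Peter--Weyl theorem for the reductive group $G$ to obtain a $G\times G$-equivariant isomorphism
\begin{equation*}
    \rring{G} \simeq \bigoplus_F F^{*}\otimes F,
\end{equation*}
summed over isomorphism classes of irreducible $G$-modules $F$, where the first factor of $G\times G$ acts on $F^{*}$ via the contragredient of the natural action (corresponding to left translation $L$ on $\rring{G}$), and the second factor acts on $F$ naturally (corresponding to right translation $R$). Tensoring with $V$ gives
\begin{equation*}
    \rring{G}\otimes V \simeq \bigoplus_F F^{*}\otimes (F\otimes V).
\end{equation*}

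Next I would unwind the actions $r$ and $l$ under this identification. Since $r(Y)(f\otimes v)=R(Y)f\otimes v + f\otimes Yv$ for $Y\in\lie{g}$ and $r(h)(f\otimes v)=R(h)f\otimes hv$ for $h\in H$, the $(\lie{g}, H)$-module structure of the $r$-action is trivial on the $F^{*}$ factor and is the standard diagonal action on $F\otimes V$. Dually, the $l$-action $l(g)(f\otimes v)=L(g)f\otimes v$ is trivial on $F\otimes V$ and acts on $F^{*}$ as the contragredient representation. Because $H$ is reductive, the relative Lie algebra cohomology functor is the cohomology of the Chevalley--Eilenberg complex $\Hom_H(\wedge^{\bullet}(\lie{g}/\lie{k}), \cdot)$ described in Subsection \ref{sect:CEcomplex}, which is exact and commutes with direct sums and with the $G$-action through $l$ (as the latter commutes with the $r$-action used to define the cohomology). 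Therefore
\begin{equation*}
    H^i(\lie{g}, H; \rring{G}\otimes V) \simeq \bigoplus_F F^{*}\otimes H^i(\lie{g}, H; F\otimes V),
\end{equation*}
with the $G$-action on the right coming only from $F^{*}$, which is exactly the claimed decomposition after reordering tensor factors.

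Naturality in $V$ follows because every step --- Peter--Weyl, the rearrangement of tensor factors, and the cohomology of the complex --- is a natural construction in the $(\calA, H)$-module $V$. The main subtlety, and the only place to be careful, is matching conventions between the actions $l, r$ and the Peter--Weyl decomposition so that $l$ acts solely on $F^{*}$ while $r$ acts solely on $F\otimes V$; once this bookkeeping is done, the commutation of cohomology with the locally finite direct sum and the exactness afforded by reductivity of $H$ make the rest automatic.
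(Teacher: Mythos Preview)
Your argument is correct and is exactly the approach the paper indicates: this statement is recorded as a known fact, with the remark that it is ``easy to see from the above isomorphism and the algebraic Peter--Weyl theorem'' and a reference to \cite[Theorem I.8.8]{BoWa00_continuous_cohomology}. Two minor points: in your Chevalley--Eilenberg complex you wrote $\lie{g}/\lie{k}$ where you mean $\lie{g}/\lie{h}$, and your phrase ``which is exact'' should refer to the term-wise functor $W\mapsto \Hom_H(\wedge^k(\lie{g}/\lie{h}),W)$ (exact because $H$ is reductive and $\wedge^k(\lie{g}/\lie{h})$ is finite-dimensional), not to the cohomology functor itself; this is what makes the passage to direct sums go through.
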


For a generalized pair $(\calA, G)$, we consider $(\calA\otimes \univ{g}, G)$ as a generalized pair
equipped with the diagonal homomorphism $\univ{g} \rightarrow \calA\otimes \univ{g}$
and the diagonal action of $G$ on $\calA\otimes \univ{g}$.

\begin{lemma}\label{lem:TorAndBern}
Let $V$ be an $(\calA, H)$-module.
Assume that $G$ is reductive.
Then for any $(\lie{g}, H)$-module $W$ and $i \in \NN$, there exists a natural isomorphism of $\calA^G$-modules
\begin{align*}
	\Dzuck{G}{H}{i}(V\otimes W)^G \simeq H^i(\lie{g}, H; V\otimes W),
\end{align*}
where $V\otimes W$ is considered as an $(\calA\otimes \univ{g}, H)$-module to apply the functor $\Dzuck{G}{H}{i}$.
\end{lemma}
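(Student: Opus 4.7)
The plan is to deduce this directly from Fact \ref{fact:BernAndF} by enlarging the generalized pair to incorporate the $\lie{g}$-action on $W$.

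First I would verify that $(\calA\otimes \univ{g}, G)$ forms a generalized pair when equipped with the diagonal $G$-action and the diagonal algebra homomorphism $\iota_\Delta\colon \univ{g}\rightarrow \calA\otimes \univ{g}$ given on generators by $X\mapsto X\otimes 1 + 1\otimes X$. The compatibility condition between the differential of the diagonal $G$-action and the adjoint action of $\iota_\Delta$ is an immediate Leibniz-rule computation from the fact that $(\calA, G)$ itself is a generalized pair. Under this setup, $V\otimes W$ becomes an $(\calA\otimes \univ{g}, H)$-module via $(a\otimes X)(v\otimes w) = av\otimes Xw$ with diagonal $H$-action, and the induced $\lie{g}$-action through $\iota_\Delta$ is precisely the diagonal action $X(v\otimes w) = Xv\otimes w + v\otimes Xw$ used in the statement.

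Next I would apply Fact \ref{fact:BernAndF} to the generalized pair $(\calA\otimes \univ{g}, G)$ (which has $G$ reductive by hypothesis) with the $(\calA\otimes \univ{g}, H)$-module $V\otimes W$. This yields a natural $G$-module isomorphism
\begin{align*}
\Dzuck{G}{H}{i}(V\otimes W) \simeq \bigoplus_{F} H^i(\lie{g}, H; F\otimes V\otimes W)\otimes F^*,
\end{align*}
where the sum runs over isomorphism classes of irreducible $G$-modules. Taking $G$-invariants on the right-hand side only the contribution with $F = \CC$ survives (since $(F^*)^G = 0$ unless $F$ is trivial), and this yields the desired isomorphism
\begin{align*}
\Dzuck{G}{H}{i}(V\otimes W)^G \simeq H^i(\lie{g}, H; V\otimes W).
\end{align*}
Naturality in $V$ and $W$ is inherited from the naturality in Fact \ref{fact:BernAndF}.

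Finally I would verify that this isomorphism is $\calA^G$-equivariant. The $\calA^G$-action on the left-hand side comes from the canonical embedding $\calA^G \hookrightarrow (\calA\otimes \univ{g})^G$ (via $a\mapsto a\otimes 1$) together with the $(\calA\otimes \univ{g}, G)$-module structure on $\Dzuck{G}{H}{i}(V\otimes W)$ provided by Fact \ref{fact:Zuckerman}; on the right-hand side it arises from the $\calA$-action on $V$, which commutes with the $\lie{g}$-action on $V\otimes W$ used to compute relative cohomology, modulo the $\lie{g}$-invariance encoded in $\calA^G$. The main (and only really delicate) step is this final compatibility check: it amounts to tracing through the explicit $\mu$-action definition of the $(\calA\otimes \univ{g})$-structure on $\Dzuck{G}{H}{i}$ and comparing it with the tautological $\calA^G$-action on $V\otimes W$ under the isomorphism of Fact \ref{fact:BernAndF}, which is essentially a bookkeeping exercise rather than a substantive obstacle.
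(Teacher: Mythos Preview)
Your proposal is correct and follows essentially the same approach as the paper: apply Fact \ref{fact:BernAndF} to the enlarged pair and extract the trivial $G$-type. The only difference is in the final step: the paper observes that the naturality you already note is enough to conclude $\calA^G$-equivariance directly (since any $a\in\calA^G$ acts on $V$ by a $(\lie{g},H)$-module endomorphism, and $\mu(a)=\id\otimes a$ on $\rring{G}\otimes V$), so the explicit bookkeeping with the $\mu$-action that you propose is unnecessary.
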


\begin{proof}
	The isomorphism $\Dzuck{G}{H}{i}(V\otimes W)^G \simeq H^i(\lie{g}, H; V\otimes W)$ of vector spaces in Fact \ref{fact:BernAndF} is natural in $V$ and $W$.
	Hence the isomorphism is also an $\calA^G$-homomorphism.
\end{proof}

\begin{corollary}\label{cor:TorAndBernLength}
Retain the notation in Lemma \ref{lem:TorAndBern}.
Then for any $(\lie{g}, H)$-module $W$ and $i \in \NN$, we have
\begin{align*}
\Len_{\calA^G}(H^i(\lie{g}, H; V\otimes W)) \leq \Len_{\calA\otimes \univ{g},G}(\Dzuck{G}{H}{i}(V\otimes W)).
\end{align*}
\end{corollary}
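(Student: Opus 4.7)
The plan is to combine Lemma~\ref{lem:TorAndBern} with Proposition~\ref{prop:AGandLength} applied to the generalized pair $(\calA\otimes\univ{g}, G)$, which has reductive $G$ by hypothesis and comes equipped with the diagonal embedding $\univ{g}\to\calA\otimes\univ{g}$ and the diagonal $G$-action as described just before the lemma.

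First, by Lemma~\ref{lem:TorAndBern} the natural $\calA^G$-equivariant isomorphism $H^i(\lie{g}, H; V\otimes W) \simeq \Dzuck{G}{H}{i}(V\otimes W)^G$ reduces the problem to bounding $\Len_{\calA^G}(\Dzuck{G}{H}{i}(V\otimes W)^G)$ by the right-hand side. Second, I take a composition series $0 = N_0 \subsetneq N_1 \subsetneq \cdots \subsetneq N_n = \Dzuck{G}{H}{i}(V\otimes W)$ in $\Mod(\calA\otimes\univ{g}, G)$ of length $n$ equal to the right-hand side. By the exactness of $(-)^G$ on this category (from reductivity of $G$, as recalled in the proof of Proposition~\ref{prop:AGandLength}), applying $G$-invariants produces a filtration $0 \subset N_1^G \subset \cdots \subset N_n^G = \Dzuck{G}{H}{i}(V\otimes W)^G$ by $\calA^G$-submodules whose successive quotients are $(N_k/N_{k-1})^G$. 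Invoking the Schur-type principle underlying Proposition~\ref{prop:AGandLength}---that the functor $V \mapsto V^G$ sends irreducible $(\calR, G)$-modules to zero or irreducible $\calR^G$-modules---each quotient $(N_k/N_{k-1})^G$ is zero or irreducible as an $\calA^G$-module, and summing gives $\Len_{\calA^G}(\Dzuck{G}{H}{i}(V\otimes W)^G) \leq n$, as required.

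The main obstacle is the last step, where one needs $\calA^G$-irreducibility rather than only $(\calA\otimes\univ{g})^G$-irreducibility of the successive quotients, since a literal application of Proposition~\ref{prop:AGandLength} to $(\calA\otimes\univ{g}, G)$ outputs the weaker conclusion. I would close this gap by going back to the double-centralizer argument of \cite[Theorem~4.2.1]{GoWa09} and using that the $\calA^G$-action on the trivial $G$-isotypic multiplicity space of an irreducible $(\calA\otimes\univ{g}, G)$-module $N$ already saturates the commutant in $\End_\CC(N^G)$; this ensures that $N^G$ is already irreducible as an $\calA^G$-module whenever it is nonzero, completing the argument.
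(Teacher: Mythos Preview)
Your plan is the same as the paper's: apply Lemma~\ref{lem:TorAndBern} to identify $H^i(\lie{g},H;V\otimes W)$ with $\Dzuck{G}{H}{i}(V\otimes W)^G$, then use the exactness of $(-)^G$ and the irreducibility principle behind Proposition~\ref{prop:AGandLength}. You also correctly isolate the one real issue, namely that Proposition~\ref{prop:AGandLength} applied to $(\calA\otimes\univ{g},G)$ only gives $(\calA\otimes\univ{g})^G$-irreducibility of each $(N_k/N_{k-1})^G$, not $\calA^G$-irreducibility.

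Where your proposal falls short is in how you close that gap. Saying that ``the $\calA^G$-action already saturates the commutant in $\End_\CC(N^G)$'' is the conclusion you want, not an argument for it; the reference to \cite[Theorem~4.2.1]{GoWa09} only yields that $(\calA\otimes\univ{g})^G$ acts irreducibly on $N^G$, and you still owe an explanation of why the smaller algebra $\calA^G\otimes 1$ has the same image in $\End_\CC(N^G)$. The paper supplies exactly this missing step by a direct algebraic observation: on $N^G$ the diagonal copy $\Delta(\lie{g})\subset\calA\otimes\univ{g}$ acts by zero (it is the differential of the $G$-action), so the $(\calA\otimes\univ{g})^G$-action on $N^G$ factors through $(\calA\otimes\univ{g})^G/((\calA\otimes\univ{g})\Delta(\lie{g}))^G$, and the inclusion $\calA^G\otimes 1\hookrightarrow(\calA\otimes\univ{g})^G$ induces an isomorphism onto this quotient. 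Hence the $(\calA\otimes\univ{g})^G$-length of $N^G$ equals its $\calA^G$-length, and the inequality follows. This is equivalent to (and cleaner than) arguing that any $(\calA\otimes 1,G)$-stable subspace is automatically $(1\otimes\univ{g})$-stable, which is what you would need to run your density argument with $\calR=\calA$.
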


\begin{proof}
By Proposition \ref{prop:AGandLength}, we have
\begin{align*}
\Len_{(\calA\otimes \univ{g})^G}(\Dzuck{G}{H}{i}(V\otimes W)^G) \leq \Len_{\calA\otimes \univ{g},G}(\Dzuck{G}{H}{i}(V\otimes W)).
\end{align*}
The action of $(\calA\otimes \univ{g})^G$ on $\Dzuck{G}{H}{i}(V\otimes W)^G$ factors through
$(\calA\otimes \univ{g})^G / (\calA\otimes \univ{g}\Delta(\lie{g}))^G$.
The inclusion map $\calA^G\otimes 1 \hookrightarrow (\calA\otimes \univ{g})^G$
induces an isomorphism
\begin{align*}
\calA^G\otimes 1 \xrightarrow{\simeq} (\calA\otimes \univ{g})^G / (\calA\otimes \univ{g}\Delta(\lie{g}))^G.
\end{align*}
The assertion therefore follows from Lemma \ref{lem:TorAndBern}.
\end{proof}

\subsection{Localization of the Zuckerman functor}\label{sect:LocalizationBern}

We review the localization of the Zuckerman derived functor.
We refer to \cite[II.4]{Bi90} and to \cite[4.6]{Ki12} for a conceptual treatment using the equivariant derived category
(see also \cite[3.7]{BeLu94}).

Let $G$ be an affine algebraic group over $\CC$ and $\Dsheaf_X$ a $G$-equivariant algebra of twisted differential operators on a smooth $G$-variety $X$.
Let $H$ be a reductive subgroup of $G$.
We construct an $(\Dsheaf_X, G)$-module from an $(\Dsheaf_X, H)$-module.

We consider the following diagram:
\begin{align*}
X \xleftarrow{\pi} G\times X \xrightarrow{a} G\times X \xrightarrow{q} G/H\times X \xrightarrow{\pi'} X,
\end{align*}
where $\pi$ and $\pi'$ are the projections, $a$ is the isomorphism given by $a(g, x) = (g, gx)$ and $q$ is the natural projection.
We consider the left two $X$ as $G\times H$-varieties letting $G$ act trivially and the others as $G\times H$-varieties letting $H$ act trivially.
We consider $G$ as a $G\times H$-variety via the left and right translations.
Then $\pi, a, q$ and $\pi'$ are $G\times H$-equivariant.
Since $\Dsheaf_X$ is $G$-equivariant, we have canonical isomorphisms
\begin{align*}
\pi^{\#}\Dsheaf_X \simeq \ntDsheaf_{G}\boxtimes \Dsheaf_{X} \simeq (\pi'\circ q\circ a)^\# \Dsheaf_X
\end{align*}
of $G\times G$-equivariant algebras (see Proposition \ref{prop:GGequivariant}).
In particular, they are $G\times H$-equivariant.

We set $n = \dim_{\CC}(\lie{h}), m = \dim_{\CC}(\lie{g}/\lie{h})$ and 
\begin{align*}
\DlocZuck{G}{H}{i}(\calM):=L_{m-i} \pi'_+(L_n q_+(a_{+}\pi^* (\calM))^{H/H_0}) \in \Mod_{qc}(\Dsheaf_X, G)
\end{align*}
for $\calM \in \Mod_{qc}(\Dsheaf_X, H)$ and $i \in \NN$.
Here $(\cdot)^{H/H_0}$ means taking the $H/H_0$-invariant part in a $H/H_0$-equivariant sheaf.
Since the functors to define $\DlocZuck{G}{H}{i}$ preserve holonomicity, we can replace $\Mod_{qc}$ by $\Mod_{h}$.

The functors $\DlocZuck{G}{H}{i}$ can be considered as a localization of the Zuckerman functors $\Dzuck{G}{H}{i}$ (see \cite[Theorem 4.4]{Bi90} and \cite[Proposition 4.17]{Ki12}).

\begin{proposition}\label{prop:commutativeBernSect}
Let $\calM$ be an object in $\Mod_{qc}(\Dsheaf_{X}, H)$.
If the global section functor $\sect\colon \Mod_{qc}(\Dsheaf_X) \rightarrow \Mod(\Dalg{X})$ is exact, then
there exists a natural isomorphism
\begin{align*}
\sect(\DlocZuck{G}{H}{i}(\calM)) \simeq \Dzuck{G}{H}{i}(\sect(\calM))
\end{align*}
of $(\Dalg{X}, G)$-modules for any $i \in \NN$.
\end{proposition}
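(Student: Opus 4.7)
The plan is to apply $\sect$ to the defining expression $\DlocZuck{G}{H}{i}(\calM) = L_{m-i}\pi'_+(L_n q_+(a_+\pi^*\calM)^{H/H_0})$, commute $\sect$ past each intermediate functor, and match the resulting algebraic expression with the Chevalley--Eilenberg cochain complex computing $H^i(\lie{g}, H; \rring{G}\otimes \sect(\calM)) = \Dzuck{G}{H}{i}(\sect(\calM))$. First I would verify exactness of $\sect$ on each intermediate category of quasi-coherent twisted $\ntDsheaf$-modules: on $G\times X$ by combining the hypothesis on $X$ with affinity of $G$ via the lemma closing Subsection \ref{subsect:DirectImage}, and on $G/H\times X$ similarly, using that $H$ reductive implies $G/H$ affine. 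Hence $\sect$ commutes with every $L_i$-operation in the definition and with the finite-group invariants $(\cdot)^{H/H_0}$.

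Next I would identify $\sect(\pi^*\calM) \simeq \rring{G}\otimes \sect(\calM)$ as a $(\calA_{G\times X}, G\times H)$-module and transport through the isomorphism $a$ to obtain precisely the $(r, \mu, l)$-structure of Subsection \ref{sect:bernsteinFunctor}. Then Theorem \ref{thm:PrincipalBundleDirectImageDerived3} applied to the principal $H$-bundle $q$ yields
\begin{align*}
    \sect(L_j q_+(a_+\pi^*\calM)) \simeq \Tor_j^{\univ{h}}(\CC_{-\delta_H}, \rring{G}\otimes \sect(\calM)),
\end{align*}
with the $\univ{h}$-action coming from $r|_\lie{h}$, and taking $H/H_0$-invariants (which commutes with $\sect$ by finiteness) extracts the $H$-equivariant piece. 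For the second direct image, $\pi'$ is affine (again by affinity of $G/H$), so Proposition \ref{prop:DirectImageAffineMorphism} identifies $D\pi'_+$ with the left derived functor of $\pi'^0_+ \simeq \pi'_*(\Omega_{\pi'}\otimes_{\rsheaf{G/H\times X}}(\cdot))$; here the twist $\Omega_{\pi'}$ corresponds to $\wedge^m(\lie{g}/\lie{h})$ as a relative canonical bundle.

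Combining the two steps, $\sect(\DlocZuck{G}{H}{i}(\calM))$ becomes the $(m-i)$-th homology of a relative Chevalley--Eilenberg chain complex for $(\lie{g}, H)$ with coefficients in $\rring{G}\otimes \sect(\calM)\otimes \wedge^m(\lie{g}/\lie{h})$, which by Poincar\'e duality (Fact \ref{fact:PoincareDuality}) is naturally isomorphic to $H^i(\lie{g}, H; \rring{G}\otimes \sect(\calM)) = \Dzuck{G}{H}{i}(\sect(\calM))$. Naturality in $\calM$ and compatibility with the $\Dalg{X}$- and $G$-actions follow from the naturality of all the intermediate identifications (the residual $G$-action survives because $\pi', q, a, \pi$ are all $G$-equivariant while we quotient only by $H$). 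The main obstacle is the careful tracking of the three interacting twists---the character $-\delta_H$ appearing in the principal-bundle formula of Theorem \ref{thm:PrincipalBundleDirectImageDerived3}, the canonical bundle twist $\Omega_{\pi'}$ entering $D\pi'_+$, and the Poincar\'e duality exchange between homology of combined degree $n+(m-i)$ and cohomology of degree $i$---but once these are reconciled, the identification with the Chevalley--Eilenberg complex is essentially forced.
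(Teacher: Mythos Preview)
Your outline is correct and follows the same broad strategy as the paper: apply $\sect$ to each constituent of $\DlocZuck{G}{H}{i}$, use affinity of $G/H$ (Matsushima) to justify the necessary exactness, invoke the principal-bundle direct image formula for $q$, and identify the output with the Chevalley--Eilenberg complex. The substantive difference is in how you treat $L_{m-i}\pi'_+$.

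The paper does not use Poincar\'e duality. For the $q$-step it observes directly (Theorem~\ref{thm:DirectImageTor} at top degree) that $L_n q_+(\cdot)^{H/H_0}\simeq q_*(\cdot)^H$, yielding the single sheaf $q_*(\rsheaf{G}\boxtimes\calM)^H$ on $G/H\times X$. For $\pi'$ it invokes the relative de Rham resolution \cite[Lemma~1.5.27]{HTT08}: since $\calT_{G/H}\simeq (q'_*\rsheaf{G}\otimes\lie{g}/\lie{h})^H$ and $G/H$ is affine, the relative de Rham complex is already the Chevalley--Eilenberg \emph{cochain} complex $(\rring{G}\otimes\wedge^{\bullet}(\lie{g}/\lie{h})^*\otimes\sect(\calM))^H$, and one reads off $H^i(\lie{g},H;\rring{G}\otimes\sect(\calM))$ with no duality step.

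Your formula $D^0\pi'_+\simeq\pi'_*(\Omega_{\pi'}\otimes_{\rsheaf{G/H\times X}}(\cdot))$ is not right as stated: the transfer bimodule is $\Dsheaf_{X\leftarrow G/H\times X}\simeq\Omega_{G/H}\boxtimes\Dsheaf_X$, and the tensor defining $D^0\pi'_+$ is over $\ntDsheaf_{G/H}\boxtimes\Dsheaf_X$, not over $\rsheaf{G/H\times X}$. If you compute the derived tensor correctly---e.g.\ by resolving $\Omega_{G/H}$ as a right $\ntDsheaf_{G/H}$-module via the de Rham resolution---you land on the cochain complex immediately. Your detour through a chain complex followed by Fact~\ref{fact:PoincareDuality} can be made to work (Spencer and de Rham resolutions are exchanged by the pairing $\Omega_{G/H}\otimes\wedge^j\calT_{G/H}\simeq\Omega^{m-j}_{G/H}$), but it is an artifact of that misidentification; the paper's route is shorter and makes the twist-tracking you flag as ``the main obstacle'' disappear.
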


\begin{proof}
Note that $G/H$ is affine by Matsushima's criterion \cite[Theorem 3.8]{Ti11}.
Fix $i \in \NN$.

Since $q\colon G\times X \rightarrow G/H\times X$ is a principal $H$-bundle,
$L_n q_+(\cdot)^{H/H_0}$ is isomorphic to $q_*(\cdot)^H$ by Theorem \ref{thm:DirectImageTor}.
Hence we have
\begin{align*}
	L_nq_+ a_+ \pi^*(\calM)^{H/H_0} \simeq q_*(\rsheaf{G}\boxtimes \calM)^H.
\end{align*}
The action of $\ntDalg{X}\subset \Dalg{G/H}\otimes \Dalg{X}$ on $\sect(q_*(\rsheaf{G}\boxtimes \calM)^H) \simeq (\rring{G}\otimes\sect(\calM))^H$ is given by
\begin{align}
	A\cdot f\otimes m = \sum_{i} f_i f\otimes A_i m \label{eqn:ActionOfDG}
\end{align}
where $\set{A_i}\subset \Dalg{X}$ and $\set{f_i} \subset \rring{G}$ are finite subsets satisfying $\sum_i f_i(g)A_i = \Ad(g^{-1})A$.
The $G$-action on $(\rring{G}\otimes\sect(\calM))^H$ is given by the left translation on $\rring{G}$.

Let $p\colon G/H\times X\rightarrow G/H$ be the projection and $q'\colon G\rightarrow G/H$
the natural projection.
Since $\pi'$ is projection, we can compute $L_{m-i} \pi'_+$ by the relative de Rham complex (see \cite[Lemma 1.5.27]{HTT08}).
Since $G/H$ is a homogeneous variety, the tangent sheaf $\calT_{G/H}$
is isomorphic to $(q'_*\rsheaf{G}\otimes \lie{g}/\lie{h})^H$.
Hence we can write the relative de Rham complex using Lie algebras as
\begin{align*}
	&\sect (L_{m-i} \pi'_+(q_*(\rsheaf{G}\boxtimes \calM)^H)) \\
	\simeq &H^{i-m}\circ \sect\circ R\pi'_*(p^{-1}(q'_*\rsheaf{G}\otimes \wedge^{m+\cxdot}(\lie{g}/\lie{h})^*)^H\otimes_{p^{-1}\rsheaf{G/H}} q_*(\rsheaf{G}\boxtimes \calM)^H) \\
	\simeq &H^{i}((\rring{G}\otimes \wedge^{\cxdot}(\lie{g}/\lie{h})^*)^H\otimes_{\rring{G/H}} (\rring{G}\otimes \sect(\calM))^H) \\
	\simeq &H^{i}((\rring{G}\otimes \wedge^{\cxdot}(\lie{g}/\lie{h})^* \otimes \sect(\calM))^H).
\end{align*}
Here the second isomorphism holds because $G/H$ is affine and $\sect$ is exact on $\Mod_{qc}(\Dsheaf_X)$, and the third isomorphism comes from the tensor product of the two locally free sheaves on the affine variety $G/H$.
The differentials in the above complexes are those induced from the relative de Rham complex.
By a straightforward computation, the complex $(\rring{G}\otimes \wedge^{\cxdot}(\lie{g}/\lie{h})^* \otimes \sect(\calM))^H$ is isomorphic to the complex $\Hom_H(\CE{\lie{g}}{H}{\cxdot}, \rring{G}\otimes \sect(\calM))$.
See Subsection \ref{sect:CEcomplex} for the Chevalley--Eilenberg chain complex $\CE{\lie{g}}{H}{\cxdot}$.
Therefore we have
\begin{align*}
	\sect (L_{m-i} \pi'_+(q_*(\rsheaf{G}\boxtimes \calM)^H))\simeq H^{i}(\lie{g}, H; \rring{G}\otimes \sect(\calM)) \simeq \Dzuck{G}{H}{i}(\sect(\calM)).
\end{align*}

As we have seen around \eqref{eqn:ActionOfDG}, under the isomorphism $\sect(\DlocZuck{G}{H}{i}(\calM))\simeq \sect(\DlocZuck{G}{H}{i}(\calM))$ of vector spaces, the $\Dalg{X}$-action and the $G$-action on $\sect(\DlocZuck{G}{H}{i}(\calM))$ coincide with those on $\Dzuck{G}{H}{i}(\sect(\calM))$ given in Fact \ref{fact:Zuckerman}.
We have therefore proved the proposition.
\end{proof}

\begin{remark}\label{rmk:LocalZuckerman}
	In the proof, we did not use the reductivity of $H$.
	In fact, one can define the Zuckerman functor $\Dzuck{G}{H}{i}(V)$ by the same way in the previous subsection for any $H$ if $G/H$ is affine.
\end{remark}

Let $G$ and $H$ be an affine algebraic group and its reductive subgroup, and $X$ a smooth $G$-variety.
Let $\Dsheaf_{X, \Lambda}:=(\Dsheaf_{X,\lambda})_{\lambda \in \Lambda}$ be a family of $G$-equivariant algebras of twisted differential operators on $X$.
Take a $G$-equivariant bornology $\calB$ of $\Dsheaf_{X,\Lambda}$.
See Definition \ref{def:EquivariantBornology}.

The functor $\DlocZuck{G}{H}{i}$ is defined by a composition of inverse image functors 
and direct image functors.
Hence $\DlocZuck{G}{H}{i}$ preserves the uniform boundedness.

\begin{theorem}\label{thm:UniformlyBoundedFamilyBernstein}
	Let $(\calM_\lambda)_{\lambda \in \Lambda}$ be a family of $(\Dsheaf_{X,\lambda}, H)$-modules.
	Suppose that $\calM$ is uniformly bounded with respect to $\calB$.
	Then $(\DlocZuck{G}{H}{i}(\calM_\lambda))_{i \in \ZZ, \lambda \in \Lambda}$ is uniformly bounded with respect to $\calB$.
\end{theorem}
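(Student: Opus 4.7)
The plan is to trace through the explicit definition
\begin{align*}
\DlocZuck{G}{H}{i}(\calM_\lambda)=L_{m-i}\pi'_+\!\bigl((L_n q_+(a_+\pi^*\calM_\lambda))^{H/H_0}\bigr)
\end{align*}
step by step, at each stage appealing to the preservation of uniform boundedness under direct images, inverse images, and subquotients (Theorem~\ref{thm:FunctorOnUniformlyBounded} and Proposition~\ref{prop:FundamentalBoundeFamily}). The crucial geometric input, which makes the whole argument possible, is the $G$-equivariance of $\calB$; this lets us identify the a priori different bornologies on $G\times X$ coming from the projection $\pi$ and from the composition $\pi'\circ q\circ a$.

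First I would record this bornology identification. Since each $\Dsheaf_{X,\lambda}$ is $G$-equivariant, the canonical isomorphism $\pi^\#\Dsheaf_{X,\Lambda}\simeq m^\#\Dsheaf_{X,\Lambda}$, where $m=\pi'\circ q\circ a\colon G\times X\to X$ is the multiplication map, is in force. Definition~\ref{def:EquivariantBornology} applied to the $G$-equivariant $\calB$ then yields $\pi^\#\calB=m^\#\calB$, and Proposition~\ref{prop:pull-backBornology} rewrites this as
\begin{align*}
\pi^\#\calB \;=\; a^\# q^\# (\pi')^\#\calB
\end{align*}
on $G\times X$. This single observation synchronizes the bornologies along the four morphisms entering the definition of $\DlocZuck{G}{H}{i}$.

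Second, starting from $\calM\in\Mod_{ub}(\Dsheaf_{X,\Lambda},\calB)$, Theorem~\ref{thm:FunctorOnUniformlyBounded} applied to $\pi$ places $\pi^*\calM$ in $D^b_{ub}(\pi^\#\Dsheaf_{X,\Lambda},\pi^\#\calB)$, which by the identification above is $D^b_{ub}(a^\# q^\#(\pi')^\#\Dsheaf_{X,\Lambda},a^\# q^\#(\pi')^\#\calB)$. As $a$ is an isomorphism of varieties, $a_+$ is an equivalence preserving uniform boundedness, so $a_+\pi^*\calM$ lies in $D^b_{ub}(q^\#(\pi')^\#\Dsheaf_{X,\Lambda},q^\#(\pi')^\#\calB)$. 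Theorem~\ref{thm:FunctorOnUniformlyBounded} applied to $q$ then delivers $Dq_+(a_+\pi^*\calM)\in D^b_{ub}((\pi')^\#\Dsheaf_{X,\Lambda},(\pi')^\#\calB)$, and by Proposition~\ref{prop:FundamentalBoundeFamily}(iii) each single cohomology sheaf, in particular $L_n q_+(a_+\pi^*\calM)$, belongs to $\Mod_{ub}((\pi')^\#\Dsheaf_{X,\Lambda},(\pi')^\#\calB)$.

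Third, the functor $(\cdot)^{H/H_0}$ is, since $H/H_0$ is a finite group acting linearly over $\CC$, a direct summand given by the averaging projector, and in particular a subquotient. Hence Proposition~\ref{prop:FundamentalBoundeFamily}(i) shows $(L_n q_+(a_+\pi^*\calM))^{H/H_0}$ remains in $\Mod_{ub}((\pi')^\#\Dsheaf_{X,\Lambda},(\pi')^\#\calB)$. A final application of Theorem~\ref{thm:FunctorOnUniformlyBounded} to $\pi'$ gives the full direct image in $D^b_{ub}(\Dsheaf_{X,\Lambda},\calB)$, and extracting the $(m-i)$-th homology via Proposition~\ref{prop:FundamentalBoundeFamily}(iii) lands $\DlocZuck{G}{H}{i}(\calM)$ in $\Mod_{ub}(\Dsheaf_{X,\Lambda},\calB)$, proving the theorem. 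The only non-routine step is the bornology identification in the first paragraph; after that the argument is a straight application of the Section~4 machinery, and I anticipate no genuine obstacle.
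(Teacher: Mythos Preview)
Your proposal is correct and follows essentially the same approach as the paper: the key step is the $G$-equivariance of $\calB$, which gives $\pi^\#\calB=(\pi'\circ q\circ a)^\#\calB$, after which one simply invokes preservation of uniform boundedness under direct images, inverse images, and subquotients (Theorem~\ref{thm:FunctorOnUniformlyBounded} and Proposition~\ref{prop:FundamentalBoundeFamily}). The paper's proof is a terse two-sentence version of exactly what you have spelled out in detail.
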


\begin{proof}
	Recall the definition of the morphisms:
	\begin{align*}
		X \xleftarrow{\pi} G\times X \xrightarrow{a} G\times X \xrightarrow{q} G/H\times X \xrightarrow{\pi'} X.
	\end{align*}
	Since $\calB$ is $G$-equivariant, we have $\pi^\# \calB = (\pi'\circ q \circ a)^\# \calB$ by Definition \ref{def:EquivariantBornology}.
	The uniform boundedness is preserved by direct images, inverse images and taking subquotients by Proposition \ref{prop:FundamentalBoundeFamily} and Theorem \ref{thm:FunctorOnUniformlyBounded}.
	Hence we have proved the theorem.
\end{proof}

\section{Application to representation theory}\label{sect:applications}

In this section, we define a notion of uniformly bounded family of $\lie{g}$-modules.
A typical example is a family of Harish-Chandra modules with bounded lengths.
As an application of results about uniformly bounded families of $\ntDsheaf$-modules, we will show that the uniform boundedness of a family of $\lie{g}$-modules is preserved by several operations such as (cohomologically) parabolic induction and taking coinvariants.
We will also prove the boundedness of the lengths of $\univ{g}^{G'}$-modules, which is related to the branching problem and harmonic analysis.

\subsection{Uniformly bounded family of \texorpdfstring{$\lie{g}$}{g}-modules}\label{sect:BBcorrespondence}

In this subsection, we introduce the notion of uniformly bounded family of $\lie{g}$-modules.

Let $G$ be a connected reductive algebraic group and $B$ a Borel subgroup of $G$.
Fix a Levi decomposition $B=TU$, where $T$ is a maximal torus and $U$ is the unipotent radical of $B$.
Then the natural projection $p\colon G/U\rightarrow G/B$ is a principal $T$-bundle and $G$-equivariant.

We will reduce theorems about $\lie{g}$-modules to those about $\calD$-modules on $G/B$.
To do so, we review the Beilinson--Bernstein correspondence.
Let $\ntDsheaf_{G/U}$ be the algebra of non-twisted differential operators on $G/U$ equipped with the natural $G\times T$-equivariant structure.
For a character $\lambda$ of $\lie{t}$, we set
\begin{align*}
\ntDsheaf_{G/B,\lambda} := (\CC_{\lambda}\otimes_{\univ{t}}p_*\ntDsheaf_{G/U})^T
\end{align*}
and consider $\ntDsheaf_{G/B,\lambda}$ as a $G\times T$-equivariant algebra of twisted differential operators
as in Subsection \ref{sect:PrincipalBundle}.
Then $p^{\#}\ntDsheaf_{G/B,\lambda}$ is naturally isomorphic to $\ntDsheaf_{G/U}$.
Note that we can explicitly construct a bounded trivialization belonging to $\calB(G/B, G)$ using an open covering by the open Bruhat cell and its translations.

$\proots = \proots(\lie{g}, \lie{t})$ denotes the set of positive roots determined by $B$ and $T$.
We write $\rho$ for half the sum of the positive roots.
The following fact is called the Beilinson--Bernstein correspondence \cite{BeBe81}.

\begin{fact}\label{fact:BeilinsonBernstein}
Let $\lambda$ be a character of $\lie{t}$.
\begin{enumerate}[(i)]
	\item The homomorphism $\univ{g}\rightarrow \ntDalg{G/B,\lambda}(=\sect(\ntDsheaf_{G/B,\lambda}))$ is surjective and its kernel is equal to the minimal primitive ideal with infinitesimal character $\lambda-\rho$.
	\item If $\lambda - \rho$ is anti-dominant, then any quasi-coherent $\ntDsheaf_{G/B,\lambda}$-module $\calM$ is acyclic, i.e.\ $H^i(G/B, \calM)=0$ for any $i > 0$.
	Moreover, there exists a full subcategory $\Mod_{qc}^e(\ntDsheaf_{G/B, \lambda})$ of $\Mod_{qc}(\ntDsheaf_{G/B,\lambda})$ such that
	the global section functor
	\begin{align*}
		\sect\colon \Mod_{qc}^e(\ntDsheaf_{G/B,\lambda}) \rightarrow \Mod(\ntDalg{G/B,\lambda})
	\end{align*}
	gives an equivalence of categories.
	\item If $\lambda - \rho$ is regular and anti-dominant, then the global section functor $\sect\colon \Mod_{qc}(\ntDsheaf_{G/B,\lambda}) \rightarrow \Mod(\ntDalg{G/B,\lambda})$
	gives an equivalence of categories.
\end{enumerate}
\end{fact}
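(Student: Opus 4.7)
The plan is to follow the original Beilinson--Bernstein argument from \cite{BeBe81}, proceeding in three stages corresponding to (i), (ii), (iii). A key role throughout is played by the order filtration $F$ on $\ntDsheaf_{G/B,\lambda}$, whose associated graded is $p_{*}\rsheaf{T^{*}(G/B)}$ where $p\colon T^{*}(G/B)\rightarrow G/B$ is the cotangent projection, together with the Springer resolution $T^{*}(G/B)\rightarrow \calN$ of the nilpotent cone $\calN \subset \lie{g}^*$.

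For (i), I would pass to associated gradeds. The symbol map $S(\lie{g})\rightarrow \rring{T^{*}(G/B)}$ is surjective (this is Kostant's theorem via the moment map), and since each graded piece of $p_{*}\rsheaf{T^{*}(G/B)}$ has vanishing higher cohomology on $G/B$, this surjectivity lifts to the filtered level, giving surjectivity of $\univ{g}\rightarrow \ntDalg{G/B,\lambda}$. The Harish--Chandra isomorphism then implies that $\univcent{g}$ acts on the right-hand side through the central character $\chi_{\lambda-\rho}$, so the kernel contains $\univ{g}\cdot\ker(\chi_{\lambda-\rho})$; equality with the minimal primitive ideal of infinitesimal character $\lambda-\rho$ follows from a Gelfand--Kirillov dimension comparison, using $\dim T^{*}(G/B) = \dim \calN$ and the fact that the associated graded of the kernel cuts out exactly $\calN$ inside $\lie{g}^*$.

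For (ii), the heart is the vanishing $H^{i}(G/B,\calM)=0$ ($i>0$) for quasi-coherent $\calM$ when $\lambda-\rho$ is anti-dominant. I would reduce to coherent $\calM$ equipped with a good filtration and argue by means of the intertwining operators $I_{w}$ between twisted $\ntDsheaf$-modules attached to different Weyl chamber walls, using that anti-dominance makes the relevant compositions isomorphisms and forces the higher cohomology to vanish inductively on the length of $w$; alternatively, one can invoke a Grauert--Riemenschneider vanishing applied to a good filtration pushed down the Springer resolution. Once vanishing is in hand, one defines the localization functor $\Delta:=\ntDsheaf_{G/B,\lambda}\otimes_{\ntDalg{G/B,\lambda}}(\cdot)$ and takes $\Mod_{qc}^{e}(\ntDsheaf_{G/B,\lambda})$ to be its essential image. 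The natural adjunction $(\Delta,\sect)$ satisfies $\sect\circ\Delta\simeq \id$ because $\ntDalg{G/B,\lambda}$ is a quotient of $\univ{g}$ by (i), and this adjunction restricts to an equivalence on the essential image.

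For (iii), the regularity assumption on $\lambda-\rho$ upgrades the equivalence to all of $\Mod_{qc}(\ntDsheaf_{G/B,\lambda})$ by showing that $\sect$ is faithful, i.e.\ no non-zero quasi-coherent module has vanishing global sections. Reducing to coherent $\calM$ and analyzing the support via characteristic varieties, one uses that at a regular infinitesimal character the translation functors to and from nearby regular anti-dominant weights are equivalences of categories, which excludes modules supported only on boundary Bruhat strata. The main obstacle is the vanishing theorem of (ii): once it is established, the rest is a formal consequence of adjunction and dimension counting, but the vanishing itself cannot be obtained from the uniformly-bounded-family machinery of the present paper and requires genuine geometric input about $T^{*}(G/B)$ (Borel--Weil--Bott or Grauert--Riemenschneider), together with the delicate sign conventions matching anti-dominance of $\lambda - \rho$ with positivity of the relevant line bundles on $G/B$.
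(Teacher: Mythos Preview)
The paper does not prove this statement at all: it is labelled as a \emph{Fact} and simply attributed to Beilinson--Bernstein \cite{BeBe81} with the sentence ``The following fact is called the Beilinson--Bernstein correspondence.'' There is therefore nothing in the paper to compare your proposal against.

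Your sketch is a reasonable outline of the original Beilinson--Bernstein argument, and part (i) is essentially correct as stated. For (ii) and (iii), however, what you wrote is more of a menu of possible approaches than an actual proof. In (ii) you offer two alternatives---intertwining operators $I_w$ or Grauert--Riemenschneider---without committing to either or explaining how the induction on the length of $w$ actually goes; the real Beilinson--Bernstein proof uses tensoring with finite-dimensional $G$-modules and a filtration argument to reduce to the Borel--Weil--Bott vanishing for line bundles, which is different from both options you name. Your description of $\Mod_{qc}^e$ as the essential image of $\Delta$ is also not quite the standard one (it is usually the full subcategory of modules generated by their global sections, equivalently those with no non-zero quotient having vanishing global sections). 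In (iii) your appeal to translation functors is circular: the equivalence of translation functors at regular parameters is typically \emph{deduced from} the Beilinson--Bernstein theorem, not used to prove it. The actual proof of faithfulness again uses tensoring with finite-dimensional modules to produce a non-zero global section. So while your proposal points in the right general direction, several steps would need to be replaced by the genuine arguments before it could be called a proof.
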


Motivated by the Beilinson--Bernstein correspondence and the definition of uniformly bounded family of $\ntDsheaf$-modules, we introduce the following definition.

\begin{definition}
	Let $(V_i)_{i \in I}$ be a family of $\lie{g}$-modules.
	We say that $(V_i)_{i \in I}$ is \define{uniformly bounded} if
	the following two conditions hold.
	\begin{enumerate}[(i)]
		\item The length of $V_i$ is bounded by a constant independent of $i \in I$.
		\item There exist a family $(\lambda(r))_{r \in R}$ of anti-dominant weights of $\lie{t}$ and a family $\calN \in \Mod_{ub}(\ntDsheaf_{G/B, \lambda(r)+\rho}, \calB(G/B,G))$ (see \ref{sect:BornologyPrincipalBundle}) such that any composition factor of any $V_i$ is isomorphic to some $\sect(\calN_r)$.
	\end{enumerate}
	We say that a family of $(\lie{g}, K)$-modules of a pair $(\lie{g}, K)$ is \define{uniformly bounded} if it is a uniformly bounded family of $\lie{g}$-modules.
\end{definition}

The uniform boundedness is preserved by several operations of $\lie{g}$-modules.
The following proposition is a direct consequence of Proposition \ref{prop:FundamentalBoundeFamily} and Theorem \ref{thm:TensorUniformlyBounded}.

\begin{proposition}\label{prop:FundamentailUniformlyBoundedGH}
	Let $\lie{g}$ and $\lie{h}$ be complex reductive Lie algebras.
	\begin{enumerate}[(i)]
		\item For a short exact sequence $0\rightarrow L \rightarrow M \rightarrow N \rightarrow 0$ in $\prod_{i \in I} \Mod(\lie{g})$,
		both $L$ and $N$ are uniformly bounded if and only if so is $M$.
		\item For any family $(\lambda_i)_{i \in I}$ of characters of $\lie{g}$
		and uniformly bounded family $(V_j)_{j \in J}$ of $\lie{g}$-modules,
		$(V_j\otimes \CC_{\lambda_i})_{i \in I, j \in J}$ is also uniformly bounded.
		\item For any set $\Phi$ of inner automorphisms of $\lie{g}$ and uniformly bounded family $(V_j)_{j \in J}$ of $\lie{g}$-modules, $(V_j^{\varphi})_{\varphi \in \Phi, j\in J}$ is also uniformly bounded.
		Here $V_j^{\varphi}$ is the $\lie{g}$-module defined by the composition $\lie{g}\xrightarrow{\varphi} \lie{g}\rightarrow \End_{\CC}(V_j)$.
		\item \label{enum:prop:UniformlyBoundedGH} Let $(V_i)_{i \in I}$ (resp.\ $(W_i)_{i \in I}$) be a family of $\lie{g}$-modules (resp.\ $\lie{h}$-modules).
		Then $(V_i\boxtimes W_i)_{i \in I}$ is a uniformly bounded family of $(\lie{g\oplus h})$-modules if and only if both $(V_i)_{i \in I}$ and $(W_i)_{i \in I}$ are uniformly bounded.
	\end{enumerate}	
\end{proposition}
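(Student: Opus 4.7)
The strategy is to transfer each assertion to the corresponding statement about uniformly bounded families of twisted $\ntDsheaf$-modules on $G/B$ via the Beilinson--Bernstein correspondence (Fact \ref{fact:BeilinsonBernstein}). Under this identification, the length condition in the definition of a uniformly bounded family of $\lie{g}$-modules is a routine property on the $\lie{g}$-module side, while the composition-factor condition becomes precisely the uniform boundedness of the family of localized composition factors with respect to $\calB(G/B, G)$.

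Part (i) is immediate: lengths behave additively on short exact sequences, and the composition factors of $M_i$ are exactly the union of those of $L_i$ and $N_i$, so the witnessing $\ntDsheaf$-module family for $M_i$ may be taken as the disjoint union of the families for $L_i$ and $N_i$; both directions then follow from Proposition \ref{prop:FundamentalBoundeFamily}(ii). Part (iii) follows because the twisted module $V_j^{\Ad(g)}$ has composition factors that localize to translates by $g$ of the localizations of the composition factors of $V_j$; since $\calB(G/B, G)$ is $G$-equivariant by Proposition \ref{prop:G-equivariantBornologyPrincipal}, Proposition \ref{prop:TranslationByG} bounds the resulting multiplicities uniformly in $g \in G$. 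For part (iv), the product $G/B \times H/B_H$ is the full flag variety of $G \times H$; by Dixmier's lemma any composition factor of $V_i \boxtimes W_i$ is an external product $V' \boxtimes W'$ of composition factors of the two factors, and it localizes to an external product of $\ntDsheaf$-modules on the product flag variety. The $G \times H$-equivariant bornology on the homogeneous variety $(G\times H)/(B \times B_H)$ is unique (Proposition \ref{prop:UniqueBornologyHomogeneous}), so $\calB(G/B, G) \boxtimes \calB(H/B_H, H)$ agrees with $\calB((G\times H)/(B\times B_H), G\times H)$, and Theorem \ref{thm:TensorUniformlyBounded} yields the biconditional.

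The only point requiring genuine work is part (ii). Since $\lie{g}$ is reductive, every character $\lambda_i$ of $\lie{g}$ vanishes on $[\lie{g},\lie{g}]$; in particular $\lambda_i|_{\lie{t}}$ pairs trivially with all coroots, so the shift $\lambda(r) \mapsto \lambda(r)+\lambda_i|_{\lie{t}}$ preserves anti-dominance. Tensoring a composition factor $W = \sect(\calN_r)$ with $\CC_{\lambda_i}$ yields a module over $\ntDsheaf_{G/B, \lambda(r)+\rho+\lambda_i|_{\lie{t}}}$. In a trivialization $(U, \varphi, \Phi) \in \calB(G/B, G)$ built from a local section of the principal $T$-bundle $G \to G/B$ as in Subsection \ref{sect:BornologyPrincipalBundle}, this shift is realized by adding a closed $1$-form depending linearly on $\lambda_i$ to $\Phi_{\lambda(r)+\rho}$. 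Since the space of characters of $\lie{g}$ is finite-dimensional, these $1$-forms lie in a finite-dimensional subspace of $\calZ(U)$, and Corollary \ref{cor:MultiplicityTwisted} supplies the required uniform multiplicity bound. This cocycle compatibility is the only real obstacle; once it is in place, all four assertions follow formally from the cited results.
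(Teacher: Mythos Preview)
Your proof is essentially correct and is considerably more explicit than the paper's, which consists of the single sentence ``direct consequence of Proposition~\ref{prop:FundamentalBoundeFamily} and Theorem~\ref{thm:TensorUniformlyBounded}''.  Those two citations cover (i) and (iv) exactly as you argue; the paper leaves (ii) and (iii) implicit, and your expansions of them are the natural ones.  In particular, invoking the $G$-equivariance of $\calB(G/B,G)$ (Proposition~\ref{prop:G-equivariantBornologyPrincipal}) together with Proposition~\ref{prop:TranslationByG} is the right mechanism for (iii), and your use of Proposition~\ref{prop:UniqueBornologyHomogeneous} to identify the product bornology in (iv) is also correct.

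For (ii) there is one slip and one point worth saying more precisely.  The slip: $G\to G/B$ is a principal $B$-bundle, not a $T$-bundle; the principal $T$-bundle is $G/U\to G/B$.  This is harmless since, by Proposition~\ref{prop:UniqueBornologyHomogeneous}, both constructions give the unique $G$-equivariant bornology on $G/B$.  The point to sharpen: the phrase ``this shift is realized by adding a closed $1$-form'' hides the step that the localization of $\sect(\calN_r)\otimes\CC_{\lambda_i}$ at the shifted twist is obtained from $\calN_r$ by transport along an algebra isomorphism $\ntDsheaf_{G/B,\lambda(r)+\rho}\simeq\ntDsheaf_{G/B,\lambda(r)+\rho+\lambda_i|_{\lie t}}$.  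Such an isomorphism exists precisely because $\lambda_i|_{\lie t}$ pairs trivially with all coroots (so the two sheaves of TDO have the same class on $G/B$); on global sections it is induced by the algebra automorphism $X\mapsto X+\lambda_i(X)$ of $\univ{g}$, which identifies $W$ with $W\otimes\CC_{\lambda_i}$.  With that identification in hand, comparing the two trivializations from Subsection~\ref{sect:BornologyPrincipalBundle} gives exactly a twist by a closed $1$-form depending linearly on $\lambda_i|_{\lie t}$, and Corollary~\ref{cor:MultiplicityTwisted} finishes the argument as you say.
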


\begin{proposition}\label{prop:FundamentalUniformlyBoundedGmod}
	Let $(\lie{g}, K)$ be a pair and $M$ a reductive subgroup of $K$.
	Let $(V_i)_{i \in I}$ be a uniformly bounded family of $(\lie{g}, M)$-modules.
	\begin{enumerate}[(i)]
		\item\label{enum:FundamentalZuckerman} $(\Dzuck{K}{M}{j}(V_i))_{i \in I, j \in \ZZ}$ is uniformly bounded.
		\item If $K$ is reductive, there exists a constant $C$ such that for any $i \in I$ and $j \in \ZZ$, we have
		\begin{align*}
			\Len_{\univ{g}^K}(H^j(\lie{k}, M; V_i)) \leq C.
		\end{align*}
		\item (ii) is also true if $(V_i)_{i \in I}$ is a uniformly bounded family of $(\lie{g}, \lie{m})$-modules and $H^j(\lie{k}, M; \cdot)$ is replaced by $H^j(\lie{k}, \lie{m}; \cdot)$.
		\item (ii) is also true if we replace $M$ with its covering $\widetilde{M}$.
	\end{enumerate}
\end{proposition}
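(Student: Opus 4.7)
The core of the proposition is part (i); parts (ii)--(iv) follow from it via Corollary \ref{cor:TorAndBernLength} and its Lie-algebra analogs.

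For (i), the plan is to lift everything to the level of twisted $\ntDsheaf$-modules on $G/B$, where Theorem \ref{thm:UniformlyBoundedFamilyBernstein} controls the Zuckerman functor. Using Proposition \ref{prop:FundamentalBoundeFamily}(ii) and Lemma \ref{lem:AdditiveFunctionDerived}, I first reduce to the case where each $V_i$ is irreducible, so that $V_i \simeq \sect(\calN_{r(i)})$ for some family $(\calN_r)_r$ uniformly bounded with respect to the $G$-equivariant bornology $\calB(G/B, G)$; the $(\lie{g}, M)$-structure on $V_i$ lifts to an $M$-equivariant structure on $\calN_r$ via the equivariant Beilinson--Bernstein correspondence. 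Since $\calB(G/B, G)$ is $G$-equivariant by Proposition \ref{prop:G-equivariantBornologyPrincipal} and hence $K$-equivariant through the map $K \to G$ implicit in the pair structure, the localized Zuckerman functor $\DlocZuck{K}{M}{j}$ applies termwise; Theorem \ref{thm:UniformlyBoundedFamilyBernstein} produces a uniformly bounded family on $G/B$ whose sections identify with $\Dzuck{K}{M}{j}(V_i)$ by Proposition \ref{prop:commutativeBernSect}, using the exactness of $\sect$ guaranteed by Fact \ref{fact:BeilinsonBernstein} from antidominance of each $\lambda(r)$. The main technical obstacle in this step is ensuring that the irreducible localization $\calN_r$ can be chosen $M$-equivariant compatibly with a single bounded trivialization of $\calB(G/B,G)$, so that the machinery applies uniformly in $r$.

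Part (ii) is an application of Corollary \ref{cor:TorAndBernLength} with $(\calA, G, H) = (\univ{g}, K, M)$ and trivial $W = \CC$; since the auxiliary $\univ{k}$-factor then acts trivially on $\Dzuck{K}{M}{j}(V_i)$, the inequality
\begin{align*}
\Len_{\univ{g}^K}(H^j(\lie{k}, M; V_i)) \leq \Len_{\univ{g}, K}(\Dzuck{K}{M}{j}(V_i)) \leq \Len_{\univ{g}}(\Dzuck{K}{M}{j}(V_i))
\end{align*}
reduces (ii) directly to (i).

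For (iii), I set $\Dzuck{K}{\lie{m}}{j}(V) := H^j(\lie{k}, \lie{m}; \rring{K}\otimes V)$ and verify the Lie-algebra analogs of Fact \ref{fact:BernAndF}, Lemma \ref{lem:TorAndBern}, and Corollary \ref{cor:TorAndBernLength} by repeating the Chevalley--Eilenberg arguments of Subsection \ref{sect:CEcomplex} with $r(\lie{m})$-invariants in place of $r(M)$-invariants; reductivity of $\lie{m}$ suffices for the projectivity of the resolution, and the localization argument of (i) then runs verbatim with weakly $\lie{m}$-equivariant modules. For (iv), the covering $\widetilde{M}_0 \to M_0$ is of connected reductive groups with common Lie algebra $\lie{m}$, so that $H^j(\lie{k}, \widetilde{M}_0; V_i) = H^j(\lie{k}, \lie{m}; V_i)$ and $H^j(\lie{k}, \widetilde{M}; V_i)$ is the subspace of $\pi_0(\widetilde{M})$-invariants therein, giving the desired bound from (iii).
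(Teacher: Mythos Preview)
For parts (i), (ii), and (iv), your approach matches the paper's: the paper's proof of (i) is equally terse, invoking Theorem \ref{thm:UniformlyBoundedFamilyBernstein} after passing to the flag variety, and your identification of the technical obstacle (arranging $M$-equivariant localizations within one bounded trivialization) is apt; (ii) then follows from Corollary \ref{cor:TorAndBernLength}, and (iv) from (iii) by taking $\widetilde{M}/\widetilde{M}_0$-invariants, exactly as you say.

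The gap is in (iii). You propose to define $\Dzuck{K}{\lie{m}}{j}(V) := H^j(\lie{k}, \lie{m}; \rring{K}\otimes V)$ and then assert that ``the localization argument of (i) then runs verbatim with weakly $\lie{m}$-equivariant modules.'' But the localized Zuckerman functor $\DlocZuck{K}{M}{j}$ of Subsection \ref{sect:LocalizationBern} is built from the diagram
\begin{align*}
X \xleftarrow{\pi} K\times X \xrightarrow{a} K\times X \xrightarrow{q} K/M\times X \xrightarrow{\pi'} X,
\end{align*}
and the crucial map $q$ requires forming the quotient $K/M$ as a variety; there is no such construction for a bare Lie subalgebra $\lie{m}$, so Theorem \ref{thm:UniformlyBoundedFamilyBernstein} has no direct Lie-algebra analogue to invoke. (Your side claim that ``reductivity of $\lie{m}$ suffices for the projectivity of the resolution'' is also false: finite-dimensional modules over a reductive Lie algebra need not be projective, e.g.\ $\lie{m}=\CC$.)

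The paper avoids all of this by reducing (iii) to (ii) in one line:
\begin{align*}
H^j(\lie{k}, \lie{m}; V_i) = H^j(\lie{k}, M_0; (V_i)_{M_0}),
\end{align*}
where $(V_i)_{M_0}\subset V_i$ is the sum of all $\lie{m}$-submodules that integrate to $M_0$. This identity holds because any $\lie{m}$-equivariant map from the finite-dimensional $M_0$-module $\wedge^p(\lie{k}/\lie{m})$ into $V_i$ has image in $(V_i)_{M_0}$, and $M_0$ being connected makes $\Hom_{M_0}=\Hom_{\lie{m}}$ on such maps. Since $\lie{g}$ is itself an $M_0$-module via $\Ad$, the subspace $(V_i)_{M_0}$ is a $\lie{g}$-submodule, hence a $(\lie{g}, M_0)$-submodule of $V_i$; it inherits uniform boundedness, and (ii) with $M$ replaced by $M_0$ finishes the job. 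No Lie-algebra-level localization is needed.
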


\begin{proof}
	By definition, we can reduce the assertions to similar results about $\ntDsheaf$-modules on the flag variety.
	(i) follows from Theorem \ref{thm:UniformlyBoundedFamilyBernstein}.
	Taking the $K$-invariant part of (i), (ii) follows from Corollary \ref{cor:TorAndBernLength}.

	By the definition of the relative Lie algebra cohomology, we have
	\begin{align*}
		H^j(\lie{k}, \lie{m}; V_i) = H^j(\lie{k}, M_0; (V_i)_{M_0}),
	\end{align*}
	where $(V_i)_{M_0}$ is the sum of all $\lie{m}$-submodules in $V_i$ that can lift to $M_0$-modules.
	Hence (iii) follows from (ii).
	(iv) can be reduced to (iii) by
	\begin{align*}
		H^j(\lie{k}, \widetilde{M}; V_i) = H^j(\lie{k}, \lie{m}; V_i)^{\widetilde{M}/\widetilde{M}_0}.
	\end{align*}
	We have proved the proposition.
\end{proof}

Let $(\lie{g}, K)$ be a pair.
Then $K$ acts on the flag variety of $\lie{g}$, which is isomorphic to $G/B$.
Assume that $K$ has finitely many orbits in $G/B$.

\begin{proposition}\label{prop:TorUniformlyBoundedGmod}
	Let $\lie{h}$ be a complex reductive Lie algebra
	and $(M_i)_{i \in I}$ a uniformly bounded family of $(\lie{g\oplus h})$-modules.
	For any set $\calF$ of finite-dimensional $\lie{k}$-modules whose dimensions are bounded, the family $(\Tor^{\univ{k}}_{j}(F, M_i))_{i \in I, j \in \ZZ, F \in \calF}$ is a uniformly bounded family of $\lie{h}$-modules.
	Moreover, there exists a constant $C$ such that for any finite-dimensional $\lie{k}$-module $F$, $i \in I$ and $j \in \ZZ$, 
	\begin{align*}
		\Len_{\lie{h}}(\Tor^{\univ{k}}_{j}(F, M_i)) \leq C\cdot \dim_{\CC}(F).
	\end{align*}
\end{proposition}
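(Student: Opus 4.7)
The plan is to reduce to Theorem \ref{thm:UniformlyBoundedFiniteOrbits} via Beilinson--Bernstein localization on the flag variety $X\times Y := G/B_G \times H/B_H$ of $\lie{g}\oplus\lie{h}$, with $q\colon X\times Y\to Y$ the projection onto the second factor. The long exact sequence for $\Tor^{\univ{k}}_\ast$ combined with Lemma \ref{lem:AdditiveFunctionDerived}(i) lets me replace $(M_i)_{i\in I}$ with the family of all composition factors of all $M_i$; this new family is itself uniformly bounded by Proposition \ref{prop:FundamentailUniformlyBoundedGH}(i), and the universal bound on $\Len(M_i)$ will be paid at the end to absorb a multiplicative constant. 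Hence I may assume each $M_i$ is irreducible, and by the definition of uniform boundedness for $(\lie{g}\oplus\lie{h})$-modules I may write $M_i=\sect(\calN_r)$ for some $\calN_r$ belonging to a uniformly bounded family on $X\times Y$ whose twist $(\lambda(r)+\rho, \mu(r)+\rho)$ is anti-dominant on each factor, with respect to the unique $(G\times H)$-equivariant bornology on this homogeneous variety (Proposition \ref{prop:UniqueBornologyHomogeneous}), which agrees with $\calB(X,G)\boxtimes\calB(Y,H)$.

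Since $K$ acts on $X=G/B_G$ with finitely many orbits, Theorem \ref{thm:UniformlyBoundedFiniteOrbits} will apply once I verify that each $\calN_r$ is $q_\ast$-acyclic. This is a routine consequence of anti-dominance in the $X$-direction: for any affine open $V\subset Y$ one computes $H^i(X\times V,\calN_r)$ by first pushing along the affine projection $q'\colon X\times V\to X$, which gives $H^i(X, q'_\ast\calN_r)$, and the quasi-coherent $\ntDsheaf_{X,\lambda(r)+\rho}$-module $q'_\ast\calN_r$ is acyclic by Fact \ref{fact:BeilinsonBernstein}(ii). Theorem \ref{thm:UniformlyBoundedFiniteOrbits} then produces a constant $C_0$ independent of $r,j,F$ with
\begin{align*}
\Len_{\ntDsheaf_{Y,\mu(r)+\rho}}\!\bigl(\Tor^{\univ{k}}_j(F, q_\ast\calN_r)\bigr)\leq C_0\dim_{\CC} F,
\end{align*}
and simultaneously places the family $(\Tor^{\univ{k}}_j(F, q_\ast\calN_r))_{r,j,F}$ in $\Mod_{ub}(\ntDsheaf_{Y,\mu(r)+\rho}, \calB(Y,H))$.

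The last and, I expect, most delicate step is to descend these $\ntDsheaf_Y$-module statements to $\lie{h}$-module statements via the global section functor. The key identification is
\begin{align*}
\sect_Y\!\bigl(\Tor^{\univ{k}}_j(F, q_\ast\calN_r)\bigr)\simeq \Tor^{\univ{k}}_j(F, M_i),
\end{align*}
which I obtain by choosing a free $\univ{k}$-resolution $J^\cxdot\to F$: each $J^k\otimes_{\univ{k}} q_\ast\calN_r$ is a direct sum of copies of $q_\ast\calN_r$ in $\Mod_{qc}(\ntDsheaf_{Y,\mu(r)+\rho})$, so since $\sect_Y$ is exact on this category (anti-dominance) and commutes with arbitrary direct sums of quasi-coherent sheaves, applying it to the complex $J^\cxdot\otimes_{\univ{k}} q_\ast\calN_r$ yields $J^\cxdot\otimes_{\univ{k}} M_i$ and commutes with taking the $(-j)$th cohomology. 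Because $\sect_Y$ sends irreducible $\ntDsheaf_{Y,\mu(r)+\rho}$-modules to $0$ or to irreducible $\lie{h}$-modules, the length bound $C_0\dim_\CC F$ descends to a bound for $\Len_{\lie{h}}(\Tor^{\univ{k}}_j(F, M_i))$ when $M_i$ is irreducible; multiplying by the universal bound on $\Len(M_i)$ recovers the general case. Uniform boundedness of the family $(\Tor^{\univ{k}}_j(F, M_i))_{i,j,F}$ as $\lie{h}$-modules then follows at once from the uniform boundedness of $(\Tor^{\univ{k}}_j(F, q_\ast\calN_r))_{r,j,F}$ on $Y$ together with the definition in Section \ref{sect:BBcorrespondence}.
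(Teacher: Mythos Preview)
Your proof is correct and follows the same route as the paper, whose proof is the single sentence ``The proposition follows from Theorem~\ref{thm:UniformlyBoundedFiniteOrbits}.'' You have correctly unpacked that citation: the reduction to irreducible $M_i$, the localization on the product flag variety $G/B_G\times H/B_H$, the $q_*$-acyclicity from anti-dominance on the first factor, and the descent of the length bound and uniform boundedness through the exact global section functor $\sect_Y$ are exactly the steps implicit in the paper's one-line proof.
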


\begin{proof}
	The proposition follows from Theorem \ref{thm:UniformlyBoundedFiniteOrbits}.
\end{proof}

\begin{remark}\label{rmk:TorUniformlyBoundedGmod}
	If $\lie{h} = 0$, then $\Len_{\lie{h}}(\Tor^{\univ{k}}_{0}(F, M_i))$ is the dimension of $F\otimes_{\univ{k}} M_i$.
	Hence we can deduce a kind of finite multiplicity theorems from the proposition.
	Replacing $G/B$ by a partial flag variety $G/P$ and $\ntDsheaf_{G/B,\lambda}\otimes_{\univ{k}}F$ by some holonomic $\ntDsheaf$-module,
	one can obtain several finite multiplicity theorems.
	We postpone the results to the sequel.
\end{remark}

A typical example of uniformly bounded family is a family of Harish-Chandra modules.

\begin{proposition}\label{prop:UniformlyBoundedG-modSpherical}
	Any family of $(\lie{g}, \lie{k})$-modules with bounded lengths is uniformly bounded.
	In particular, so is any family of $(\lie{g}, K)$-modules with bounded lengths.
\end{proposition}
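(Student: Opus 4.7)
My plan is to realize each composition factor of each $V_i$ as the global sections of an irreducible twisted $K$-equivariant holonomic $\ntDsheaf$-module on the flag variety $G/B$, and then to invoke Corollary~\ref{cor:UniformlyBoundedIrreducibles}.

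First, by iterating Proposition~\ref{prop:FundamentailUniformlyBoundedGH}(i) along the composition series, and using that the lengths of the $V_i$ are uniformly bounded, it suffices to establish the proposition for a family $(V_i)_{i \in I}$ of \emph{irreducible} $(\lie{g}, \lie{k})$-modules; here we use that a $(\lie{g}, \lie{k})$-module in the sense of this paper carries a locally finite $\lie{k}$-action (automatic in the $(\lie{g}, K)$ setting by rationality of the $K$-action).

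For each such $V_i$ I would choose an anti-dominant weight $\lambda_i \in \lie{t}^*$ whose image under the Harish--Chandra isomorphism is the infinitesimal character of $V_i$. Since $(\lambda_i + \rho) - \rho = \lambda_i$ is anti-dominant, Fact~\ref{fact:BeilinsonBernstein}(ii) produces a unique irreducible holonomic module $\calM_i \in \Mod_{qc}^{e}(\ntDsheaf_{G/B, \lambda_i + \rho})$ with $V_i \simeq \sect(\calM_i)$. I then transfer the $\lie{k}$-structure to $\calM_i$: local finiteness plus irreducibility allow us to integrate the $\lie{k}$-action on $V_i$ to the universal cover $\widetilde{K_0}$ of $K_0$ (with $\ker(\widetilde{K_0}\to K_0)$ acting by a character $\nu_i$), and, by the $G$-equivariance of $\ntDsheaf_{G/B, \lambda_i + \rho}$ together with the naturality of the Beilinson--Bernstein equivalence in $\Mod_{qc}^{e}$, this endows $\calM_i$ with the structure of a twisted $(\ntDsheaf_{G/B, \lambda_i + \rho}, K_0)$-module of twist $\nu_i$.

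Finally, the family $(\calM_i)$ consists of irreducible twisted $K_0$-equivariant holonomic $\ntDsheaf$-modules on $G/B$. Since $\calB(G/B, G)$ is $K_0$-equivariant by Proposition~\ref{prop:G-equivariantBornologyPrincipal} and, in the context of the paper, $K_0$ has finitely many orbits on $G/B$ (the spherical hypothesis implicit in the Harish--Chandra setup, e.g.\ for symmetric subgroups by Matsuki--Wolf), Corollary~\ref{cor:UniformlyBoundedIrreducibles} shows that $(\calM_i)$ is uniformly bounded with respect to $\calB(G/B, G)$. By the definition of uniformly bounded family of $\lie{g}$-modules this gives the first assertion, and the ``in particular'' part is immediate. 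The main obstacle is the equivariant transfer step: checking that a locally finite $\lie{k}$-action on $V_i = \sect(\calM_i)$ integrates to a genuine twisted $K_0$-equivariant structure on the sheaf, which requires careful handling of the covering $\widetilde{K_0}$ and the character twist $\nu_i$, particularly in the singular anti-dominant case where $\sect$ is only an equivalence on the subcategory $\Mod_{qc}^{e}$.
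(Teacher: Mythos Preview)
Your overall strategy matches the paper's: reduce to irreducibles, realize each as global sections of an irreducible twisted equivariant holonomic module on $G/B$, and invoke Corollary~\ref{cor:UniformlyBoundedIrreducibles}. Note that the finite-orbit hypothesis on $K$ in $G/B$ is an explicit standing assumption just before this proposition, not something to be extracted from a ``Harish--Chandra setup''.

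There is a genuine gap in your integration step. The universal cover $\widetilde{K_0}$ is in general \emph{not} an affine algebraic group (any torus factor of $K_0$ already prevents this), so it does not act algebraically on $G/B$ and the machinery of Corollary~\ref{cor:UniformlyBoundedIrreducibles} is unavailable for it. Moreover, the ``twist'' in the sense defined before Corollary~\ref{cor:UniformlyBoundedIrreducibles} is a character $\nu$ of $\lie{k}$ such that $\calM\otimes\CC_\nu$ integrates to the connected \emph{algebraic} group, not a character of a covering kernel; your $\nu_i$ is the wrong kind of datum. The paper fixes exactly this point: it passes to a \emph{finite} algebraic covering $K'$ of $K_0$ with $[K'/U_{K'},K'/U_{K'}]$ simply-connected, picks an irreducible finite-dimensional $\lie{k}$-submodule $F\subset V$, and chooses a character $\mu$ of $\lie{k}$ so that $F\otimes\CC_\mu$ lifts to $K'$; since $V=\univ{g}\cdot F$ by irreducibility, all of $V\otimes\CC_\mu$ then lifts to $K'$.

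A second, smaller gap is your transfer of equivariance to $\calM_i$ via ``naturality of $\sect$ on $\Mod_{qc}^e$''. This is delicate in the singular case because $\Mod_{qc}^e$ is only a full subcategory and one must check that the inverse of $\sect$ respects the pullbacks $\mu^*,p_2^*$. The paper avoids this by using the manifestly equivariant localization $\ntDsheaf_{G/B,\lambda}\otimes_{\univ{g}}V$ and extracting an irreducible subquotient $\calV$ with $\sect(\calV)\simeq V$ (cf.\ \cite[Corollary~11.2.6]{HTT08}); the twisted $(\ntDsheaf_{G/B,\lambda},K')$-structure with twist $\mu$ is then inherited automatically.
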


\begin{proof}
	The second assertion follows from the first one because the length of $(\lie{g}, K)$-module $V$ is bounded by $|K/K_0|\cdot \Len_{\lie{g}}(V)$ by Lemma \ref{lem:GeneralizedPairConnected}.

	Take a covering $K'$ of $K_0$ such that $[K'/U_{K'}, K'/U_{K'}]$ is simply-connected, where $U_{K'}$ is the unipotent radical of $K'$.
	Then we have a homomorphism $K'\rightarrow K\rightarrow \Aut(\lie{g})$ and $K'$ has finitely many orbits in $G/B$.

	Let $V$ be an irreducible $(\lie{g}, \lie{k})$-module.
	We want to realize $V$ as $\sect(\calV)$ for some irreducible $\ntDsheaf$-module $\calV$ on $G/B$.
	Since the $\lie{k}$-action is locally finite, we can take a finite-dimensional irreducible $\lie{k}$-submodule $F \subset V$.
	Take a character $\mu$ of $\lie{k}$ such that the $\lie{k}$-action on $F\otimes \CC_\mu$ lifts to a $K'$-action.
	Since $V$ is irreducible, the multiplication map $\univ{g}\otimes F \rightarrow V$ is surjective.
	Hence the $\lie{k}$-action on $V\otimes \CC_\mu$ lifts to a $K'$-action.

	Let $\lambda$ be a character of $\lie{t}$ such that $\lambda - \rho$ is anti-dominant and $\lambda - \rho$ is the infinitesimal character of $V$.
	We can take an irreducible subquotient $\calV$ of $\ntDsheaf_{G/B, \lambda}\otimes_{\univ{g}} V$ such that $\sect(\calV)\simeq V$ (see \cite[Corollary 11.2.6]{HTT08}).
	By construction, $\calV$ is an irreducible twisted $(\ntDsheaf_{G/B,\lambda}, K')$-module with twist $\mu$.

	Since $K'$ has finitely many orbits in $G/B$, the proposition follows from Corollary \ref{cor:UniformlyBoundedIrreducibles}.
\end{proof}

\subsection{Induction of uniformly bounded family}

In this subsection, we will show uniform boundedness of some family of $\lie{g}$-modules.

Let $G$ be a connected reductive algebraic group and $B$ a Borel subgroup of $G$ with unipotent radical $U$.
Put $T:=B/U$.
We denote by $\calI_\chi$ the minimal primitive ideal of $\univ{g}$ with infinitesimal character $\chi$.
Let $W_G$ be the Weyl group of $G$.

\begin{proposition}\label{prop:UniformlyBoundedMinimalPrimitive}
	The family $(\univ{g}/\calI_\chi)_{\chi \in \lie{t}^*/W_G}$ of $(\lie{g}\oplus\lie{g}, \Delta(G))$-modules is uniformly bounded.
	In particular, the number of two sided ideals with fixed infinitesimal character is bounded by a constant independent of its infinitesimal character.
\end{proposition}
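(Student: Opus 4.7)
The plan is to combine the Beilinson--Bernstein correspondence with the Bruhat-cell structure on the doubled flag variety. View $(\univ{g}/\calI_\chi)_\chi$ as a family of $(\lie{g}\oplus\lie{g},\Delta(G))$-modules; the relevant flag variety is $G/B\times G/B$, on which $\Delta(G)$ acts with only $|W_G|$ orbits (parametrized by the Bruhat decomposition) and with connected stabilizers $B\cap wBw^{-1}$.

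First, for each $\chi\in\lie{t}^*/W_G$ fix a representative $\chi_0\in\lie{t}^*$ that is anti-dominant, and set $\lambda(\chi):=\chi_0+\rho$. By Fact~\ref{fact:BeilinsonBernstein}(i), the natural surjection $\univ{g}\twoheadrightarrow\ntDalg{G/B,\lambda(\chi)}$ has kernel $\calI_\chi$, giving a $(\univ{g}\otimes\univ{g}^{\opalg},\Delta(G))$-module isomorphism $\univ{g}/\calI_\chi\simeq\ntDalg{G/B,\lambda(\chi)}$. The corresponding family of $G\times G$-equivariant algebras of twisted differential operators on $G/B\times G/B$ is $(\ntDsheaf_{G/B,\lambda(\chi)}\boxtimes\ntDsheaf_{G/B,\lambda(\chi)}^{\opalg})_\chi$; it carries the bornology $\calB(G/B\times G/B,\,G/U\times G/U)$ coming from the principal $(T\times T)$-bundle $G/U\times G/U\to G/B\times G/B$ of Subsection~\ref{sect:BornologyPrincipalBundle}, which is $G\times G$-equivariant, hence $\Delta(G)$-equivariant, by Proposition~\ref{prop:G-equivariantBornologyPrincipal}.

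Next, each composition factor of $\univ{g}/\calI_\chi$ as a bimodule is a simple Harish-Chandra bimodule with infinitesimal character $\chi$ on each side; by the Beilinson--Bernstein correspondence applied to the reductive Lie algebra $\lie{g}\oplus\lie{g}$ with flag variety $G/B\times G/B$ (using Lemma~\ref{lem:AssociativeTensor} and Proposition~\ref{prop:IdentityBornology}(v) to handle the canonical-bundle twist arising from the opposite-algebra structure on the second factor), each composition factor is the global sections of a simple twisted $\Delta(G)$-equivariant holonomic $\ntDsheaf$-module on $G/B\times G/B$ in the above family of algebras. Fact~\ref{fact:ClassificationDmodule} applied to each orbit $\calO_w$, combined with the connectedness of the stabilizer $B\cap wBw^{-1}$ (so that all its irreducible representations are one-dimensional), forces at most one such simple module per orbit per twist. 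Hence the length of $\univ{g}/\calI_\chi$ as a $(\lie{g}\oplus\lie{g},\Delta(G))$-module is at most $|W_G|$, uniformly in $\chi$.

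Finally, Corollary~\ref{cor:UniformlyBoundedIrreducibles} applied to the family of all simple twisted $\Delta(G)$-equivariant holonomic $\ntDsheaf$-modules on $G/B\times G/B$ (over all orbit closures and all $\chi$) shows uniform boundedness of the family of composition factors with respect to the bornology $\calB(G/B\times G/B,\,G/U\times G/U)$. Since the global-section functor is exact on $\Mod_{qc}(\ntDsheaf_{G/B,\lambda(\chi)}\boxtimes\ntDsheaf_{G/B,\lambda(\chi)}^{\opalg})$ when $\lambda(\chi)-\rho$ is anti-dominant (by Fact~\ref{fact:BeilinsonBernstein}(ii) on each factor combined with the external-product exactness lemma at the end of Section~3), this passes to $(\univ{g}/\calI_\chi)_\chi$, and the length bound $|W_G|$ yields the ``in particular'' assertion on the number of two-sided ideals. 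The main obstacle is the intermediate step of identifying each composition factor as the global sections of a single simple $\Delta(G)$-equivariant $\ntDsheaf$-module under a consistent choice of twists: the opposite structure on the second factor introduces a twist by the canonical bundle $\Omega_{G/B}$ (cf.\ Subsection~\ref{sect:twisting}), which requires careful bookkeeping but does not affect the final count.
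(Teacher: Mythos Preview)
Your overall strategy---localize on $G/B\times G/B$, use that $\Delta(G)$ has $|W_G|$ orbits with connected stabilizers, and invoke Corollary~\ref{cor:UniformlyBoundedIrreducibles}---is reasonable, but there are two genuine gaps.

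\textbf{Gap 1 (twist on the second factor).} You take the family $(\ntDsheaf_{G/B,\lambda(\chi)}\boxtimes\ntDsheaf_{G/B,\lambda(\chi)}^{\opalg})_\chi$ and then claim exactness of $\sect$ ``by Fact~\ref{fact:BeilinsonBernstein}(ii) on each factor''. But $\ntDsheaf_{G/B,\lambda(\chi)}^{\opalg}\simeq\ntDsheaf_{G/B,\,2\rho-\lambda(\chi)}$, and Fact~\ref{fact:BeilinsonBernstein}(ii) for that factor requires $(2\rho-\lambda(\chi))-\rho=-\chi_0$ to be anti-dominant, i.e.\ $\chi_0$ dominant---the opposite of your choice. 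So the exactness claim fails, and with it the passage from $\ntDsheaf$-modules to $\lie{g}\oplus\lie{g}$-modules. The paper avoids this by choosing \emph{independent} anti-dominant representatives $\lambda\in\chi$ and $\lambda'\in-\chi$ and working with $\ntDsheaf_{G/B,\lambda+\rho}\boxtimes\ntDsheaf_{G/B,\lambda'+\rho}$; these are not obtained from one another by the opposite construction.

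\textbf{Gap 2 (length bound).} From ``at most one simple $(\ntDsheaf,\Delta(G))$-module per orbit'' you conclude ``length of $\univ{g}/\calI_\chi$ is at most $|W_G|$''. This inference is invalid: bounding the number of isomorphism classes of simples in a category does not bound the length of a specific object---composition factors can repeat. (Already for $\lie{g}=\sl_2$, the anti-dominant projective in the principal block of $\calO$ localizes to a $B$-equivariant holonomic module on $\mathbb{P}^1$ of length $3$, while there are only $2$ $B$-orbits.) You would need an additional argument specific to the localization of $\univ{g}/\calI_\chi$ to control multiplicities.

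The paper's proof sidesteps both issues by realizing $\univ{g}/\calI_\chi$ directly as $\sect\big((\ntDsheaf_{G/B,\lambda+\rho}\boxtimes\ntDsheaf_{G/B,\lambda'+\rho})\otimes_{\calU(\Delta(\lie{g}))}\CC\big)$ and applying Corollary~\ref{cor:UniformlyBoundedTor}, which already contains the needed length bound (via the characteristic-cycle estimate of Lemma~\ref{lem:HolonomicBoundLength}) together with uniform boundedness. Your route can be repaired: use the paper's choice of twists to fix Gap~1, and replace the orbit count by the characteristic-cycle argument of Lemma~\ref{lem:FiniteOrbitDmodule}/Corollary~\ref{cor:UniformlyBoundedTor} (or invoke Proposition~\ref{prop:UniformlyBoundedG-modSpherical} for $K=\Delta(G)$ once a length bound is in hand) to fix Gap~2---but at that point you are essentially reproducing the paper's proof.
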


\begin{proof}
	$\univ{g}/\calI_\chi$ is isomorphic to $\sect((\ntDsheaf_{G/B,\lambda+\rho}\boxtimes \ntDsheaf_{G/B, \lambda'+\rho})\otimes_{\calU(\Delta(\lie{g}))} \CC)$,
	where $\lambda$ (resp.\ $\lambda'$) is an anti-dominant weight in $\chi$ (resp.\ $-\chi$).
	Since $\Delta(G)$ has finitely many orbits in $G/B\times G/B$, the assertion follows from Corollary \ref{cor:UniformlyBoundedTor}.
\end{proof}

\begin{remark}
	The structure of the $(\lie{g}\oplus \lie{g}, \Delta(G))$-modules can be reduced to that of Verma modules (see \cite[Section 6]{BeGe80_projective_functor}).
	The proposition can be deduced from this and Soergel's theorem \cite[Theorem 11]{So90} (see also Remark \ref{rmk:Soergel}).
\end{remark}

\begin{proposition}\label{prop:LengthVerma}
	Let $P$ be a parabolic subgroup of $G$ containing $B$ with unipotent radical $U_P$, and $(M_i)_{i \in I}$ a uniformly bounded family of $\lie{p}/\lie{u}_P$-modules.
	Then $(\univ{g}\otimes_{\univ{p}} M_i)_{i \in I}$ is a uniformly bounded family of $\lie{g}$-modules.
	In particular, the length of any Verma module is bounded by a constant independent of its highest weight.
\end{proposition}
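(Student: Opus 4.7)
The strategy is to localize on the flag variety $G/B$ via the closed embedding $j\colon P/B \hookrightarrow G/B$ whose image is the fiber over $eP$ of the projection $G/B \to G/P$, together with the identification $P/B \simeq L/B_L$ where $B_L = B\cap L$.

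Since $\univ{g}$ is free as a right $\univ{p}$-module by PBW, the functor $\univ{g}\otimes_{\univ{p}}(\cdot)$ is exact, so composition factors of $\univ{g}\otimes_{\univ{p}} M_i$ all arise as composition factors of $\univ{g}\otimes_{\univ{p}} N$ for $N$ a composition factor of some $M_i$. Hence I may assume each $M_i$ is irreducible; then $M_i \simeq \sect(\calN_i)$ for an irreducible $\ntDsheaf_{L/B_L,\lambda_i+\rho_L}$-module $\calN_i$ with $\lambda_i$ anti-dominant for $\lie{l}$, and the family $(\calN_i)$ is uniformly bounded with respect to $\calB(L/B_L, L)$ by the hypothesis on $(M_i)$.

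Under the identification $L/B_L \simeq P/B$, both $\calB(L/B_L, L)$ (from the $B_L$-bundle $L \to L/B_L$) and $\calB(P/B, P)$ (from the $B$-bundle $P \to P/B$) are $L$-equivariant on the homogeneous $L$-variety $L/B_L$, hence coincide by Propositions~\ref{prop:G-equivariantBornologyPrincipal} and~\ref{prop:UniqueBornologyHomogeneous}. Combined with $\calB(P/B, P) = j^\#\calB(G/B, G)$ (Lemma~\ref{lem:pull-backOfBornologyPrincipalBundle}) and Theorem~\ref{thm:FunctorUniformlyBoundedPrincipalBundle}, this shows $(Dj_+\calN_i)_i$ is uniformly bounded with respect to $\calB(G/B, G)$; Proposition~\ref{prop:FundamentalBoundeFamily} extends this to the collection of all composition factors of the $Dj_+\calN_i$. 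A direct computation—using that $j$ is a closed embedding, so $Dj_+$ concentrates in degree zero and commutes with $\sect$—identifies $\sect(Dj_+\calN_i)$ with $\univ{g}\otimes_{\univ{p}} M_i$, so the latter has length bounded by $\Len(\calN_i)$ (via the Kashiwara equivalence of Fact~\ref{fact:Kashiwara}) and its composition factors are global sections of composition factors of $Dj_+\calN_i$.

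The main obstacle is matching this with the anti-dominance-for-$\lie{g}$ requirement in the definition of a uniformly bounded family of $\lie{g}$-modules: the twist on $G/B$ inherited from $\lambda_i+\rho_L$ through $j$ need not be anti-dominant for $\lie{g}$. To resolve this, for each composition factor I pass to an equivalent realization at an anti-dominant representative in the same Weyl orbit via an intertwining functor; such functors are built from direct images, inverse images, and twists by invertible sheaves, so they preserve uniform boundedness by Theorems~\ref{thm:FunctorOnUniformlyBounded} and~\ref{thm:TensorUniformlyBounded} together with the results of Subsection~\ref{sect:twisting}. The ``in particular'' statement then follows by specializing to $P=B$, $L=T$, $M_i=\CC_{\lambda_i}$: the family of one-dimensional $\lie{t}$-modules is tautologically uniformly bounded (localized on the point $T/T$), and the proposition yields the uniform length bound for the Verma modules $M(\lambda_i) = \univ{g}\otimes_{\univ{b}}\CC_{\lambda_i}$.
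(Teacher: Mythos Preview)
Your geometric approach via the closed embedding $j\colon P/B\hookrightarrow G/B$ is natural, but the argument as written has a structural gap. You assert that $\univ{g}\otimes_{\univ{p}} M_i$ has length bounded by $\Len(\calN_i)$ and that its composition factors are global sections of composition factors of $Dj_+\calN_i$; both claims require $\sect$ to be exact on the relevant category of twisted $\ntDsheaf$-modules on $G/B$, which holds only when the parameter is anti-dominant---precisely the condition you flag as still unresolved in the next paragraph. Your intertwining-functor repair is therefore applied in the wrong order: you cannot first extract $\lie{g}$-composition factors and then pass each to an anti-dominant realization, because at the non-anti-dominant parameter you have not yet established that those composition factors arise from composition factors of $Dj_+\calN_i$. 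The salvageable version applies the intertwining functor to $Dj_+\calN_i$ \emph{before} taking global sections, uses its compatibility with $R\sect$, and then invokes exactness of $\sect$ at the anti-dominant parameter; this in turn needs $\sect$-acyclicity of $Dj_+\calN_i$, which you have not checked. The identification $\sect(Dj_+\calN_i)\simeq\univ{g}\otimes_{\univ{p}}M_i$ also hides a character twist, and intertwining functors are not developed in the paper, so you are importing outside machinery.

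The paper's proof is quite different and avoids all of this. It rewrites $\univ{g}\otimes_{\univ{p}}M_i\simeq(\univ{g}/\calI_{\chi_i}\otimes M_i)\otimes_{\univ{p}}\CC$, observes that $(\univ{g}/\calI_{\chi_i}\otimes M_i)_i$ is a uniformly bounded family of $\lie{g}\oplus\lie{g}\oplus(\lie{p}/\lie{u}_P)$-modules by Proposition~\ref{prop:UniformlyBoundedMinimalPrimitive} and Proposition~\ref{prop:FundamentailUniformlyBoundedGH}~(\ref{enum:prop:UniformlyBoundedGH}), and then applies Proposition~\ref{prop:TorUniformlyBoundedGmod} using that $P$ has finitely many orbits in $G/B\times P/B$. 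No acyclicity checks, no intertwining functors, no parameter bookkeeping.
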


\begin{proof}
	Since $P$ is parabolic, each $\lie{g}$-module $\univ{g}\otimes_{\univ{p}} M_i$ has an infinitesimal character $\chi_i$.
	Then we have
	\begin{align*}
		\univ{g}\otimes_{\univ{p}} M_i \simeq (\univ{g}/\calI_{\chi_i}\otimes M_i)\otimes_{\univ{p}} \CC.
	\end{align*}
	$(\univ{g}/\calI_{\chi_i}\otimes M_i)_{i\in I}$ is a uniformly bounded family of $(\lie{g\oplus g\oplus p/u}_P)$-modules by Propositions \ref{prop:UniformlyBoundedMinimalPrimitive} and \ref{prop:FundamentailUniformlyBoundedGH} (\ref{enum:prop:UniformlyBoundedGH}).
	Since $P$ has finitely many orbits in $G/B\times P/B$, the assertion follows from Proposition \ref{prop:TorUniformlyBoundedGmod}.
\end{proof}

\begin{remark}\label{rmk:Soergel}
	The second assertion is an easy consequence of Soergel's theorem \cite[Theorem 11]{So90}.
	In fact, the categorical structure of each block of the BGG category $\calO$ depends only on a pair of a Coxeter system and a subgroup of $W_G$ determined by the block,
	and the number of such pairs is finite.
\end{remark}

We consider cohomologically induced modules.
Let $(\lie{g}, K)$ be a pair and $\lie{p}$ a parabolic subalgebra of $\lie{g}$.
Take a Levi subalgebra $\lie{l}$ of $\lie{p}$ and a reductive subgroup $K_L$ of $K$ whose Lie algebra is contained in $\lie{l}\cap \lie{k}$.
Assume that $K_L$ normalizes $\lie{p}$ and $\lie{l}$.
We consider $\lie{l}$-modules as $\lie{p}$-modules through the natural surjection $\lie{p}\rightarrow \lie{l}$.

\begin{theorem}\label{thm:uniformlyBoundedLengthCohInd}
	Let $(V_i)_{i \in I}$ be a uniformly bounded family of $(\lie{l}, K_L)$-modules, e.g.\ a family of irreducible Harish-Chandra modules.
	(See Proposition \ref{prop:UniformlyBoundedG-modSpherical}.)
	Then $(\Dzuck{K}{K_L}{j}(\univ{g}\otimes_{\univ{p}}V_i))_{j \in \ZZ, i\in I}$ is a uniformly bounded family of $(\lie{g}, K)$-modules.
	In particular, there exists some constant $C$ such that for any $i \in I$ and $j \in \ZZ$, we have
	\begin{align*}
	\Len_{\lie{g}, K}(\Dzuck{K}{K_L}{j}(\univ{g}\otimes_{\univ{p}}V_i)) \leq C.
	\end{align*}
\end{theorem}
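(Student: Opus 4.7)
The plan is to decompose the construction into two steps and invoke two earlier preservation results: Proposition \ref{prop:LengthVerma} to handle the ordinary parabolic induction $\univ{g}\otimes_{\univ{p}}(\cdot)$, and Proposition \ref{prop:FundamentalUniformlyBoundedGmod} (\ref{enum:FundamentalZuckerman}) to handle the Zuckerman derived functor $\Dzuck{K}{K_L}{j}$.

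First, I would choose a Borel subgroup $B$ of $G$ contained in some parabolic $P$ with Lie algebra $\lie{p}$ and Levi $\lie{l}$. Since the notion of uniform boundedness for $\lie{g}$-modules is invariant under conjugation of the Borel (all Borels are $G$-conjugate, and $\calB(G/B,G)$ is $G$-equivariant by Proposition \ref{prop:G-equivariantBornologyPrincipal}, so uniformly bounded families transport between flag varieties), this choice is harmless. Viewing each $V_i$ as an $\lie{l} \simeq \lie{p}/\lie{u}_P$-module via the Levi projection, the family $(V_i)_{i \in I}$ remains uniformly bounded as $\lie{l}$-modules. Proposition \ref{prop:LengthVerma} then yields that $(\univ{g}\otimes_{\univ{p}} V_i)_{i \in I}$ is a uniformly bounded family of $\lie{g}$-modules.

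Next, because $K_L$ is assumed to normalize $\lie{p}$ and $\lie{l}$ and acts rationally on $V_i$, the induced module $\univ{g}\otimes_{\univ{p}} V_i$ inherits a natural $K_L$-equivariant structure compatible with its $\lie{g}$-action, making it a $(\lie{g}, K_L)$-module. Since $K_L$ is a reductive subgroup of $K$ with $\lie{k}_L \subset \lie{k} \cap \lie{l}$, we are in the setting of Proposition \ref{prop:FundamentalUniformlyBoundedGmod} (\ref{enum:FundamentalZuckerman}) (with the pair $(\lie{g}, K)$ and reductive subgroup $K_L$). Applying that proposition to the uniformly bounded family $(\univ{g}\otimes_{\univ{p}} V_i)_{i \in I}$ of $(\lie{g}, K_L)$-modules gives that $(\Dzuck{K}{K_L}{j}(\univ{g}\otimes_{\univ{p}} V_i))_{i \in I, j \in \ZZ}$ is uniformly bounded as a family of $(\lie{g}, K)$-modules. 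The final bound on $\Len_{\lie{g}, K}(\cdot)$ is then immediate from clause (i) of the definition of uniform boundedness of $\lie{g}$-modules (bounded length), combined with Lemma \ref{lem:GeneralizedPairConnected} if one wants to pass from $\Len_{\lie{g}}$ to $\Len_{\lie{g}, K}$.

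There is essentially no analytic obstacle: the theorem is a formal consequence of the two cited preservation results once the $K_L$-equivariance of the induced module is observed. The only point requiring a small check is that the definition of uniform boundedness of $\lie{g}$-modules, which is phrased in terms of a fixed flag variety $G/B$, is insensitive to the choice of $B$; this is where one uses that $\calB(G/B,G)$ is $G$-equivariant so that inner automorphisms of $\lie{g}$ preserve the class of uniformly bounded families (Proposition \ref{prop:FundamentailUniformlyBoundedGH}(iii)).
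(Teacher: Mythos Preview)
Your proposal is correct and follows exactly the same two-step approach as the paper's own proof: apply Proposition \ref{prop:LengthVerma} to handle the parabolic induction, then Proposition \ref{prop:FundamentalUniformlyBoundedGmod}~(\ref{enum:FundamentalZuckerman}) to handle the Zuckerman derived functor. The extra remarks you make (choice of Borel inside $P$, $K_L$-equivariance of the induced module, passage to $\Len_{\lie{g},K}$) are all fine elaborations of points the paper leaves implicit.
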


\begin{proof}
	The assertion follows from Propositions \ref{prop:LengthVerma} and \ref{prop:FundamentalUniformlyBoundedGmod} (i).
\end{proof}

It is well-known that a $(\lie{g}, K)$-module cohomologically induced from an irreducible module of
a parabolic subpair 
is of finite length (see e.g.\ \cite[Theorem 0.46]{KnVo95_cohomological_induction}).
In addition to this fact, we have shown that the lengths of such modules are bounded.

The following corollary is a special case of Theorem \ref{thm:uniformlyBoundedLengthCohInd} because the underlying Harish-Chandra module of any principal series representation can be realized as a cohomologically induced module.
See \cite[Propositions 11.57 and 11.65]{KnVo95_cohomological_induction}.

\begin{corollary}
	Let $G_\RR$ be a real reductive Lie group.
	Then there exists a constant $C$ such that the length of any principal series representation of $G_\RR$ is bounded by $C$.
\end{corollary}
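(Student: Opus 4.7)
The plan is to follow exactly the remark: realize the Harish-Chandra module of every principal series representation of $G_\RR$ as a cohomologically induced module, and then apply Theorem \ref{thm:uniformlyBoundedLengthCohInd} together with Proposition \ref{prop:UniformlyBoundedG-modSpherical}. Let $K_\RR$ be a maximal compact subgroup of $G_\RR$, and let $(\lie{g}, K)$ denote the associated complexified pair. A principal series representation of $G_\RR$ arises by (real) parabolic induction from an irreducible admissible representation of a Levi subgroup $L_\RR$ of a parabolic $P_\RR = L_\RR N_\RR$ of $G_\RR$. Set $\lie{p} = \lie{l} \oplus \lie{n}$ (the complexifications), and $K_L := K \cap L$; then $K_L$ is a reductive subgroup of $K$ with $\lie{k}_L \subset \lie{k} \cap \lie{l}$ normalizing both $\lie{l}$ and $\lie{p}$, so the hypotheses of Theorem \ref{thm:uniformlyBoundedLengthCohInd} are satisfied.

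Next, I would invoke Propositions 11.57 and 11.65 of \cite{KnVo95_cohomological_induction}: the underlying Harish-Chandra module of any such parabolically induced representation is isomorphic to $\Dzuck{K}{K_L}{j}(\univ{g}\otimes_{\univ{p}}V)$ for some degree $j$ (determined by the parabolic) and some irreducible $(\lie{l}, K_L)$-module $V$ (the Harish-Chandra module of the inducing representation). As $V$ varies over the inducing data (including the continuous parameter of the character on the split component), we obtain a family of irreducible $(\lie{l}, K_L)$-modules, hence a family with length identically $1$; by Proposition \ref{prop:UniformlyBoundedG-modSpherical} this family is uniformly bounded.

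Applying Theorem \ref{thm:uniformlyBoundedLengthCohInd} to this uniformly bounded family yields a constant $C_{P_\RR}$ (depending only on the real parabolic $P_\RR$) such that
\begin{align*}
\Len_{\lie{g}, K}\bigl(\Dzuck{K}{K_L}{j}(\univ{g}\otimes_{\univ{p}}V)\bigr) \leq C_{P_\RR}
\end{align*}
for every $j \in \ZZ$ and every irreducible inducing $(\lie{l}, K_L)$-module $V$. Since $G_\RR$ has only finitely many conjugacy classes of parabolic subgroups, and since the length of a representation equals that of its Harish-Chandra module, the desired uniform bound is obtained by taking $C := \max_{P_\RR} C_{P_\RR}$ over representatives of these finitely many classes. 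The only real content is the citation of Propositions 11.57 and 11.65 from \cite{KnVo95_cohomological_induction}; everything else is a direct invocation of the machinery already established, so there is no substantial obstacle.
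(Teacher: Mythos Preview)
Your proposal is correct and follows essentially the same approach as the paper: the paper's proof is nothing more than the observation that the underlying Harish-Chandra module of any principal series representation is a cohomologically induced module (citing \cite[Propositions 11.57 and 11.65]{KnVo95_cohomological_induction}), so the corollary is a special case of Theorem~\ref{thm:uniformlyBoundedLengthCohInd}. You have simply spelled out the details (checking the hypotheses on $K_L$, invoking Proposition~\ref{prop:UniformlyBoundedG-modSpherical} for the inducing data, and taking the maximum over the finitely many conjugacy classes of real parabolics) that the paper leaves implicit.
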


\begin{remark}
	The corollary has been proved in \cite[Proposition 4.1]{KoOs13} by using the theory of minimal $K$-types and the translation principle.
\end{remark}

\subsection{\texorpdfstring{$\univ{g}^{G'}$}{U(g)G'}-modules}

For applications to the branching problem and harmonic analysis, we shall summarize several consequences of the results so far about uniformly bounded families.

Let $G$ be a reductive algebraic group and $G'$ a reductive subgroup of $G$.

\begin{theorem}\label{thm:uniformlyBoundedLengthGeneral}
	Let $(V_{i})_{i \in I}$ and $(V'_{i})_{i \in I}$ be uniformly bounded families of $\lie{g}$-modules and $\lie{g'}$-modules, respectively.
	Then there exists some constant $C$ such that for any $i \in I$ and $j \in \NN$, we have
	\begin{align*}
		\Len_{\univ{g}^{G'}}(\Tor^{\univ{g'}}_j(V_i, V'_i)) \leq C.
	\end{align*}
\end{theorem}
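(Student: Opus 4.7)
The plan is to convert $\Tor^{\univ{g'}}_j(V_i, V'_i)$ into a relative Lie algebra cohomology, bound the latter by a Zuckerman image through Corollary \ref{cor:TorAndBernLength}, and then invoke that the Zuckerman derived functor preserves uniform boundedness (Proposition \ref{prop:FundamentalUniformlyBoundedGmod} (i)).

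Setting $n' := \dim_{\CC}(\lie{g'})$, I would first combine Fact \ref{fact:LiealgebraCohomology} (applied with the trivial subgroup $\{e\}$) and Poincar\'e duality (Fact \ref{fact:PoincareDuality}) to obtain natural isomorphisms
\begin{align*}
    \Tor^{\univ{g'}}_j(V_i, V'_i) \simeq H_j(\lie{g'}; V_i \otimes V'_i) \simeq H^{n'-j}(\lie{g'}; V_i \otimes V'_i \otimes \wedge^{n'}\lie{g'}).
\end{align*}
These are isomorphisms of $\univ{g}^{G'}$-modules because $\univ{g}^{G'}$ centralizes $\univ{g'}$ inside $\univ{g}$, so its left action on $V_i$ commutes with the diagonal $\lie{g'}$-differential on $V_i \otimes V'_i$; naturality then transports this action compatibly. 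Next, I would apply Corollary \ref{cor:TorAndBernLength} to the generalized pair $(\calA, G) = (\univ{g}, G')$ (with $G'$ acting by the restriction of the adjoint action on $\univ{g}$) and the trivially reductive subgroup $H = \{e\}$, with $V = V_i$ as a $(\univ{g}, \{e\})$-module and $W = V'_i \otimes \wedge^{n'}\lie{g'}$ as a $(\lie{g'}, \{e\})$-module, yielding
\begin{align*}
    \Len_{\univ{g}^{G'}}(H^{n'-j}(\lie{g'}; V_i \otimes V'_i \otimes \wedge^{n'}\lie{g'})) \leq \Len_{\univ{g}\otimes \univ{g'},\, G'}(\Dzuck{G'}{\{e\}}{n'-j}(V_i \otimes V'_i \otimes \wedge^{n'}\lie{g'})).
\end{align*}

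To conclude, I would bound the right-hand side uniformly in $i$ and $j$. By Proposition \ref{prop:FundamentailUniformlyBoundedGH} (iv), the family $(V_i \boxtimes V'_i)_{i \in I}$ is a uniformly bounded family of $(\lie{g}\oplus \lie{g'})$-modules, and tensoring with the fixed one-dimensional $\lie{g'}$-module $\wedge^{n'}\lie{g'}$ (which in fact is trivial since $\lie{g'}$ is reductive) preserves uniform boundedness by Proposition \ref{prop:FundamentailUniformlyBoundedGH} (ii). Viewing $(\lie{g}\oplus \lie{g'}, G')$ as a pair with $G'$ acting by the diagonal adjoint action and $M = \{e\}$ as a reductive subgroup of $G'$, Proposition \ref{prop:FundamentalUniformlyBoundedGmod} (i) shows that $(\Dzuck{G'}{\{e\}}{k}(V_i \otimes V'_i \otimes \wedge^{n'}\lie{g'}))_{i, k}$ is a uniformly bounded family of $(\lie{g}\oplus \lie{g'}, G')$-modules, so its length as a $\univ{g}\otimes \univ{g'}$-module is bounded independently of $i$ and $k$. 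Since any $(\univ{g}\otimes \univ{g'}, G')$-composition series refines to a $\univ{g}\otimes \univ{g'}$-composition series, the $(\univ{g}\otimes \univ{g'}, G')$-length is bounded above by the $\univ{g}\otimes \univ{g'}$-length, and the desired uniform constant $C$ follows.

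The only delicate point---and not, I expect, a real obstacle---is tracking the $\univ{g}^{G'}$-module structure through the chain of natural isomorphisms and through Corollary \ref{cor:TorAndBernLength}; this compatibility is already built into the naturality statements of Lemma \ref{lem:TorAndBern} and Fact \ref{fact:BernAndF}, so it amounts to a bookkeeping verification rather than a genuinely new step.
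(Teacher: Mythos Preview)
Your argument is correct and follows essentially the same route as the paper: the paper invokes Proposition \ref{prop:FundamentalUniformlyBoundedGmod} (ii) (whose proof is precisely Proposition \ref{prop:FundamentalUniformlyBoundedGmod} (i) combined with Corollary \ref{cor:TorAndBernLength}) to bound $\Len_{\univ{g}^{G'}}(H^j(\lie{g'};V_i\otimes V'_i))$ and then applies Poincar\'e duality to pass to $\Tor^{\univ{g'}}_{n-j}$, whereas you apply Poincar\'e duality first and then unpack (ii) into its constituent steps. The only cosmetic differences are the order in which Poincar\'e duality is applied and the fact that you carry the twist $\wedge^{n'}\lie{g'}$ explicitly before noting it is trivial, while the paper silently uses its triviality for reductive $\lie{g'}$.
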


\begin{proof}
	By Corollary \ref{cor:TorAndBernLength} and Proposition \ref{prop:FundamentalUniformlyBoundedGmod} (ii) for $K=\Delta(G')$ and $M=\set{e}$, there is a constant $C$ such that for any $i \in I$ and $j \in \NN$, 
	\begin{align*}
		\Len_{\univ{g}^{G'}}(H^j(\lie{g'}; V_i \otimes V'_i)) \leq C.
	\end{align*}
	Put $n = \dim_{\CC}(\lie{g'})$.
	By the Poincar\'e duality (Fact \ref{fact:PoincareDuality}), we have
	\begin{align*}
		H^j(\lie{g'}; V_i \otimes V'_i) \simeq H_{n-j}(\lie{g'}; V_i\otimes V'_i)
		\simeq \Tor^{\univ{g'}}_{n-j}(V_i, V'_i).
	\end{align*}
	Since these isomorphisms are natural in $V_i$ and $V'_i$, the isomorphisms are $\univ{g}^{G'}$-homomorphisms.
	We have shown the theorem.
\end{proof}

\begin{corollary}\label{cor:uniformlyBoundedLengthN}
	Let $\lie{b'}$ be a Borel subalgebra of $\lie{g'}$ and $(V_i)_{i \in I}$ a uniformly bounded family of $\lie{g}$-modules.
	Then there exists some constant $C$ such that for any character $\lambda$ of $\lie{b'}$, $j \in \ZZ$ and $i \in I$, we have
	\begin{align*}
	\Len_{\univ{g}^{G'}}(\Tor^{\univ{b'}}_{j}(V_i, \CC_{\lambda})) \leq C.
	\end{align*}
	Moreover, the constant $C$ can be chosen independently of $\lie{b'}$.
\end{corollary}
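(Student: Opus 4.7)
The plan is to rewrite the $\Tor$ over $\univ{b'}$ as a $\Tor$ over $\univ{g'}$ with the Verma module replacing $\CC_\lambda$, and then apply Theorem \ref{thm:uniformlyBoundedLengthGeneral} to the family of $\lie{g}$-modules $(V_i)_{i \in I}$ together with the family of Verma modules.

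First I would use the Poincar\'e--Birkhoff--Witt decomposition $\univ{g'} \simeq \univ{\overline{n'}}\otimes \univ{b'}$ (with $\overline{\lie{n'}}$ an $\lie{n'}$-opposite nilradical) to see that $\univ{g'}$ is free, hence flat, as a right $\univ{b'}$-module. The standard change-of-rings isomorphism then gives
\begin{align*}
  \Tor^{\univ{b'}}_j(V_i, \CC_\lambda) \simeq \Tor^{\univ{g'}}_j(V_i, \univ{g'}\otimes_{\univ{b'}}\CC_\lambda) = \Tor^{\univ{g'}}_j(V_i, M(\lambda)),
\end{align*}
where $M(\lambda)$ is the Verma module; since this isomorphism is natural in $V_i$, it is an isomorphism of $\univ{g}^{G'}$-modules. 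Next, Proposition \ref{prop:LengthVerma} applied to $G'$, its Borel $B'$, and the family of one-dimensional $\lie{b'}/\lie{n'}$-modules $(\CC_\lambda)_{\lambda}$ shows that $(M(\lambda))_\lambda$ is a uniformly bounded family of $\lie{g'}$-modules. Plugging the families $(V_i)_{i \in I}$ and $(M(\lambda))_\lambda$ into Theorem \ref{thm:uniformlyBoundedLengthGeneral} yields a single constant $C$ with
\begin{align*}
  \Len_{\univ{g}^{G'}}(\Tor^{\univ{g'}}_j(V_i, M(\lambda))) \leq C
\end{align*}
uniformly in $i \in I$, $\lambda$, and $j$, which is the desired bound for the fixed Borel $\lie{b'}$.

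For the last sentence, I would reduce to a fixed Borel $\lie{b'_0}$ using inner automorphisms. Any Borel $\lie{b'}\subset \lie{g'}$ is of the form $\Ad(g)\lie{b'_0}$ for some $g \in G'$, and transport of structure provides an isomorphism
\begin{align*}
  \Tor^{\univ{b'}}_j(V_i, \CC_\lambda) \simeq \Tor^{\univ{b'_0}}_j(V_i^{\Ad(g^{-1})}, \CC_{\lambda \circ \Ad(g)})
\end{align*}
of $\univ{g}^{G'}$-modules, because $\Ad(g) \in G'$ preserves $\univ{g}^{G'}$. By Proposition \ref{prop:FundamentailUniformlyBoundedGH}~(iii) applied to the set of inner automorphisms $\{\Ad(g^{-1}) : g \in G'\}$, the family $(V_i^{\Ad(g^{-1})})_{i\in I,\, g\in G'}$ is uniformly bounded as a family of $\lie{g}$-modules. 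Running the previous argument with this enlarged family and the fixed Borel $\lie{b'_0}$ gives a constant $C$ depending only on $(V_i)_{i \in I}$ and $\lie{g'}$.

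The argument is essentially assembly of existing machinery, so no step should be a serious obstacle; the one item that deserves care is verifying that the isomorphism under conjugation is genuinely $\univ{g}^{G'}$-linear (the $\univ{g}^{G'}$-structure on both sides comes from the same subalgebra, fixed pointwise by $\Ad(g)$, so this is immediate once written out). Everything else reduces to a direct citation of Proposition \ref{prop:LengthVerma} and Theorem \ref{thm:uniformlyBoundedLengthGeneral}.
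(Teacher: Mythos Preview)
Your proof is correct and follows essentially the same approach as the paper: the change-of-rings isomorphism via freeness of $\univ{g'}$ over $\univ{b'}$, Proposition~\ref{prop:LengthVerma} for the Verma modules, and Theorem~\ref{thm:uniformlyBoundedLengthGeneral}. The only cosmetic difference is in handling the independence of $\lie{b'}$: the paper indexes the Verma family by both $\lambda$ and $\lie{b'}$ and applies Proposition~\ref{prop:FundamentailUniformlyBoundedGH}~(iii) on the $\lie{g'}$-side, whereas you conjugate back to a fixed Borel and apply it on the $\lie{g}$-side to the family $(V_i^{\Ad(g^{-1})})$; both are valid and amount to the same thing.
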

	
\begin{proof}
	Since $\univ{g'}$ is a free right $\univ{b'}$-module, there is a natural isomorphism
	\begin{align*}
		\Tor^{\univ{b'}}_{j}(V_i, \CC_{\lambda}) \simeq \Tor^{\univ{g'}}_j(V_i, \univ{g'}\otimes_{\univ{b'}}\CC_\lambda)
	\end{align*}
	of $\univ{g}^{G'}$-modules.
	The family $(\univ{g'}\otimes_{\univ{b'}}\CC_{\lambda})_{\lambda, \lie{b'}}$ is uniformly bounded by Proposition \ref{prop:LengthVerma} and Proposition \ref{prop:FundamentailUniformlyBoundedGH} (iii).
	Hence the corollary follows from Theorem \ref{thm:uniformlyBoundedLengthGeneral}.
\end{proof}

\begin{corollary}\label{cor:uniformlyBoundedLengthInfChar}
	Let $(V_i)_{i \in I}$ be a uniformly bounded family of $\lie{g}$-modules.
	There exists some constant $C$ such that for any maximal ideal $\calI$ of $\univcent{g'}$, $i \in I$ and $j \in \ZZ$, we have
	\begin{align*}
	\Len_{\univ{g}^{G'}\otimes \univ{g'}}(\Tor_j^{\univcent{g'}}(\univcent{g'}/\calI, V_i)) \leq C.
	\end{align*}
\end{corollary}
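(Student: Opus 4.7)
The plan is to reduce the $\univcent{g'}$-Tor to a $\univ{g'}$-Tor using Kostant's freeness theorem, identify the result as a relative Lie algebra cohomology, and apply Proposition \ref{prop:FundamentalUniformlyBoundedGmod} to the pair $(\lie{g}\oplus\lie{g'}\oplus\lie{g'}, \Delta(G'))$. By Kostant's theorem $\univ{g'}$ is free as a $\univcent{g'}$-module, so change of rings yields a natural isomorphism of $\univ{g}^{G'}\otimes\univ{g'}$-modules
\[
\Tor_j^{\univcent{g'}}(\univcent{g'}/\calI, V_i) \simeq \Tor_j^{\univ{g'}}(W_\calI, V_i),
\]
where $W_\calI := \univ{g'}/\calI\univ{g'}$, regarded as a $(\lie{g'}\oplus\lie{g'}, \Delta(G'))$-module via left and right multiplication; under this isomorphism the $\univ{g'}$-action corresponds (up to antipode) to the natural $\univ{g'}\subset\univ{g}$-action on $V_i$.

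Embedding $\Delta(G')\hookrightarrow G\times G'\times G'$ via $g'\mapsto(g',g',e)$ (using $G'\subset G$), we form the $(\lie{g}\oplus\lie{g'}\oplus\lie{g'}, \Delta(G'))$-module $V_i\otimes W_\calI$, where $\lie{g}$ acts on $V_i$, the first $\lie{g'}$-factor acts on $W_\calI$ by left multiplication, and the second by right multiplication. A standard Chevalley--Eilenberg computation combined with Poincar\'e duality (Fact \ref{fact:PoincareDuality}; the twist is trivial since reductive $\lie{g'}$ is unimodular) identifies
\[
H^j(\Delta(\lie{g'}); V_i\otimes W_\calI) \simeq \Tor_{n-j}^{\univcent{g'}}(\univcent{g'}/\calI, V_i)
\]
as $\univ{g}^{G'}\otimes\univ{g'}$-modules, with $n=\dim\lie{g'}$. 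Applying Proposition \ref{prop:FundamentalUniformlyBoundedGmod} (ii) with $K=\Delta(G')$ and $M=\{e\}$ then bounds the length of the left-hand side as a module over $(\univ{g}\otimes\univ{g'}\otimes\univ{g'})^{\Delta(G')}$. Since $\Delta(G')$ acts trivially on the third factor, this invariant algebra decomposes as $(\univ{g}\otimes\univ{g'})^{\Delta(G')}\otimes\univ{g'}$, into which $\univ{g}^{G'}\otimes\univ{g'}$ embeds via $a\otimes b\mapsto(a\otimes 1)\otimes b$; the desired estimate follows.

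The main obstacle is to verify the hypothesis of Proposition \ref{prop:FundamentalUniformlyBoundedGmod}, namely that $(V_i\otimes W_\calI)_{i,\calI}$ is a uniformly bounded family of $(\lie{g}\oplus\lie{g'}\oplus\lie{g'})$-modules. By Proposition \ref{prop:FundamentailUniformlyBoundedGH} (iv) this reduces to showing $(W_\calI)_\calI$ is a uniformly bounded family of $(\lie{g'}\oplus\lie{g'})$-modules, an analog of Proposition \ref{prop:UniformlyBoundedMinimalPrimitive} with $\calI\univ{g'}$ in place of $\calI_\chi$. The plan for this step is to realize each $W_\calI$ as the global sections of a $\Delta(G')$-equivariant holonomic $\calD$-module on $G'/B'\times G'/B'$ lying in a uniformly bounded family, and invoke Theorem \ref{thm:UniformlyBoundedIrreducibles}, exploiting that $\Delta(G')$ has only $|W_{G'}|$ orbits on $G'/B'\times G'/B'$. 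Alternatively, one may use the surjection $W_\calI \twoheadrightarrow \univ{g'}/\calI_\chi$ (where $\chi$ is the central character determined by $\calI$), whose target is handled by Proposition \ref{prop:UniformlyBoundedMinimalPrimitive}, and treat the kernel $\calI_\chi/\calI\univ{g'}$ via a parallel $\calD$-module argument.
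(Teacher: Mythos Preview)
Your approach is correct and coincides with the paper's proof once both are unpacked. The paper's proof is two lines: the freeness reduction you give, followed by ``hence the corollary follows from Proposition~\ref{prop:UniformlyBoundedMinimalPrimitive} and Theorem~\ref{thm:uniformlyBoundedLengthGeneral}.'' Since $W_\calI$ does not have finite length as a one-sided $\lie{g'}$-module, Theorem~\ref{thm:uniformlyBoundedLengthGeneral} is not being applied literally; rather, its proof (via Proposition~\ref{prop:FundamentalUniformlyBoundedGmod} (ii) with $K=\Delta(G')$) is applied to the enlarged pair $(\lie{g}\oplus\lie{g'}\oplus\lie{g'},\Delta(G'))$, exactly as you describe. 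So there is no difference in method.

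The one point you are missing is that your ``main obstacle'' is already resolved in the paper. For a maximal ideal $\calI\subset\univcent{g'}$ with associated character $\chi$, Duflo's theorem gives $\calI\univ{g'}=\calI_\chi$, the minimal primitive ideal with infinitesimal character $\chi$ (this is also implicit in Fact~\ref{fact:BeilinsonBernstein} (i)). Hence $W_\calI=\univ{g'}/\calI_\chi$, and the uniform boundedness of $(W_\calI)_\calI$ as $(\lie{g'}\oplus\lie{g'},\Delta(G'))$-modules is \emph{exactly} Proposition~\ref{prop:UniformlyBoundedMinimalPrimitive} applied to $G'$, not an analog requiring new work. Your sketched alternative (localizing $W_\calI$ on $G'/B'\times G'/B'$ and invoking Theorem~\ref{thm:UniformlyBoundedIrreducibles} via Corollary~\ref{cor:UniformlyBoundedTor}) is precisely the proof of Proposition~\ref{prop:UniformlyBoundedMinimalPrimitive}, so it would also work but is redundant. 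The second alternative you mention (handling the kernel $\calI_\chi/\calI\univ{g'}$ separately) is unnecessary since that kernel is zero.
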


\begin{proof}
	Since $\univ{g'}$ is a free $\univcent{g'}$-module,
	we have a natural isomorphism
	\begin{align*}
		\Tor_j^{\univcent{g'}}(\univcent{g'}/\calI, V_i)\simeq 
		\Tor_j^{\univ{g'}}(\univ{g'}/\calI\univ{g'}, V_i).
	\end{align*}
	Hence the corollary follows from Proposition \ref{prop:UniformlyBoundedMinimalPrimitive} and Theorem \ref{thm:uniformlyBoundedLengthGeneral}.
\end{proof}

Retain the notation $G$ and $G'$ as above.
Let $(\lie{g}, K)$ and $(\lie{g'}, K')$ be pairs (see Definition \ref{def:pair}).

\begin{corollary}\label{cor:uniformlyBoundedLengthGKGK}
	Assume that $K$ and $K'$ have finitely many orbits in the flag varieties of $\lie{g}$ and $\lie{g'}$, respectively.
	Then there exists some constant $C$ such that
	for any $i \in \NN$, irreducible $(\lie{g'}, K')$-module $V'$ and irreducible $(\lie{g}, K)$-module $V$, we have
	\begin{align*}
		\Len_{\univ{g}^{G'}}(\Tor^{\univ{g'}}_i(V, V')) \leq C.
	\end{align*}
\end{corollary}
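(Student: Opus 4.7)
The plan is to derive Corollary \ref{cor:uniformlyBoundedLengthGKGK} as a direct application of Theorem \ref{thm:uniformlyBoundedLengthGeneral}, once I verify that the collection of all irreducible $(\lie{g}, K)$-modules (respectively $(\lie{g'}, K')$-modules) forms a uniformly bounded family of $\lie{g}$-modules (respectively $\lie{g'}$-modules) with respect to a natural indexing.

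First I would check the uniform boundedness of the two families of irreducibles. Since every irreducible module has length one, the hypothesis of Proposition \ref{prop:UniformlyBoundedG-modSpherical} is trivially met; the nontrivial input the proposition needs is that $K$ act with finitely many orbits on the flag variety of $\lie{g}$, which is precisely our standing assumption. Consequently, indexing by the set of isomorphism classes $\Irr(\lie{g}, K)$, the family $(V)_{V \in \Irr(\lie{g}, K)}$ is a uniformly bounded family of $\lie{g}$-modules. The identical argument, using the finite-orbit hypothesis on $K'$, shows that $(V')_{V' \in \Irr(\lie{g'}, K')}$ is a uniformly bounded family of $\lie{g'}$-modules.

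Next I would pass to a common index set in order to invoke Theorem \ref{thm:uniformlyBoundedLengthGeneral}. Set $I := \Irr(\lie{g}, K) \times \Irr(\lie{g'}, K')$ and define $V_{(V,V')} := V$, $V'_{(V,V')} := V'$. Reindexing does not affect uniform boundedness, since that property is formulated in terms of a single bounded function (the multiplicity of the localization) on the index set, and pulling back along the projections $I \to \Irr(\lie{g}, K)$ and $I \to \Irr(\lie{g'}, K')$ preserves this bound. Therefore $(V_i)_{i \in I}$ and $(V'_i)_{i \in I}$ are uniformly bounded as families of $\lie{g}$- and $\lie{g'}$-modules respectively. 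Theorem \ref{thm:uniformlyBoundedLengthGeneral} then yields a single constant $C$ with
\[
  \Len_{\univ{g}^{G'}}\bigl(\Tor^{\univ{g'}}_j(V, V')\bigr) \leq C
\]
for all $(V, V') \in I$ and all $j \in \NN$, which is the assertion.

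There is no serious obstacle here: both of the key inputs, uniform boundedness of irreducible Harish-Chandra-type modules (Proposition \ref{prop:UniformlyBoundedG-modSpherical}) and the $\Tor$-length estimate for pairs of uniformly bounded families (Theorem \ref{thm:uniformlyBoundedLengthGeneral}), have already been proved in the preceding subsections. The only point requiring care is the choice of a common index set, and this is a purely formal manipulation.
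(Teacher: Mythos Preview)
Your proposal is correct and follows essentially the same approach as the paper: invoke Proposition \ref{prop:UniformlyBoundedG-modSpherical} (using the finite-orbit hypotheses) to see that the families of irreducibles are uniformly bounded, then apply Theorem \ref{thm:uniformlyBoundedLengthGeneral}. The paper's proof is just the two-sentence version of what you wrote; your extra care about passing to a common index set is a harmless elaboration.
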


\begin{proof}
	By Proposition \ref{prop:UniformlyBoundedG-modSpherical}, any family of irreducible $(\lie{g}, K)$-modules or irreducible $(\lie{g'}, K')$-modules is uniformly bounded.
	Hence the assertion follows from Theorem \ref{thm:uniformlyBoundedLengthGeneral}.
\end{proof}

\subsection{Euler--Poincar\'e characteristic}

We shall define the Euler--Poincar\'e characteristic in the setting of the branching problem and harmonic analysis.
Retain the notation $G$, $G'$, $K$ and $K'$ in the previous subsection.
Assume that $K'$ is reductive and contained in $K$, and $\Ad_{\lie{g}}(K')$ is contained in $\Ad_{\lie{g}}(G')$.

\begin{theorem}\label{thm:uniformlyBoundedLengthGeneralGK}
	Let $(V_{i})_{i \in I}$ (resp. $(V'_{i})_{i \in I}$) be a uniformly bounded family of $(\lie{g}, K)$-modules (resp. $(\lie{g'}, K')$-modules).
	Then there exists some constant $C$ such that for any $i \in I$ and $j \in \NN$, we have
	\begin{align*}
		\Len_{\univ{g}^{G'}}(H_j(\lie{g'}, K'; V_i \otimes V'_i)) \leq C.
	\end{align*}
	In particular, the Euler--Poincar\'e characteristic
	\begin{align*}
		\EP(V_i, V'_i):=\sum_{i}(-1)^i H_i(\lie{g'}, K'; V_i\otimes V'_i)
	\end{align*}
	is well-defined as an element of the Grothendieck group of the category
	of $\univ{g}^{G'}$-modules of finite length.
\end{theorem}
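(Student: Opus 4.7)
The strategy is to reduce the bound on $H_j(\lie{g'}, K'; V_i\otimes V'_i)$ to Corollary~\ref{cor:TorAndBernLength}, applied to the generalized pair $(\univ{g}, G')$ with subgroup $K'$, and then to control the Zuckerman term on the right-hand side via Proposition~\ref{prop:FundamentalUniformlyBoundedGmod}(i). Set $n := \dim_{\CC}(\lie{g'}/\lie{k'})$ and $U_i := V_i \otimes V'_i \otimes \wedge^n(\lie{g'}/\lie{k'})$, where $\wedge^n(\lie{g'}/\lie{k'})$ carries the $(\lie{g'}, K')$-structure described in Fact~\ref{fact:PoincareDuality}. The twist by $\wedge^n(\lie{g'}/\lie{k'})$ is inserted precisely so that Poincar\'e duality will convert the cohomology produced by Corollary~\ref{cor:TorAndBernLength} back into the homology we are trying to estimate.

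I first verify that $(U_i)_{i \in I}$ is a uniformly bounded family of $(\lie{g}\oplus\lie{g'})$-modules. Since $(V_i)_i$ and $(V'_i)_i$ are uniformly bounded as $\lie{g}$-modules and $\lie{g'}$-modules respectively, Proposition~\ref{prop:FundamentailUniformlyBoundedGH}(iv) shows that $(V_i \boxtimes V'_i)_i$ is uniformly bounded over $\lie{g}\oplus\lie{g'}$, and tensoring with the one-dimensional character $\wedge^n(\lie{g'}/\lie{k'})$ preserves this by Proposition~\ref{prop:FundamentailUniformlyBoundedGH}(ii). The hypotheses $K' \subset K$ and $\Ad_{\lie{g}}(K') \subset \Ad_{\lie{g}}(G')$ let me regard $V_i$ as a $(\univ{g}, K')$-module compatibly with the generalized pair $(\univ{g}, G')$, and endow $(\lie{g}\oplus\lie{g'}, G')$ with a pair structure (diagonal $G'$-action) whose restriction to $K'$ realizes each $U_i$ as a $(\lie{g}\oplus\lie{g'}, K')$-module.

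Applying Corollary~\ref{cor:TorAndBernLength} with $\calA = \univ{g}$, $G = G'$, $H = K'$, $V = V_i$, and $W = V'_i \otimes \wedge^n(\lie{g'}/\lie{k'})$ yields
\begin{align*}
	\Len_{\univ{g}^{G'}}(H^j(\lie{g'}, K'; U_i)) \leq \Len_{\univ{g}\otimes\univ{g'}, G'}(\Dzuck{G'}{K'}{j}(U_i)).
\end{align*}
Poincar\'e duality (Fact~\ref{fact:PoincareDuality}) identifies the left-hand side with $\Len_{\univ{g}^{G'}}(H_{n-j}(\lie{g'}, K'; V_i \otimes V'_i))$. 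For the right-hand side, Proposition~\ref{prop:FundamentalUniformlyBoundedGmod}(i) applied to the pair $(\lie{g}\oplus\lie{g'}, G')$ with reductive subgroup $K'$ and input family $(U_i)_i$ shows that $(\Dzuck{G'}{K'}{j}(U_i))_{i,j}$ is a uniformly bounded family of $(\lie{g}\oplus\lie{g'}, G')$-modules; in particular its $(\univ{g}\otimes\univ{g'}, G')$-length is bounded by a single constant $C$ independent of $i$ and $j$. Rewriting $n-j$ as $k$, this gives $\Len_{\univ{g}^{G'}}(H_k(\lie{g'}, K'; V_i \otimes V'_i)) \leq C$ for all $k \in \NN$ and $i \in I$.

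Finally, the Chevalley--Eilenberg chain complex computing $H_\cxdot(\lie{g'}, K'; V_i \otimes V'_i)$ is concentrated in degrees $0,1,\dots,n$, so only finitely many homology groups are nonzero; each has finite length as a $\univ{g}^{G'}$-module by the bound above, so the alternating sum $\EP(V_i, V'_i) = \sum_{k \geq 0}(-1)^k [H_k(\lie{g'}, K'; V_i \otimes V'_i)]$ defines a class in the Grothendieck group of finite-length $\univ{g}^{G'}$-modules. The main technical point is the bookkeeping required to assemble the generalized pair $(\univ{g}\otimes\univ{g'}, G')$ together with its subgroup $K'$ so that Corollary~\ref{cor:TorAndBernLength} and Proposition~\ref{prop:FundamentalUniformlyBoundedGmod}(i) apply in tandem; once the setup is fixed, the cohomological estimate reduces to the $\ntDsheaf$-module machinery of Sections~4--6.
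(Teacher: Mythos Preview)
Your argument follows the same route as the paper: twist by $\wedge^n(\lie{g'}/\lie{k'})$, apply Poincar\'e duality, and bound the resulting cohomology via the Zuckerman machinery (Corollary~\ref{cor:TorAndBernLength} together with Proposition~\ref{prop:FundamentalUniformlyBoundedGmod}). The paper in fact packages your combination of Corollary~\ref{cor:TorAndBernLength} and Proposition~\ref{prop:FundamentalUniformlyBoundedGmod}(i) into a single citation of Proposition~\ref{prop:FundamentalUniformlyBoundedGmod}(ii), so the two arguments are essentially identical in outline.

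There is one point you gloss over that the paper treats more carefully. You invoke Corollary~\ref{cor:TorAndBernLength} and Proposition~\ref{prop:FundamentalUniformlyBoundedGmod}(i) with the pair $(\lie{g}\oplus\lie{g'}, G')$ and ``reductive subgroup $K'$'', but the standing hypothesis is only that $K'\subset K$ and $\Ad_{\lie{g}}(K')\subset \Ad_{\lie{g}}(G')$; it is \emph{not} assumed that $K'$ is literally a subgroup of $G'$. Thus the Zuckerman functor $\zuck{G'}{K'}$ and its derived functors are not a priori defined. The paper handles this by invoking Proposition~\ref{prop:FundamentalUniformlyBoundedGmod}(iv) rather than (i) or (ii): part~(iv) is stated precisely to allow $M$ to be replaced by a covering $\widetilde{M}$, which covers the present situation where $K'$ maps to $G'$ only through $\Ad_{\lie{g}}$. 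Your sentence ``the hypotheses \ldots\ let me regard $V_i$ as a $(\univ{g}, K')$-module compatibly with the generalized pair $(\univ{g}, G')$'' is where this issue is hidden; to make the argument rigorous you should either cite~(iv) directly or explain how to descend to the image of $K'$ in $G'$ (and note, as the paper does, that the $\lie{g'}$-action on $\wedge^n(\lie{g'}/\lie{k'})$ is trivial since $\lie{g'}$ is reductive, so the twist does not affect the $\univ{g}^{G'}$-module structure).
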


\begin{proof}
	Almost all of the proof is the same as that of Theorem \ref{thm:uniformlyBoundedLengthGeneral}.
	We note the difference.
	In this setting, the Poincar\'e duality (Fact \ref{fact:PoincareDuality}) is written as
	\begin{align*}
		H^{n-j}(\lie{g'}, K'; V_i \otimes V'_i \otimes \wedge^n (\lie{g'}/\lie{k'})) \simeq H_j(\lie{g'}, K'; V_i \otimes V'_i),
	\end{align*}
	where $n = \dim_{\CC}(\lie{g'}/\lie{k'})$ and the $\lie{g'}$-action on $\wedge^n (\lie{g'}/\lie{k'})$ is trivial.
	Hence the twisting by $\wedge^n (\lie{g'}/\lie{k'})$ does not affect the action of $\univ{g}^{G'}$.
	Therefore we have proved the theorem by Proposition \ref{prop:FundamentalUniformlyBoundedGmod} (iv).
\end{proof}

\begin{remark}
	It is clear that for the well-definedness of the Euler--Poincar\'e characteristic, we does not need the notion of uniformly bounded families.
	In fact, we need only holonomicity of modules.
\end{remark}

\begin{remark}
$H_i(\lie{g'}, K'; V\otimes V')^*$ is isomorphic to $\Ext^i_{\lie{g'}, K'}(V, (V')^*_{K'})$ as a $\univ{g}^{G'}$-module (see \cite[Corollary 3.2]{KnVo95_cohomological_induction}).
If $\Ext^i_{\lie{g'}, K'}(V, (V')^*_{K'})$ is not finite dimensional, the $\univ{g}^{G'}$-module does not have finite length because it is uncountably infinite dimensional.
\end{remark}

If all $H_i(\lie{g'}, K'; V\otimes V')$ are finite dimensional, we can define the ($\ZZ$-valued) Euler--Poincar\'e characteristic
\begin{align}
\dim_{\CC}\EP(V, V'):=\sum_{i}(-1)^i \dim_{\CC}(H_i(\lie{g'}, K'; V\otimes V')) \label{eqn:EP}
\end{align}
The characteristic for $p$-adic groups is studied in \cite{Pr13}, \cite{AiSa17} and \cite{ChSa18}.
Remark that $\EP(V, V')$ in the papers corresponds to $\dim_{\CC}\EP(V, (V')^*_{K'})$ in our notation.
We give sufficient conditions for the well-definedness of the $\ZZ$-valued characteristic in the sequels \cite[Corollary 7.17]{Ki20}.

\subsection{Theta lifting}

We apply Theorem \ref{thm:uniformlyBoundedLengthGeneralGK} to the theory of the Howe duality (see \cite{Ho89,Ho89_transcending}).

Let $G_{\RR}$ be a double cover of $\Sp(n, \RR)$.
Let $(H_{\RR}, H'_{\RR})$ be a reductive dual pair of $G_{\RR}$, i.e.\ $H_{\RR} = C_{G_{\RR}}(H'_{\RR})$ and $H'_{\RR} = C_{G_{\RR}}(H_{\RR})$ holds.
Here $C_{G_\RR}(\cdot)$ denotes the centralizer in $G_\RR$.
We write $\lieR{g}, \lieR{h}$ and $\lieR{h'}$ for the Lie algebras of $G_\RR$, $H_\RR$ and $H'_\RR$, respectively.

Fix a Cartan involution $\theta$ of $G_\RR$ which stabilizes $H_\RR$ and $H'_\RR$,
and put $K_\RR:=G_\RR^\theta, K_{H,\RR}:=H_\RR^\theta$ and $K_{H',\RR}:=(H'_\RR)^\theta$.
Then we have pairs $(\lie{g}, K), (\lie{h}, K_H)$ and $(\lie{h'}, K_{H'})$,
which are the complexifications of $(\lieR{g}, K_\RR)$, $(\lieR{h}, K_{H,\RR})$
and $(\lieR{h'}, K_{H',\RR})$, respectively.
We write $(\omega, V)$ for the underlying Harish-Chandra module of the Segal--Shale--Weil representation of $G_{\RR}$.
Then, by the classical invariant theory, we have $\omega(\univ{g})^{H} = \omega(\univ{h'})$.
Here $H$ is the centralizer in $\Sp(n,\CC)$ of the image of $H'_\RR$ by the covering map $G_\RR\rightarrow \Sp(n,\RR)$.

For an irreducible $(\lie{h}, K_H)$-module $V'$, we set
\begin{align*}
\Theta_i(V') := H_i(\lie{h}, K_H; V\otimes \dual{V'}),
\end{align*}
where $\dual{V'}$ is the space of all $K_H$-finite vectors in $(V')^*$.
Then $\Theta_i(V')$ is a $(\lie{h'}, K_{H'})$-module.
Let $\calR(\lie{h}, K_H, \omega)$ be the set of equivalence classes of irreducible $(\lie{h}, K_H)$-modules such that $\Theta_0(V')\neq 0$.

\begin{fact}[R. Howe {\cite[Theorem 2.1]{Ho89_transcending}}]
For any $V' \in \calR(\lie{h}, K_H, \omega)$, $\Theta_0(V')$ is of finite length and has a unique irreducible quotient $\theta(V')$.
The correspondence $\calR(\lie{h}, K_H, \omega) \ni V'\mapsto \theta(V') \in \calR(\lie{h'}, K_{H'}, \omega)$ is bijective.
\end{fact}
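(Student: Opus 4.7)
The plan is to split Howe's theorem into three claims---(i) finiteness of length of $\Theta_0(V')$, (ii) uniqueness of its irreducible quotient, and (iii) bijectivity of $V' \mapsto \theta(V')$---and to handle (i) with the uniform boundedness machinery developed here while treating (ii) and (iii) as coming from Howe's original algebraic arguments.

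First I would establish (i). Writing $\Theta_j(V') = H_j(\lie{h}, K_H; V \otimes \dual{V'})$, I would apply Theorem \ref{thm:uniformlyBoundedLengthGeneralGK} to the constant family $V_i = V$ (the Weil representation, of finite length as a $(\lie{g}, K)$-module) and the family $V'_i = \dual{V'}$ for $V' \in \calR(\lie{h}, K_H, \omega)$, which is uniformly bounded by Proposition \ref{prop:UniformlyBoundedG-modSpherical}. This yields a constant $C$ such that $\Len_{\univ{g}^H}(\Theta_j(V')) \leq C$ for all $j$ and all $V'$. The identity $\omega(\univ{g})^H = \omega(\univ{h'})$ from classical invariant theory forces the $\univ{h'}$-action on $\Theta_0(V')$ to factor through the same image as the $\univ{g}^H$-action, so the $\univ{h'}$-length of $\Theta_0(V')$ is bounded by the same $C$. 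Notably this upgrades Howe's bare finiteness to a bound that is uniform in $V'$, which is the genuinely new content accessible from the $\mathscr{D}$-module side.

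The main obstacle is (ii), which the author explicitly notes lies beyond the present methods: the uniform boundedness framework controls total length and detects irreducibility through the Beilinson--Bernstein correspondence, but it does not analyze Loewy structure and so cannot single out a unique maximal quotient. Howe's original argument is algebraic, exploiting the double commutant implicit in $\omega(\univ{g})^H = \omega(\univ{h'})$ to identify the $V'$-isotypic component of the joint $(H \times H')$-spectrum of $V$ as $V' \boxtimes N(V')$ for an indecomposable $(\lie{h'}, K_{H'})$-module $N(V')$ possessing a unique irreducible quotient $\theta(V')$. I would invoke (ii) as a cited black box. Granted (ii), the bijectivity (iii) follows by the symmetry of the dual pair setup: the roles of $H$ and $H'$ are interchangeable, the classical invariant theory identity is likewise symmetric, and comparing the two $(H \times H')$-decompositions of $V$ from each side of the pair shows that the two theta lift maps are mutually inverse. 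The net effect is that the paper's contribution to the Howe duality story is precisely the uniform version of (i), which is what makes the Euler--Poincar\'e characteristic of higher theta lifts in the subsequent Theorem \ref{thm:ThetaLift} well-defined.
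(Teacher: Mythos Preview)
The statement you are analyzing is a \emph{Fact} cited from Howe's paper, not a result the present paper proves; there is no ``paper's own proof'' to compare against. The paper explicitly disclaims the ability to prove part (ii) (the introduction says: ``By our method, we can not prove one of important parts of Howe's theorem that the theta lift $\Theta_0(V')$ has a unique irreducible quotient''), and the Fact is stated purely as background before Theorem~\ref{thm:ThetaLift}.

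Your analysis of what the paper's machinery can and cannot reach is accurate, and your argument for (i) is essentially the content of the sentence immediately following the Fact and of the proof of Theorem~\ref{thm:ThetaLift}: apply Theorem~\ref{thm:uniformlyBoundedLengthGeneralGK} with $V$ the Weil representation and $V'_i = \dual{V'}$, then use $\omega(\univ{g})^H = \omega(\univ{h'})$ to pass from $\univ{g}^H$-length to $(\lie{h'}, K_{H'})$-length. So on (i) you and the paper agree, and you correctly flag that (ii) and (iii) must be taken as input from Howe rather than derived here. The only adjustment is one of framing: this is not a case where you should be producing a proof at all, but rather recognizing that the paper is quoting an external theorem and then extracting from its own methods the uniform strengthening of the finite-length part.
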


For any $i \in \NN$, $\Theta_i(V')$ is of finite length by $\omega(\univ{g})^{H} = \omega(\univ{h'})$ and Theorem \ref{thm:uniformlyBoundedLengthGeneralGK}.
More precisely, the following theorem holds.

\begin{theorem}\label{thm:ThetaLift}
Let $V'$ be an irreducible $(\lie{h}, K_H)$-module.
Then there exists some constant $C$ independent of $V'$ such that
\begin{align*}
\Len_{\lie{h'}, K_{H'}}(\Theta_i(V')) \leq C
\end{align*}
for any $i\in \NN$.
In particular, as an element of the Grothendieck group of the category of $(\lie{h'}, K_{H'})$-modules of finite length,
the Euler--Poincar\'e characteristic
\begin{align*}
\EP(V, \dual{V'}) = \sum_i (-1)^i\Theta_i(V')
\end{align*}
is well-defined.
\end{theorem}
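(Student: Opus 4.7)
The plan is to reduce the statement to Theorem \ref{thm:uniformlyBoundedLengthGeneralGK} by taking a constant family on one side and a family of irreducible modules on the other, then transport the bound from $\univ{g}^{H}$ to $(\lie{h'}, K_{H'})$ via the classical invariant theory identity $\omega(\univ{g})^{H} = \omega(\univ{h'})$.

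First I would verify that the hypotheses of Theorem \ref{thm:uniformlyBoundedLengthGeneralGK} apply with the complexified group $G$ in place of itself and with $G'=H$, $K' = K_H$. The containment $K_H \subset K$ is automatic from $H_{\RR}\subset G_{\RR}$ and the fact that $\theta$ stabilizes $H_{\RR}$; $K_H$ is reductive since $K_{H,\RR}$ is compact; and $\Ad_{\lie{g}}(K_H) \subset \Ad_{\lie{g}}(H)$ because $K_H \subset H$. Second, I would take $V_i := V$ (a constant family) and $V'_i := \dual{V'}$ as $V'$ runs over irreducible $(\lie{h}, K_H)$-modules. The constant family $(V)$ is a family of $(\lie{g}, K)$-modules of bounded length (it is a single Harish-Chandra module of finite length), so by Proposition \ref{prop:UniformlyBoundedG-modSpherical} it is uniformly bounded; the family $(\dual{V'})$ is uniformly bounded by the same proposition. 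Theorem \ref{thm:uniformlyBoundedLengthGeneralGK} therefore furnishes a constant $C$ with
\begin{equation*}
\Len_{\univ{g}^{H}}(H_i(\lie{h}, K_H; V\otimes \dual{V'})) = \Len_{\univ{g}^{H}}(\Theta_i(V')) \leq C
\end{equation*}
for all $i \in \NN$ and all irreducible $V'$.

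The crucial remaining step is to replace $\univ{g}^{H}$ by $(\lie{h'}, K_{H'})$. Since $\Theta_i(V')$ is computed from $V$ (through the Chevalley--Eilenberg complex), the $\univ{g}^{H}$-action on $\Theta_i(V')$ factors through $\omega\colon \univ{g}^{H}\twoheadrightarrow \omega(\univ{g})^{H}$, and by the classical invariant theory identity $\omega(\univ{g})^{H} = \omega(\univ{h'})$ this action coincides with the one induced from $\univ{h'}\to \omega(\univ{h'})$. Consequently the lattices of $\univ{g}^{H}$-submodules and of $\univ{h'}$-submodules of $\Theta_i(V')$ agree, giving $\Len_{\univ{h'}}(\Theta_i(V')) = \Len_{\univ{g}^{H}}(\Theta_i(V')) \leq C$. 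Since any $(\lie{h'}, K_{H'})$-submodule is in particular an $\lie{h'}$-submodule, $\Len_{\lie{h'}, K_{H'}}(\Theta_i(V')) \leq \Len_{\univ{h'}}(\Theta_i(V')) \leq C$, which is the desired bound.

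For the second assertion, I would observe that $\Theta_i(V') = H_i(\lie{h}, K_H; V\otimes \dual{V'})$ vanishes for $i > \dim_{\CC}(\lie{h}/\lie{k}_H)$, since the relative Chevalley--Eilenberg complex has that length. Combined with the uniform finite-length bound just obtained, this makes the alternating sum $\EP(V, \dual{V'}) = \sum_i (-1)^i \Theta_i(V')$ a well-defined element in the Grothendieck group of $(\lie{h'}, K_{H'})$-modules of finite length, independently of $V'$. The main obstacle, and the only nontrivial conceptual input beyond invoking Theorem \ref{thm:uniformlyBoundedLengthGeneralGK}, is the careful identification of the $\univ{g}^{H}$-action on $\Theta_i(V')$ with the $\univ{h'}$-action via Howe's identity $\omega(\univ{g})^{H} = \omega(\univ{h'})$; everything else is routine bookkeeping.
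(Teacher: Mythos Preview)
Your proposal is correct and follows the same route as the paper: the paper simply notes (in the sentence preceding the theorem) that the result follows from Theorem \ref{thm:uniformlyBoundedLengthGeneralGK} together with the identity $\omega(\univ{g})^{H} = \omega(\univ{h'})$, and you have spelled out exactly these two ingredients with the appropriate choice of families and the passage from $\univ{g}^{H}$-length to $(\lie{h'}, K_{H'})$-length.
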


The well-definedness of the Euler--Poincar\'e characteristic of the theta lifting for $p$-adic groups is proved and studied in \cite[Proposition 1.1]{APS17}.

\subsection{Uniformly bounded family in branching laws}

Let $G$ be a connected reductive algebraic group and $G'$ a connected reductive subgroup of $G$.
Using the restriction of modules, we can construct a uniformly bounded family of $\lie{g'}$-modules from one of $\lie{g}$-modules.
We consider the embedding $\iota\colon G'\rightarrow G'\times G'\times G$ defined by $\iota(g) = (e, g, g)$.

\begin{lemma}\label{lem:FindimQuot}
	Let $V$ be a $\lie{g}$-module and $V'$ an irreducible $\lie{g'}$-module, and set $\calI:= \Ann_{\univcent{g'}}(V')$.
	If $0 < \dim_{\CC} \Hom_{\lie{g'}}(V, V') < \infty$, then there exists an irreducible $(\lie{g'\oplus g}, \Delta(G'))$-module $W$ such that $V'\boxtimes W$ is isomorphic to a subquotient of $\Dzuck{\iota(G')}{\set{e}}{n}(\univ{g'}/\calI\otimes V)$,
	where $n:=\dim_{\CC}(G')$.
\end{lemma}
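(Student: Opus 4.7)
The plan is to exploit commuting actions on $M := \Dzuck{\iota(G')}{\set{e}}{n}(\univ{g'}/\calI \otimes V)$. The module $\univ{g'}/\calI \otimes V$ is naturally a $(\univ{g'} \otimes \univ{g'} \otimes \univ{g})$-module, with the three factors acting by left multiplication on $\univ{g'}/\calI$, by right translation on $\univ{g'}/\calI$, and on $V$, respectively. Since $\iota(G') = \set{(e,g,g)}$ acts trivially on the first $\univ{g'}$-factor, the first-factor $\univ{g'}$-action inherited by $M$ from the Zuckerman construction commutes with $\iota(G')$; identifying $\iota(G')$ with $\Delta(G')$ sitting in the product of the second and third factors, $M$ thus carries commuting actions of $\univ{g'}$ (from the first factor) and of the generalized pair $(\univ{g'} \otimes \univ{g}, \Delta(G'))$.

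Next, I would compute the $\iota(G')$-invariants of $M$. By Lemma \ref{lem:TorAndBern} combined with Fact \ref{fact:PoincareDuality} (Poincar\'e duality for the reductive Lie algebra $\iota(\lie{g'})$),
\begin{align*}
M^{\iota(G')} \simeq H^n(\iota(\lie{g'}); \univ{g'}/\calI \otimes V) \simeq (\univ{g'}/\calI \otimes V)_{\iota(\lie{g'})} \simeq V/\calI V,
\end{align*}
where the last isomorphism sends $\bar{u} \otimes v \mapsto \overline{uv}$, and the residual first-factor $\univ{g'}$-action corresponds to the restriction to $\lie{g'}$ of the $\univ{g}$-action on $V$ (well-defined on the quotient since $\calI$ is central in $\univ{g'}$). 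The hypothesis $\Hom_{\lie{g'}}(V, V') \neq 0$ together with irreducibility of $V'$ yields a surjection $V \twoheadrightarrow V'$, which factors through $V/\calI V$ because $\calI$ annihilates $V'$; hence $V'$ is a first-factor $\univ{g'}$-quotient of $M^{\iota(G')}$, and the finiteness $\dim \Hom_{\lie{g'}}(V, V') < \infty$ bounds the $V'$-multiplicity inside $V/\calI V$.

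Finally, I would use the commuting actions to realize $V' \boxtimes W$ as a subquotient of $M$ and then extract an irreducible $W_0$. Concretely, choose $(\univ{g'} \otimes \univ{g}, \Delta(G'))$-submodules $M_2 \subset M_1 \subset M$ whose $\Delta(G')$-invariant part $(M_1/M_2)^{\Delta(G')}$ picks out (part of) the $V'$-isotypic subquotient inside $M^{\Delta(G')} = V/\calI V$; commutativity with the first-factor $\univ{g'}$-action then yields an isomorphism $M_1/M_2 \simeq V' \boxtimes W$ for some $(\univ{g'} \otimes \univ{g}, \Delta(G'))$-module $W$ with $W^{\Delta(G')}$ non-zero and finite-dimensional. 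Because $W^{\Delta(G')}$ is finite-dimensional, it admits an irreducible $(\univ{g'} \otimes \univ{g})^{\Delta(G')}$-subquotient, which by Proposition \ref{prop:AGandLength} and reductivity of $\Delta(G')$ lifts to an irreducible $(\univ{g'} \otimes \univ{g}, \Delta(G'))$-subquotient $W_0$ of $W$; then $V' \boxtimes W_0$ is a subquotient of $M$ with $W_0$ the desired irreducible $(\lie{g'} \oplus \lie{g}, \Delta(G'))$-module. The hard step will be the construction of $V' \boxtimes W$ as a subquotient: since $M$ may have infinite length as a first-factor $\univ{g'}$-module and semisimplicity under this action is not available, a careful filtration argument preserving the commuting $(\univ{g'} \otimes \univ{g}, \Delta(G'))$-structure will be needed.
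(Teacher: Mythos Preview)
Your overall strategy matches the paper's: compute $M^{\iota(G')}\simeq V/\calI V$, locate $V'$ as a quotient there, and lift to a subquotient of $M$ of the form $V'\boxtimes W$. The identification of the invariants and the use of Poincar\'e duality are exactly as in the paper.

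The gap is in the lifting step, which you yourself flag as ``the hard step.'' Two points:
\begin{itemize}
\item Your proposed filtration $M_2\subset M_1\subset M$ by $(\univ{g'}\otimes\univ{g},\Delta(G'))$-submodules, together with the claim that $M_1/M_2\simeq V'\boxtimes W$ ``by commutativity,'' is not justified: commuting actions alone do not force a subquotient to be $V'$-isotypic for the first $\lie{g'}$-factor, and without that you cannot split off the $V'$-tensor factor.
\item Your appeal to Proposition~\ref{prop:AGandLength} to lift an irreducible subquotient of $W^{\Delta(G')}$ to one of $W$ is a misuse: that proposition only bounds lengths and does not assert that every irreducible subquotient of the invariants arises from an irreducible subquotient of the ambient module.
\end{itemize}

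The paper resolves both issues by reversing the order of operations. First it works purely at the level of invariants, viewing $V/\calI V$ as a $\univ{g'}\otimes\univ{g}^{G'}$-module and using the finite-dimensional dual basis of $\Hom_{\lie{g'}}(V,V')$ to produce an explicit map $V/\calI V\to V'\boxtimes\Hom_{\lie{g'}}(V,V')^*$; this yields an irreducible quotient $V'\boxtimes W_0$ with $W_0$ an irreducible $\univ{g}^{G'}$-module. Then it invokes \cite[Proposition~3.5.4]{Wa88_real_reductive_I} (the correct lifting tool) to obtain an irreducible $(\lie{g'\oplus g'\oplus g},\iota(G'))$-subquotient $X$ of $M$ with $X^{\iota(G')}\simeq V'\boxtimes W_0$. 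Finally, the $V'$-isotypicity of $X$ under the first $\lie{g'}$ is forced by the single observation $X=\univ{g}\cdot X^{\iota(G')}$: since $\univ{g}$ commutes with the first $\lie{g'}$ and the generating set is already $V'$-isotypic, $X|_{\lie{g'}}$ is a sum of copies of $V'$, whence $X\simeq V'\boxtimes\Hom_{\lie{g'}}(V',X)$. This last step is what replaces your unspecified filtration argument.
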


\begin{proof}
	Take a basis $\set{\varphi_i}$ of $\Hom_{\lie{g'}}(V/\calI V, V')(\simeq \Hom_{\lie{g'}}(V, V'))$ and its dual basis $\set{\lambda_i}$ of $\Hom_{\lie{g'}}(V/\calI V, V')^*$.
	Since $\Hom_{\lie{g'}}(V/\calI V, V')$ is finite dimensional, we obtain a $\univ{g'}\otimes \univ{g}^{G'}$-module homomorphism
	\begin{align*}
		V/\calI V \rightarrow V' \boxtimes \Hom_{\lie{g'}}(V/\calI V, V')^*
	\end{align*}
	given by $v\mapsto \sum_i \varphi_i(v)\otimes \lambda_i$.
	Hence the $\univ{g'}\otimes \univ{g}^{G'}$-module $V/\calI V$ has an irreducible quotient of the form $V'\boxtimes W_0$ for an irreducible $\univ{g}^{G'}$-module $W_0$.

	By Fact \ref{fact:PoincareDuality} and Lemma \ref{lem:TorAndBern}, we have
	\begin{align*}
		\Dzuck{\iota(G')}{\set{e}}{n}(\univ{g'}/\calI\otimes V)^{\iota(G')} &\simeq \univ{g'}/\calI\otimes_{\univ{g'}} V \\
		&\simeq V / \calI V
	\end{align*}
	as $\univ{g'}\otimes \univ{g}^{G'}$-modules.
	Hence $V'\boxtimes W_0$ is isomorphic to a quotient of $\Dzuck{\iota(G')}{\set{e}}{n}(\univ{g'}/\calI\otimes V)^{\iota(G')}$.
	This implies that we can take an irreducible subquotient $X$ of $\Dzuck{\iota(G')}{\set{e}}{n}(\univ{g'}/\calI\otimes V)$ such that $X^{\iota(G')} \simeq V'\boxtimes W_0$ (see e.g. \cite[Proposition 3.5.4]{Wa88_real_reductive_I}).
	Since $X = \univ{g}X^{\iota(G')}$, the $\lie{g'}$-module $X|_{\lie{g'}}$ is a direct sum of some copies of $V'$.
	Hence $X$ is naturally isomorphic to $V'\boxtimes \Hom_{\lie{g'}}(V', X)$ and the natural $(\lie{g'\oplus g})$-action on $\Hom_{\lie{g'}}(V', X)$ is irreducible.
	We have shown the lemma.
\end{proof}

\begin{remark}
	Suppose that $V$ is irreducible.
	By the proof, one can see that if the Beilinson--Bernstein localization of $V$ is regular holonomic, then that of $V'$ is also regular holonomic.
\end{remark}

\begin{theorem}\label{thm:BranchingUniformlyBounded}
	Let $(V_i)_{i \in I}$ be a uniformly bounded family of $\lie{g}$-modules and $(V'_i)_{i \in I}$ a family of irreducible $\lie{g'}$-modules.
	If $0 < \dim_{\CC}(\Hom_{\lie{g'}}(V_i, V'_i)) < \infty$ for any $i \in I$, then $(V'_i)_{i \in I}$ is uniformly bounded.
\end{theorem}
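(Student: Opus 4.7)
The plan is to realize each $V'_i$ as a tensor factor in an irreducible subquotient of a Zuckerman-derived module applied to an explicitly uniformly bounded family, and then conclude via the external tensor product characterization of uniform boundedness.

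First I would mildly modify Lemma \ref{lem:FindimQuot} by replacing the ideal $\calI = \Ann_{\univcent{g'}}(V'_i)$ with the minimal primitive ideal $\calI_{\chi_i} \subset \univ{g'}$ with infinitesimal character $\chi_i$ of $V'_i$. The only property of $\calI$ used in the proof of Lemma \ref{lem:FindimQuot} is that it annihilates $V'_i$, which is also true for $\calI_{\chi_i}$; hence the isomorphism $\Hom_{\lie{g'}}(V_i/\calI_{\chi_i}V_i, V'_i) \simeq \Hom_{\lie{g'}}(V_i, V'_i)$ still holds, and the application of Fact \ref{fact:PoincareDuality} together with Lemma \ref{lem:TorAndBern} still yields
\[\Dzuck{\iota(G')}{\set{e}}{n}(\univ{g'}/\calI_{\chi_i}\otimes V_i)^{\iota(G')} \simeq V_i/\calI_{\chi_i}V_i,\]
where $n = \dim_\CC(G')$. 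This produces a nonzero irreducible $(\lie{g'\oplus g}, \Delta(G'))$-module $W_i$ such that $V'_i\boxtimes W_i$ is a subquotient of $\Dzuck{\iota(G')}{\set{e}}{n}(\univ{g'}/\calI_{\chi_i}\otimes V_i)$.

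Next I would assemble the uniform boundedness step by step. By Proposition \ref{prop:UniformlyBoundedMinimalPrimitive}, the family $(\univ{g'}/\calI_{\chi_i})_{i \in I}$ is uniformly bounded as a family of $(\lie{g'\oplus g'}, \Delta(G'))$-modules. Combining this with the hypothesized uniform boundedness of $(V_i)_{i \in I}$ and Proposition \ref{prop:FundamentailUniformlyBoundedGH} (iv) shows that $(\univ{g'}/\calI_{\chi_i}\otimes V_i)_{i \in I}$ is uniformly bounded as a family of $(\lie{g'\oplus g'\oplus g})$-modules. Proposition \ref{prop:FundamentalUniformlyBoundedGmod} (i) propagates uniform boundedness through the Zuckerman derived functor, and Proposition \ref{prop:FundamentalBoundeFamily} transfers it to subquotients, so $(V'_i\boxtimes W_i)_{i \in I}$ is uniformly bounded as a family of $(\lie{g'}\oplus(\lie{g'\oplus g}))$-modules. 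Since $W_i \neq 0$ for every $i$, the converse direction of Proposition \ref{prop:FundamentailUniformlyBoundedGH} (iv) implies that $(V'_i)_{i \in I}$ is uniformly bounded.

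The main obstacle is justifying the substitution $\calI \rightsquigarrow \calI_{\chi_i}$ in Lemma \ref{lem:FindimQuot}. It is essentially formal once one inspects the proof, but it is essential because Proposition \ref{prop:UniformlyBoundedMinimalPrimitive} handles only the minimal primitive quotient $\univ{g'}/\calI_{\chi_i}$, whereas the original statement of Lemma \ref{lem:FindimQuot} uses the bimodule $\univ{g'}/\calI\univ{g'}$ for a maximal ideal $\calI \subset \univcent{g'}$. The natural surjection $\univ{g'}/\calI\univ{g'}\twoheadrightarrow \univ{g'}/\calI_{\chi_i}$ need not be an isomorphism at singular infinitesimal characters, so uniform boundedness of the larger source would require a nontrivial additional argument (translation functors or Soergel's theorem). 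Replacing $\calI$ by $\calI_{\chi_i}$ sidesteps this delicate point entirely.
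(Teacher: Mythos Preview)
Your proof is correct and follows essentially the same route as the paper's. One remark: the ``modification'' of Lemma~\ref{lem:FindimQuot} you propose is in fact no modification at all, since for a reductive Lie algebra the two-sided ideal of $\univ{g'}$ generated by a maximal ideal $\calI \subset \univcent{g'}$ \emph{is} the minimal primitive ideal with that central character (this is the classical fact that $\Ann_{\univ{g'}}(M(\lambda)) = \univ{g'}\ker(\chi_\lambda)$ for every Verma module, due to Dixmier); hence the paper's $\univ{g'}/\calI_i$ already coincides with your $\univ{g'}/\calI_{\chi_i}$, and the concern you raise about singular infinitesimal characters does not arise.
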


\begin{proof}
	Set $\calI_i := \Ann_{\univcent{g'}}(V'_i)$.
	Then $(\univ{g'}/\calI_i\otimes V_i)_{i \in I}$ is a uniformly bounded family of $(\lie{g'\oplus g'\oplus g})$-modules by Proposition \ref{prop:UniformlyBoundedMinimalPrimitive}.
	By Proposition \ref{prop:FundamentalUniformlyBoundedGmod} (\ref{enum:FundamentalZuckerman}), $(\Dzuck{\iota(G')}{\set{e}}{n}(\univ{g'}/\calI_i\otimes V_i))_{i \in I}$ is a uniformly bounded family of $(\lie{g'\oplus g'\oplus g}, \iota(G'))$-modules.
	Here we set $n := \dim_{\CC}(G')$.
	
	By Lemma \ref{lem:FindimQuot}, for each $i \in I$, we can take an irreducible $(\lie{g'\oplus g}, \Delta(G'))$-module $W_i$ such that $V'_i\boxtimes W_i$ is a subquotient of $\Dzuck{\iota(G')}{\set{e}}{n}(\univ{g'}/\calI_i\otimes V_{i})$.
	This implies that $(V'_i\boxtimes W_i)_{i \in I}$ is a uniformly bounded family of $(\lie{g'\oplus g'\oplus g}, \iota(G'))$-modules.
	By Proposition \ref{prop:FundamentailUniformlyBoundedGH} (\ref{enum:prop:UniformlyBoundedGH}), the family $(V'_i)_{i \in I}$ is uniformly bounded.
\end{proof}

\subsection{Tensoring with finite-dimensional modules}

Let $G$ be a connected reductive algebraic group.
We shall show that the uniformly boundedness is preserved by tensoring with finite-dimensional modules.
In particular, the uniformly boundedness is preserved by the translation functors.

\begin{lemma}\label{lem:TranslationLength}
	Let $(V_i)_{i \in I}$ be a uniformly bounded family of $\lie{g}$-modules and $F$ a finite-dimensional $\lie{g}$-module.
	Then there exists a constant $C > 0$ independent of $F$ such that
	\begin{align*}
		\Len_{\lie{g}}(V_i\otimes F) \leq C\cdot \dim_{\CC}(F)^2
	\end{align*}
	for any $i \in I$.
\end{lemma}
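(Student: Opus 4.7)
The plan is to reduce to a length-counting assertion about $\ntDsheaf$-modules on the flag variety $G/B$, decomposing $V_i\otimes F$ into its generalized infinitesimal character components, each realized as global sections of a holonomic $\ntDsheaf$-module whose length is controlled by the uniformly bounded family.

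First I would reduce to the case where $V_i$ is irreducible. By uniform boundedness, $\Len_{\lie{g}}(V_i)$ is bounded by some constant $D$, and since $F$ is $\CC$-flat, length is subadditive on the composition series of $V_i$ tensored with $F$. Hence it suffices to prove $\Len_{\lie{g}}(W\otimes F) \leq C_0 \dim(F)^2$ for an irreducible composition factor $W$ of any $V_i$, with $C_0$ independent of $W$. By the definition of uniform boundedness, $W\simeq \sect(\calN)$ for a holonomic $\ntDsheaf_{G/B,\lambda+\rho}$-module $\calN$ in the uniformly bounded family (with $\lambda$ anti-dominant), where $\Len_{\ntDsheaf}(\calN) \leq C_0$ uniformly.

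Next, decompose $W\otimes F = \bigoplus_{[\nu]} (W\otimes F)_{[\nu]}$ by generalized infinitesimal character, indexed by $W_G$-orbits $[\nu]$ of the finite set $\lambda+\mathrm{wts}(F)$; at most $\dim F$ such orbits appear. For each orbit pick an anti-dominant representative $\nu_0\in[\nu]$, so that Fact~\ref{fact:BeilinsonBernstein} lets us write $(W\otimes F)_{[\nu]} \simeq \sect(\calM_{[\nu]})$ for a holonomic $\ntDsheaf_{G/B,\nu_0+\rho}$-module $\calM_{[\nu]}$. To bound $\Len_{\ntDsheaf}(\calM_{[\nu]})$, I would implement the translation principle at the sheaf level: the $\ntDsheaf_{G/B,\nu_0+\rho}$-module
\[
\calN \otimes_{\rsheaf{G/B}} \calL_{\nu_0-\lambda} \otimes_{\rsheaf{G/B}} \calV_F,
\]
where $\calL_{\nu_0-\lambda}$ is the $G$-equivariant line bundle of weight $\nu_0-\lambda$ and $\calV_F := G\times_B F$ is the $G$-equivariant vector bundle of rank $\dim F$ associated to $F$, has length at most $\dim(F)\cdot C_0$. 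Indeed, tensoring with the invertible sheaf $\calL_{\nu_0-\lambda}$ is an equivalence of categories and preserves length, while tensoring with the locally free $\calV_F$ of rank $\dim F$ multiplies length by at most $\dim F$. Consequently the subquotient $\calM_{[\nu]}$ realizing the $[\nu]$-infinitesimal character projection satisfies $\Len_{\ntDsheaf}(\calM_{[\nu]}) \leq \dim(F)\cdot C_0$.

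Since $\nu_0$ is anti-dominant, Fact~\ref{fact:BeilinsonBernstein} gives exactness of $\sect$ on $\Mod_{qc}(\ntDsheaf_{G/B,\nu_0+\rho})$, hence $\Len_{\lie{g}}((W\otimes F)_{[\nu]}) \leq \Len_{\ntDsheaf}(\calM_{[\nu]}) \leq \dim(F)\cdot C_0$. Summing over the at most $\dim F$ orbits $[\nu]$ yields $\Len_{\lie{g}}(W\otimes F) \leq C_0\dim(F)^2$, and combined with the bound $\Len_{\lie{g}}(V_i)\leq D$ this gives $\Len_{\lie{g}}(V_i\otimes F) \leq D C_0 \dim(F)^2$, as desired. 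The hard part will be the sheaf-theoretic implementation of the translation principle --- that is, realizing each component $(W\otimes F)_{[\nu]}$ as $\sect(\calM_{[\nu]})$ for $\calM_{[\nu]}$ built functorially from the displayed sheaf, and verifying that the $\calM_{[\nu]}$'s for different $[\nu]$ together account for all of $W\otimes F$. This amounts to keeping track of the filtration of $\calV_F$ by $G$-equivariant line bundles $\calL_\mu$ (indexed by $B$-weights $\mu$ of $F$) and carefully matching the generalized infinitesimal character projection with the chosen anti-dominant twist $\nu_0+\rho$ so that the length-preserving exactness of $\sect$ remains available.
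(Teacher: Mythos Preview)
Your approach has a genuine gap in the key sheaf-theoretic step. The displayed object
\[
\calN \otimes_{\rsheaf{G/B}} \calL_{\nu_0-\lambda} \otimes_{\rsheaf{G/B}} \calV_F
\]
is \emph{not} a $\ntDsheaf_{G/B,\nu_0+\rho}$-module in the way you need. Since $F$ is a $G$-module, $\calV_F$ is the trivial bundle $\rsheaf{G/B}\otimes_\CC F$; tensoring with it does give a $\ntDsheaf_{G/B,\nu_0+\rho}$-module of length $\dim(F)\cdot C_0$, but the resulting $\lie{g}$-action on global sections is the action on the $\calN$-factor alone, not the diagonal action that defines $W\otimes F$. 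The diagonal $\lie{g}$-action does not factor through any single $\ntDsheaf_{G/B,\nu_0+\rho}$, so your length bound does not transfer. (There is a secondary issue: for non-integral $\lambda$ the anti-dominant representative $\nu_0=w(\lambda+\mu)$ generally has $\nu_0-\lambda$ non-integral, so $\calL_{\nu_0-\lambda}$ does not exist.)

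This is not just a technical wrinkle: the bound $\Len_{\lie{g}}((W\otimes F)_{[\nu]})\le \dim(F)\cdot C_0$ is false. Take $\lie{g}=\sl_2$, $W=M(-1)$ the irreducible Verma module of highest weight $-1$ (so $C_0=1$), and $F$ the standard $2$-dimensional representation. Then $W\otimes F$ sits in $0\to M(-2)\to W\otimes F\to M(0)\to 0$, has composition factors $L(0),L(-2),L(-2)$, and a \emph{single} generalized infinitesimal character. Thus the unique piece has $\lie{g}$-length $3>2=\dim(F)\cdot C_0$. Your per-piece estimate fails, and with it the summation argument.

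The paper proceeds differently and avoids localizing $W\otimes F$ directly. It uses Kostant's theorem to bound the nilpotency order of $\univcent{g}$ on each generalized eigenspace $W_j$, reducing to bounding $\Len_{\lie{g}}\bigl((V_i\otimes F)/\calI_j(V_i\otimes F)\bigr)$. For this it invokes Corollary~\ref{cor:uniformlyBoundedLengthInfChar}, which gives a uniform bound on the length \emph{as a} $\univ{g}\otimes\univ{g\oplus g}^{\Delta(G)}$-module; the passage to $\lie{g}$-length then costs a factor of $\dim(F)$ because the action of $\univ{g\oplus g}^{\Delta(G)}$ factors through an algebra of dimension $\le\dim(F)^2$. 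Summing over at most $\dim(F)$ pieces yields the $\dim(F)^2$ bound. The paper does remark afterward that a proof via twisting of $\ntDsheaf$-modules or projective functors is possible, so your direction is not unreasonable --- but it requires the intertwining-functor machinery to move between non-anti-dominant twists, not the na\"ive tensor product you wrote down.
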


\begin{proof}
	Clearly, we can assume that $F$ is completely reducible.
	Since the lengths of all $V_i$ are bounded by a constant independent of $i \in I$, we can also assume that all $V_i$ are irreducible.
	Set $n:=\dim_{\CC}(F)$.

	Fix $i \in I$.
	By Kostant's theorem \cite[Theorem 7.133]{KnVo95_cohomological_induction}, $V_i\otimes F$ is a direct sum of finitely many submodules $W_1, W_2, \ldots, W_m$ with generalized infinitesimal characters $\chi_1, \chi_2, \ldots, \chi_m$, respectively.
	More precisely, we have $m \leq n$ and $\calI_{j}^{|W_{\lie{g}}|} W_j = 0$ for any $1\leq j \leq m$,
	where $\calI_{j}$ is the maximal ideal of $\univcent{g}$ corresponding to $\chi_j$ and $W_{\lie{g}}$ is the Weyl group of $\lie{g}$.
	There is a $\lie{g}$-module surjection
	\begin{align*}
		\calI_j^{k} \otimes (W_j/\calI_j W_j) \twoheadrightarrow \calI_j^k W_j / \calI_j^{k+1} W_j
	\end{align*}
	for any $k \in \NN$, and $\calI_j^{k}$ is generated by $r^k$ elements as a $\univcent{g}$-module.
	Here $r$ is the rank of $\lie{g}$.
	Hence we have
	\begin{align}
		\Len_{\lie{g}}(V_i\otimes F) &= \sum_j \Len_{\lie{g}}(W_j) \\
		&\leq C' \cdot \sum_j \Len_{\lie{g}}(W_j/\calI_j W_j) \\
		&=C' \cdot \sum_j \Len_{\lie{g}}((V_i\otimes F)/\calI_j (V_i\otimes F)), \label{eqn:TranslationLengthInf}
	\end{align}
	where $C'$ is a constant depending only on $|W_{\lie{g}}|$ and $r$.
	
	We shall estimate $\Len_{\lie{g}}((V_i\otimes F)/\calI_j (V_i\otimes F))$.
	By Corollary \ref{cor:uniformlyBoundedLengthInfChar}, there exists a constant $C''$ depending only on the family $(V_i)_{i \in I}$ such that
	\begin{align}
		\Len_{\univ{g}\otimes \univ{g\oplus g}^{\Delta(G)}}((V_i\otimes F)/\calI_j (V_i\otimes F)) \leq C'' \label{eqn:TranslationLength}
	\end{align}
	for any $j$.
	Here the action of $\univ{g\oplus g}^{\Delta(G)}$ on $V_i\otimes F$ factors through
	\begin{align*}
		(\univ{g}/\Ann_{\univ{g}}(V_i)\otimes \End_{\CC}(F))^{\Delta(G)}.
	\end{align*}
	Take a maximal torus $T$ of $G$.
	Since $\univ{g}/\Ann_{\univ{g}}(V_i)$ is isomorphic to a submodule of $\rring{G/T}$ as a $G$-module (see \cite[Theorem 12.4.2]{GoWa09}), we have
	\begin{align*}
		\dim_{\CC}((\univ{g}/\Ann_{\univ{g}}(V_i)\otimes \End_{\CC}(F))^{\Delta(G)}) \leq \dim_{\CC}(\End_{T}(F)) \leq \dim_{\CC}(F)^2.
	\end{align*}
	In particular, the dimension of any irreducible module of $(\univ{g}/\Ann_{\univ{g}}(V_i)\otimes \End_{\CC}(F))^{\Delta(G)}$ is less than or equal to $\dim_{\CC}(F)$.
	By \eqref{eqn:TranslationLength}, we have
	\begin{align*}
		\Len_{\lie{g}}((V_i\otimes F)/\calI_j (V_i\otimes F)) \leq C'' \cdot \dim_{\CC}(F).
	\end{align*}
	Combining \eqref{eqn:TranslationLengthInf}, we obtain
	\begin{equation*}
		\Len_{\lie{g}}(V_i\otimes F) \leq C'C''\cdot \dim_{\CC}(F)^2. \qedhere
	\end{equation*}
\end{proof}

One can prove and refine Lemma \ref{lem:TranslationLength} using twisting of $\ntDsheaf$-modules on the flag variety of $\lie{g}$ or the theory of projective functors \cite{BeGe80_projective_functor}.
For $(\lie{g}, K)$-modules, a more precise estimate is known \cite[Proposition 5.4.1 and its proof]{Ko08}.

\begin{theorem}\label{thm:UniformlyBoundedTranslation}
	Let $(V_i)_{i \in I}$ be a uniformly bounded family of $\lie{g}$-modules and $(F_j)_{j \in J}$ a family of finite-dimensional $\lie{g}$-modules with bounded dimensions.
	Then $(V_i\otimes F_j)_{i\in I, j\in J}$ is a uniformly bounded family of $\lie{g}$-modules.
\end{theorem}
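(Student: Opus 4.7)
The first condition in the definition of a uniformly bounded family of $\lie{g}$-modules, namely that $\Len_{\lie{g}}(V_i \otimes F_j)$ is uniformly bounded, follows at once from Lemma \ref{lem:TranslationLength}. Applied to each composition factor of $V_i$ in turn, this lemma yields $\Len_{\lie{g}}(V_i \otimes F_j) \leq C \cdot \Len_{\lie{g}}(V_i) \cdot \dim_{\CC}(F_j)^2$ for a constant $C$ depending only on the uniformly bounded family $(V_i)$. Both factors on the right are bounded by hypothesis, so condition (i) holds.

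\textbf{Realization of composition factors.} For condition (ii) the plan is to realize all composition factors of all $V_i \otimes F_j$ as sections of irreducible twisted $\ntDsheaf$-modules belonging to a single uniformly bounded family on $G/B$. Since $\lie{g}$ is reductive, each $F_j$ decomposes as a direct sum of finitely many irreducible $G$-modules, and the bound on $\dim_{\CC}(F_j)$ implies that only finitely many isomorphism classes $F^{(1)}, \ldots, F^{(m)}$ of irreducibles occur among these summands. It therefore suffices to treat $(V_i \otimes F)_{i\in I}$ for each fixed irreducible $F = F^{(k)}$, and then to combine the finitely many uniformly bounded families obtained this way.

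For fixed irreducible $F$, the composition factors of $V_i \otimes F$ have infinitesimal characters of the form $\lambda(r) + \mu$, where $\lambda(r)$ is the infinitesimal character of a composition factor of $V_i$ (coming from the uniformly bounded family $(\calN_r)_r$ of $\ntDsheaf_{G/B,\lambda(r)+\rho}$-modules) and $\mu$ is a weight of $F$. The plan is to build the required $\ntDsheaf$-module realizations of these composition factors by combining three operations, each of which preserves uniform boundedness: (a) tensor product with a $G$-equivariant $\ntDsheaf$-module $\calF$ realizing $F$ on $G/B$, obtained as the inverse image along the diagonal $\Delta\colon G/B \to G/B \times G/B$ of $\calN_r \boxtimes \calF$ (uniformly bounded by Theorem \ref{thm:TensorUniformlyBounded} and Theorem \ref{thm:FunctorOnUniformlyBounded}); (b) twisting by $G$-equivariant line bundles $\calL_\mu$ for the weights $\mu$ of $F$ (preserving uniform boundedness by the results of Subsection \ref{sect:twisting}); and (c) intertwining by simple Weyl reflections, realized as push--pull along the partial flag projections $G/B \to G/P_\alpha$, to move each twist $\lambda(r) + \mu + \rho$ into the anti-dominant chamber (Theorem \ref{thm:FunctorOnUniformlyBounded}).

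\textbf{Main obstacle.} The difficulty lies in Part II, and more specifically in making the last paragraph rigorous. A single tensor product $\calN_r \otimes_{\rsheaf{G/B}} \calF$ only lives over one twist and hence its global sections capture only one infinitesimal character component of $V_i \otimes F$, not the whole module; one must therefore carefully track how the various generalized infinitesimal character pieces from Kostant's decomposition are matched by suitable combinations of line bundle twists and intertwining functors, and ensure that each composition factor of $V_i \otimes F$ is hit by some member of the resulting family of D-modules on $G/B$. Handling singular or non-integral $\lambda(r) + \mu$, where the translation principle behaves less uniformly and the intertwining along $G/P_\alpha$ acquires higher cohomology, is the main technical point that requires the use of Lemma \ref{lem:AdditiveFunctionDerived} and the behaviour of uniform boundedness under direct/inverse image on partial flag varieties.
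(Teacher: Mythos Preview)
Your length bound (condition (i)) via Lemma~\ref{lem:TranslationLength} is correct and matches the paper.

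For condition (ii), however, you take a very different route from the paper, and your own ``Main obstacle'' paragraph is honest: what you have written is a plan, not a proof. Tracking which twist each composition factor lands in, moving parameters to the anti-dominant chamber via push--pull along $G/B\to G/P_\alpha$, and controlling the higher cohomology at singular walls would all require substantial additional work that you have not carried out. So as it stands there is a genuine gap.

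The paper sidesteps all of this. Its argument for condition (ii) is short: the family $(F_j)_{j\in J}$, having bounded dimension, is itself a uniformly bounded family of $\lie{g}$-modules, so by Proposition~\ref{prop:FundamentailUniformlyBoundedGH}\,(iv) the family $(V_i\boxtimes F_j)_{i,j}$ is a uniformly bounded family of $(\lie{g}\oplus\lie{g})$-modules. Now apply (the proof of) Theorem~\ref{thm:BranchingUniformlyBounded} with ambient group $G\times G$ and subgroup $G'=\Delta(G)$: every composition factor $W$ of $V_i\otimes F_j$ is an irreducible $\Delta(\lie{g})$-subquotient of $(V_i\boxtimes F_j)/\calI(V_i\boxtimes F_j)$ for some maximal ideal $\calI\subset\univcent{\Delta(g)}$, and the mechanism of Lemma~\ref{lem:FindimQuot} (which still works for subquotients under the finite-length assumption supplied by Lemma~\ref{lem:TranslationLength}) exhibits $W$ as a factor of a uniformly bounded family built from $\Dzuck{\iota(\Delta(G))}{\{e\}}{n}(\univ{\Delta(g)}/\calI\otimes (V_i\boxtimes F_j))$. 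Thus the whole collection of composition factors is uniformly bounded.

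The contrast is instructive. Your approach tries to realize composition factors by explicit $\ntDsheaf$-module operations on a single copy of $G/B$ (tensor, line-bundle twist, intertwining), which forces you to confront wall-crossing and singular parameters head-on. The paper instead passes to the product $G/B\times G/B$ and lets the already-established machinery---uniform boundedness of $(\univ{g}/\calI_\chi)_\chi$ (Proposition~\ref{prop:UniformlyBoundedMinimalPrimitive}) and preservation under the localized Zuckerman functor (Theorem~\ref{thm:UniformlyBoundedFamilyBernstein})---do the work uniformly in all parameters at once. No intertwining functors, no case analysis by chamber.
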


\begin{proof}
	For $i\in I$ and $j\in J$, let $R_{ij}$ be the set of all composition factors of $V_i\otimes F_j$.
	By Lemma \ref{lem:TranslationLength}, the lengths of all $V_i\otimes F_j$ are bounded by a constant independent of $i \in I$ and $j \in J$.
	Hence it suffices to show that the family $(W)_{W \in R_{ij}, i\in I, j\in J}$ is uniformly bounded.

	As we have seen in the proof of Lemma \ref{lem:TranslationLength}, any element of $R_{ij}$ is a subquotient of $(V_i\otimes F_j)/\calI(V_i\otimes F_j)$ for a maximal ideal $\calI$ of $\univcent{g}$.
	By Theorem \ref{thm:BranchingUniformlyBounded}, the family $(W)_{W \in R_{ij}, i\in I, j\in J}$ is uniformly bounded.
	Note that although we have proved Theorem \ref{thm:BranchingUniformlyBounded} for a family of irreducible quotients, the proof also works for a family of irreducible subquotients under some finiteness assumption.
\end{proof}

\subsection{Category of \texorpdfstring{$(\lie{g}, \lie{k})$}{(g,k)}-modules}

Let $G$ be a connected reductive algebraic group and $K$ a finite covering of a connected reductive subgroup of $G$.
Suppose that $[K, K]$ is simply-connected.
We denote by $\calC(\lie{g}, \lie{k})$ the full subcategory of $\Mod(\lie{g})$ whose object is 
\begin{enumerate}[(i)]
	\item of finite length,
	\item locally finite and completely reducible as a $\lie{k}$-module, and
	\item $\lie{k}$-admissible, i.e.\ any $\lie{k}$-isotypic component is finite dimensional.
\end{enumerate}
Such a module is called a generalized Harish-Chandra module by I. Penkov and G. Zuckerman (see e.g.\ \cite{PeZu14}).
We write $\calC_\chi(\lie{g}, \lie{k})$ for the full subcategory of $\calC(\lie{g}, \lie{k})$ whose object has the infinitesimal character $\chi$.

In this subsection, we study the category $\calC_{\chi}(\lie{g}, \lie{k})$.
It is related to the branching problem and harmonic analysis because the algebra $\univ{g}^\lie{k}$ roughly controls multiplicities and its modules can be obtained from the $\Delta(K)$-invariant part of $(\lie{g\oplus k}, \Delta(K))$-modules.
See Theorem \ref{thm:uniformlyBoundedLengthGeneral}.

\begin{theorem}\label{thm:twosidedidal}
Let $\calI$ (resp.\ $\calJ$) be a maximal ideal of $\univcent{g}$ (resp.\ $\univcent{k}$).
The number of two sided ideals of $\univ{g}^{K}/(\calI+\calJ)\univ{g}^{K}$
is bounded by some constant independent of $\calI$ and $\calJ$.
\end{theorem}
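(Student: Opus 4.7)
My plan is to bound the length of $B_{\calI,\calJ} := \univ{g}^{K}/(\calI+\calJ)\univ{g}^{K}$ uniformly in $\calI$ and $\calJ$, and then convert this into a bound on the number of two-sided ideals. The strategy parallels the second assertion of Proposition \ref{prop:UniformlyBoundedMinimalPrimitive}, extended to handle central ideals on both the $\lie{g}$ and $\lie{k}$ sides simultaneously.

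First, since every maximal ideal $\calI \subset \univcent{g}$ is contained in at most $|W_{G}|$ minimal primitive ideals $\calI_\chi \subset \univ{g}$, it suffices to bound the number of two-sided ideals of $B_{\calI_\chi \cap \univcent{g}, \calJ}$ uniformly in $\chi$ and $\calJ$. By Proposition \ref{prop:UniformlyBoundedMinimalPrimitive}, the family $(\univ{g}/\calI_\chi)_\chi$ is uniformly bounded as $(\univ{g}\otimes\univ{g}^{\opalg}, \Delta(G))$-modules, hence also as $(\univ{g}\otimes\univ{g}^{\opalg}, \Delta(K))$-modules after restricting the equivariant structure along $\Delta(K) \subset \Delta(G)$. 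Applying the same proposition to $K$ in place of $G$, together with Proposition \ref{prop:FundamentailUniformlyBoundedGH}(i) to absorb extensions between minimal primitive quotients, the family $(\univ{k}/\calJ\univ{k})_\calJ$ is uniformly bounded as $(\lie{k}\oplus\lie{k}, \Delta(K))$-modules, and in particular as $\Delta(\lie{k})$-modules under the adjoint action.

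Next, apply Theorem \ref{thm:uniformlyBoundedLengthGeneral} with ambient Lie algebra $\lie{g}\oplus\lie{g}$ and reductive subgroup $\Delta(K)$ to the uniformly bounded families $V_\chi = \univ{g}/\calI_\chi$ and $V'_\calJ = \univ{k}/\calJ\univ{k}$: this produces a uniform bound on $\Len_{(\univ{g}\otimes\univ{g}^{\opalg})^{\Delta(K)}} \Tor^{\calU(\Delta(\lie{k}))}_j(V_\chi, V'_\calJ)$ for all $j$, $\chi$, and $\calJ$. The $j=0$ coinvariants admit a natural $\univ{g}^{K}$-bimodule morphism whose image contains the class of $1$ and therefore surjects onto $B_{\calI_\chi \cap \univcent{g}, \calJ}$, yielding a uniform bound on the length of $B_{\calI,\calJ}$ as a module over an algebra containing $\univ{g}^{K} \otimes (\univ{g}^{K})^{\opalg}$.

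The main obstacle is this final conversion, since $\univ{g}^{K} \otimes (\univ{g}^{K})^{\opalg}$ is a proper subalgebra of $(\univ{g}\otimes\univ{g}^{\opalg})^{\Delta(K)}$, and a bound on length over the larger algebra does not automatically bound the number of $\univ{g}^{K}$-sub-bimodules (equivalently, two-sided ideals of $B_{\calI,\calJ}$). To bridge this gap, I plan to count primitive ideals directly: an irreducible $B_{\calI,\calJ}$-module extends to an irreducible $(\lie{g}\oplus\lie{k}, \Delta(K))$-module with prescribed central characters on both sides, and a uniformly bounded family argument on the flag variety of $\lie{g}\oplus\lie{k}$ bounds the number of such extensions uniformly in $(\calI,\calJ)$. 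A semiprimary-type decomposition of $B_{\calI,\calJ}$ — invoking one-dimensional endomorphism algebras for simple bimodule factors (Schur's lemma for enveloping-algebra quotients) together with an induction on the Loewy length to handle the Jacobson radical — then promotes the bound on primitive ideals to a bound on the full lattice of two-sided ideals.
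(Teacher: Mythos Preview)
Your overall strategy matches the paper's, but the application of Theorem~\ref{thm:uniformlyBoundedLengthGeneral} takes a wrong turn at the choice of subgroup: you use $\Delta(K)\subset G\times G$, whereas the paper uses $K\times K$. Since $\univ{g\oplus g}^{K\times K}=\univ{g}^K\otimes\univ{g}^K$, the paper obtains the length bound directly over the bimodule algebra for $\univ{g}^K$, and your ``main obstacle'' simply never arises. Concretely, the paper rewrites $B_{\calI,\calJ}\simeq(\univ{k}/\calJ\univ{k}\otimes_{\univ{k}}\univ{g}/\calI\univ{g})^K$ as a $(\univ{g}^K,\univ{g}^K)$-bimodule, observes via Proposition~\ref{prop:UniformlyBoundedMinimalPrimitive} that $(\univ{g}/\calI\univ{g})_{\calI}$ and $(\univ{k}/\calJ\univ{k})_{\calJ}$ are uniformly bounded families of $(\lie{g}\oplus\lie{g})$-modules and $(\lie{k}\oplus\lie{k})$-modules respectively, and then applies Theorem~\ref{thm:uniformlyBoundedLengthGeneral} to the pair $(G\times G,\,K\times K)$ to bound $\Len_{\univ{g}^K\otimes\univ{g}^K}(B_{\calI,\calJ})$ uniformly. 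The key point you miss is that $\univ{k}/\calJ\univ{k}$ should enter the argument as a $(\lie{k}\oplus\lie{k})$-module via the two one-sided actions, not merely as a $\Delta(\lie{k})$-module under the adjoint action; you even record the $(\lie{k}\oplus\lie{k})$-structure and then discard it.

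Your proposed workaround in the final paragraph does not rescue the $\Delta(K)$ route. A uniform bound on the number of primitive ideals together with a bound on the Loewy length does not in general bound the cardinality of the lattice of two-sided ideals: the path algebra of the Kronecker quiver $\bullet\rightrightarrows\bullet$ has exactly two primitive ideals and Loewy length two, yet its radical is a direct sum of two isomorphic simple bimodules and therefore contains a $\mathbb{P}^1$ of two-sided ideals. So the ``semiprimary-type decomposition'' and Schur's lemma are insufficient on their own; you would additionally need a multiplicity-one statement for the simple bimodule composition factors, which you do not address.
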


\begin{proof}
We have a $(\univ{g}^{K}, \univ{g}^K)$-bimodule isomorphism
\begin{align*}
\univ{g}^{K}/(\calI+\calJ)\univ{g}^{K} &= (\univ{g}/(\calI\univ{g}+\calJ\univ{g}))^{K} \\
&\simeq (\univ{k}/\calJ \univ{k}\otimes_{\univ{k}}\univ{g}/\calI\univ{g})^{K}.
\end{align*}
By Proposition \ref{prop:UniformlyBoundedMinimalPrimitive}, $(\univ{g}/\calI\univ{g})_\calI$ and $(\univ{k}/\calJ \univ{k})_\calJ$
are uniformly bounded families of $(\lie{g\oplus g})$-modules and $(\lie{k\oplus k})$-modules, respectively.
By applying Theorem \ref{thm:uniformlyBoundedLengthGeneral} to $\univ{k}/\calJ \univ{k}\otimes \univ{g}/\calI\univ{g}$, there exists some constant $C$ independent of $\calI$ and $\calJ$ such that
\begin{align*}
\Len_{\univ{g}^K\otimes \univ{g}^K}((\univ{k}/\calJ \univ{k}\otimes_{\univ{k}}\univ{g}/\calI\univ{g})^{K}) \leq C.
\end{align*}
This shows the theorem.
\end{proof}

\begin{remark}
If $\lie{k}$ does not contain any non-trivial ideal of $\lie{g}$,
the center of $\univ{g}^{K}$ is equal to $\univcent{g}\univcent{k}\simeq \univcent{g}\otimes \univcent{k}$ by \cite[Theorem 10.1]{Kn94}.
\end{remark}

Let $\calI_\chi$ be the minimal primitive ideal of $\univ{g}$ with infinitesimal character $\chi$.

\begin{theorem}
	Any family of objects in $\calC(\lie{g}, \lie{k})$ with bounded lengths is a uniformly bounded family.
	In particular, for any irreducible object $V \in \calC_{\chi}(\lie{g}, \lie{k})$, there exist an anti-dominant $\lambda \in \chi$ and some $\calM \in \Mod_h(\ntDsheaf_{G/B,\lambda+\rho})$ such that $V\simeq \sect(\calM)$.
	(See Subsection \ref{sect:BBcorrespondence} for the notation.)
\end{theorem}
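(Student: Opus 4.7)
The plan is to derive both assertions directly from Proposition \ref{prop:UniformlyBoundedG-modSpherical}. The essential observation is that every object $V \in \calC(\lie{g}, \lie{k})$ is a $\lie{g}$-module whose $\lie{k}$-action is locally finite (by condition (ii) in the definition of $\calC(\lie{g}, \lie{k})$), and therefore qualifies as a $(\lie{g}, \lie{k})$-module in the sense required by that proposition. The complete-reducibility and $\lie{k}$-admissibility hypotheses on $\calC(\lie{g}, \lie{k})$ play no role in the uniform-boundedness argument itself, although they ensure that the category has the structural properties exploited in the subsequent corollaries.

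For the first assertion, given a family $(V_i)_{i\in I}$ in $\calC(\lie{g}, \lie{k})$ with $\sup_i \Len_{\lie{g}}(V_i) < \infty$, I would invoke Proposition \ref{prop:UniformlyBoundedG-modSpherical} to conclude that $(V_i)_{i\in I}$ is uniformly bounded.

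For the second assertion, I would apply the first to the singleton family $\set{V}$, which trivially has bounded length. Unwinding the definition of uniformly bounded family of $\lie{g}$-modules from Subsection \ref{sect:BBcorrespondence}, there exist a family $(\lambda(r))_{r\in R}$ of anti-dominant weights of $\lie{t}$ and a uniformly bounded family $\calN \in \Mod_{ub}(\ntDsheaf_{G/B, \lambda(r)+\rho}, \calB(G/B, G))$ such that every composition factor of $V$ is isomorphic to $\sect(\calN_r)$ for some $r$. Since $V$ is irreducible, it is its own sole composition factor, so $V \simeq \sect(\calM)$ with $\calM := \calN_r \in \Mod_h(\ntDsheaf_{G/B, \lambda(r)+\rho})$. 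The requirement $\lambda(r) \in \chi$ is then forced by Fact \ref{fact:BeilinsonBernstein}(i): $\univ{g}$ acts on $\sect(\calM)$ through the quotient by the minimal primitive ideal with infinitesimal character $\lambda(r)$, so $\lambda(r)$ must represent the $W_G$-orbit $\chi$.

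No serious obstacle arises; the theorem is essentially a repackaging of Proposition \ref{prop:UniformlyBoundedG-modSpherical} and the definition of uniformly bounded family into the language of the category $\calC(\lie{g}, \lie{k})$. The genuine technical work — construction of the bornology $\calB(G/B, G)$, preservation of uniform boundedness under direct and inverse images, realization of irreducible equivariant $\ntDsheaf$-modules via Corollary \ref{cor:UniformlyBoundedIrreducibles}, and the argument producing a twisted localization of an irreducible $(\lie{g}, \lie{k})$-module — has already been carried out in establishing that proposition.
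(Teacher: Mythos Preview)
There is a genuine gap. Proposition \ref{prop:UniformlyBoundedG-modSpherical} is stated under the standing hypothesis, introduced just before Proposition \ref{prop:TorUniformlyBoundedGmod}, that $K$ has finitely many orbits in the flag variety $G/B$. In Section 7.8 no such assumption is made: $K$ is merely a finite covering of an arbitrary connected reductive subgroup of $G$, and a general reductive subgroup need not act with finitely many orbits on $G/B$. Your direct appeal to Proposition \ref{prop:UniformlyBoundedG-modSpherical} therefore invokes a hypothesis that is not available, and the argument collapses at the first step.

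Contrary to your claim, the $\lie{k}$-admissibility hypothesis is exactly what the paper uses to compensate for the missing finite-orbit condition. The paper's proof proceeds as follows: after twisting by a character so that $V$ becomes a genuine $(\lie{g},K)$-module, one forms $W:=\Dzuck{K}{\{e\}}{n}(\univ{g}/\calI_\chi)$ and shows that $V\boxtimes \dual{V}$ is a subquotient of $W$. Here $\dual{V}$ is the $K$-finite dual, whose irreducibility relies on $K$-admissibility, and the embedding uses that $\Hom_K(F,V)$ is a finite-dimensional irreducible $\univ{g}^K$-module so that the Jacobson density theorem applies. Since $(\univ{g}/\calI_\chi)_\chi$ is uniformly bounded by Proposition \ref{prop:UniformlyBoundedMinimalPrimitive} and the Zuckerman functor preserves uniform boundedness (Proposition \ref{prop:FundamentalUniformlyBoundedGmod}(i)), the family $(W)_\chi$ is uniformly bounded; extracting the first external factor via Proposition \ref{prop:FundamentailUniformlyBoundedGH}(\ref{enum:prop:UniformlyBoundedGH}) finishes the argument. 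The admissibility is thus not incidental but the substitute for the spherical hypothesis you implicitly assumed.
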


\begin{remark}
	The second assertion has been proved by A. V. Petukhov \cite{Pe12}.
	More precisely, one can see from our proof that $\calM$ is regular holonomic.
\end{remark}

\begin{proof}
	It is enough to show that the family of all irreducible objects in $\calC(\lie{g}, \lie{k})$ (modulo isomorphism) is uniformly bounded.

	Let $V$ be an irreducible object in $\calC(\lie{g}, \lie{k})$ with infinitesimal character $\chi$.
	Then we can take a character $\mu$ of $\lie{k}$ such that $V\otimes \CC_\mu$
	lifts to a $K$-module (see the proof of Proposition \ref{prop:UniformlyBoundedG-modSpherical}).
	Since $V\otimes \CC_\mu$ is an irreducible $(\lie{k}/[\lie{k},\lie{k}]\oplus \lie{g}, \Delta(K))$-module, replacing $V$ by $V\otimes \CC_\mu$ and $(\lie{g}, \lie{k})$ by $(\lie{k}/[\lie{k},\lie{k}]\oplus \lie{g}, \Delta(K))$, we can assume that $V$ is an irreducible $(\lie{g}, K)$-module.
	See also the proof of Corollary \ref{cor:UniformlyBoundedIrreducibles}.

	We put $W:=\Dzuck{K}{\set{e}}{n}(\univ{g}/\calI_\chi)$, where $n = \dim_{\CC}(\lie{k})$ and we take the functor $\Dzuck{K}{\set{e}}{n}$ with respect to the left $\lie{k}$-action.
	Then for any irreducible $K$-module $F$, we have
	\begin{align*}
		\Hom_{K}(F, W) \simeq F^*\otimes_{\univ{k}} \univ{g}/\calI_\chi
	\end{align*}
	by Fact \ref{fact:BernAndF}.
	This implies that $W$ is a $(\lie{g\oplus g}, K\times K)$-module.
	
	$\dual{V}$ denotes the subspace of all $K$-finite vectors in $V^*$, which is the dual in $\calC(\lie{g}, \lie{k})$.
	It is easy to see that $\dual{V}$ is irreducible by the $K$-admissibility.
	We shall show that $V\boxtimes \dual{V}$ is a subquotient of $W$.

	Fix an irreducible $K$-submodule $F$ of $V$.
	Then we have isomorphisms
	\begin{align*}
		\Hom_{K\times K}(F\boxtimes F^*, W) &\simeq F^*\otimes_{\univ{k}} \univ{g}/\calI_\chi \otimes_{\univ{k}}F \\
		&\simeq (\univ{g}/\calI_\chi \otimes_{\univ{k}} \End_{\CC}(F))^K\\
		&\simeq (\univ{g}/(\calI_\chi + \univ{g}\Ann_{\univ{k}}(F)))^K
	\end{align*}
	as $\univ{g\oplus g}^{K\times K}$-modules.
	Here $\Ann_{\univ{k}}(F)$ denotes the annihilator of $F$ in $\univ{k}$.
	Since $\Hom_{K}(F, V)$ is a finite-dimensional irreducible $\univ{g}^K$-module (see Fact \ref{fact:UgK-module}), we have a surjection
	\begin{align*}
		(\univ{g}/(\calI_\chi + \univ{g}\Ann_{\univ{k}}(F)))^K \rightarrow \End_{\CC}(\Hom_K(F, V))
	\end{align*}
	by the Jacobson density theorem.
	This implies that there is a surjection
	\begin{align*}
		\Hom_{K\times K}(F\boxtimes F^*, W) &\rightarrow \Hom_{K\times K}(F\boxtimes F^*, V\boxtimes \dual{V}) \\
		&\simeq \End_{\CC}(\Hom_K(F, V))
	\end{align*}
	of $\univ{g\oplus g}^{K\times K}$-modules.
	By \cite[Proposition 3.5.4]{Wa88_real_reductive_I}, $V\boxtimes \dual{V}$ is isomorphic to a subquotient of $W$.

	By Propositions \ref{prop:UniformlyBoundedMinimalPrimitive} and \ref{prop:FundamentalUniformlyBoundedGmod} (i), $(\Dzuck{K}{\set{e}}{n}(\univ{g}/\calI_\chi))_{\chi}$ is a uniformly bounded family of $(\lie{g\oplus g}, K\times K)$-modules.
	By Proposition \ref{prop:FundamentailUniformlyBoundedGH} (i) the family $(V\boxtimes \dual{V})_{V \in \calS}$ is uniformly bounded, where $\calS$ is the set of all equivalence classes of irreducible $K$-admissible $(\lie{g}, K)$-modules.
	From this and Proposition \ref{prop:FundamentailUniformlyBoundedGH} (\ref{enum:prop:UniformlyBoundedGH}), the family $(V)_{V\in \calS}$ is uniformly bounded.
	This shows the theorem.
\end{proof}

In the proof, we have proved the following corollary.
For a character $\mu$ of $\lie{k}$, we denote by $\calC_{\chi, \mu}(\lie{g}, \lie{k})$ the full subcategory of $\calC_\chi(\lie{g}, \lie{k})$ whose object $V$
satisfies that $V\otimes \CC_\mu$ lifts to a $K$-module.

\begin{corollary}\label{cor:NumberOfIrreducibles}
	Let $\mu$ be a character of $\lie{k}$ and $\chi$ an infinitesimal character of $\lie{g}$.
	The number of equivalence classes of irreducible objects in $\calC_{\chi, \mu}(\lie{g}, \lie{k})$ is bounded by some constant independent of $\mu$ and $\chi$.
\end{corollary}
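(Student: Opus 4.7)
The plan is to observe that the corollary is essentially a byproduct of the argument proving the preceding theorem, combined with a counting argument that bounds the number of distinct irreducibles by the length of a uniformly controlled module.

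First I would reduce to the untwisted case exactly as in the theorem's proof: given $V \in \calC_{\chi, \mu}(\lie{g}, \lie{k})$, the twist $V \otimes \CC_\mu$ is an irreducible $K$-admissible $(\lie{g}', K)$-module for the enlarged pair $(\lie{g}', K) = (\lie{k}/[\lie{k},\lie{k}] \oplus \lie{g}, \Delta(K))$, and this assignment is an equivalence from $\calC_{\chi, \mu}(\lie{g}, \lie{k})$ onto the full subcategory of irreducible $(\lie{g}', K)$-modules with a prescribed infinitesimal character $\chi'$ depending on $(\chi, \mu)$. So it suffices to bound the number of equivalence classes of irreducible $K$-admissible $(\lie{g}', K)$-modules with infinitesimal character $\chi'$, uniformly in $\chi'$.

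Next I would apply the construction already carried out in the theorem's proof with $(\lie{g}, K)$ replaced by $(\lie{g}', K)$. Setting $n = \dim_{\CC}(\lie{k})$ and $W(\chi') = \Dzuck{K}{\{e\}}{n}(\univ{g}'/\calI_{\chi'})$, the theorem's proof shows that for every irreducible $K$-admissible $(\lie{g}', K)$-module $V'$ with infinitesimal character $\chi'$, the $(\lie{g}' \oplus \lie{g}', K \times K)$-module $V' \boxtimes \dual{V'}$ is isomorphic to a subquotient of $W(\chi')$. Since non-isomorphic irreducibles $V'_1, V'_2$ give rise to non-isomorphic irreducible $(\lie{g}' \oplus \lie{g}', K\times K)$-subquotients $V'_i \boxtimes \dual{V'_i}$, the number of such $V'$ is bounded by $\Len_{\lie{g}' \oplus \lie{g}', K\times K}(W(\chi'))$.

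Finally, I would invoke the uniform boundedness results to control this length independently of $\chi'$. By Proposition \ref{prop:UniformlyBoundedMinimalPrimitive} the family $(\univ{g}'/\calI_{\chi'})_{\chi'}$ is uniformly bounded, and by Proposition \ref{prop:FundamentalUniformlyBoundedGmod}\eqref{enum:FundamentalZuckerman} the family $(W(\chi'))_{\chi'} = (\Dzuck{K}{\{e\}}{n}(\univ{g}'/\calI_{\chi'}))_{\chi'}$ is a uniformly bounded family of $(\lie{g}' \oplus \lie{g}', K\times K)$-modules; in particular its lengths are bounded by a constant $C$ independent of $\chi'$, hence independent of $(\chi, \mu)$. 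This yields the desired uniform bound.

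The only mild obstacle is ensuring that the reduction by twisting gives a bound that does not secretly depend on $\mu$: one needs that the construction of $(\lie{g}', K)$ itself does not change with $\mu$ (only the infinitesimal character $\chi'$ does), which is built into the definition of $(\lie{g}', K)$. Once this is observed, the uniform boundedness of the family over all $\chi'$, established by the cited propositions, completes the argument.
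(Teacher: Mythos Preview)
Your proposal is correct and matches the paper's approach: the paper simply records that the corollary was already established during the proof of the preceding theorem, and your extraction makes explicit precisely the two ingredients used there, namely that every irreducible $V'$ contributes a distinct irreducible subquotient $V'\boxtimes\dual{V'}$ of $W(\chi')=\Dzuck{K}{\{e\}}{n}(\univ{g}'/\calI_{\chi'})$, and that the lengths $\Len_{\lie{g}'\oplus\lie{g}',K\times K}(W(\chi'))$ are uniformly bounded by Propositions~\ref{prop:UniformlyBoundedMinimalPrimitive} and~\ref{prop:FundamentalUniformlyBoundedGmod}\eqref{enum:FundamentalZuckerman}.
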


\begin{remark}
	The number of equivalence classes of irreducible objects in $\calC_\chi(\lie{g}, \lie{k})$ may be infinite.
	$(\lie{g}, \lie{k}) = (\sl(2,\CC), \so(2,\CC))$ gives an example.
	See \cite[Theorem 1.3.1]{HoTa92}.
\end{remark}

For a $(\lie{g}, \lie{k})$-module $V$ and an irreducible finite-dimensional $\lie{k}$-module $F$, we denote by $V(F)$ the isotypic component with respect to $F$.
Then $V(F)$ is a $\univ{k}\otimes \univ{g}^K$-module.
It is well-known that the $\lie{g}$-module structure on $V$ is related to the $\univ{k}\otimes \univ{g}^K$-module structure on $V(F)$.
See \cite[Lemma 3.5.3]{Wa88_real_reductive_I} and \cite{LeMc73}.
Recall that $\univ{k}$ and $\univ{g}^K$ are noetherian.

\begin{fact}\label{fact:UgK-module}
	Let $V$ be a $(\lie{g}, \lie{k})$-module and $F$ an irreducible finite-dimensional $\lie{k}$-module.
	For any submodule $W$ of $V(F)$, we have $(\univ{g} W)(F) = W$.
	In particular, the length of $V(F)$ is less than or equal to that of $V$,
	and $V(F)$ is finitely generated if $V$ is finitely generated.
\end{fact}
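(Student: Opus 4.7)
The plan is to establish the core identity $(\univ{g}W)(F) = W$ by reducing to the Lepowsky--McCollum correspondence \cite{LeMc73}, and then to derive both ``in particular'' statements as formal consequences.

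First, the inclusion $W \subseteq (\univ{g}W)(F)$ is immediate from $W \subseteq \univ{g}W \cap V(F)$. For the reverse inclusion, I would replace $V$ by the $\lie{g}$-submodule $V' := \univ{g}V(F) \subseteq V$; since $V(F) \subseteq V'$ we have $V'(F) = V(F)$ and $\univ{g}W \subseteq V'$ is unaffected, so one may assume $V$ is generated by $V(F)$ as a $\lie{g}$-module. Under this assumption, I would invoke the Lepowsky--McCollum correspondence, which in this setting gives a lattice-preserving bijection between $\lie{g}$-submodules of $V$ and $\univ{g}^K$-submodules of $M_0 := \Hom_K(F, V)$, via $U \mapsto \Hom_K(F, U)$ with inverse $W_0 \mapsto \univ{g}\cdot (W_0 \otimes F)$, using the canonical identification $V(F) \simeq M_0 \otimes F$.

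Any $\univ{k} \otimes \univ{g}^K$-submodule $W \subseteq V(F)$ takes the form $W_0 \otimes F$ for a $\univ{g}^K$-submodule $W_0 \subseteq M_0$, since $\univ{k}$-invariance forces a decomposition into copies of $F$. The bijection then yields $\univ{g}W = \univ{g}\cdot (W_0\otimes F)$ and, reversing, $(\univ{g}W)(F) = W_0 \otimes F = W$. For the two consequences, the identity shows that $W \mapsto \univ{g}W$ is an order-preserving injection from the lattice of $\univ{k}\otimes\univ{g}^K$-submodules of $V(F)$ into the lattice of $\lie{g}$-submodules of $V$, with strict inclusions preserved because the $F$-isotypic part recovers $W$. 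This immediately gives $\Len_{\univ{k}\otimes\univ{g}^K}(V(F)) \leq \Len_{\lie{g}}(V)$. When $V$ is finitely generated over $\univ{g}$, the Noetherianity of $\univ{g}$ makes $V$ Noetherian; transferring the ascending chain condition along the injection, and using Noetherianity of $\univ{k}\otimes\univ{g}^K$, yields finite generation of $V(F)$.

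The main obstacle is the Lepowsky--McCollum correspondence itself. Its substance is to show that, when $V$ is generated by $V(F)$, a subspace $W_0 \subseteq M_0$ is $\univ{g}^K$-invariant precisely when $\univ{g}\cdot(W_0\otimes F) \cap V(F) = W_0\otimes F$; equivalently, the apparent richer action of the Hecke-type algebra $R(\lie{g},K,F) = (\univ{g}\otimes\End(F))^K$ on $M_0$ does not cut out any additional invariant subspaces beyond those invariant under $\univ{g}^K$ alone. Verifying this equality of submodule lattices, via the $K$-isotypic decomposition of $\univ{g}$ and the averaging/reconstruction techniques in \cite{LeMc73}, is the one non-formal step in the argument.
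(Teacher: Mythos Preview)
The paper does not prove this statement: it is presented as a \textbf{Fact} with references to \cite[Lemma 3.5.3]{Wa88_real_reductive_I} and \cite{LeMc73}, and no proof is given. Your sketch correctly reconstructs the standard argument behind those citations --- the reduction to $V = \univ{g}V(F)$, the Lepowsky--McCollum lattice correspondence, and the formal deduction of the two consequences --- so there is nothing to compare against in the paper itself.

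Two minor remarks on your write-up. First, since the statement concerns $(\lie{g},\lie{k})$-modules rather than $(\lie{g},K)$-modules, you should write $\Hom_{\lie{k}}(F,V)$ rather than $\Hom_K(F,V)$; in the paper's setting $K$ is connected so $\univ{g}^K = \univ{g}^{\lie{k}}$, but $F$ is only assumed to be a $\lie{k}$-module. Second, in your finite-generation argument, once the lattice injection transfers the ascending chain condition from $\lie{g}$-submodules of $V$ to $\univ{k}\otimes\univ{g}^K$-submodules of $V(F)$, finite generation of $V(F)$ follows immediately from the fact that a Noetherian module is finitely generated; invoking Noetherianity of the ring $\univ{k}\otimes\univ{g}^K$ is unnecessary (and that ring's Noetherianity is not entirely obvious, though it holds here).
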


\begin{lemma}\label{lem:FiniteUgK}
	Let $F$ be an irreducible $K$-module.
	Then $\univ{g}(F)$ with respect to the adjoint action is finitely generated as a left/right $\univ{g}^K$-module.
	In particular, any finitely generated submodule of the $(\univ{g}^K, \univ{g}^K)$-bimodule $\univ{g}$ is finitely generated as a left/right $\univ{g}^K$-module.
\end{lemma}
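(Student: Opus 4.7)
The plan is to pass to the associated graded algebra via the order filtration of $\univ{g}$, apply Hilbert's classical theorem on invariants of reductive groups there, and then lift back. Let $F^\bullet \univ{g}$ denote the order filtration, which is $K$-stable since $K$ acts by Lie algebra automorphisms. The associated graded is $\gr \univ{g} \simeq S(\lie{g})$ as a $K$-algebra. Because $K$ is reductive, the $K$-action is completely reducible on each $F^n \univ{g}$, so the isotypic decomposition commutes with $\gr$; in particular $\gr(\univ{g}(F)) \simeq S(\lie{g})(F)$ and $\gr(\univ{g}^K) \simeq S(\lie{g})^K$.

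First I would handle the graded case. By Hilbert--Nagata, $S(\lie{g})^K$ is a finitely generated $\CC$-algebra, hence Noetherian, and the isotypic component $S(\lie{g})(F) \simeq F \otimes_\CC \Hom_K(F, S(\lie{g})) = F \otimes_\CC (S(\lie{g}) \otimes F^*)^K$ is finitely generated as an $S(\lie{g})^K$-module (by the classical finiteness theorem for the module of covariants). Choose finitely many homogeneous elements $\bar x_1, \dots, \bar x_r$ generating $S(\lie{g})(F)$ over $S(\lie{g})^K$ and lift them to $x_1, \dots, x_r \in \univ{g}(F)$, of respective filtration degrees; by complete reducibility of the $K$-action on each $F^n \univ{g}$, such lifts lie genuinely in the $F$-isotypic component.

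Next I would show by induction on filtration degree that $x_1, \dots, x_r$ generate $\univ{g}(F)$ as a left $\univ{g}^K$-module (the right case is identical). Given $u \in F^n \univ{g}(F)$, write its symbol as $\sum_i \bar a_i \bar x_i$ with $\bar a_i \in S(\lie{g})^K$; lift each $\bar a_i$ to $\tilde a_i \in \univ{g}^K$ via $\gr(\univ{g}^K) \simeq S(\lie{g})^K$, and observe that $u - \sum_i \tilde a_i x_i \in F^{n-1} \univ{g}(F)$, to which the induction applies. This proves the first assertion.

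For the ``in particular'' clause, I would first note that $\univ{g}^K$ is Noetherian because its associated graded $S(\lie{g})^K$ is Noetherian (Hilbert) and filtered-to-graded lifting of left ideals preserves finite generation. Now let $M \subset \univ{g}$ be a sub-bimodule generated over $(\univ{g}^K, \univ{g}^K)$ by $m_1, \dots, m_n$; since $\univ{g}$ is $K$-locally finite, each $m_i$ lies in a finite-dimensional $K$-stable subspace, so we may replace the $m_i$ by their isotypic components and assume each $m_i \in \univ{g}(F_i)$ for some irreducible $K$-module $F_i$. Because left and right multiplications by $\univ{g}^K$ commute with the $K$-action, they preserve isotypic components, so $\univ{g}^K m_i \univ{g}^K \subset \univ{g}(F_i)$. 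Hence $M$ embeds in the finite direct sum $\bigoplus_i \univ{g}(F_i)$, which is finitely generated as a left (or right) $\univ{g}^K$-module by the first part. Noetherianity of $\univ{g}^K$ then forces $M$ itself to be finitely generated on either side.

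The main obstacle is purely in the graded step: one must verify that taking the $F$-isotypic component commutes with the order filtration and with $\gr$, which depends crucially on complete reducibility of $K$ on each $F^n \univ{g}$, and one must cite Hilbert's finiteness of covariants in the form that $\Hom_K(F, S(\lie{g}))$ is a finitely generated $S(\lie{g})^K$-module. Once these points are established, the filtered lifting argument and the Noetherian reduction are routine.
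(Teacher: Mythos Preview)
Your proof is correct and follows essentially the same route as the paper: both reduce the first assertion to the finite generation of the module of $K$-covariants $(F^{*}\otimes\univ{g})^{K}\simeq\Hom_{K}(F,\univ{g})$ over $\univ{g}^{K}$, and the second assertion follows in both by noting that a finitely generated sub-bimodule sits inside a finite sum of isotypic components, together with Noetherianity of $\univ{g}^{K}$. The only difference is presentational: the paper invokes \cite[Lemma~2.2]{Ki14} as a black box for the covariant finiteness at the level of $\univ{g}$, whereas you unpack that lemma by passing to $\gr\univ{g}\simeq S(\lie{g})$, applying Hilbert's classical finiteness of covariants, and lifting generators through the order filtration. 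Your version is more self-contained; the paper's is shorter.
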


\begin{proof}
	$F^*\otimes \univ{g}$ is finitely generated left $\univ{g}$-module.
	By \cite[Lemma 2.2]{Ki14}, $(F^*\otimes \univ{g})^K$ is finitely generated left $\univ{g}^K$-module.
	Since $\univ{g}(F)$ is canonically isomorphic to $F\otimes \Hom_K(F, \univ{g})$ as a $\univ{g}^K$-module, this shows the first assertion.

	Any finitely generated submodule of the $(\univ{g}^K, \univ{g}^K)$-bimodule $\univ{g}$ is contained in a finite sum of some $K$-isotypic components.
	Since $\univ{g}^K$ is noetherian, the second assertion follows from the first one.
\end{proof}

\begin{lemma}\label{lem:FiniteGK}
	Let $V$ be a $(\lie{g}, \lie{k})$-module and $F$ an irreducible finite-dimensional $\lie{k}$-module.
	If $V(F)$ is finite dimensional and generates $V$, then $V$ is in $\calC(\lie{g}, \lie{k})$.
\end{lemma}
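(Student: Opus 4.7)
Condition (ii) of membership in $\calC(\lie{g},\lie{k})$ (local finiteness and complete reducibility under $\lie{k}$) is built into the hypothesis that $V$ is a $(\lie{g},\lie{k})$-module, so I need only verify finite length (i) and $\lie{k}$-admissibility (iii).

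The first step is to observe that $V$ carries a generalized infinitesimal character. Since $\univcent{g}$ is central in $\univ{g}$ it preserves the finite-dimensional subspace $V(F)$, so the $\univcent{g}$-action on $V(F)$ factors through a finite-dimensional quotient; in particular some finite-codimensional ideal $\calJ\subseteq\univcent{g}$ annihilates $V(F)$. Centrality together with $V=\univ{g}\cdot V(F)$ then forces $\calJ\cdot V=0$, and after decomposing $V$ along the finitely many maximal ideals of $\univcent{g}$ containing $\calJ$, I may assume $V$ has a single generalized infinitesimal character.

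For $\lie{k}$-admissibility, fix an irreducible finite-dimensional $\lie{k}$-module $E$ and decompose $\univ{g}=\bigoplus_{E'}\univ{g}(E')$ with respect to the adjoint $\lie{k}$-action, giving $V=\sum_{E'}\univ{g}(E')\cdot V(F)$. The $E$-isotypic $V(E)$ receives contribution only from the finitely many $E'$ with $E\subseteq E'\otimes F$, and Lemma~\ref{lem:FiniteUgK} says each such $\univ{g}(E')$ is finitely generated as a left $\univ{g}^{\lie{k}}$-module; consequently $V(E)$ is finitely generated as a $\univ{g}^{\lie{k}}$-module. Combined with the finite-codimensional annihilator $\calJ\subseteq\univcent{g}\subseteq\univ{g}^{\lie{k}}$ from the previous step, the Lepowsky--McCollum correspondence \cite{LeMc73} (tying the $\lie{k}$-type structure of $V$ to the $\univ{g}^{\lie{k}}$-module structure of $V(F)$) propagates the finite dimensionality of $V(F)$ to the conclusion $\dim_{\CC}V(E)<\infty$.

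Once admissibility is in hand, finite length follows. Fact~\ref{fact:UgK-module} embeds the (finite-length) submodule lattice of $V(F)$ as a $\univ{g}^{\lie{k}}$-module into the submodule lattice of $V$, bounding the number of composition factors $L$ of $V$ with $L(F)\neq 0$ by $\len_{\univ{g}^{\lie{k}}}(V(F))<\infty$. Composition factors with $L(F)=0$ share the fixed generalized infinitesimal character, are $\lie{k}$-admissible, and live inside the finitely generated $\univ{g}$-module $V$; noetherianity together with the coherence of the Beilinson--Bernstein localization (Fact~\ref{fact:BeilinsonBernstein}) of each such character block bounds their number. The principal obstacle is the admissibility step: upgrading finite generation of $V(E)$ over $\univ{g}^{\lie{k}}$ to finite dimensionality, which without a spherical hypothesis on $\lie{k}\subseteq\lie{g}$ rests essentially on the Lepowsky--McCollum bridge between finitely generated $\univ{g}^{\lie{k}}$-modules and the $\lie{k}$-isotypic structure of $V$.
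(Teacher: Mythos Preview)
Your plan has the right architecture (reduce to a generalized infinitesimal character, prove admissibility, then finite length), but the admissibility step contains a genuine gap that you yourself flag at the end.

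You correctly argue that $V(E)\subseteq\sum_{E'}\univ{g}(E')\cdot V(F)$ with only finitely many $E'$ contributing, and that each $\univ{g}(E')$ is finitely generated over $\univ{g}^{\lie{k}}$ by Lemma~\ref{lem:FiniteUgK}. But from \emph{left} finite generation you only get $V(E)\subseteq \univ{g}^{\lie{k}}\cdot(XV(F))$ for some finite-dimensional $X$, i.e.\ finite generation of $V(E)$ over $\univ{g}^{\lie{k}}$, not finite dimensionality. The appeal to a ``Lepowsky--McCollum bridge'' does not close this: that correspondence (Fact~\ref{fact:UgK-module}) relates the submodule lattice of $V(F)$ to that of $V$, and the finite-codimensional annihilator $\calJ\subset\univcent{g}$ is no help since $\univ{g}^{\lie{k}}/\calJ\univ{g}^{\lie{k}}$ is still infinite-dimensional. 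The missing idea is to use the \emph{right} finite generation in Lemma~\ref{lem:FiniteUgK}: one can choose a finite-dimensional $X'$ with $\univ{g}^{K}X\univ{g}^{K}=X'\univ{g}^{K}$, and then, because $\univ{g}^{K}$ preserves the isotypic component $V(F)$, one obtains
\[
V(E)\subseteq \univ{g}^{K}XV(F)=X'\univ{g}^{K}V(F)=X'V(F),
\]
which is finite-dimensional. This is exactly the trick the paper uses.

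Your finite-length argument is also incomplete. Coherence of the Beilinson--Bernstein localization does not give finite length (coherent $\ntDsheaf$-modules need not be holonomic), and noetherianity alone only gives the ascending chain condition. The paper instead combines admissibility with Corollary~\ref{cor:NumberOfIrreducibles}: after fixing $\chi$ and the twist $\mu$ determined by $V(F)$, there are only finitely many isomorphism classes of irreducible subquotients, each occurring with finite multiplicity by admissibility; together with noetherianity this forces finite length.
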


\begin{proof}
	Since the multiplication map $\univ{g}V(F)\rightarrow V$ is surjective,
	$V$ is completely reducible as a $\lie{k}$-module.
	We shall show that $V$ is $\lie{k}$-admissible and of finite length.

	Let $F'$ be an irreducible finite-dimensional $\lie{k}$-module.
	We shall show that $V(F')$ is finite dimensional.
	Since $V$ is finitely generated, $V(F')$ is finitely generated as a $\univ{g}^K$-module by Fact \ref{fact:UgK-module}.
	Since $V$ is generated by $V(F)$, we can take a finite-dimensional subspace $X \subset \univ{g}$ such that $V(F')\subset \univ{g}^K XV(F)$.
	By Lemma \ref{lem:FiniteUgK}, there exists a finite-dimensional subspace $X' \subset \univ{g}$ such that $\univ{g}^K X \univ{g}^K = X'\univ{g}^K$.
	Then we have
	\begin{align*}
		V(F') \subset \univ{g}^K XV(F) = X' \univ{g}^K V(F) = X' V(F),
	\end{align*}
	and hence $V(F')$ is finite dimensional.
	Therefore $V$ is $\lie{k}$-admissible.

	We shall show that $V$ is of finite length.
	Since $V$ is generated by $V(F)$, $\Ann_{\univcent{g}}(V)$ is of finite codimension in $\univcent{g}$.
	Hence $V$ is a finite direct sum of $\lie{g}$-submodules with generalized infinitesimal characters.
	We can assume that $V$ has a generalized infinitesimal character $\chi$.

	Since $V$ is generated by $V(F)$, there is a character $\mu$ of $\lie{k}$ such that $V\otimes \CC_\mu$ lifts to a $K$-module.
	Then any irreducible subquotient of $V$ is in $\calC_{\chi,\mu}(\lie{g}, \lie{k})$.
	By Corollary \ref{cor:NumberOfIrreducibles}, the number of equivalence classes of irreducible objects in $\calC_{\chi,\mu}(\lie{g}, \lie{k})$ is finite.
	Since $V$ is $\lie{k}$-admissible and noetherian, this shows that $V$ is of finite length.
\end{proof}

Suppose that $0=V_0\subset V_1 \subset V_2 \subset \cdots \subset V_r = V$ is the socle filtration of $V \in \calC(\lie{g}, \lie{k})$, that is, each $V_i / V_{i-1}$ is the sum of all irreducible submodules in $V/V_{i-1}$.
The length $r$ is called the Loewy length of $V$.

\begin{theorem}\label{thm:LoewyLength}
	The Loewy length of any object in $\calC_\chi(\lie{g}, \lie{k})$ is bounded by some constant independent of the object and the infinitesimal character $\chi$.
\end{theorem}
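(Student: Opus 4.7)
The plan is to bound the Loewy length by transferring the problem to a uniformly bounded family of $\lie{g}$-modules, exploiting Corollary \ref{cor:NumberOfIrreducibles} together with Proposition \ref{prop:UniformlyBoundedMinimalPrimitive}.

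First, I would reduce to the case $V \in \calC_{\chi,\mu}(\lie{g}, \lie{k})$ for some single character $\mu$ of $\lie{k}$. Because $V$ is $\lie{k}$-admissible of finite length, its composition factors involve only finitely many weight cosets for the center of $\lie{k}/[\lie{k},\lie{k}]$; after decomposing according to these central characters and possibly passing to a finite covering (which costs only a uniformly bounded multiplicative factor by Lemma \ref{lem:GeneralizedPairConnected}), one may assume all composition factors of $V$ lie in a single $\calC_{\chi,\mu}$. By Corollary \ref{cor:NumberOfIrreducibles}, the number of isomorphism classes of simples in $\calC_{\chi,\mu}$ is bounded by a universal constant $N$ independent of both $\chi$ and $\mu$.

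Second, I would realize $V$ as a quotient of a universal module. Since $V$ is of finite length and $\lie{k}$-admissible, by Fact \ref{fact:UgK-module} it is generated as a $\lie{g}$-module by a finite-dimensional $\lie{k}$-submodule $F \subset V$. Hence $V$ is a $\lie{g}$-quotient of $W := \univ{g}/\calI_\chi \otimes_{\univ{k}} F$, and in particular $\text{Loewy}(V) \leq \text{Loewy}(W)$.

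Third, I would bound $\text{Loewy}(W)$ uniformly. The family $(\univ{g}/\calI_\chi)_\chi$ is uniformly bounded as $(\lie{g}\oplus \lie{g}, \Delta(G))$-modules by Proposition \ref{prop:UniformlyBoundedMinimalPrimitive}, and Proposition \ref{prop:TorUniformlyBoundedGmod} applied to $\Delta(K)$ acting on $\univ{g}/\calI_\chi$ (viewed as a $\lie{g}\oplus \lie{g}$-module) shows that the family $(\univ{g}/\calI_\chi \otimes_{\univ{k}} F)_{\chi, F}$ is uniformly bounded of $\lie{g}$-modules whenever $\dim F$ is bounded, so its length, and therefore its Loewy length, is uniformly bounded.

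The main obstacle will be that $\dim F$ is a priori unbounded, since different $V \in \calC_\chi$ may require generators of arbitrarily large $\lie{k}$-dimension. To overcome this, I would argue that $\text{Loewy}(V)$ equals the supremum of $\text{Loewy}(V')$ over cyclic $\lie{g}$-subquotients $V'$ of $V$ generated by a single $\lie{k}$-irreducible component; this supremum is taken over a family controlled categorically by the $N$ simple classes of $\calC_{\chi,\mu}$. A pigeonhole argument on the socle filtration — using that each socle layer of $V$ is a direct sum of at most $N$ non-isomorphic simples, and that each step in the filtration corresponds to a non-trivial $\Ext^1$-class in the category — combined with the uniform boundedness of extensions between simples (deducible from Theorem \ref{thm:UniformlyBoundedIntegralTransform} applied to pairs of irreducibles) would then yield a bound on $\text{Loewy}(V)$ depending only on $N$.
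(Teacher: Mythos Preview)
Your proposal correctly identifies the core difficulty --- that the generating $\lie{k}$-type $F$ has unbounded dimension --- but the workaround in your final paragraph does not close the gap. Knowing that $\calC_{\chi,\mu}$ has at most $N$ simple isomorphism classes, together with a uniform bound on $\dim \Ext^1$ between them, does \emph{not} bound Loewy lengths: already for $\CC[x]/(x^n)$-modules there is one simple and $\Ext^1$ is one-dimensional, yet Loewy lengths are unbounded. A pigeonhole argument on socle layers cannot succeed without some additional input that forces the radical series to terminate in bounded time, and nothing in your sketch supplies that. There is also a smaller issue in your third step: Proposition~\ref{prop:TorUniformlyBoundedGmod} requires $K$ to have finitely many orbits on the flag variety, which is not assumed in this section.

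The paper's argument circumvents the unbounded-$\dim F$ problem in a different way. It does not try to bound the $\lie{g}$-length of $\widetilde{P}_{F,\chi}:=\univ{g}/\calI_\chi\otimes_{\univ{k}}F$. Instead it observes, via Theorem~\ref{thm:uniformlyBoundedLengthGeneral} applied with $G'=K$ sitting in one factor of $G\times G$, that the $\univ{g}\otimes\univ{g}^K$-length of $\widetilde{P}_{F,\chi}$ is bounded by a constant $C$ independent of both $\chi$ and $F$ (the family of all irreducible $\lie{k}$-modules is uniformly bounded since each has length one). One then carves out of $\widetilde{P}_{F,\chi}$ a quotient $P_{F,\chi}$ that actually lies in $\calC_\chi(\lie{g},\lie{k})$ and is projective there, using Theorem~\ref{thm:twosidedidal} to make the $F$-isotypic piece finite-dimensional. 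Finally, a short lemma shows that any $\univ{g}\otimes\univ{g}^K$-irreducible subquotient of an object of $\calC(\lie{g},\lie{k})$ is $\lie{g}$-semisimple; hence the $\lie{g}$-Loewy length of $P_{F,\chi}$ is at most its $\univ{g}\otimes\univ{g}^K$-length, which is at most $C$. Since every object of $\calC_\chi$ is a quotient of a finite direct sum of such $P_{F,\chi}$, the bound follows. The key move you are missing is passing from $\lie{g}$-length (which blows up with $\dim F$) to $\univ{g}\otimes\univ{g}^K$-length (which does not), and then transferring back to Loewy length via the semisimplicity lemma.
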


\begin{proof}
	We construct projective objects in $\calC(\lie{g}, \lie{k})$ using $\univ{g}\otimes_{\univ{k}}F$, which is a projective object in the category of all $(\lie{g}, \lie{k})$-modules whose $\lie{k}$-actions are completely reducible.

	Let $F$ be an irreducible finite-dimensional $\lie{k}$-module
	and $\chi$ an infinitesimal character of $\lie{g}$.
	Put
	\begin{align*}
		\widetilde{P}_{F,\chi} := \univ{g}/\calI_\chi \otimes_{\univ{k}}F
	\end{align*}
	Then there is a canonical isomorphism
	\begin{align*}
		\Hom_{\lie{k}}(F, \widetilde{P}_{F,\chi}) \simeq (\univ{g}/(\calI_\chi + \univ{g}\Ann_{\univ{k}}(F)))^K
	\end{align*}
	of $(\univ{g}^K, \univ{g}^K)$-modules.
	We regard $\calA:=\Hom_{\lie{k}}(F, \widetilde{P}_{F,\chi})$ as an algebra under this isomorphism.

	Let $\calJ$ be the union of all left ideals of finite codimension in $\calA$.
	Then $\calJ$ is also the union of all two-sided ideals of finite codimension.
	By Theorem \ref{thm:twosidedidal}, the number of two-sided ideals is finite.
	Hence $\calJ$ is of finite codimension in $\calA$.

	There is a canonical isomorphism
	\begin{align*}
		F\otimes \Hom_{\lie{k}}(F, \widetilde{P}_{F,\chi}) \xrightarrow{\simeq} \widetilde{P}_{F,\chi}(F)
	\end{align*}
	of $\univ{k}\otimes \univ{g}^K\otimes \univ{g}^K$-modules.
	We consider $F\otimes \calJ$ as a subspace of $\widetilde{P}_{F,\chi}$ by the isomorphism.
	Put $\widetilde{J}:=\univ{g}\cdot (F\otimes \calJ)$ and
	\begin{align*}
		P_{F,\chi} := \widetilde{P}_{F,\chi} / \widetilde{J}.
	\end{align*}
	Since $\widetilde{J}(F) = F\otimes J$ by Fact \ref{fact:UgK-module}, we have
	\begin{align*}
		P_{F,\chi}(F) \simeq F\otimes (\Hom_{\lie{k}}(F, \widetilde{P}_{F,\chi}) / \calJ).
	\end{align*}
	Hence $P_{F,\chi}$ is generated by the finite-dimensional subspace $P_{F,\chi}(F)$.
	By Lemma \ref{lem:FiniteGK}, $P_{F,\chi}$ is an object in $\calC_\chi(\lie{g}, \lie{k})$.

	By construction, $P_{F,\chi}$ is projective in $\calC_\chi(\lie{g}, \lie{k})$.
	In fact, the image of $F\otimes \calJ$ by any $\lie{g}$-homomorphism $\widetilde{P}_{F,\chi} \rightarrow V \in \calC_\chi(\lie{g}, \lie{k})$ should be zero.
	
	Hence any object in $\calC_\chi(\lie{g}, \lie{k})$ is isomorphic to a quotient of a finite direct sum of projective objects of the form $P_{F,\chi}$.
	It is enough to bound the Loewy length of $P_{F,\chi}$.
	By Proposition \ref{prop:UniformlyBoundedMinimalPrimitive} and Theorem \ref{thm:uniformlyBoundedLengthGeneral}, there is a constant $C$ independent of $\chi$ and $F$ such that
	\begin{align*}
		\Len_{\univ{g}\otimes \univ{g}^K}(P_{F,\chi}) \leq \Len_{\univ{g}\otimes \univ{g}^K}(\univ{g}/\calI_\chi \otimes_{\univ{k}}F) \leq C.
	\end{align*}
	By the following lemma, the Loewy length of $P_{F,\chi}$ as a $\lie{g}$-module is bounded by $C$.
\end{proof}

\begin{lemma}
	Let $\calA$ be a $\CC$-algebra and $V$ an irreducible $\univ{g}\otimes \calA$-module.
	If $V$ is in $\calC(\lie{g}, \lie{k})$ as a $\lie{g}$-module,
	then $V$ is completely reducible as a $\lie{g}$-module.
\end{lemma}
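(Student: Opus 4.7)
The plan is to show that the socle of $V$ as a $\lie{g}$-module (i.e.\ the sum of all irreducible $\lie{g}$-submodules) is the whole of $V$, which is exactly the statement that $V$ is completely reducible as a $\lie{g}$-module. Since completeness is the point, the main observation will be that the socle is stable under the auxiliary action of $\calA$, so that its being nonzero forces it to equal $V$ by irreducibility of the $\univ{g}\otimes\calA$-action.

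First, I would note that $V$ being in $\calC(\lie{g},\lie{k})$ implies that $V$ has finite length as a $\lie{g}$-module; in particular $V$ contains at least one irreducible $\lie{g}$-submodule, so $\soc_{\lie{g}}(V)\neq 0$.

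Next comes the key point: the socle is $\calA$-stable. For any $a\in\calA$, multiplication by $a$ is a $\lie{g}$-endomorphism of $V$ because the $\lie{g}$- and $\calA$-actions commute. Therefore, for any irreducible $\lie{g}$-submodule $N\subset V$, the image $aN$ is a $\lie{g}$-submodule which is a quotient of $N$, hence either zero or again simple. Consequently $aN\subset\soc_{\lie{g}}(V)$, and taking finite sums gives $a\cdot \soc_{\lie{g}}(V)\subset \soc_{\lie{g}}(V)$.

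Thus $\soc_{\lie{g}}(V)$ is a nonzero $\univ{g}\otimes\calA$-submodule of the irreducible $\univ{g}\otimes\calA$-module $V$, so $\soc_{\lie{g}}(V)=V$, which means $V$ is completely reducible as a $\lie{g}$-module. There is no real obstacle here; the only thing to verify carefully is that $a$ acts by a $\lie{g}$-endomorphism (a direct consequence of the tensor product structure of $\univ{g}\otimes\calA$) and that a finite-length module always has a nonzero socle.
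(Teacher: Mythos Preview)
Your proof is correct and follows essentially the same approach as the paper: both use finite length to obtain an irreducible $\lie{g}$-submodule, observe that the $\calA$-action commutes with $\lie{g}$ so that $\calA$ preserves the sum of irreducible $\lie{g}$-submodules, and then invoke irreducibility of $V$ as a $\univ{g}\otimes\calA$-module. The only cosmetic difference is that the paper picks a single irreducible $\lie{g}$-submodule $W$ and writes $V=\calA\cdot W$ directly (so in fact $V$ is $W$-isotypic), whereas you phrase the same step via the $\lie{g}$-socle.
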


\begin{proof}
	Since $V$ has finite length as a $\lie{g}$-module, $V$ has an irreducible $\lie{g}$-submodule $W$.
	Since $V$ is irreducible as a $\univ{g}\otimes 
	\calA$-module, we have $V = \calA\cdot W$.
	This implies that $V$ is a sum of some copies of $W$, and hence $V$ is completely reducible as a $\lie{g}$-module.
\end{proof}

In the proof of Theorem \ref{thm:LoewyLength}, we have proved the following proposition.

\begin{proposition}\label{prop:EnoughProj}
	$\calC_\chi(\lie{g}, \lie{k})$ has enough projectives.
\end{proposition}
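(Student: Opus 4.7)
The plan is to extract the construction of projective covers already carried out in the proof of Theorem \ref{thm:LoewyLength}, and verify that it indeed produces enough projectives in $\calC_\chi(\lie{g},\lie{k})$. Concretely, for each irreducible finite-dimensional $\lie{k}$-module $F$, I would introduce $\widetilde{P}_{F,\chi} := \univ{g}/\calI_\chi \otimes_{\univ{k}} F$, which is a projective object in the category of all $(\lie{g},\lie{k})$-modules with completely reducible, locally finite $\lie{k}$-action annihilated by $\calI_\chi$, then cut it down using the algebra $\calA := \Hom_{\lie{k}}(F,\widetilde{P}_{F,\chi}) \simeq (\univ{g}/(\calI_\chi + \univ{g}\Ann_{\univ{k}}(F)))^K$ to land inside $\calC_\chi(\lie{g},\lie{k})$.

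The key step that invokes our boundedness machinery is the definition $\calJ := \sum \{I \subset \calA : I \text{ a left ideal of finite codimension}\}$ and the claim that $\calJ$ has finite codimension in $\calA$. This follows because the two-sided ideals of $\calA$ are finite in number by Theorem \ref{thm:twosidedidal} (applied to the maximal ideals $\calI_\chi$ of $\univcent{g}$ and $\Ann_{\univcent{k}}(F)$ of $\univcent{k}$), so $\calJ$ itself is a two-sided ideal of finite codimension. Then set $\widetilde{J} := \univ{g}\cdot (F \otimes \calJ) \subset \widetilde{P}_{F,\chi}$ and define $P_{F,\chi} := \widetilde{P}_{F,\chi}/\widetilde{J}$.

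I then need to check three things, each of which is already essentially in the proof of Theorem \ref{thm:LoewyLength}. First, $P_{F,\chi} \in \calC_\chi(\lie{g},\lie{k})$: by Fact \ref{fact:UgK-module}, $\widetilde{J}(F) = F \otimes \calJ$, so $P_{F,\chi}(F) \simeq F \otimes (\calA/\calJ)$ is finite-dimensional and generates $P_{F,\chi}$, whence Lemma \ref{lem:FiniteGK} applies. Second, $P_{F,\chi}$ is projective in $\calC_\chi(\lie{g},\lie{k})$: any $\lie{g}$-morphism $\widetilde{P}_{F,\chi} \to V$ with $V \in \calC_\chi(\lie{g},\lie{k})$ corresponds by Frobenius reciprocity to an $\calA$-module map $\calA \to \Hom_{\lie{k}}(F,V)$, and since $\Hom_{\lie{k}}(F,V)$ is a finite-dimensional $\calA$-module (using $\lie{k}$-admissibility of $V$), every left ideal in its annihilator has finite codimension, so $\calJ$ lies in the kernel and the map factors through $P_{F,\chi}$.

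Third, there are enough such projectives: any $V \in \calC_\chi(\lie{g},\lie{k})$ is finitely generated as a $\lie{g}$-module (being of finite length) and $\lie{k}$-admissible, so we may choose finitely many $\lie{k}$-types $F_1,\dots,F_r$ whose isotypic components jointly generate $V$. Picking a surjection of $\lie{k}$-modules $\bigoplus_i F_i^{\oplus n_i} \twoheadrightarrow \bigoplus_i V(F_i)$, Frobenius reciprocity extends it to a $\lie{g}$-surjection $\bigoplus_i \widetilde{P}_{F_i,\chi}^{\oplus n_i} \twoheadrightarrow V$, which by the projectivity argument factors through $\bigoplus_i P_{F_i,\chi}^{\oplus n_i}$. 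The only nontrivial obstacle is the finite-codimensionality of $\calJ$, and this is precisely where the uniformly bounded family machinery — via Theorem \ref{thm:twosidedidal} — is essential; everything else is standard manipulation with $(\lie{g},\lie{k})$-modules.
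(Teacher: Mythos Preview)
Your proposal is correct and follows essentially the same approach as the paper, which simply states that the proposition was proved in the course of Theorem \ref{thm:LoewyLength}; you have accurately extracted and slightly elaborated the construction of $P_{F,\chi}$, the verification via Fact \ref{fact:UgK-module} and Lemma \ref{lem:FiniteGK} that it lies in $\calC_\chi(\lie{g},\lie{k})$, the projectivity argument, and the appeal to Theorem \ref{thm:twosidedidal} for the finite codimension of $\calJ$. One minor notational slip: $\calI_\chi$ is the minimal primitive ideal of $\univ{g}$, not a maximal ideal of $\univcent{g}$; what you feed into Theorem \ref{thm:twosidedidal} is the corresponding maximal ideal $\calI_\chi \cap \univcent{g}$.
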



\def\cprime{$'$} \def\cprime{$'$}

\end{document}